\let\HH\H
\newtheorem{thm}{Theorem}[section]
\newtheorem{lem}[thm]{Lemma}
\newtheorem{prop}[thm]{Proposition}
\newtheorem{prin}[thm]{Principle}
\newtheorem{cor}[thm]{Corollary}
\newtheorem{defn}[thm]{Definition}
\newtheorem{rmk}[thm]{Remark}
\newtheorem{algo}[thm]{Algorithm}
\newtheorem{conj}[thm]{Conjecture}
\newtheorem{notation}[thm]{Notation}
\newcommand{\Spec}{\operatorname{Spec}}
\newcommand{\End}{\operatorname{End}}
\newcommand{\GL}{\operatorname{GL}}
\newcommand{\SL}{\operatorname{SL}}
\newcommand{\Hom}{\operatorname{Hom}}
\newcommand{\Frob}{\operatorname{Frob}}
\newcommand{\Gal}{\operatorname{Gal}}
\newcommand{\Jac}{\operatorname{Jac}}
\newcommand{\Nm}{\operatorname{Nm}}
\newcommand{\Sym}{\operatorname{Sym}}
\renewcommand{\dim}{\operatorname{dim}}
\newcommand{\pfrak}{\mathfrak{p}}
\newcommand{\iso}{\cong}
\newcommand{\tr}{\mathrm{tr}}
\newcommand{\Qbar}{{\overline{\mathbb{Q}}}}
\newcommand{\Q}{\mathbb{Q}}
\newcommand{\Z}{\mathbb{Z}}
\newcommand{\N}{\mathbb{N}}
\renewcommand{\C}{\mathbb{C}}
\newcommand{\F}{\mathbb{F}}
\renewcommand{\R}{\mathbb{R}}
\newcommand{\G}{\mathbb{G}}
\renewcommand{\o}{\mathfrak{o}}
\newcommand{\Aut}{\mathrm{Aut}}
\newcommand{\rank}{\mathop{\mathrm{rank}}}
\newcommand{\et}{\text{\'{e}t.}}
\renewcommand{\P}{\mathbb{P}}
\newcommand{\nfrak}{\mathfrak{n}}
\newcommand{\inj}{\hookrightarrow}
\newcommand{\Res}{\mathrm{Res}}
\newcommand{\Fbar}{\overline{\F}}
\newcommand{\Span}{\mathrm{span}}
\newcommand{\im}{\mathrm{im}}
\newcommand{\ur}{\mathrm{ur.}}
\newcommand{\sh}{\mathrm{s.H.}}
\newcommand{\rk}{\mathrm{rank}}
\newcommand{\id}{\mathrm{id}}
\newcommand{\diag}{\mathrm{diag}}
\newcommand{\eps}{\varepsilon}
\renewcommand{\H}{\mathbb{H}}
\newcommand{\sing}{\mathrm{sing.}}
\definecolor{turquoiseblue}{rgb}{0.0, 1.0, 0.94}
\newcommand{\levent}[1]{{\color{pink} #1}}
\newcommand{\brian}[1]{{\bf \color{blue} #1}}
\let\phi\varphi
\let\emptyset\varnothing
\newcommand\mnote[1]{\marginpar{\tiny #1}}
\let\@@pmod\pmod
\DeclareRobustCommand{\pmod}{\@ifstar\@pmods\@@pmod}
\def\@pmods#1{\mkern4mu({\operator@font mod}\mkern 6mu#1)}
\DeclareSymbolFont{cyrillic}{T2A}{cmr}{m}{n}
\def\makecyrsymbol#1#2{%
  \begingroup\edef\temp{\endgroup
    \noexpand\DeclareMathSymbol{\noexpand#1}
    {\noexpand\mathalpha}{cyrillic}%
    {\expandafter\expandafter\expandafter
     \calccyr\expandafter\meaning\csname T2A\string#2\endcsname\end}}%
  \temp}
\def\expandafter\calccyr\string\char#1\end{#1}
\def\hwemoji@insert#1{\scalerel*{\includegraphics[page=#1]{hwemoji-assets.pdf}}{X}}
\def\hwemoji@yfwry{\hwemoji@insert{444}}
\newcommand{\giftemoji}{\scaleobj{1.5}{\scalerel*{\includegraphics[page=444]{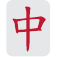}}{X}}}
\title{\vspace{-1in}Conditional Algorithmic Mordell}
\author{Levent Alp\"{o}ge \& Brian Lawrence}
\date{}
\begin{document}

\newif\ifhidecomments

\ifhidecomments
  \def\mnote#1{{}}
  \def\levent#1{{}}
  \def\brian#1{{}}
  \NewEnviron{hide}{}
  \let\todolist\hide
  \let\endtodolist\endhide
\fi

\maketitle

\section*{The problem.}

The problem is the following. Let $f\in \Z[x,y]$. It is known that the set of integral solutions of $f(x,y) = 0$ has a finitary description. The same goes for the set of rational such solutions. So: can one find such finitary descriptions automatically? --- that is, is there an algorithm which, when run on such an $f$, eventually returns a finite description of its set of integral (or rational) points?

~

Thanks to the negative solution of Hilbert's tenth problem \cite{matiyasevich, davis-putnam-robinson, sun} no such algorithm exists for the analogous problem of determining whether there is even a single integral solution to an equation $f(x_1, \ldots, x_n) = 0$ with $f\in \Z[x_1, \ldots, x_n]$ in $n\geq 11$ variables, and it is a conjecture of Baker, Matiyasevich, and Robinson \cite{suns-older-paper, gasarch} that the same should hold when $n\geq 3$, at least with "integral solution" replaced with "positive integral solution". Of course when $n = 1$ the problem is trivial. So the case of points on curves is, it seems, the only interesting one where one has a chance.

\tableofcontents

\section{Introduction.}

It is not currently known that the rank of an elliptic curve over $\Q$ is a computable quantity. In other words, it is not currently known that there is a finite-time algorithm, aka Turing machine that terminates on all inputs, that, on input an elliptic curve $E/\Q$, outputs $\rank{E(\Q)}$. However, what is known is that there is a Turing machine that, on input $E/\Q$, outputs $\rank{E(\Q)}$ \emph{if it terminates}, and moreover a standard conjecture\footnote{--- namely the finiteness of the corresponding Tate-Shafarevich group $\Sha(E/\Q)$, or even "just" that $(n, \#|n\cdot \Sha(E/\Q)|) = 1$ for some $n\in \Z^+$ ---} implies that said Turing machine indeed terminates on all inputs.

The purpose of this paper is to do the same\footnote{However: {\bf our algorithm is extraordinarily inefficient!} Our goal is only to produce \emph{some} algorithm, whereas the algorithm conjecturally computing the rank of an elliptic curve $E/\Q$ is efficient enough that it is used in practice.} for the age-old question of computing the rational points on hyperbolic curves. In other words, in this paper we will produce a Turing machine which, on input $C/K$ a smooth projective hyperbolic curve over a number field $K$, outputs $C(K)$ upon termination, and such that standard conjectures imply that said Turing machine terminates on all inputs.

Let us now describe the algorithm.

Let $K$ be a number field, $S$ a finite set of primes of $\mathcal{O}_K$,
and $g$ a positive integer.
The Shafarevich conjecture (a theorem of Faltings \cite[Satz 6]{Fal83})
asserts that there are only finitely many isomorphism classes of abelian varieties
of dimension $g$ over $K$ having good reduction at all primes outside $S$.
Faltings' proof is ineffective:
it does not provide a way to show that a list of abelian varieties is complete.

Assuming standard conjectures, we show that there exists an algorithm, terminating in finite time, that takes $K$, $S$ and $g$ as inputs, and returns a list of all isomorphism classes of abelian varieties of dimension $g$ over $K$ having good reduction outside $S$.

More precisely, in Section \ref{conjecture section} we formulate Conjecture \ref{the key conjecture} and prove that it follows from standard conjectures in arithmetic geometry.

\begin{thm}\label{the key implication}
The Hodge, Tate, and Fontaine-Mazur conjectures imply Conjecture \ref{the key conjecture}.
\end{thm}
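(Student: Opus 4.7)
The plan is to set up a correspondence, using the three conjectures in concert, between abelian varieties $A/K$ of dimension $g$ with good reduction outside $S$ and certain $\ell$-adic Galois representations of $G_K$, and then to extract from this correspondence the effective statement Conjecture \ref{the key conjecture} demands.

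First, the Tate conjecture for abelian varieties (Faltings' theorem) tells us that the isogeny class of $A$ is determined by the rational $\ell$-adic Tate module $V_\ell A$ as a $G_K$-representation, and equivalently by the characteristic polynomials of $\Frob_\pfrak$ for $\pfrak \notin S \cup \{\ell\}$. Since each isogeny class contains only finitely many isomorphism classes (also Faltings, effectively enumerable via lattices in $V_\ell A$), enumerating abelian varieties reduces to enumerating certain $\ell$-adic Galois representations.

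Second, Fontaine--Mazur runs this correspondence in reverse: a continuous $2g$-dimensional $\ell$-adic representation $\rho \colon G_K \to \GL_{2g}(\Qbar_\ell)$ that is unramified outside $S \cup \{\ell\}$, de Rham at primes above $\ell$ with Hodge--Tate weights in $\{0,1\}$, and equipped with a suitable polarization, arises from a motive of weight $1$. The Hodge conjecture upgrades this to ``motive of an abelian variety'': a polarized weight-$1$ Hodge structure is the $H^1$ of an abelian variety, and Hodge guarantees that this analytic association lifts to the category of motives (equivalently, via Mumford--Tate, that the image of $\rho$ lands in a copy of $\GSp_{2g}$ cut out by algebraic cycles).

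The hard part, and the content of Conjecture \ref{the key conjecture}, is effectivity. The three conjectures above are purely qualitative, but to enumerate abelian varieties algorithmically one needs a computable bound -- either on the Faltings height of each such $A$, or on a Chebotarev-style threshold $N$ after which two candidate isogeny classes whose Frobenius polynomials match for all $\pfrak$ of norm at most $N$ are certified equal. I would obtain such a bound by combining Fontaine--Mazur with the Weil bounds on Frobenius eigenvalues and an effective form of Chebotarev density (presumably under GRH, which is itself among the ``standard conjectures''). The main obstacle is exactly this synthesis: Fontaine--Mazur produces a motive, not a height bound, so packaging the existential conjectural input into the quantitative shape Conjecture \ref{the key conjecture} requires will be where the proof does its real work, and the formulation of that conjecture has presumably been calibrated precisely so that this packaging step goes through.
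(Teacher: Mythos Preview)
You have misread what Conjecture \ref{the key conjecture} asserts. It is not an effectivity statement and involves no height bounds, no GRH, and no Chebotarev threshold. It is a purely qualitative claim: a $2g$-dimensional $\ell$-adic representation $\rho$ of $G_K$ that is unramified outside $S\cup\{\ell\}$, has integral Frobenius traces, and is de Rham with Hodge--Tate weights $0$ and $1$, must arise as $V_\lambda(B)$ for an abelian variety $B$ over a finite extension $L/K$ admitting an action of (the ring of integers of) a CM field $E$. All the effectivity in the paper lives in the \emph{algorithm}, which uses Conjecture \ref{the key conjecture} only as a termination guarantee. Your final two paragraphs are therefore aimed at the wrong target.

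Your second paragraph is closer to the real argument, but still skips the step where the work happens. Fontaine--Mazur exhibits $\rho$ as a subquotient of some $H^i_{\et}(X_{\Qbar},\Q_\ell)(j)$; Hodge and Tate together give semisimplicity, so $\rho$ is a direct summand cut out by a projector $\pi$. The issue is that $\pi$ lies in $\End_K(M)\otimes\Q_\ell$, and there is no reason for it to lie in $\End_K(M)\otimes\Q$. Since $\End_K(M)$ is a semisimple $\Q$-algebra, it splits over the maximal CM extension of $\Q$, so $\pi$ is realized over some CM field $E$; this is precisely why the CM field $E$ and the extension $L/K$ appear in the statement of the conjecture, and why one ends up with $\rho^{\oplus [E:\Q]\cdot[L:K]}$ rather than $\rho$ itself. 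After that, Riemann's theorem identifies the resulting weight-one $\Q$-Hodge structure with $H^1$ of a complex abelian variety, and the Hodge conjecture is invoked again to descend this abelian variety from $\C$ to $\Qbar$. None of this involves effective bounds.
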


Then we show that said conjecture has the following ramifications.

\begin{thm}\label{shafarevich main theorem}
There is a Turing machine $T_{\text{Shafarevich}}$ with the following properties.
\begin{itemize}
\item On input $(g,K,S, d)$, with $g, d \in \Z^+$, $K/\Q$ a number field, and $S$ a finite set of places of $K$, if $T_{\text{Shafarevich}}$ terminates, then it outputs the finitely many polarized $g$-dimensional abelian varieties $A/K$, with polarization of degree $d$, having good reduction outside $S$.
\item Conjecture \ref{the key conjecture} implies that $T_{\text{Shafarevich}}$ always terminates.
\end{itemize}
\end{thm}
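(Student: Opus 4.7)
The plan is to realize $T_{\text{Shafarevich}}$ as the parallel combination of two semi-algorithms: an enumeration loop that eventually lists every polarized $(A,\lambda)/K$ of the given type, and a certification loop whose halting is guaranteed by Conjecture \ref{the key conjecture}. When the certification loop halts, $T_{\text{Shafarevich}}$ outputs the accumulated list. Unconditional correctness comes from the enumeration step; conditional termination comes from the certification step.

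For the enumeration loop, fix once and for all a computable projective model of the moduli space $\mathcal{A}_{g,d}$ of polarized $g$-dimensional abelian varieties with polarization of degree $d$, say over $\mathbb{Z}[1/d]$ with a chosen ample line bundle. Enumerate its $K$-points in order of increasing height, a decidable task once $K$ is given as a number field. Each such $K$-point recovers a polarized abelian variety $(A, \lambda)$ over $K$ up to a finite ambiguity (removable by passing to a finite level cover), from which one extracts an explicit projective model. Good reduction outside $S$ is then decidable: spread out the model to $\mathcal{O}_{K, S}$, and test smoothness of a N\'eron model at each prime of potential bad reduction. The candidates passing this test form a recursively enumerable stream that eventually contains every desired $(A, \lambda)$.

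Termination must come from Conjecture \ref{the key conjecture}. Whatever its precise shape, it should produce from $(g, K, S, d)$ an effectively computable certificate parameter $B = B(g, K, S, d)$ with one of two algorithmic meanings: either (a) an upper bound on some arithmetic height of every desired $(A, \lambda)$ --- e.g. the Faltings height, translated via the standard comparison to a height on the moduli space --- so that halting after exhausting the enumeration up to height $B$ suffices; or (b) an upper bound on the norm of a set of primes $\mathfrak{p} \notin S$ whose local $L$-factors separate all polarized isomorphism classes of the desired type, in which case one halts once the local Frobenius data of the current list accounts for every achievable Frobenius datum (using an effective form of Faltings' isogeny theorem together with the fact that only finitely many Weil $L$-polynomials of bounded degree satisfy the Weil bounds at any given prime). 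Either way, completeness becomes checkable in finite time from the current state of the enumeration.

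The main obstacle is guaranteeing that Conjecture \ref{the key conjecture}, which one expects to be phrased in the arithmetic-geometric language of compatible systems of Galois representations, genuinely affords an \emph{effective} $B(g, K, S, d)$, rather than a mere existential finiteness statement, and that the implications from Hodge, Tate and Fontaine--Mazur in Theorem \ref{the key implication} preserve this effectivity. Once that is secured, the two loops above interleave with standard computability facts --- effective construction of $\mathcal{A}_{g,d}$, decidability of good reduction, computability of Frobenius traces at primes of good reduction, and computability of polarizations from a bare abelian variety --- to yield the Turing machine $T_{\text{Shafarevich}}$ with the stated unconditional correctness and conditional termination properties.
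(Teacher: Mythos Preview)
Your overall architecture---an enumeration loop plus a certification loop---matches the paper's, but your certification mechanism rests on a mistaken guess about Conjecture \ref{the key conjecture}. The conjecture does \emph{not} provide an effective height bound $B(g,K,S,d)$, nor any effectively computable certificate parameter. It is a purely existential statement: any $\ell$-adic Galois representation $\rho$ satisfying certain local conditions (unramified outside $S$ and primes above $\ell$, integral Frobenius traces, de Rham at primes above $\ell$ with Hodge--Tate weights $0$ and $1$) arises inside the Tate module of some abelian variety over $K$. No bound on that abelian variety is asserted, and indeed the deduction from Hodge, Tate, and Fontaine--Mazur (Theorem \ref{the key implication}) passes through nonconstructive existence of algebraic cycles and yields nothing effective. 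So your option (a) is simply not on the table, and the ``main obstacle'' you flag is fatal, not merely technical.

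The paper's actual certification mechanism is quite different from either of your options. Having fixed (via Lemma \ref{faltings lemma}) a finite set $T$ of primes whose Frobenius traces determine a semisimple rank-$2g$ representation, one lists the finitely many trace tuples $(a_\pfrak)_{\pfrak\in T}$ satisfying the Weil bounds. For each such tuple one asks, for increasing $N$: does there exist a mod-$\ell^N$ representation $\rho_N$ with these traces, unramified outside $S\cup\{\lambda : \lambda\mid\ell\}$, with $\det\rho_N \equiv \chi_\ell^g$, and prolonging to a finite flat group scheme at each prime above $\ell$? This is a finite computation (Theorem \ref{computability of the necessary conditions modulo ell to the N}). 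If for some $N$ no such $\rho_N$ exists, the tuple is discarded. Meanwhile a brute-force search looks for abelian varieties whose traces match a surviving tuple (possibly after multiplying by an integer $k$); when one is found, Masser--W\"ustholz bounds (Theorem \ref{MW}) compute its whole isogeny class and the tuple is removed. Termination under Conjecture \ref{the key conjecture} comes via K\HH{o}nig's lemma: a tuple surviving all mod-$\ell^N$ tests assembles into an honest $\ell$-adic $\rho$ satisfying the hypotheses of the conjecture, hence comes from some abelian variety, hence is eventually found by the search. Your option (b) gestures at the separating set $T$ and the finiteness of Weil-bounded trace tuples, but omits the decisive ingredient---ruling out non-motivic tuples via these mod-$\ell^N$ local tests---without which ``accounts for every achievable Frobenius datum'' is circular: you cannot decide which of the finitely many possible tuples are achievable without already knowing the answer.
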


\begin{thm}\label{mordell main theorem}
There is a Turing machine $T_{\text{Mordell}}$ with the following properties.
\begin{itemize}
\item On input $(K,C/K)$, with $K/\Q$ a number field and $C/K$ a smooth projective hyperbolic curve, if $T_{\text{Mordell}}$ terminates, then it outputs $C(K)$.
\item Conjecture \ref{the key conjecture} implies that $T_{\text{Mordell}}$ always terminates.
\end{itemize}
\end{thm}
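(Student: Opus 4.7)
The plan is to reduce Theorem~\ref{mordell main theorem} to Theorem~\ref{shafarevich main theorem} via the Kodaira--Parshin trick. Since $C$ is smooth, projective, and hyperbolic, it has genus $g \geq 2$. First I would fix, depending only on $C/K$, a computable finite set $S_0$ of primes of $\mathcal{O}_K$ containing the bad reduction of $C$, an integer $n \geq 2$, and a $K$-rational divisor $D_0$ on $C$ compatible with the Parshin construction. For each $P \in C(K)$, Parshin then assigns a cyclic cover $\pi_P \colon C_P \to C$ of degree $n$ branched only at $P$ and $D_0$; by Riemann--Hurwitz its Jacobian $A_P := \Jac(C_P)$ is a principally polarized abelian variety of a computable dimension $g' = g'(g,n)$. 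The crucial point is that $A_P$ has good reduction outside a computable enlargement $S' \supseteq S_0$ depending only on $C/K$, $n$, and $D_0$---not on $P$ itself.

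Next I would invoke $T_{\text{Shafarevich}}$ on input $(g', K, S', 1)$. Conditionally on Conjecture~\ref{the key conjecture} it terminates, outputting a finite list $\mathcal{L}$ of all principally polarized abelian varieties of dimension $g'$ over $K$ with good reduction outside $S'$; in particular the Parshin map $\phi \colon C(K) \to \mathcal{L}$, $P \mapsto A_P$, has finite image. For each $A \in \mathcal{L}$ I would then compute $\phi^{-1}(A)$: the Torelli theorem reconstructs $C_P$ as a $K$-scheme from the principally polarized $A$, after which an enumeration of the finitely many finite $K$-morphisms $C_P \to C$ of degree $n$ with the prescribed ramification type reads off the candidate branch point $P$ in each. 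Taking the union over $\mathcal{L}$ produces an explicit finite set $L_{\text{cand}} \supseteq C(K)$, and the algorithm outputs its (easily tested) $K$-rational subset.

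The main obstacle is precisely this inversion step: one needs an effective-enough form of Torelli over a general number field together with an enumeration of the relevant Hurwitz data, packaged into a subroutine that provably halts on every input. If the direct inversion proves delicate, a robust alternative is to interleave the Parshin--Shafarevich pipeline with a naive height-enumeration of $C(K)$, halting only once every element of $\mathcal{L}$ has been accounted for---either by exhibiting a $K$-rational preimage or by certifying via an effective test on $A$ that no such preimage exists. In either variant, termination is guaranteed by the termination of $T_{\text{Shafarevich}}$ and the built-in finiteness of the Parshin fibers, while correctness holds unconditionally: any output is provably equal to $C(K)$.
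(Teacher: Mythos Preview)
Your high-level reduction---attach to each $P\in C(K)$ an abelian variety with controlled good reduction, invoke $T_{\text{Shafarevich}}$, then invert---is exactly the paper's strategy. The paper even cites Kodaira--Parshin as the reason a suitable nonisotrivial family $\mathcal{A}\to C$ over $K$ exists (Theorem~\ref{computability of a nonisotrivial family}). So the architecture is right.

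The substantive difference, and the place where your proposal has a genuine gap, is the inversion step. The paper does \emph{not} go through Torelli. Instead it puts the family $\mathcal{A}\to C$ into Mumford form (Lemma~\ref{algo_mumf_family}), so that one obtains an explicit morphism of varieties $Q_{\mathcal{A}'}\colon S'\to \mathbb{P}(V(\delta))$ to a concrete projective space playing the role of moduli. For each $A$ on the Shafarevich list one then computes \emph{all} of its finitely many Mumford coordinate tuples (Lemma~\ref{all_mumf_coords}) and pulls each back along $Q_{\mathcal{A}'}$; this is just solving polynomial equations and yields the finite set $\{s\in C(\Qbar):\mathcal{A}_s\cong A\}$ directly (Lemma~\ref{the subfamily of abelian varieties in a family isomorphic to a given one is computable}). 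Crucially, this works uniformly whether or not $A$ is actually a fiber of the family---when it is not, the preimage is simply empty, and the algorithm sees this.

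By contrast, your Torelli route requires, for every $A$ on the list, either recognizing that $A$ is not a Jacobian, or running effective Torelli and then enumerating degree-$n$ maps from the resulting curve to $C$ with prescribed branching. Each of these pieces is plausible but none is supplied, and the packaging into a halting subroutine is exactly the content you are deferring. Your fallback ``day/night'' variant does not close the gap either: the phrase ``certifying via an effective test on $A$ that no such preimage exists'' is precisely the missing ingredient, and without it the loop need not terminate on those $A\in\mathcal{L}$ that lie outside the image of $\phi$. The paper's moduli-map inversion \emph{is} that effective test.

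A smaller point: you assert that the Parshin cover $C_P$ and its Jacobian are defined over $K$. In the classical construction this typically holds only over a bounded extension depending on $P$; one must either arrange the family carefully over $K$ (as in \cite[\S 7]{LV}) or restrict scalars. The paper sidesteps this by brute-force searching for a nonisotrivial $K$-family $\mathcal{A}\to C$ directly, with Kodaira--Parshin invoked only to guarantee existence.
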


We expect that both algorithms are too slow for practical use.

The algorithms rely on the following. For the sake of exposition we will be slightly imprecise (e.g.\ semisimplifications will be omitted, etc.).
\begin{itemize}
\item Bounds of Masser--W\"ustholz or Raynaud show that,
given an abelian variety over $K$,
one can effectively compute all other $K$-abelian varieties isogenous to it. 
\item \'Etale cohomology gives a correspondence between 
abelian varieties over $K$, with good reduction outside $S$, up to isogeny,
and certain $\ell$-adic representations of the absolute Galois group $\Gal_K$, unramified outside $S$.
\item Using a result of Faltings (a form of the Faltings--Serre method),
an $\ell$-adic Galois representation is determined by the Frobenius traces
at an explicitly computable finite list of primes.
\item Conditionally on the Fontaine--Mazur, Hodge, and Tate conjectures,
one can give local conditions under which an $\ell$-adic Galois representation must come from an abelian variety.
(A more precise but less general result of this form was recently proven by Patrikis, Voloch, and Zarhin \cite{PVZ16}.)
\item These conditions are the limits of conditions modulo $\ell^n$ for each $n\in \Z^+$. Moreover, given $n$, the mod-$\ell^n$ conditions can be checked algorithmically by explicit calculations involving finite flat group schemes.
\end{itemize}

Just for clarity we state the following immediate consequence of Theorem \ref{mordell main theorem}.

\begin{cor}
Assume the Hodge, Tate, and Fontaine-Mazur conjectures. Then there is a finite-time algorithm which computes the rational points on a given hyperbolic curve over a given number field.
\end{cor}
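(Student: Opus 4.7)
The plan is to chain together the two theorems already stated in the introduction. Specifically, Theorem \ref{the key implication} says that the Hodge, Tate, and Fontaine--Mazur conjectures imply Conjecture \ref{the key conjecture}, and Theorem \ref{mordell main theorem} says that Conjecture \ref{the key conjecture} in turn implies that the Turing machine $T_{\text{Mordell}}$ terminates on every input $(K, C/K)$. Combining these, under the stated hypotheses $T_{\text{Mordell}}$ halts on every input; and the first bullet of Theorem \ref{mordell main theorem} guarantees that whenever it halts, its output is $C(K)$. So $T_{\text{Mordell}}$ itself is the desired finite-time algorithm.

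There is nothing to prove beyond this syllogism: no new construction, no new estimate, and no appeal to the internal workings of $T_{\text{Mordell}}$ or of the conjectures. The only mild subtlety is that the corollary's input specification (``a given hyperbolic curve over a given number field'') matches the input specification of $T_{\text{Mordell}}$ in Theorem \ref{mordell main theorem}, which one should note explicitly by recalling that a number field $K$ and a smooth projective hyperbolic curve $C/K$ can be presented to a Turing machine in the standard way (e.g.\ $K$ by a defining polynomial over $\Q$, and $C$ by defining equations in some projective space over $K$).

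The hard part, of course, lies entirely upstream: it is Theorems \ref{the key implication} and \ref{mordell main theorem}. The corollary is genuinely a one-line consequence once those are in hand.
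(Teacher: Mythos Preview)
Your proof is correct and matches the paper's approach exactly: the paper presents this corollary as an immediate consequence of Theorem \ref{mordell main theorem} (together with Theorem \ref{the key implication}) and does not give a separate proof, precisely because the deduction is the straightforward chaining you describe.
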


Let us also note in passing that Theorem \ref{mordell main theorem} implies the following.

\begin{thm}\label{hilbert}
There is a Turing machine $T_{\text{Hilbert}}$ with the following properties.
\begin{itemize}
\item On input $(\o,f)$, with $\o\subseteq \o_K$ an order in a number field $K/\Q$ and $f\in \o[x,y]$, if $T_{\text{Hilbert}}$ terminates, then it outputs $\{(a,b)\in \o\times \o : f(a,b) = 0\}$.
\item On input $(K,f)$, with $K/\Q$ a number field and $f\in K[x,y]$, if $T_{\text{Hilbert}}$ terminates, then it outputs a finite-length description\footnote{If the set is finite then the algorithm outputs it, else the geometric genus of $f = 0$ is at most $1$. Thus (the other cases being evident) we need only comment on the case of a smooth curve of genus $1$, where a finite-length "description" means (the set being infinite) a $K$-point along with a "$\Z$-basis" of the group of $K$-points of the curve's Jacobian.} of $\{(a,b)\in K\times K : f(a,b) = 0\}$.
\item Conjecture \ref{the key conjecture} and the "finiteness-of-$\Sha(E/K)$" conjecture imply that $T_{\text{Hilbert}}$ always terminates.
\end{itemize}
\end{thm}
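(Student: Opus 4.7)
The plan is to factor $f$ over $K$, treat each irreducible factor separately, and reduce to Theorem \ref{mordell main theorem} combined with the classical conditional algorithm for Mordell--Weil groups under the finiteness of $\Sha$. For each geometrically irreducible factor $f_i$ of $f$, standard computer-algebra methods (polynomial factorization, Gröbner bases, normalization) will produce the smooth projective model $\widetilde{C}_i$ of $V(f_i) \subset \A^2_K$, its geometric genus $g_i$, the finite $K$-subscheme $D_i \subset \widetilde{C}_i$ lying over the singular locus of $V(f_i)$ together with the divisor at infinity, and an explicit birational isomorphism $\widetilde{C}_i \setminus D_i \cong V(f_i)$. Once $\widetilde{C}_i(K)$ is computed (or, when infinite, described by a finite set of generators), both rational and integral points on $V(f_i)$ follow by discarding the points of $D_i(K)$ and, in the integral case, by checking integrality of each surviving coordinate pair.

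I would then case-split on $g_i$. If $g_i \geq 2$, $\widetilde{C}_i$ is smooth projective and hyperbolic, so $T_{\text{Mordell}}$ of Theorem \ref{mordell main theorem} applies directly. If $g_i = 0$, Hasse--Minkowski provides an unconditional algorithm for deciding whether the conic $\widetilde{C}_i$ has a $K$-rational point and, if so, parametrizing $\widetilde{C}_i \cong \P^1_K$, yielding a finite-length description of the rational points; for integral points, Siegel--Baker gives effective bounds whenever $\#D_i(\overline{K}) \geq 3$, while in the remaining Pell-type cases the integral points reduce to describing units of finite index in an order of a (possibly trivial) real quadratic extension, which is classical via continued fractions. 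The genus-one case is the crux: for rational points I run the standard conditional Mordell--Weil algorithm --- $n$-descent combined with naive search determines, assuming finiteness of $\Sha$ for the Jacobian, whether $\widetilde{C}_i(K) = \emptyset$, and in the non-empty case I fix a basepoint, identify $\widetilde{C}_i$ with its Jacobian $E$, and output a finite set of Mordell--Weil generators for $E(K)$; for integral points the affine curve necessarily has $D_i \neq \emptyset$, so Baker's method on elliptic logarithms produces an effective, unconditional, finite list.

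The main obstacle is precisely this genus-one case for rational points: it is the sole step where the $\Sha$-finiteness hypothesis is invoked, both for certifying emptiness of $\widetilde{C}_i(K)$ via descent and for completing a tentative list of Mordell--Weil generators. The remaining branches either reduce to Theorem \ref{mordell main theorem} (and hence to Conjecture \ref{the key conjecture}) or to classical unconditional effective methods, so combining these with the conditional genus-one algorithm will yield the Turing machine $T_{\text{Hilbert}}$ with the claimed termination properties.
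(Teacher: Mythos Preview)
Your proposal is correct and follows essentially the same route as the paper's proof: normalize to a smooth projective model, case-split on the genus, invoke Theorem \ref{mordell main theorem} for $g\ge 2$, use Hasse--Minkowski/Baker for $g=0$, Baker--Coates for $g=1$ integral points, and the standard descent-by-day/search-by-night Mordell--Weil algorithm (conditional on finiteness of $\Sha$) for $g=1$ rational points. If anything, you are slightly more careful than the paper in explicitly factoring $f$ first and in tracking the divisor $D_i$ at infinity and over the singular locus.
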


By the "finiteness-of-$\Sha(E/K)$" conjecture we mean the conjecture that all Tate-Shafarevich groups of elliptic curves over $K$ are finite.

The deduction is immediate so we provide it immediately.

\begin{proof}
On input $(R, f)$ with $R\subseteq K$ and $K/\Q$ a number field, write $C_f / K$ (implicitly embedded in $\P^N_{/K}$) for the normalization of the scheme-theoretic closure of $f = 0$ in $\P^2_{/K}$ and $g$ for its genus (which is of course explicitly computable, e.g.\ for fun via point counting over finite fields). If $g\geq 2$ apply Theorem \ref{mordell main theorem}. If $g = 1$ and $R =: \o$ is an order in a number field, apply Baker's lower bound on linear forms in logarithms \cite{baker-linear-forms-in-logs} (see e.g.\ Baker-Coates \cite{baker-coates}). If $g = 0$ apply the usual finite-time algorithm depending on the divisor at infinity of $C_f$ (via an explicit form of Hasse-Minkowski or else Baker's effective solution of $S$-unit equations via his lower bounds on linear forms in logarithms \cite{baker-linear-forms-in-logs}). Else we are in the case $g = 1$ and $R = K$ --- then $C_f$ has no $K$-point if it is not everywhere locally soluble (an explicit finite check by e.g.\ the Hasse bound and Hensel lifting), else it is everywhere locally soluble and so, choosing an explicit $P\in C_f(\Qbar)$ and writing $L/K$ for the Galois closure of its field of definition, one obtains that $C_f$ induces a class $\alpha\in \mathrm{Sel}_{[L:K]}(E_f / K)$ with $E_f := \Jac{C_f}$. By hypothesis the usual compute-$\mathrm{Sel}_N(E_f / K)$-by-day / brute-force-search-to-lower-bound-$\rank\,{E_f(K)}$-by-night algorithm terminates with a "$\Z$-basis" of $E_f(K)$, whence in finite time we determine whether or not $\alpha$ lies in the image of $E_f(K) / [L:K]\to \mathrm{Sel}_{[L:K]}(E_f / K)$ --- if not then $C_f$ has no $K$-points, while if so a brute-force search will find a $K$-point $P\in C_f(K)$ aka a $K$-isomorphism $C_f\simeq E_f$, and we have already found a "$\Z$-basis" of $E_f(K)$.
\end{proof}

\subsection{Outline of the paper.}

In Section \ref{conjecture section}, we state the conjecture (Conjecture \ref{the key conjecture}) on which our conditional results rely, and we prove that Conjecture \ref{the key conjecture} is a consequence of the Hodge, Tate, and Fontaine--Mazur conjectures.

We state the main algorithms in Sections \ref{shafarevich algorithm section} and \ref{mordell algorithm section}; Section \ref{sec:algo_proofs} contains a proof that the algorithms achieve what we claim.

In Section \ref{sec:computability} we discuss how to work with various mathematical objects (number fields, abelian varieties, endomorphisms, etc.) at the level of (finite) byte representations, and the need to approximate objects that cannot be represented by finite bit strings (e.g.\ complex numbers, varieties and morphisms over $\mathbb{C}$, and $\ell$-adic Galois representations).
We also discuss brute-force search as a technique for finding various algebro-geometric objects.

The rest of the paper explains in some detail how to perform various calculations that are used in the main algorithms.  We expect that a significant proportion of this material is known to the experts; for lack of a suitable reference, we wrote the material in some level of detail.  Sections \ref{sec:fund_algo_gal_rep} and \ref{fundamental algorithms for abelian varieties section} present a large number of algorithms for fundamental calculations involving Galois representations and abelian varieties.  In Section \ref{comp_pol_section}, we explain how to find all polarizations (up to isomorphism) of given degree on a given abelian variety.  In Section \ref{sec:int_homology} we show how to compute the singular homology of an abelian variety in terms of differentials and the complex-analytic uniformization; this is needed for various exact calculations involving homology, endomorphisms, and polarizations.  Sections \ref{sec:alg_dec_ql_alg} and \ref{alg_dec_num_fld_alg} contain algorithms for working with semisimple algebras, which we will apply to the endomorphism ring of an abelian variety.  We conclude with Section \ref{sec:mum_coord}, where we explain Mumford's parametrization of a certain moduli space of abelian varieties with level structure.

\subsection{Acknowledgements.}

We would like to thank Manjul Bhargava, Frank Calegari, Ignacio Darago, Matt Emerton, Hao Lee, David Benjamin Lim,  Bjorn Poonen, Michael Stoll, Andrew Sutherland, Akshay Venkatesh, John Voight, and Felipe Voloch for helpful conversations.

LA thanks the Society of Fellows. BL would like to acknowledge support from a grant from the US National Science Foundation (2101985).

Work on this paper was performed at Schloss Schney, the Mathematical Sciences Research Institute, the University of Wisconsin--Madison, Sierra-at-Tahoe, and Kirkwood.
We are grateful to these institutions for their hospitality.

\newpage

\section{A characterization of Galois representations attached to abelian varieties.\label{conjecture section}}

The purpose of this section is to state Conjecture \ref{the key conjecture} and to prove (Theorem \ref{the key implication}) that Conjecture \ref{the key conjecture} is a consequence of the Hodge, Tate, and Fontaine--Mazur conjectures. Note that Theorem \ref{the key implication} is closely related to the main result of \cite{PVZ16}.

\begin{conj}\label{the key conjecture}
Let $K/\Q$ be a number field. Let
\[ \rho \colon \Gal(\Qbar/K) \rightarrow \GL_{2g}(\Q_\ell) \]
be a Galois representation such that
\begin{itemize}
\item $\rho$ is unramified outside $S$ and primes above $(\ell)$,
\item for every prime $\pfrak\nmid (\ell)$ of $K$ not in $S$, $\tr(\rho(\Frob_\pfrak))\in \Z$,
\item and, at each place of $K$ above $\ell$, the representation $\rho$ is de Rham, with Hodge--Tate weights $0$ and $1$, each appearing with multiplicity $g$.
\end{itemize}

Then there exists a Galois extension $L/K$, a CM field $E$, a degree one prime $\lambda\vert (\ell)$ of $E$, and an abelian variety $B/L$ admitting $\mathfrak{o}_E\hookrightarrow \End_L(B)$ with good reduction outside $S$ and primes above $\ell$ such that $B\sim_L B^\sigma$ for all $\sigma\in \Gal(L/K)$ and moreover the $E_\lambda\cong \Q_\ell$-adic rational Tate module $V_\lambda(B) := T_\lambda(B)\otimes_{\Z_\ell} \Q_\ell$ is isomorphic to $\rho$ as a $\Gal(\Qbar/K)$-representation: $$V_\lambda(B)\cong \rho.$$

In particular letting $A := \Res_K^L(B)$ we conclude that $V_\ell(A)\cong \rho^{\oplus [E : \Q]\cdot [L : K]}$ as $\Gal(\Qbar/K)$-representations.
\end{conj}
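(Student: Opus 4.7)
The plan is a bootstrap from $\ell$-adic Galois representation to motive to abelian variety. Fontaine-Mazur provides step one: since $\rho$ is unramified outside $S \cup \{\ell\}$ and de Rham at places above $\ell$, it is geometric, so (conjecturally) its semisimplification is isomorphic to a $\Gal(\Qbar/K)$-stable subquotient of $H^i_{\mathrm{et}}(X_{\Qbar}, \Q_\ell)(j)$ for some smooth projective $X/K$ and integers $i, j$. The Hodge-Tate weights being $0$ and $1$ (each with multiplicity $g$) force $i - 2j = 1$: the underlying motive is pure of weight $1$, with Hodge structure of type $(1,0) + (0,1)$.

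Step two uses the Hodge and Tate conjectures. Under these, the category of Chow motives over $K$ is a semisimple Tannakian category, and any pure motive of weight $1$ is isomorphic (up to isogeny) to $H^1$ of an abelian variety --- this is the classical "motives of weight one are abelian" principle, made rigorous conditional on Hodge and Tate by matching the motivic Galois group with the Mumford-Tate group on the Hodge side and with the $\ell$-adic algebraic monodromy group on the Tate side, and invoking Riemann's theorem that any polarizable $\Q$-Hodge structure of weight $1$ is $H^1$ of a complex abelian variety. After possibly passing to a finite extension $K'/K$, we thus obtain an abelian variety $A/K'$ whose $H^1$ contains a $\Gal(\Qbar/K')$-stable isotypic summand with $\ell$-adic realization isomorphic to $\rho|_{\Gal(\Qbar/K')}$.

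Step three extracts the CM datum. Enlarge $K'$ to a finite Galois extension $L/K$ over which $\End_L(A)$ attains its maximum; the $\Q$-algebra $\End^0_L(A)$ is semisimple, and on the isotypic factor of $V_\ell(A_L)$ corresponding to $\rho$ we find the action of a CM field $E$. By enlarging $L$ further if necessary to ensure $\ell$ has a degree-one factor in $E$, we secure a prime $\lambda \mid \ell$ of $E$ with $E_\lambda \cong \Q_\ell$. Let $B/L$ be the sub-abelian variety of $A_L$ cut out by the $E$-isotypic projector, so $\dim B = g \cdot [E : \Q]$ and $\o_E \hookrightarrow \End_L(B)$; by construction, $V_\lambda(B) \cong \rho|_{\Gal(\Qbar/L)}$ as $\Gal(\Qbar/L)$-representations. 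The compatibility $B \sim_L B^\sigma$ for $\sigma \in \Gal(L/K)$ follows from Faltings' isogeny theorem, using that $V_\lambda(B^\sigma) \cong V_\lambda(B)^\sigma \cong \rho|_{\Gal(\Qbar/L)} \cong V_\lambda(B)$ (since $\rho$ is already defined on $\Gal(\Qbar/K)$). This compatibility promotes the $\Gal(\Qbar/L)$-equivariant identification $V_\lambda(B) \cong \rho|_{\Gal(\Qbar/L)}$ to a $\Gal(\Qbar/K)$-equivariant identification $V_\lambda(B) \cong \rho$.

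The main obstacle is step three. The Hodge and Tate conjectures are needed not only to descend the motive to an abelian variety in step two but also here, to ensure that $\End^0_L(A)$ is large enough to contain a suitable CM subfield $E$. Matching the abstract isotypic piece with a specific CM abelian variety $B$ having $V_\lambda(B) \cong \rho$ on the nose --- with the correct CM field and degree-one prime, and with $\Gal(\Qbar/K)$-equivariance rather than merely $\Gal(\Qbar/L)$-equivariance --- requires careful bookkeeping of the endomorphism algebra and the $\lambda$-adic decomposition. The statement and proof are closely related to those of \cite{PVZ16}, which establishes a more precise version of this result in more restricted settings.
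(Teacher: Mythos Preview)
Your outline follows the same broad arc as the paper: Fontaine--Mazur realizes $\rho$ inside $H^i_{\et}(X_{\Qbar},\Q_\ell)(j)$ for some smooth projective $X/K$; Hodge and Tate give semisimplicity and hence a projector $\pi$ onto $\rho$; the Hodge--Tate weight hypothesis forces the Betti realization to have type $\{(1,0),(0,1)\}$; Riemann then supplies an abelian variety over $\C$, and Hodge (applied to a product of that abelian variety with a power of $X$) descends the isogeny class to $\Qbar$ and thence to a number field $L$.

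Where your proposal leaves a genuine gap is your step three. The paper does \emph{not} extract $E$ from $\End^0_L(A)$. Rather, it observes that $\End_K(M)$ (with $M = H^i(X)(j)$) is a semisimple $\Q$-algebra, and every such algebra splits over the maximal CM extension of $\Q$; hence the projector $\pi\in\End_K(M)\otimes_\Q\Q_\ell$ already lies in $\End_K(M)\otimes_\Q E$ for a finite CM subfield $E\subset\Qbar$, equipped with a distinguished $\lambda\mid\ell$ from the chosen embedding $\Qbar\hookrightarrow\Qbar_\ell$. Your sentence ``on the isotypic factor \ldots\ we find the action of a CM field $E$'' does not pin down $E$ or explain why it is CM (the center of the relevant Albert division algebra could well be totally real). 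More seriously, you never invoke the hypothesis $\tr(\rho(\Frob_\pfrak))\in\Z$. In the paper this is the crux: once $\pi\in\End_K(M)\otimes_\Q E$, one forms the motive $\widetilde{M}$ with $\Q$-coefficients by restriction of scalars, and its $\ell$-adic realization is $\Res^E_\Q(\rho\otimes_\Q E)$; the integral-trace hypothesis together with Chebotarev forces this to equal $\rho^{\oplus[E:\Q]}$ rather than a nontrivial sum of Galois conjugates. Without this step there is no reason your $V_\lambda(B)$ should recover $\rho$ on the nose. Finally, ``enlarging $L$ further \ldots\ to ensure $\ell$ has a degree-one factor in $E$'' is a non-sequitur: the splitting behaviour of $\ell$ in $E$ is insensitive to $L$.
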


Standard examples of abelian surfaces with quaternionic multiplication demonstrate that we cannot hope to 
take the above $E = \mathbb{Q}$ in general (see e.g.\ Section $4$ of \cite{PVZ16}).

Now let us prove Theorem \ref{the key implication}.

\begin{proof}[Proof of Theorem \ref{the key implication}.]
By Fontaine-Mazur (\cite[Conjecture 1]{FM95}), there is a smooth projective variety $X/K$ and $i,j\in \Z$ such that $\rho$ is a subquotient of $H^i_\et(X/\Qbar,\Q_\ell)(j)$. By the Hodge and Tate conjectures \cite{moonen}, $H^i_\et(X/\Qbar,\Q_\ell)(j)$ is a semisimple $\Gal(\Qbar/K)$-representation, and so $\rho$ is in fact a summand thereof. Write $\pi\in \End_{\Q_\ell[\Gal(\Qbar/K)]}(H^i_\et(X/\Qbar,\Q_\ell)(j))$ for the corresponding projector. Let $M := H^i(X)(j)$, a pure motive with $\Q$-coefficients over $K$ --- thus $\End_{\Q_\ell[\Gal(\Qbar/K)]}(H^i_\et(X/\Qbar,\Q_\ell)(j))\simeq \End_K(M)\otimes_{\Q} \Q_\ell$. Thus $\End_K(M)$ is a semisimple $\Q$-algebra, whence it splits over $\Qbar$. In other words $\pi$ is in the image of $\End_K(M)\otimes_{\Q} \Qbar\inj \End_K(M)\otimes_{\Q} \Qbar_\ell\simeq \End_{\Qbar_\ell[\Gal(\Qbar/K)]}(H^i_\et(X/\Qbar,\Q_\ell)(j))$, where we have implicitly chosen a particular embedding $\Qbar\inj \Qbar_\ell$. Thus, because semisimple algebras over $\Q$ all split over the maximal CM extension of $\Q$, by the Tate conjecture it follows that there is a finite set $\mathscr{C}$ of $(\dim{X})$-dimensional correspondences $C\subseteq X\times X$ and $\alpha_C\in \Qbar$ lying in a CM field such that $\pi = \sum_{C\in \mathscr{C}} \alpha_C\cdot C_*$. Let $E := \Q(\{\alpha_C : C\in \mathscr{C}\})$, a (totally real or imaginary CM) number field because $\mathscr{C}$ is finite, equipped with a choice of embedding $E\inj \Qbar\inj \Qbar_\ell$ (and thus a chosen $\lambda\mid (\ell)$) from our preferred $\Qbar\inj \Qbar_\ell$ from before.

We conclude that $\pi$ is in the image of $\End_K(M)\otimes_{\Q} E$, and so $\rho\otimes_\Q E$ is the $\lambda$-adic realization of an object in the category of pure motives with $E$-coefficients over $K$. On restricting coefficients from $E$ to $\Q$ we conclude that $\Res_\Q^E(\rho\otimes_\Q E)$ is the $\ell$-adic realization of an object $\widetilde{M}$ in the category of pure motives with $\Q$-coefficients over $K$. But because $\rho$ has rational Frobenius traces it follows from Chebotarev that $\Res_\Q^E(\rho\otimes_\Q E)\cong \rho^{\oplus [E : \Q]}$.

So $\rho^{\oplus [E : \Q]}$ is the $\ell$-adic realization of $\widetilde{M}$. Now choose embeddings $K\inj \Qbar\inj \C$. Then the $\Q$-Hodge structure corresponding to (aka Betti realization of) $\widetilde{M}$ has weights $\{\underbrace{(1,0),\ldots,(1,0)}_{g\cdot [E : \Q]}, \underbrace{(0,1),\ldots,(0,1)}_{g\cdot [E : \Q]}\}$ by hypothesis on the weights of $\rho$. Moreover it is polarizable by our construction of $\widetilde{M}$ from the smooth projective $X/K$. It follows by Riemann that there is an isogeny class of abelian varieties over $\C$ with said $\Q$-Hodge structure, and it follows from the Hodge conjecture (applied to a product of one such abelian variety with a power of $X$) that said isogeny class is stable under the action of $\Aut(\C/\Qbar)$, whence it is an isogeny class of abelian varieties over $\Qbar$.

Let $B/\Qbar$ be one of the abelian varieties in said isogeny class, and $L/\Q$ a number field over which $B$ is defined (with an implicitly chosen embedding $L\inj \Qbar\inj \C$). Thus the $\Q$-Hodge structure of $B$ matches the Betti realization of $\widetilde{M}$. By enlarging $L/\Q$ if necessary, without loss of generality we may assume that $L/\Q$ is Galois and that the Hodge class providing the isomorphism between the $\Q$-Hodge structure of $B$ and that of $\widetilde{M}$ is invariant under $\Aut(\C/L)$ --- note that it then follows that $E\inj \End_L^0(B)$, and, by taking a Serre tensor product if necessary, without loss of generality that $\o_E\inj \End_L(B)$. By transporting said Hodge class along the comparison isomorphism between Betti and $\ell$-adic cohomology we find that $V_\ell(B)$ is isomorphic to the $\ell$-adic realization of $\widetilde{M}$ (aka $\rho^{\oplus [E : \Q]}$) as $\Gal(\Qbar/L)$-representations. Note that this immediately implies that $B\sim_L B^\sigma$ for all $\sigma\in \Gal(L/K)$.

Let then $A := \Res_K^L(B)$. It follows that $V_\ell(A)\iso \rho^{\oplus [E : \Q]\cdot [L : K]}$ as $\Gal(\Qbar/K)$-representations, so we are done.
\end{proof}

\newpage

\section{The main algorithms.}

In this section we present the two main algorithms: $T_{\text{Shafarevich}}$, which finds all abelian varieties of dimension $g$ over a number field $K$, having good reduction outside $S$, and equipped with a polarization of degree $d$; and $T_{\text{Mordell}}$, which finds all rational points on a curve of genus at least $2$ over a number field $K$.
The algorithms make use of subroutines which are given in later sections.

\subsection{$T_{\text{Shafarevich}}$.}\label{shafarevich algorithm section}

\begin{algo}[$T_{\text{Shafarevich}}$]\label{the shafarevich algorithm}
On input $(g, K, S, d)$,
\begin{enumerate}
\item Check that the input specifies $g, d \in \Z^+$, a number field $K/\Q$, and a finite set $S$ of primes of $K$.
\item Initialize $\giftemoji := \emptyset$.  

(This will be the output set of abelian varieties.)
\item Choose a prime $\ell\in \Z^+$ not dividing $\prod_{\pfrak\in S} \Nm\,{\pfrak}$.

(We're going to work with $\ell$-adic Galois representations.)
\item Follow the proof of Lemma \ref{faltings lemma} on input $(g, K, S, \ell)$ to compute $T$ as guaranteed in Lemma \ref{faltings lemma}.  

($T$ is a finite set of primes of $K$ such that a semisimple rank-$2g$ Galois representation is determined by its Frobenius traces at primes in $T$.)
\item Let $C := \left\{(a_\pfrak)_{\pfrak\in T} : a_\pfrak\in \Z, |a_\pfrak|\leq 2g\cdot \sqrt{\Nm\,{\pfrak}}\right\}$, and initialize $ \text{\c{C}} = C$.

(This $C$ is a list of all possible tuples of Frobenius traces, at primes in $T$, of a Galois representation coming from an abelian variety.  Over the course of the algorithm we will remove elements from $\text{\c{C}}$; $\text{\c{C}}$ is the set of elements of $C$ that ``have not been processed yet.'')
\item Initialize $k_{\text{max}} := 1, H := 1, N := 1$.
\item While $\text{\c{C}} \neq \emptyset$:
\label{day_night}
\begin{enumerate}
\item Increment $k_{\text{max}}\mapsto k_{\text{max}} + 1, H\mapsto H + 1, N\mapsto N + 1$. 

(This loop is going to be executed over increasing tuples $(k_{\text{max}}, H, N)$ such that $N$ is sufficiently large with respect to $k_{\text{max}}$ (see step \ref{N_k}). If the loop were allowed to run forever, $k_{\text{max}}, H, N$ would go to infinity.)
\item \label{N_k}
For all $k \leq k_{\text{max}}$, follow the proof of Lemma \ref{faltings lemma} on input $(k\cdot g, K, S, \ell)$, and let $\widetilde{T}$ be the union of the outputs over all $k \leq k_{\text{max}}$.  Increase $N$ if necessary to ensure that all primes $\pfrak \in \widetilde{T}$ satisfy $\Nm\,{\pfrak} < \frac{\ell^{2N}}{16g^2}$.

(A rank-$kg$ Galois representation is determined by its Frobenius traces at all primes in $\widetilde{T}$; 
here we take $N$ large enough to ensure that if the representation comes from an abelian variety, those Frobenius traces are determined by their value modulo $\ell^N$.)
\item Let $\text{\c{C}}_N := \emptyset$.

(This will be a list of all possible tuples of Frobenius traces at primes in the extended set $\widetilde{T}$.  Each tuple in $(a_\pfrak)_{\pfrak\in T}\in \text{\c{C}}$ will extend to at most one tuple $(a_\pfrak)_{\pfrak\in {\widetilde{T}}}\in \text{\c{C}}_N$.)
\item \label{loop_small_cand} For each $(a_\pfrak)_{\pfrak\in T}\in \text{\c{C}}$, do the following.
\begin{enumerate}
\item Follow the proof of Theorem \ref{computability of the necessary conditions modulo ell to the N} on input $(g, K, S, T, (a_\pfrak)_{\pfrak\in T}, \ell, N)$, to compute $(\Phi, (a_\pfrak)_{\pfrak\in \widetilde{T}})$ as guaranteed in Theorem \ref{computability of the necessary conditions modulo ell to the N}.

(In other words: the trace tuple $(a_\pfrak)_{\pfrak\in T}$ comes from at most one good Galois representation; 
we attempt to lift that Galois representation to a mod-$\ell^N$ representation with good Frobenius traces at all $\pfrak\in \widetilde{T}$. 

If we succeed, $(a_\pfrak)_{\pfrak\in \widetilde{T}}$ is the (necessarily unique) tuple of Frobenius traces at all $\pfrak\in \widetilde{T}$, though there may be multiple mod-$\ell^N$ Galois representations $\rho \in \Phi$ having that same trace tuple.)

\item If $\Phi = \emptyset$ then remove $(a_\pfrak)_{\pfrak\in T}$ from $\text{\c{C}}$.

(If the Galois representation does not lift mod $\ell^N$, or if the lift does not satisfy the Weil bound at all primes in $\widetilde{T}$, we remove the tuple $(a_\pfrak)_{\pfrak\in T}$ from consideration.)
\item If $\Phi \neq \emptyset$, add $(a_\pfrak)_{\pfrak\in \widetilde{T}}$ to $\text{\c{C}}_N$.

(If the Galois representation lifts, add the extended trace tuple to $\text{\c{C}}_N$.  At the end of Step \ref{loop_small_cand}, $\text{\c{C}}_N$ will contain one extended trace tuple for each trace tuple in $\text{\c{C}}$.)
\end{enumerate}

\item Perform a brute-force search, with parameter $H$ (\S \ref{sec:brute_force} and Principle \ref{prin:brute_force}), for abelian varieties $A/K$ in Mumford form (Definition \ref{defn:mumford_form}), of dimension $\dim{A}\leq k\cdot g$ with $g\mid \dim{A}$, which are of good reduction outside $S$.
Let $\bighardsign_H$ denote the resulting finite set of abelian varieties.

Note that Lemma \ref{char_delta_lemma} gives an algorithmically verifiable condition to test whether $A/K$ is an abelian variety in Mumford form,
while Theorem \ref{conductors are computable} allows one to test whether $A$ is of good reduction outside $S$.

\item For each $A\in \bighardsign_H$, do the following.
\begin{enumerate}
\item Let $(a_\pfrak)_{\pfrak\in \widetilde{T}}\in \text{\c{C}}_N$ be the unique element such that $\tr(\rho_{A,\ell}(\Frob_\pfrak))\equiv \left (  \frac{\dim A}{g} \right )\cdot a_\pfrak\pmod*{\ell^N}$ for all $\pfrak\in \widetilde{T}$.  (Use Lemma \ref{comp_frob_trace} to compute Frobenius traces on $A$.)

\item Follow the proofs of Proposition \ref{the property of being a kth power is computable} and Theorem \ref{isogeny classes are computable} to determine whether or not there is a $B/K$ with $\dim{B} = g$ such that $A\sim_K B^{\times k}$, and to compute one such $B/K$ if one exists.

\item If there is such a $B/K$, follow the proof of Theorem \ref{isogeny classes are computable} to compute all polarized abelian varieties $(B', \mathcal{L})/K$ such that $B'\sim_K B$ and $\mathcal{L}$ is a polarization of degree $d$ on $B'$, and add all such $(B', \mathcal{L})$ to $\giftemoji$.

(In other words: using the Masser--Wüstholz isogeny estimates (\cite{MW}, \cite{Bost96}), determine all abelian varieties in the isogeny class of $B$.)
\item Remove $(a_\pfrak)_{\pfrak\in T}$ from $\text{\c{C}}$ if it has not been removed already.

(At this point we have found a single abelian variety $A$ whose Galois representation ``explains'' the tuple $(a_\pfrak)_{\pfrak\in T}$, and used $A$ to determine all abelian varieties with these Frobenius traces; we now remove the tuple $(a_\pfrak)_{\pfrak\in T}$ from consideration.)
\end{enumerate}
\end{enumerate}
\item Output $\giftemoji$.
\end{enumerate}
\end{algo}

\subsection{$T_{\text{Mordell}}$.}\label{mordell algorithm section}

\begin{algo}[$T_{\text{Mordell}}$]\label{the mordell algorithm}

On input $(K, C/K)$, 
\begin{enumerate}
\item Let ${\color{magenta} \text{\large \<ڭ>}} := \emptyset$.
\item Follow the proof of Theorem \ref{computability of a nonisotrivial family} on input $(K, C/K)$ to compute a nonisotrivial family $A \rightarrow C$ of abelian varieties parametrized by $C$ (with some projective embedding).

Let $d$ be the degree of the polarization of the given projective embedding of $A$.
\item Apply Theorem \ref{computability of an integral model} to $(K, C, \mathcal{A} \rightarrow C)$ to compute a finite set $S$ of primes of $K$, such that for every $x \in C(K)$, the fiber $\mathcal{A}_x$ has good reduction at all primes outside of $S$.
\item Let $g$ be the relative dimension of the abelian scheme $\mathcal{A} \rightarrow C$, and apply Algorithm \ref{the shafarevich algorithm} on input $(g, K, S)$ to determine (up to isomorphism) the set $\Sigma$ of all abelian varieties $A$ of dimension $g$ over $K$ having good reduction outside $S$.
\item For each abelian variety $A \in \Sigma$:
\begin{enumerate}
\item Run the algorithm of Theorem \ref{the set of polarizations of given multidegree is computable} to determine all polarizations $\mathcal{L}$ of degree $d$ on $A$.
\item For each such $\mathcal{L}$:
\begin{enumerate}
\item 
Apply Theorem \ref{the subfamily of abelian varieties in a family isomorphic to a given one is computable} to determine all $s \in C(\Qbar)$ such that $\mathcal{A}_s \cong A$ as polarized abelian varieties over $\Qbar$.
\item For each such $s$:
\begin{enumerate}
    \item Determine whether $s \in C(K)$.  If so, add $s$ to the set ${\color{magenta} \text{\large \<ڭ>}}$.
\end{enumerate}
\end{enumerate}
\end{enumerate}
\item Return ${\color{magenta} \text{\large \<ڭ>}}$.

\end{enumerate}
\end{algo}


\newpage

\section{Proofs of Theorems  \ref{shafarevich main theorem} and \ref{mordell main theorem}.}
\label{sec:algo_proofs}

We now prove that the two algorithms just presented achieve what we claim.

\begin{proof}[Proof of Theorem \ref{shafarevich main theorem}.]
We claim that the Turing machine specified in Section \ref{shafarevich algorithm section} has the desired properties.

First, suppose it terminates on input $(g, K, S)$. We claim that its output is then exactly the set of $B/K$ with $\dim{B} = g$ which have good reduction outside $S$.

So let $\widetilde{B}/K$ be in the output, and let us show that it has good reduction outside $S$. Because, in the notation of Algorithm \ref{the shafarevich algorithm}, $\widetilde{B}^{\times k}\sim_K A$ with $A\in \bighardsign_N$ for some $N\in \Z^+$ and each $A/K$ has good reduction outside $S$, it follows that $\widetilde{B}/K$ also has good reduction outside $S$.

Now let $\widetilde{B}/K$ with $\dim{\widetilde{B}} = g$ have good reduction outside $S$, and let us show that it is the output. Certainly, in the notation of Algorithm \ref{the shafarevich algorithm}, $(\tr(\rho_{B,\ell}(\Frob_\pfrak))_{\pfrak\in T}\in \text{\c{C}}$, and it is never removed in Step $7.\text{(c)}.ii$ because $\rho_{\widetilde{B},\ell}\pmod*{\ell^N}\in \Phi$ as defined in Step $7.(c).i$. So, because $T_{\text{Shafarevich}}$ terminates on input $(g, K, S)$, it must be removed in Step $7.(f).iv$, which is to say that at the corresponding value of $N$ there is an $A\in \bighardsign$ such that $\tr(\rho_{A,\ell}(\Frob_\pfrak)) = k\cdot \tr(\rho_{\widetilde{B},\ell}(\Frob_\pfrak))$ for all $\pfrak\in \widetilde{T}$, whence by Lemma \ref{faltings lemma} $A\sim_K \widetilde{B}^{\times k}$. Since $\widetilde{B}^{\times k}\sim_K B^{\times k}$ implies that $\widetilde{B}\sim_K B$, it follows that $\widetilde{B}/K$ is then added to the output in Step $7.\text{(f)}.iv$, so indeed it occurs in the output as desired.

So the output is correct provided $T_{\text{Shafarevich}}$ terminates.

Now let us show that Conjecture \ref{the key conjecture} implies that $T_{\text{Shafarevich}}$ always terminates.

What we must show is that each $(a_\pfrak)_{\pfrak\in T}\in \text{\c{C}}$ as computed in Step $5$ is eventually removed, or in other words that each $(a_\pfrak)_{\pfrak\in T}\in \text{\c{C}}$ which is never removed in Step $7.\text{(c)}.ii$ is eventually removed in Step $7.\text{(f)}.iv$.

So let $(a_\pfrak)_{\pfrak\in T}\in \text{\c{C}}$ be such that it is never removed in Step $7.\text{(c)}.ii$. Thus for all $N\in \Z^+$ there is a $\rho_N: \Gal(\Qbar/K)\to \GL_{2g}(\Z/\ell^N)$ such that
\begin{itemize}
\item $\rho_N$ is unramified outside $S$ and the primes above $(\ell)$,
\item for all $\pfrak\in T$, $\tr(\rho_N(\Frob_\pfrak))\equiv a_\pfrak\pmod*{\ell^N}$,
\item $\det{\rho}\equiv \chi_\ell^g\pmod*{\ell^N}$,
\item for all $\lambda\mid (\ell)$, the Galois module corresponding to $\rho_N$ prolongs to a finite flat group scheme over $\o_{K,\lambda}$,
\item and, for all primes $\pfrak\subseteq \o_K$ not in $S$ and prime to $(\ell)$ with $\Nm\,{\pfrak} < \frac{\ell^{2N}}{16g^2}$, there is a unique $a_\pfrak\in \Z$ with $|a_\pfrak|\leq 2g\cdot \sqrt{\Nm\,{\pfrak}}$ and such that $\tr(\rho_N(\Frob_\pfrak))\equiv a_\pfrak\pmod*{\ell^N}$.
\end{itemize}

By K\HH{o}nig's Lemma\footnote{(There are only finitely many representations $\Gal(\Qbar/K)\to \GL_{2g}(\Z/\ell^N)$ which are unramified outside $S$ and the primes above $(\ell)$ by Hermite-Minkowski.)} it follows that we may assume without loss of generality that $\rho_{N+1}\pmod*{\ell^N} = \rho_N$. Let then $\rho := \varprojlim \rho_N$. Thus $\rho: \Gal(\Qbar/K)\to \GL_{2g}(\Z_\ell)$ is such that
\begin{itemize}
\item $\rho$ is unramified outside $S$ and the primes above $(\ell)$,
\item for all $\pfrak\in T$, $\tr(\rho(\Frob_\pfrak)) = a_\pfrak$,
\item $\det{\rho} = \chi_\ell^g$,
\item for all $\lambda\mid (\ell)$, the $\ell$-divisible group corresponding to $\rho$ prolongs to an $\ell$-divisible group over $\o_{K,\lambda}$,
\item and, for all $\pfrak$ not in $S$ and prime to $(\ell)$, $\tr(\rho(\Frob_\pfrak))\in \Z$.
\end{itemize}

The first property amounts to the statement that $\rho$ factors through a map $G_{K,S,\ell}\to \GL_{2g}(\Z_\ell)$. The fourth property amounts to the statement that the restriction of $\rho$ to an inertia group above $(\ell)$ is crystalline with all Hodge-Tate weights in $\{0, 1\}$, from which, via $\det{\rho} = \chi_\ell^g$, we deduce the second hypothesis of Conjecture \ref{the key conjecture}. The first hypothesis of Conjecture \ref{the key conjecture} is of course the fifth property.

Therefore by Conjecture \ref{the key conjecture} it follows that there is an abelian variety $A/K$ with good reduction outside $S$ such that $\rho_{A,\ell}\iso \rho^{\oplus \frac{\dim{A}}{g}}$ as $\Gal(\Qbar/K)$-representations. Therefore it must be the case that $(a_\pfrak)_{\pfrak\in T}$ is eventually selected in Step $7.\text{(f)}.i$. But then it is removed in Step $7.\text{(f)}.iv$, as desired.
\end{proof}

\begin{proof}[Proof of Theorem \ref{mordell main theorem}.]
Given Theorem \ref{shafarevich main theorem} the second part of Theorem \ref{mordell main theorem} is evident. So let us prove the first part.

Thus suppose $T_{\text{Mordell}}$ terminates on input $(K, C/K)$. Because Step $4.\text{(b)}$ only adds $K$-points of $C$ to the output it follows that the output is a subset of $C(K)$. So let $P\in C(K)$, whence our claim is that $P$ is in the output.

But because $\mathcal{C}/\o_{K,S}$ is proper, $\mathcal{C}(\o_{K,S}) = \mathcal{C}(K) = C(K)$, so that the fibre of $\mathcal{A}\to \mathcal{C}$ over $P\in C(K) = \mathcal{C}(\o_{K,S})$ is an $\o_{K,S}$-integral model of a $g$-dimensional abelian variety $A_P/K$ with good reduction outside $S$. But said abelian variety must then occur in the output of $T_{\text{Shafarevich}}$ on input $(g, K, S)$, whence the fibre of the family $(\mathcal{A}\to \mathcal{C}, \ldots)$ of Mumford data above $P\in C(K) = \mathcal{C}(\o_{K,S})$ must occur in $\mathcal{E}$ when computed in the Step $4.\text{(a)}$ corresponding to $A_P/K$. Hence $P\in C(K)$ must be added to the output in the corresponding Step $4.\text{(b)}$, and we conclude.
\end{proof}

\newpage

\section{Some remarks on computability}
\label{sec:computability}

\subsection{Byte representations.}
\label{byte_reps}

In this paper we consider various types of arithmetic-geometric object that can be represented by a finite byte string.  For example:
\begin{itemize}
    \item an element of a number field $K$,
    \item a complex embedding of $K$,
    \item a projective variety over $K$,
    \item a morphism of projective varieties over $K$,
\end{itemize}
and so forth.

Any computer implementation will require some sort of standardized format for the byte string representing each such type of object that arises in the algorithm.
Where it is clear that such a format can be chosen (in the cases above, for instance),
we will not specify the format to be used.

On the other hand, certain types of objects, by nature, do \emph{not} in general admit descriptions by finite byte strings.  For example:
\begin{itemize}
    \item a complex number
    \item a variety over $\mathbb{C}$
    \item an $\ell$-adic Galois representation $\Gal(\Qbar/K)\to \GL_{2g}(\Z_\ell)$.
\end{itemize}
One cannot ask to compute such an object exactly, only to approximate it to some desired precision.

The case of Galois representations deserves special mention.  An $\ell$-adic Galois representation $\Gal(\Qbar/K)\to \GL_{2g}(\Z_\ell)$ is an inverse limit of finite (mod-$\ell^N$) representations
\[ \Gal(\Qbar/K)\to \GL_{2g}(\Z/\ell^N). \]
Any such mod-$\ell^N$ representation factors through some finite quotient group $\Gal(L / K)$, with $L/K$ some finite extension.
It follows that a mod-$\ell^N$ Galois representation can be represented by a finite byte string as follows:
First, specify the number field $L$ and give names to the finitely many elements of the Galois group $\Gal(L/K)$; then specify the function
\[ \Gal(L/K)\to \GL_{2g}(\Z/\ell^N) \]
(which is now a function from one finite set to another).
Our study of Galois representations is made possible by this ``$\ell$-adic approximation'' of $\ell$-adic representations by finitary objects.

\subsection{Abelian varieties and polarizations.}

Throughout the paper, an abelian variety over a number field $K$ will be presented as a projective variety $A$ over $K$, plus the usual structure morphisms (multiplication $A \times A \rightarrow A$, inverse $A \rightarrow A$ and identity $\operatorname{Spec} K \rightarrow A$).
Any such object is automatically a polarized abelian variety as well -- simply take the polarization to be the Neron--Severi class of $\mathcal{O}(1)$ in the given projective embedding.

For explicit computation with polarizations (e.g.\ as in Section \ref{comp_pol_section}), 
note that we can compute
the Chern class of a polarization in terms of an integral basis for $H^1(A)$, by Lemma \ref{find_polarization_in_H2}.

\subsection{The brute-force search principle.}
\label{sec:brute_force}

We will use the ``brute-force search principle'' repeatedly.

\begin{prin}
\label{prin:brute_force}
Suppose we wish to find an $O$ (some type of object) satisfying property $P$.  Assume:
\begin{itemize}
    \item Objects $O$ are represented by finite byte strings.
    \item Given any finite byte string, one can determine algorithmically whether it represents an object $O$ satisfying property $P$ (assuming formatting conventions have been chosen as in \S \ref{byte_reps}).
    \item At least one object $O$ with property $P$ is known to exist.
\end{itemize}
Then one can find an object $O$ with property $P$, by the following silly algorithm: iterate over all finite byte strings (in increasing order of length, say). Test each string to determine whether it represents the object sought, and halt when a string passes the test.
\end{prin}

In some situations we will say ``run brute-force search with parameter $H$'' (for some positive integer $H$) to mean ``search over all byte strings of length at most $H$''.  This has the following features:
\begin{itemize}
    \item For any fixed $H$, the brute-force search with parameter $H$ will terminate in finite time (though it may or may not find the object sought).
    \item If the search is run with larger and larger values of $H$, eventually (for sufficiently large $H$) the search will find the object.
\end{itemize}
This is useful in ``day-and-night searches'', where we wish to interleave a brute-force search with some other type of calculation.

See, for example, Step \ref{day_night} of Algorithm \ref{the shafarevich algorithm}, 
where a search for abelian varieties ``with parameter $H$'' is interleaved with a search for Galois representations.
(The search for Galois representations will give an upper bound on the number of abelian varieties that can arise.)

Some other places brute-force search is used:
\begin{itemize}
\item Lemma \ref{faltings lemma}, to find a set of primes satisfying the conclusion of Chebotarev's density theorem.

\item Lemma \ref{compute_torsion}, to find the $N$-torsion points on an abelian variety.

\item In computing the endomorphism ring of an abelian variety (Lemma \ref{algorithm computing the endomorphism ring}), to find endomorphisms.

\item Theorem \ref{computability of a nonisotrivial family}, to find a nonisotrivial family of abelian varieties over a given curve as base.

\item Lemma \ref{algo_mumf_family}, to find a Mumford form for a given abelian scheme.
\end{itemize}

\begin{rmk}
    In several of the above cases, there are more efficient algorithms than brute-force search.
    We do not attempt to describe them.
    
    Many of these algorithmic questions have been addressed in the literature; we hope this work will inspire further progress on explicit computational questions in arithmetic geometry.
\end{rmk}

\newpage

\section{Fundamental algorithms for Galois representations.}
\label{sec:fund_algo_gal_rep}

\subsection{Search for number fields.}

\begin{lem}
\label{hunter}
There is a finite-time algorithm which, on input $(d, K, S)$ with $d\in \Z^+$, $K/\Q$ a number field, and $S$ a finite set of places of $K$, outputs the set of extensions of $K$ of degree at most $d$ which are unramified at all primes outside $S$.
\end{lem}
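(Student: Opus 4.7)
The plan is a classical Hermite--Minkowski-style search, made explicit via a Hunter-type bound and Dedekind's factorization algorithm.

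First I would bound the relative discriminant. If $L/K$ has degree $n \leq d$ and is unramified outside $S$, then $\mathfrak{d}_{L/K}$ is supported on primes $\mathfrak{P} \subseteq \mathcal{O}_L$ lying above primes of $S$. The exponent of each such $\mathfrak{P}$ in the different is at most $e_\mathfrak{P} - 1 + e_\mathfrak{P} \cdot v_p(e_\mathfrak{P} \cdot d)$ by the standard bound on the valuation of the different (Serre, \emph{Local Fields}, Ch.~III), and $e_\mathfrak{P} \leq n \leq d$. Since $n$, the rational primes below $S$, and the ramification indices are all bounded in terms of the input, I get an explicit integer $D = D(d,K,S)$ such that the absolute norm $|N_{K/\Q}(\mathfrak{d}_{L/K})|$ divides $D$.

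Second I would convert this into a bound on coefficients of a primitive element. Composing with $K$ and using that $[L:\Q] \leq d \cdot [K:\Q]$, the absolute discriminant $|\mathrm{disc}(L/\Q)|$ is likewise bounded. By Hunter's theorem (see e.g.\ Cohen, \emph{A Course in Computational Algebraic Number Theory}, Thm.~9.3.1), $L$ admits an algebraic integer $\alpha$, with $\Q(\alpha)$ of index dividing $|L:\Q(\alpha)|$ inside $L$, whose conjugates over $\Q$ satisfy an explicit bound in terms of $[L:\Q]$ and $|\mathrm{disc}(L/\Q)|$; iterating over all subfields and translating $\alpha$ by small integers one obtains a primitive element for $L$ over $K$ whose minimal polynomial $f \in \mathcal{O}_K[x]$ has coefficients of bounded height. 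This gives a finite, explicitly enumerable list of candidate minimal polynomials $f$.

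Third I would test each candidate. For each candidate $f \in \mathcal{O}_K[x]$ of degree $\leq d$: check irreducibility over $K$ by a standard factorization algorithm (Trager/van Hoeij), form the stem field $L = K[x]/(f)$, compute $\mathcal{O}_L$ by the Round-2 or Round-4 algorithm, factor the relative discriminant $\mathfrak{d}_{L/K}$ into prime ideals of $\mathcal{O}_K$ (effective via Dedekind, since $\mathcal{O}_K$ admits an effective prime-ideal factorization algorithm on input $S$), and retain $L$ iff every prime factor lies in $S$. Finally, deduplicate the resulting finite list up to $K$-isomorphism by checking, for each pair $(L_1, L_2)$, whether $f_2$ has a root in $L_1$.

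The main obstacle is purely bookkeeping-flavored: getting an explicit, honest bound on the different exponents in the wildly ramified case (where $p \mid e_\mathfrak{P}$) and then threading this through Hunter's inequality. None of this is deep, but it requires care; everything downstream (factoring in $\mathcal{O}_K[x]$, computing $\mathcal{O}_L$, factoring ideals, testing $K$-isomorphism) is completely standard and implemented in, e.g., PARI/GP or Magma.
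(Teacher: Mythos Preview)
Your proposal is correct and follows essentially the same approach as the paper: bound the relative discriminant using the ramification hypothesis, then run a Hunter-type search over candidate minimal polynomials. The paper's proof is simply a two-sentence sketch citing \cite[\S 9.3]{Coh00} for the targeted Hunter search, whereas you have spelled out the details (different bounds, absolute discriminant, candidate enumeration, testing, deduplication); your exposition of Step~2 is slightly muddled---the relative version of Hunter's theorem directly bounds a generator of $L/K$ rather than requiring the subfield-iteration you describe---but this is cosmetic and the overall argument is sound.
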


\begin{proof}
The ramification hypothesis implies that the relative discriminant of $L/K$
is bounded by a constant $D$ that can be effectively computed.

Given the discriminant bound, one can find all such field extensions $L/K$ by a targeted Hunter search \cite[\S 9.3]{Coh00}
\end{proof}

\subsection{Faltings' Lemma.}

\begin{defn}
\label{determines_trace_functions}
Let $d\in \Z^+$, and let $\ell$ be a prime. Let $K/\Q$ be a number field and $S$ a finite set of places of $K$. 

Let $T$ be a set of primes of $K$.  We say that $T$ \emph{determines trace functions} (with parameters $(K, S, d, \ell)$) if it satisfies the following condition:
\begin{quote}
Let $R$ be either $\Q_{\ell}$, $\Z_{\ell}$, or $\Z / \ell^N$ for some $N$.  If two Galois representations
\[
\rho, \rho' \colon \Gal(\Qbar/K)\to \GL_d(R))
\]
satisfy $\tr(\rho(\Frob_\pfrak)) = \tr(\rho'(\Frob_\pfrak))$ for all $\pfrak\in T$, then $\tr\circ \rho = \tr\circ \rho'$ on $\Gal(\Qbar/K)$.
\end{quote}
\end{defn}

\begin{lem}[Faltings, cf.\ e.g.\ \cite{levent-thesis, levent-hypergeometric-paper}]\label{faltings lemma}
Let $d\in \Z^+$. 
Let $K/\Q$ be a number field and $S$ a finite set of places of $K$. 
One can compute a finite set $T_{K,S,d,\ell}$ of primes of $K$ that are prime to $\ell$, 
which is disjoint from $S$, such that for all Galois extensions $L/K$ of degree $[L:K]\leq {\ell}^{4d^2}$ that are unramified outside $S$ and the primes dividing $\ell$, the map $T_{K,S,d,\ell}\to \Gal(L/K)/\text{conj.}$ via $\pfrak\mapsto \Frob_\pfrak$ is surjective. 

Such a $T$ determines trace functions with parameters $(K, S, d, \ell)$.
\end{lem}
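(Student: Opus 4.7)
The lemma has two components. For constructing $T$, the plan is to enumerate all relevant Galois extensions and then hit each of their conjugacy classes with Frobenius. Concretely, apply Lemma \ref{hunter} to list all Galois extensions $L/K$ of degree at most $\ell^{4d^2}$ that are unramified outside $S$ and the primes above $\ell$; by Hermite--Minkowski this list is finite. For each such $L$, compute $\Gal(L/K)$ and its conjugacy classes, and for each class find (by brute-force search over primes, using Principle \ref{prin:brute_force}) a prime $\pfrak$ of $K$, outside $S$ and coprime to $\ell$, with $\Frob_\pfrak$ in that class---such a $\pfrak$ exists by Chebotarev's density theorem. Collect all these primes into $T_{K,S,d,\ell}$.

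For the ``determines trace functions'' claim I would run a Faltings--Serre argument. Given $\rho, \rho' \colon \Gal(\Qbar/K) \to \GL_d(R)$ with matching Frobenius traces on $T$ (where $\rho, \rho'$ may be assumed unramified outside $S \cup \{\pfrak \mid \ell\}$, so that $\Frob_\pfrak$ is defined for $\pfrak \in T$), the cases $R = \Q_\ell$ and $R = \Z_\ell$ reduce to $R = \Z/\ell^N$: for $\Q_\ell$, use compactness of $\Gal(\Qbar/K)$ to conjugate into $\GL_d(\Z_\ell)$, and for $\Z_\ell$ pass to the inverse limit. Assume then $R = \Z/\ell^N$. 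Set $\sigma := \rho \oplus \rho' \colon \Gal(\Qbar/K) \to \GL_{2d}(R)$, and let $B \subseteq \Mat_d(R) \times \Mat_d(R)$ be the $R$-submodule spanned by $\sigma(\Gal(\Qbar/K))$; since $\sigma$ is a homomorphism, $B$ is in fact an $R$-subalgebra. The trace-difference $t(A, A') := \tr A - \tr A'$ is an $R$-linear class function on $B$ (under conjugation by $\sigma(\Gal(\Qbar/K))$), and it suffices to show $t \equiv 0$ on $B$, since this gives $\tr \rho = \tr \rho'$ on $\Gal(\Qbar/K)$.

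The mod-$\ell$ image $\bar H := \bar\sigma(\Gal(\Qbar/K)) \subseteq \GL_{2d}(\F_\ell)$ has order at most $|\GL_{2d}(\F_\ell)| < \ell^{4d^2}$, and its fixed field is a Galois extension $L/K$ of degree $< \ell^{4d^2}$ unramified outside $S \cup \{\pfrak \mid \ell\}$---hence one of those enumerated during the construction of $T$. So $\{\bar\sigma(\Frob_\pfrak)\}_{\pfrak \in T}$ contains a representative of each conjugacy class of $\bar H$. The reduction $\bar t \colon \bar B \to \F_\ell$ is an $\F_\ell$-linear class function vanishing on those representatives, hence on all of $\bar H$ (by the class-function property), and then on all of $\bar B$ (by $\F_\ell$-linearity, since $\bar H$ spans $\bar B$ by construction of $B$). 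For the inductive lift, if $t \equiv 0 \pmod{\ell^n}$ on $B$, the rescaled function $\delta := \ell^{-n} t \pmod \ell$ descends to an $\F_\ell$-linear class function on $\bar B$ still vanishing on the Frobenius classes from $T$; the same argument forces $\delta \equiv 0$, so $t \equiv 0 \pmod{\ell^{n+1}}$. After $N$ steps, $t = 0$ on $B$.

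I expect the main subtlety---where a naive attempt is most likely to go wrong---is the observation that a set of conjugacy-class representatives generally does \emph{not} $\F_\ell$-span the algebra $\bar B$, so one cannot directly conclude ``$t$ vanishes on generators''. The trick is to first use the class-function property of $t$ (and of $\delta$ at each stage) to propagate vanishing from chosen representatives to all of $\bar H$, and only afterwards invoke linearity to extend from $\bar H$ to its $\F_\ell$-span $\bar B$. Everything else---the discriminant bound feeding into Lemma \ref{hunter}, the brute-force Chebotarev search, the passage between $\Z/\ell^N$, $\Z_\ell$, and $\Q_\ell$ coefficients---is routine.
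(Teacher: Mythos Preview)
Your construction of $T$ is fine, and your overall plan for ``determines trace functions''---work in the $R$-subalgebra $B \subseteq M_d(R)^{\times 2}$ spanned by $\sigma(\Gal(\Qbar/K))$ and kill the conjugation-invariant linear functional $t$---is exactly the Faltings argument the paper invokes.

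The gap is in the inductive step. You set $\bar H := \bar\sigma(\Gal(\Qbar/K)) \subseteq \GL_{2d}(\F_\ell)$ and take $\bar B$ to be its $\F_\ell$-span, i.e.\ the \emph{image} of $B$ under reduction mod~$\ell$. For $n = 0$ this is harmless, since $t$ sends $B \cap \ell M_d(R)^{\times 2}$ into $\ell R$. But for $n \geq 1$ the rescaled $\delta = \ell^{-n} t \pmod \ell$ does \emph{not} factor through that image: from $b \in B \cap \ell M_d(R)^{\times 2}$ you only get $t(b) \in \ell R$, not $t(b) \in \ell^{n+1} R$. (Take $d = 1$, $R = \Z/\ell^2$, $\rho$ a character of order $\ell$, $\rho'$ trivial: then $\bar H$ is trivial, so your argument only ever invokes the extension $L = K$, yet $t \not\equiv 0$.) What $\delta$ \emph{does} factor through is the abstract quotient $B/\ell B$, since $b \in \ell B$ gives $t(b) \in \ell \cdot t(B) \subseteq \ell^{n+1} R$ by the inductive hypothesis. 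The fix is therefore to replace $\bar H$ throughout by the image of $\Gal(\Qbar/K)$ in $(B/\ell B)^\times$; as $B \subseteq R^{2d^2}$ has at most $2d^2$ cyclic summands one still has $|B/\ell B| \leq \ell^{2d^2} \leq \ell^{4d^2}$, and with this change your class-function argument goes through verbatim. The paper sidesteps the induction by showing in one stroke, via Nakayama, that the $R$-span of the Frobenius conjugacy classes $\sigma(\Frob_\pfrak)$ already equals $B$---and Nakayama is automatically a reduction to $B/\ell B$, not to the image of $B$ in $M_d(\F_\ell)^{\times 2}$.
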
\noindent

\begin{proof}
The desired set of primes is guaranteed to exist by the Chebotarev density theorem, so they it be found by brute-force search (see Section \ref{sec:brute_force}).

(Alternatively, the usual explicit form of the Chebotarev density theorem \cite{Chebotarev1, Chebotarev2} gives\footnote{Each such Galois extension $L/K$ has explicitly bounded discriminant, and Frobenii of norm bounded in terms of said discriminant represent all conjugacy classes of $\Gal(L/K)$ --- see e.g.\ Theorem $1.1$ of Lagarias-Montgomery-Odlyzko's \cite{lagarias-montgomery-odlyzko} for one such bound.} an explicit $T_{K,S,d,\ell}$ satisfying the above --- thus e.g.\ we may take $T_{K,S,d,\ell}$ satisfying the above and so that all $\pfrak\in T_{K,S,d,\ell}$ satisfy $\Nm\,{\pfrak}\ll_{K,S,d,N} 1$, where the implied constant is explicit.)

This $T_{K,S,d,\ell}$ has the required properties by \cite[proof of Satz 5]{Fal83}; we recall the argument here.

We may assume the coefficient ring $R$ is either $\Z_{\ell}$ or $\Z / \ell^N$.

It suffices to show that the $R$-span of $\im{(\rho\oplus \rho': \Gal(\Qbar/K)\to \GL_d(R)^{\times 2})}$ inside $M_d(R)^{\times 2}$ is in fact spanned by $\bigcup_{\pfrak\in T_{K,S,d,N}} (\rho\oplus \rho')(\Frob_\pfrak)$, where $\Frob_\pfrak\subseteq \Gal(\Qbar/K)$ is the Frobenius conjugacy class of $\pfrak$. To do this one uses Nakayama to reduce mod $\ell$, after which it follows from the hypothesis on $T_{K,S,d,N}$.
\end{proof}

\subsection{Computability of the property of finite flatness.}

\begin{lem}
\label{test_finite_flat}
There is a finite-time algorithm that, on input $(d, K, \lambda, \ell, N, \rho)$ with $K/\Q$ a number field, $\lambda\subseteq \o_K$ a prime of $K$, $\ell\in \Z^+$ a prime with $\lambda\mid (\ell)$, $N\in \Z^+$, and $\rho: \Gal(\Qbar/K)\to \GL_d(\Z/\ell^N)$ continuous, returns true if and only if the Galois module corresponding to $\rho$ prolongs to a finite flat group scheme over $\o_{K,\lambda}$.
\end{lem}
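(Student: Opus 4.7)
The plan is to localize the problem at $\lambda$ and then enumerate candidate finite flat models using the Kisin (Breuil--Kisin) classification of finite flat $\o_{K,\lambda}$-group schemes killed by $\ell^N$.

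First I would restrict $\rho$ to a decomposition group at $\lambda$. Since $\rho$ has finite image, it factors through $\Gal(L/K)$ for some finite Galois extension $L/K$ that is computable from the byte-level representation of $\rho$ (cf.\ \S\ref{byte_reps}). Computing a prime $w$ of $L$ above $\lambda$ (a finite computation in the finite ring $\o_L/\lambda \o_L$) extracts a decomposition subgroup $D \le \Gal(L/K)$, whence the restricted Galois module $M_\lambda := (\Z/\ell^N)^d$ regarded as a representation of the finite group $\Gal(L_w/K_\lambda) \cong D$. The question to decide then becomes whether $M_\lambda$ arises as the generic fiber of a finite flat commutative $\o_{K,\lambda}$-group scheme of order $\ell^{Nd}$.

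Next I would invoke Kisin's classification. Writing $k$ for the residue field of $\o_{K,\lambda}$, $\pi$ for a uniformizer, $E(u)$ for its Eisenstein polynomial over $W(k)$, and $\mathfrak{S}_N := (W(k)/\ell^N)[[u]]$: finite flat $\o_{K,\lambda}$-group schemes killed by $\ell^N$ correspond antiequivalently to finite free $\mathfrak{S}_N$-modules $\mathfrak{M}$ of rank $d$ equipped with a $\phi$-semilinear endomorphism of height at most $1$ (i.e., cokernel killed by $E(u)$); one recovers the $G_{K_\lambda}$-module from $(\mathfrak{M}, \phi)$ by a concrete calculation involving base change to $W(\bar k)[[u]]/\ell^N$ and taking $\phi$-fixed points. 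The crucial finiteness input is that there is an effectively computable bound $m = m(d, N, e)$, depending on $d$, $N$, and the absolute ramification index $e$ of $\lambda$, such that two Frobenius matrices $A, A' \in \Mat_d(\mathfrak{S}_N)$ agreeing modulo $u^m$ give the same Galois representation. The algorithm then enumerates the finite set of matrices $A \in \Mat_d(\mathfrak{S}_N/u^m)$ satisfying the height condition, computes the associated Galois module for each, and tests isomorphism against $M_\lambda$ as a $G_{K_\lambda}$-module, returning true iff some matrix passes.

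The hard part will be extracting the effective bound $m = m(d, N, e)$. In the unramified Fontaine--Laffaille range this is trivial since there is no $u$-variable to truncate; in the general ramified case the bound is implicit in the proof of Kisin's theorem and requires some care to extract, and for $\ell = 2$ one additionally needs to substitute the appropriate variant of the theory (due to Kim or T.\ Liu). This is a purely technical issue of effectivity in known theorems, not one of principle.
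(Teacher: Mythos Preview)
Your approach is genuinely different from the paper's, and the comparison is instructive.

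The paper avoids $p$-adic Hodge theory entirely. After restricting to the decomposition group (as you do), it observes that the Galois module corresponds to an explicit finite \'etale Hopf algebra $A$ over $K_\lambda$, and that a finite flat prolongation is exactly a Hopf order $\mathcal{O}_A \subseteq A$: an $\o_{K,\lambda}$-lattice closed under both multiplication and comultiplication. Since $\mathcal{O}_A$ is an order in the commutative ring $A$, it is contained in the maximal order $\mathcal{O}_{\max}$ of $A$; since its $\o_{K,\lambda}$-dual is an order in $A^\vee$ (multiplication on $A^\vee$ being the dual of comultiplication on $A$), $\mathcal{O}_A$ contains the dual $\mathcal{O}_{\min}$ of the maximal order of $A^\vee$. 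The quotient $\mathcal{O}_{\max}/\mathcal{O}_{\min}$ is finite, so one tests the finitely many intermediate lattices for stability under multiplication and comultiplication. This argument is elementary, uniform in $\ell$, and fully effective with no constants left to extract.

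Your Breuil--Kisin route is in principle sound, but the step you yourself flag is a genuine gap for the purposes of this lemma: the entire content of the statement is the existence of a \emph{finite-time} algorithm, and that hinges precisely on producing an explicit truncation bound $m(d,N,e)$ guaranteeing that the generic-fiber Galois module of a height-$\le 1$ Kisin module depends only on the Frobenius matrix modulo $u^m$. Asserting that such a bound is ``implicit in Kisin's proof'' and that $\ell=2$ ``requires a variant'' does not discharge the obligation; until those constants are written down the algorithm is not specified. You also need to say how, from a truncated Frobenius matrix over $(W(k)/\ell^N)[u]/u^m$, you actually \emph{compute} the finite $G_{K_\lambda}$-set (the $\phi$-fixed-point construction lives over $W(\bar k)$ and needs to be made finitary). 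None of this is impossible, but the paper's Hopf-order sandwich sidesteps all of it with a two-line lattice argument, which is why it is the cleaner proof here.
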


\begin{proof}
Write $\Gal(\Qbar_\ell/K_\lambda)\simeq D_\lambda\subseteq \Gal(\Qbar/K)$ for a decomposition group at $\lambda$. The restriction $\rho\vert_{D_\lambda}$ describes a group scheme $\Spec A$ over the field $K_\lambda$, where $A$ is a Hopf algebra.
To say that $\rho\vert_{D_\lambda}$ comes from a finite flat group scheme
means that there is a Hopf algebra $\mathcal{O}_A \subseteq A$ over $\o_{K,\lambda}$ such that
$\mathcal{O}_A \otimes_{\o_{K,\lambda}} K_\lambda \rightarrow A$ is an isomorphism.

More precisely, $A$ is a $K_\lambda$-vector space equipped with a multiplication map $A \otimes A \rightarrow A$ and a comultiplication $A \rightarrow A \otimes A$, subject to various axioms.
By duality, the comultiplication map defines a multiplication map (i.e.\ a ring structure) on $A^{\vee}$, the $K_\lambda$-linear dual of $A$.
We need to determine whether there exists an $\o_{K,\lambda}$-lattice $\mathcal{O}_A \subseteq A$,
such that $\mathcal{O}_A$ is stable under multiplication,
and its dual is stable under comultiplication.

We can compute $A$ concretely as follows.
The representation $\rho$ is a permutation representation of the Galois group on the finite set $(\Z/\ell^N)^{\oplus d}$;
by Galois theory, it corresponds to the $K$-algebra $A =  {\Hom}_{D_\lambda} ( (\Z/\ell^N)^{\oplus d}, \Qbar_\ell )$.
Writing $L := \Qbar_\lambda^{\ker{\rho\vert_{D_\lambda}}}$, we have $A =  {\Hom}_{\Gal(L/K_\lambda)} ( (\Z/\ell^N)^{\oplus d}, L )$; this allows us to compute $A$ (with its multiplication law) explicitly, working only with finite-dimensional $K_\lambda$-vector spaces.

The natural identification $A^{\vee} \otimes_{K_\lambda} L = L [(\Z/\ell^N)^{\oplus d}]$
gives a multiplication law on $A^{\vee} \otimes_{K_\lambda} L$,
which restricts to a multiplication law $A^{\vee} \times A^{\vee} \rightarrow A^{\vee}$.

Now let $\mathcal{O}_{\max}$ be the maximal order of $A$,
and let $\mathcal{O}_{\min}$ be the dual of the maximal order of $A^{\vee}$.
We can compute both maximal orders by Lemma \ref{maxl_order}.

We know that any Hopf algebra $\mathcal{O}_A \subseteq A$ must satisfy
$\mathcal{O}_{\min} \subseteq \mathcal{O}_A \subseteq \mathcal{O}_{\max}$.
Because the quotient $\mathcal{O}_{\max} / \mathcal{O}_{\min}$ is finite, 
we need only test the finitely many intermediate lattices $\mathcal{O}_A$
to determine whether any is stable under both multiplication and comultiplication.
\end{proof}

\subsection{Computability of the necessary conditions modulo $\ell^N$.}

\begin{thm}\label{computability of the necessary conditions modulo ell to the N}
There is a finite-time algorithm that, on input $(K, S, g, \ell, T, (a_\pfrak)_{\pfrak\in T}, \widetilde{T}, N)$, with
\begin{itemize}
\item $K/\Q$ a number field,
\item $S$ a finite set of places of $K$ not dividing $(\ell)$,
\item $g \in \Z^+$,
\item $\ell\in \Z^+$ a prime,
\item $T$ a finite set, disjoint from $S$, of places of $K$ not dividing $(\ell)$, which determines trace functions (Def.\ \ref{determines_trace_functions}) with parameters $(K, S, 2g, \ell)$,
\item $a_\pfrak\in \Z$ for all $\pfrak\in T$,
\item $\widetilde{T}$ a finite set, disjoint from $S$, of places of $K$ not dividing $(\ell)$, such that $T \subseteq \widetilde{T}$, such that $\Nm\,{\pfrak} < \frac{\ell^{2N}}{16g^2}$ for all $\pfrak\in \widetilde{T}$, and
\item $N\in \Z^+$,
\end{itemize}\noindent
returns $(\Phi, (a_\pfrak)_{\pfrak\in \widetilde{T}})$, where
\begin{itemize}
\item $\Phi$ is the (finite) set of all $\rho: \Gal(\Qbar/K)\to \GL_{2g}(\Z/\ell^N)$ such that
\begin{enumerate}
\item $\rho$ is unramified outside $S$ and the primes above $(\ell)$,
\item for all $\pfrak\in T$, $\tr(\rho(\Frob_\pfrak))\equiv a_\pfrak\pmod*{\ell^N}$,
\item $\det{\rho}\equiv \chi_\ell^g\pmod*{\ell^N}$,
\item for all $\lambda\mid (\ell)$, the Galois module corresponding to $\rho$ prolongs to a finite flat group scheme over $\o_{K,\lambda}$, and
\item \label{cond_ffgs} for all $\pfrak\in \widetilde{T}$, the Frobenius trace $\tr(\rho(\Frob_\pfrak))$ is congruent (modulo $\ell^N$) to some integer $a_{\pfrak, \rho}$ with $\left | a_{\pfrak, \rho} \right | \leq 2g\cdot \sqrt{\Nm\,{\pfrak}}$,
\end{enumerate}
\item and, for all primes $\pfrak\in \widetilde{T}$ and all $\rho \in \Phi$, we have $a_{\pfrak, \rho} = a_{\pfrak}$.
\end{itemize}

\begin{rmk}
    This theorem statement has two parts.  First, we can compute the set $\Phi$ of mod-$\ell^N$ Galois representations $\rho$ satisfying conditions 1-5.  Then, we claim that, for all such $\rho$, the Frobenius traces at primes in $\widetilde{T}$ agree (and can be computed).
\end{rmk}

\begin{proof}
    By Lemma \ref{hunter} one can compute all (finitely many) $\rho: \Gal(\Qbar/K)\to \GL_{2g}(\Z/\ell^N)$ that are unramified outside $S$ and the set of primes dividing $\ell$.

    Now it is simply a matter of testing which of these Galois representations satisfy conditions 2-5 above.  For condition \ref{cond_ffgs}, we use Lemma \ref{test_finite_flat}; the other conditions are straightforward.

    All these $\rho$ will have the same Frobenius traces since $T$ determines trace functions, and of course it is straightforward to compute those traces.
\end{proof}

\end{thm}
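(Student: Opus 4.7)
The plan is enumerate-and-filter. A representation $\rho: \Gal(\Qbar/K)\to \GL_{2g}(\Z/\ell^N)$ satisfying condition $1$ factors through $\Gal(L/K)$, where $L := \Qbar^{\ker{\rho}}$ is a finite extension unramified outside $S\cup \{\lambda\mid (\ell)\}$, of degree bounded by $\#\GL_{2g}(\Z/\ell^N)$ (an explicitly computable integer). Applying Lemma \ref{hunter} with this degree bound and ramification set produces a finite list of candidate fields $L/K$; for each one, enumerate all homomorphisms $\Gal(L/K)\to \GL_{2g}(\Z/\ell^N)$ (a finite brute-force check). The union over $L$ gives a finite superset of $\Phi$.

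The second step is to filter this superset by imposing conditions $2$--$5$ one by one. Conditions $2$, $3$, and $5$ are immediate finite checks given $\rho$ together with (representatives of) Frobenius conjugacy classes in $\Gal(L/K)$ at the primes in $T$, at those in $\widetilde{T}$, and at enough primes to pin down $\chi_\ell^g$ (i.e.\ anywhere needed to verify the determinant identity). Condition $4$ is exactly what Lemma \ref{test_finite_flat} handles, invoked at each prime $\lambda\mid (\ell)$. The surviving $\rho$ form the desired set $\Phi$, which is finite, and whose output byte representation is immediate from the byte representation of each $L$ and each homomorphism.

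For the second conclusion — that the value $a_{\pfrak,\rho}$ from condition $5$ does not depend on $\rho\in \Phi$, and hence defines the integers $a_\pfrak$ for $\pfrak\in \widetilde{T}$ that are output alongside $\Phi$ — one combines the trace-determining hypothesis on $T$ with the Weil bound. Any two $\rho, \rho'\in \Phi$ agree as trace functions at Frobenii of primes in $T$ (both being $a_\pfrak$ modulo $\ell^N$), so by Definition \ref{determines_trace_functions} the full trace functions $\tr\circ \rho$ and $\tr\circ \rho'$ coincide on $\Gal(\Qbar/K)$, in particular at Frobenii of primes in $\widetilde{T}$. Finally, the inequality $4g\sqrt{\Nm\,\pfrak} < \ell^N$ — equivalent to the given bound $\Nm\,\pfrak < \ell^{2N}/(16g^2)$ — implies that at most one integer of absolute value $\leq 2g\sqrt{\Nm\,\pfrak}$ lies in any residue class modulo $\ell^N$, so $a_{\pfrak,\rho}$ is pinned down by the common value $\tr(\rho(\Frob_\pfrak))\in \Z/\ell^N$ alone. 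The only nontrivial steps in the proof are really hidden in the cited lemmas (Hunter's targeted search in Lemma \ref{hunter} and the finite-flatness test in Lemma \ref{test_finite_flat}); granting these, the assembly into the stated algorithm is routine.
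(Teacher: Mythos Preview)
Your proof is correct and follows essentially the same approach as the paper: enumerate the finitely many mod-$\ell^N$ representations unramified outside $S\cup\{\lambda\mid\ell\}$ via Lemma~\ref{hunter}, filter by conditions 2--5 (invoking Lemma~\ref{test_finite_flat} for the finite-flatness test), and conclude uniqueness of the $a_\pfrak$ from the trace-determining property of $T$. You are in fact more explicit than the paper on two points --- spelling out why the integer $a_{\pfrak,\rho}$ is uniquely determined by its residue class (via the bound $4g\sqrt{\Nm\,\pfrak}<\ell^N$), and correctly attributing Lemma~\ref{test_finite_flat} to condition~4 rather than condition~5 --- but the underlying argument is the same.
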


\newpage

\section{Fundamental algorithms for abelian varieties over number fields.}\label{fundamental algorithms for abelian varieties section}

\subsection{Computability of torsion.}
\begin{lem}
\label{compute_torsion}
    There is a finite-time algorithm that, on input $(K, A/K, N)$ with $A/K$ an abelian variety over a number field $K / \Q$ and $N$ a positive integer, returns the finite set $A[N]$.  
    
    (Specifically, $A$ is input to the algorithm in a projective embedding; the algorithm then outputs the finitely many points $A[N]$ in the same projective embedding, with coordinates presented as exact elements of some number field $L$ extending $K$.)
\end{lem}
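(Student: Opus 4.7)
The plan is to invoke the brute-force search principle (Principle \ref{prin:brute_force}). I first observe two structural facts: (a) since $K$ has characteristic $0$, the multiplication-by-$N$ map $[N]\colon A\to A$ is finite \'etale of degree $N^{2g}$, so $|A[N]| = N^{2g}$ where $g = \dim A$; and (b) every point of $A[N]$ is algebraic over $K$, hence has a finite byte representation as a tuple of projective coordinates (in the given embedding of $A$) lying in some number field $L/K$.

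The search will enumerate byte strings encoding data of the form $\bigl(L,\ \{P_1,\ldots,P_{N^{2g}}\}\bigr)$, where $L/K$ is a number field specified by a defining polynomial and each $P_i$ is a projective coordinate tuple with entries in $L$. For each candidate, I verify algorithmically that: (i) each $P_i$ satisfies the defining equations of $A\subseteq \P^n_{/K}$; (ii) the $P_i$ are pairwise distinct in $\P^n(L)$ (a finite check of nonvanishing of suitable $2\times 2$ minors); and (iii) for each $i$, the point $[N]P_i$ equals the identity, computed via double-and-add using the given multiplication morphism $A\times A\to A$ (evaluated after composing with the Segre embedding) and compared against the identity section $\Spec K\to A$.

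If a candidate $(L, \{P_1,\ldots,P_{N^{2g}}\})$ passes all three tests, then by (iii) each $P_i$ lies in $A[N]$, and by (ii) combined with $|A[N]| = N^{2g}$ the list is forced to be exactly $A[N]$. Conversely, taking $L$ to be the (finite) compositum of residue fields of the geometric points of $A[N]$ produces such a candidate, so by Principle \ref{prin:brute_force} the search terminates in finite time with the correct output.

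There is no serious obstacle here: the only issue is the bookkeeping for exact arithmetic (addition, multiplication, polynomial evaluation, and equality testing) in $L$, all of which is standard once $L$ is presented as $K[x]/(f)$ for some $f\in K[x]$. In particular, one never needs to know $L$ in advance --- the brute-force enumeration simply tries larger and larger candidate fields until the correct one is encountered.
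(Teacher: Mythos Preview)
Your proof is correct and takes essentially the same approach as the paper: brute-force search (Principle \ref{prin:brute_force}), using that membership in $A[N]$ is testable and that the target cardinality $N^{2g}$ is known in advance. The paper's proof is a two-line sketch of exactly these two points; your version simply fills in more of the bookkeeping.
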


\begin{proof}
    Brute-force search (Principle \ref{prin:brute_force}).\footnote{Of course computing the kernel of multiplication of $N$ is more sensible, but we have released ourselves to brute force throughout.}  Note that:
    \begin{itemize}
        \item One can test whether any point of the ambient projective space is an $N$-torsion point of $A$, and
        \item One knows that the total number of $N$-torsion points is $N^{2 \dim A}$.
    \end{itemize}
\end{proof}

\subsection{Computability of Frobenius traces.}
\begin{thm}
\label{comp_frob_trace}
    There is an algorithm that takes as input a number field $K$, an abelian variety $A$ over $K$, and a prime $\pfrak$ of $K$ not dividing the conductor of $A$, and returns the trace of Frobenius acting on $H^1(A, \mathbb{Q}_{\ell})$ (this trace is independent of $\ell$ provided $\ell$ is not divisible by $\pfrak$).
\end{thm}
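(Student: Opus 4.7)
The plan is to compute $\tr(\Frob_\pfrak | H^1(A,\Q_\ell))$ by reading it off from Frobenius's action on the $\ell$-torsion of $A$ for a conveniently large prime $\ell$, and then lifting the resulting mod-$\ell$ trace to an integer using the Weil bound.

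First, choose a prime $\ell\in \Z^+$ that is coprime to $\pfrak$ and satisfies $\ell > 4g\sqrt{\Nm\,\pfrak}$, where $g = \dim A$ (computed from the given projective embedding by standard dimension routines). Apply Lemma \ref{compute_torsion} to obtain the finite set $A[\ell]\subseteq A(L)$ for some finite extension $L/K$; by replacing $L$ with its Galois closure, assume $L/K$ is Galois. By the criterion of N\'eron--Ogg--Shafarevich, the good reduction hypothesis on $A$ at $\pfrak$ together with $\ell$ being coprime to $\pfrak$ implies that $L/K$ is unramified at $\pfrak$. Compute a prime $\Pfrak\subseteq \o_L$ above $\pfrak$ by factoring $\pfrak\o_L$, and compute a primitive element $\alpha\in \o_L$ for $L/K$.

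Next, identify the Frobenius element $\phi := \Frob_\Pfrak\in \Gal(L/K)$ by iterating over the (finite, explicit) group $\Gal(L/K)$ and retaining the unique $\sigma$ satisfying the characterizing congruence $\sigma(\alpha)\equiv \alpha^{\Nm\,\pfrak}\pmod*{\Pfrak}$. Then compute the action of $\phi$ on each point of $A[\ell]$ by applying $\phi$ coordinate-wise in the given projective embedding. Since $A$ has good reduction at $\pfrak$ and $\ell$ is prime to $\Nm\,\pfrak$, the reduction map $A[\ell]\to \bar A[\ell]$ is a Galois-equivariant bijection, so the resulting matrix $M\in \GL_{2g}(\F_\ell)$ represents $\Frob_\pfrak$ acting on $T_\ell(A)/\ell T_\ell(A)$, and hence (trace being preserved under duality between $V_\ell(A)$ and $H^1(A,\Q_\ell)$) on $H^1(A,\Q_\ell)$ modulo $\ell$.

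Finally, set $t := \tr(M)\in \F_\ell$ and output the unique integer $a\in \Z$ with $|a|\leq 2g\sqrt{\Nm\,\pfrak}$ and $a\equiv t\pmod*{\ell}$; existence and uniqueness of such an $a$ follow from the Weil bound applied to $\bar A/\F_q$ together with our choice $\ell > 4g\sqrt{\Nm\,\pfrak}$. Independence of $a$ from the auxiliary prime $\ell$ (and of $\Pfrak$ from the choice of prime above $\pfrak$) is the standard Weil theorem that $\tr(\Frob_\pfrak | H^1(A,\Q_\ell))$ lies in $\Z$ independent of $\ell$ and is conjugation-invariant. The main obstacle is purely logistical rather than conceptual: the invocation of Lemma \ref{compute_torsion} to find $A[\ell]$ by brute-force search is astronomically inefficient, but terminates in finite time, which is all that is required for the existence-of-algorithm statement.
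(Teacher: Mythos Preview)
Your proof is correct and follows essentially the same approach as the paper: choose $\ell$ large relative to $\Nm\,\pfrak$, compute $A[\ell]$ with its Galois action via Lemma~\ref{compute_torsion}, and read off the Frobenius trace. The paper's proof is a single sentence sketching exactly this, while you have helpfully spelled out the details (the explicit bound $\ell > 4g\sqrt{\Nm\,\pfrak}$, how to identify $\Frob_\Pfrak$ in $\Gal(L/K)$, and the Weil-bound lift from $\F_\ell$ to $\Z$).
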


\begin{proof}
Choose a prime $\widetilde{\ell}\gg_{\Nm\,{\pfrak}} 1$ coprime to the conductor of $A$ and then use Lemma \ref{compute_torsion} to compute $A[\widetilde{\ell}]$ along with its action of $\Gal(\Qbar/K)$.
\end{proof}

\subsection{Computability of the conductor.}

We will in fact show that N\'{e}ron models are computable in Section \ref{computability of the neron model section}, but the (implicit) algorithm will repeatedly invoke the book of Bosch-L\"{u}tkebohmert-Raynaud \cite{bosch-lutkebohmert-raynaud} and thus will not be self-contained. Since we will only need to compute conductors for the main theorems of this paper, we will give a standalone algorithm to compute said conductors in this section.

\begin{algo}\label{algorithm computing the conductor}
On input $(K, A/K)$,
\begin{enumerate}
\item Compute a finite set $S$ of places of $K$ such that $A/K$ has good reduction outside $S$ (e.g.\ via collecting all denominators etc.\ present in Mumford data for the given $A/K$).

\item For each $\pfrak\in S$,
\begin{enumerate}
\item Let $p\in \Z^+$ be the prime such that $\pfrak\mid (p)$.
\item Let $\ell \neq p$ be another prime.
\item Compute the Swan conductor $w_\pfrak$ of $A[\ell]$ as a $\Gal(\Qbar_p/K_\pfrak)$-representation (using Lemma \ref{compute_torsion} to compute $A[\ell]$).
\item Let $N := 10^{10}$.
\begin{enumerate}
\item Let $M := N$.
\item Compute the set $\alpha_N$ of all smooth integral models $\mathcal{A}/\o_K$ of $A/K$ involving coefficients in $\o_K$ of height at most $N$.
\item For each $\mathcal{A}\in \alpha_N$,
\begin{enumerate}
\item Compute the exponent $e_{\mathcal{A}}$ of the $\ell$-part of the component group of its special fibre $\overline{\mathcal{A}} := \mathcal{A}\pmod*{\pfrak}$.
\item Increment $M\mapsto \max(M, e_{\mathcal{A}} + 1)$.
\end{enumerate}
\item Let $e_\pfrak^\uparrow(M) := \infty$.
\item For each $\mathcal{A}\in \alpha_N$,
\begin{enumerate}
\item If not all $P\in A[\ell^{M+1}]^{I_\pfrak}$ prolong over $\o_{K,\pfrak}^\ur$ then continue this loop to the next element of $\alpha_N$.
\item Compute $e_\pfrak(\mathcal{A}) := w_\pfrak + 2g - \dim_{\F_\ell}\ell^M\cdot\nolinebreak \overline{\mathcal{A}}\left(\Fbar_\ell\right)[\ell^{M+1}]$.
\item Replace $e_\pfrak^\uparrow(M)\mapsto \min(e_\pfrak^\uparrow(M), e_\pfrak(\mathcal{A}))$.
\end{enumerate}
\item If $e_\pfrak^\uparrow(M) = w_\pfrak + 2g - \dim_{\F_\ell}{\ell^M\cdot \left(A[\ell^{M+1}]^{I_\pfrak}\right)}$ then let $e_\pfrak := e_\pfrak^\uparrow(M)$ and break this loop.
\item Otherwise increment $N\mapsto M+1$.
\end{enumerate}
\end{enumerate}
\item Return $\nfrak := \prod_{\pfrak\not\in S} \pfrak^{e_\pfrak}$.
\end{enumerate}
\end{algo}

\begin{thm}\label{conductors are computable}
Algorithm \ref{algorithm computing the conductor} is a finite-time algorithm that, on input $(K, A/K)$ with $A/K$ an abelian variety over a number field $K/\Q$, returns its conductor $\nfrak\subseteq \o_K$.
\end{thm}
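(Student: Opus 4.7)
The plan is to justify Algorithm \ref{algorithm computing the conductor} by appealing to the Ogg--N\'{e}ron--Shafarevich conductor formula and the properties of the N\'{e}ron model of $A$ at each bad prime.

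Fix $\pfrak\in S$ of residue characteristic $p$ and a prime $\ell\neq p$. The Ogg--N\'{e}ron--Shafarevich formula gives
\[ e_\pfrak = w_\pfrak + (2g - \dim_{\Q_\ell} V_\ell(A)^{I_\pfrak}), \]
where $w_\pfrak$ is the Swan conductor of $V_\ell(A)$ (equivalently, of $A[\ell]$, since the Swan conductor depends only on the wild-inertia action). Step 2(c) computes $w_\pfrak$ from $A[\ell]$ via Lemma \ref{compute_torsion}, so the remaining task is to compute the tame contribution $\epsilon_\pfrak := 2g - \dim V_\ell(A)^{I_\pfrak}$.

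The key input is the following: if $\mathcal{A}/\o_{K,\pfrak}$ is any smooth integral model of $A$ satisfying the prolongation condition ``every $P\in A[\ell^{M+1}]^{I_\pfrak}$ extends to a point of $\mathcal{A}(\o_{K,\pfrak}^{\ur})$'', then the reduction map on $\ell^{M+1}$-torsion is an isomorphism (injective since the kernel of reduction is pro-$p$; surjective by Hensel's lemma applied to prime-to-$p$ torsion), giving $\bar{\mathcal{A}}(\Fbar_p)[\ell^{M+1}]\iso A[\ell^{M+1}]^{I_\pfrak}$. From the short exact sequence $0\to \bar{\mathcal{A}}^0[\ell^{M+1}]\to \bar{\mathcal{A}}[\ell^{M+1}]\to \Phi_\mathcal{A}[\ell^{M+1}]\to 0$ (the cokernel vanishes since $\bar{\mathcal{A}}^0$ is divisible), together with the abelian-times-torus-times-unipotent decomposition of the smooth connected $\bar{\mathcal{A}}^0$, one deduces that whenever $M$ exceeds the $\ell$-part $e_\mathcal{A}$ of the component-group exponent, $\dim_{\F_\ell} \ell^M\cdot \bar{\mathcal{A}}(\Fbar_p)[\ell^{M+1}] = 2a_\mathcal{A} + t_\mathcal{A}$, where $a_\mathcal{A}, t_\mathcal{A}$ are the abelian and toric ranks of $\bar{\mathcal{A}}^0$. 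Step 2(d)(iii) enforces exactly this bound $M > e_\mathcal{A}$ for every $\mathcal{A}\in \alpha_N$.

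Hence the quantity $e_\pfrak(\mathcal{A}) = w_\pfrak + 2g - \dim_{\F_\ell} \ell^M\cdot \bar{\mathcal{A}}(\Fbar_p)[\ell^{M+1}]$ computed by the algorithm equals $w_\pfrak + 2g - \dim_{\F_\ell} \ell^M\cdot A[\ell^{M+1}]^{I_\pfrak}$ whenever $\mathcal{A}$ satisfies prolongation, so the termination criterion fires as soon as some $\mathcal{A}\in \alpha_N$ does so at the current $M$. For termination, I would invoke the N\'{e}ron model $\mathcal{N}$ of $A$ at $\pfrak$: by the N\'{e}ron mapping property $\mathcal{N}(\o_{K,\pfrak}^{\ur}) = A(K_\pfrak^{\ur})$, so every element of $A[\ell^{M+1}]^{I_\pfrak}$ extends to $\mathcal{N}$, and a smooth integral model over $\o_K$ restricting to $\mathcal{N}$ at $\pfrak$ has bounded ``coefficient height'' and so eventually sits in $\alpha_N$. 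For correctness, the enforced $M > e_\mathcal{A}$ together with prolongation imply that $\dim_{\F_\ell} \ell^M\cdot A[\ell^{M+1}]^{I_\pfrak} = 2a_\mathcal{A} + t_\mathcal{A}$ is $M$-stable from that point on, so its $M\to \infty$ limit $\dim_{\Q_\ell} V_\ell(A)^{I_\pfrak} = 2a + t$ must already be attained, yielding $e_\pfrak^\uparrow(M) = e_\pfrak$ upon termination. The main obstacle is precisely this verification that the termination criterion does not fire prematurely before the relevant dimensions have stabilized---which is ensured exactly by the lower bound $M > e_\mathcal{A}$ enforced in Step 2(d)(iii).
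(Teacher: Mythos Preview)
Your proof is correct and follows essentially the same approach as the paper: both arguments identify $\overline{\mathcal{A}}(\Fbar_p)[\ell^{M+1}]$ with $A[\ell^{M+1}]^{I_\pfrak}$ for smooth (group-scheme) models passing the prolongation test, use the Chevalley decomposition of $\overline{\mathcal{A}}^0$ together with $M > e_{\mathcal{A}}$ to extract $2a+t$, and invoke the N\'eron model for termination. Your $M$-stability argument for correctness (sandwiching $\dim_{\F_\ell}\ell^{M'}\cdot A[\ell^{M'+1}]^{I_\pfrak}$ between the constant $2a_{\mathcal{A}}+t_{\mathcal{A}}$ from below and the non-increasing value at $M$ from above) is in fact slightly more careful than the paper's, which tacitly computes via the N\'eron model as though $M\geq e_{\mathcal{N}}$ were already known at the terminating step.
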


\begin{rmk}
    In particular, by factoring the conductor, one can determine algorithmically the set of primes of bad reduction of $A$.
\end{rmk}

\begin{proof}
We claim that the algorithm specified in Algorithm \ref{algorithm computing the conductor} terminates with the desired output on all inputs.

Let $A/K$ be an abelian variety over a number field $K/\Q$. Let $\pfrak\subseteq \o_K$ be a prime of $\o_K$. Let $p\in \Z^+$ be the prime such that $\pfrak\mid (p)$. Let $\ell\neq p$ be another prime. Let $N\in \Z^+$. Let $\mathcal{A}/\o_K$ be a smooth integral model of $A/K$ of height at most $N$. Let $M\in \Z^+$ be such that the component group of $\overline{\mathcal{A}} := \mathcal{A}\pmod*{\pfrak}$ is $\ell^M$-torsion. Let $w_\pfrak$ be the Swan conductor of $A[\ell]$ as a $\Gal(\Qbar_p/K_\pfrak)$-representation. Let $e_\pfrak(\mathcal{A}) := w_\pfrak + 2g - \dim_{\F_\ell} \ell^M\cdot \overline{\mathcal{A}}(\Fbar_p)[\ell^{M+1}]$. Let $G\subseteq A(K_\pfrak^\ur)[\ell^{M+1}]$ be the subgroup of points which prolong over $\o_{K,\pfrak}^\ur$.

We claim that $e_\pfrak(\mathcal{A})\geq w_\pfrak + 2g - \dim_{\F_\ell} \ell^M\cdot G$. Indeed this amounts to the claim that $\dim_{\F_\ell} \ell^M\cdot G\leq \dim_{\F_\ell}\ell^M\cdot\nolinebreak \overline{\mathcal{A}}\left(\Fbar_p\right)[\ell^{M+1}]$, which follows from the injection $G\inj \overline{\mathcal{A}}(\Fbar_p)[\ell^{M+1}]$ arising from prolongation and then reduction modulo $\pfrak$, since $G$ prolongs to a finite flat --- indeed \'{e}tale --- subgroup scheme of $\mathcal{A}$ over $\o_{K,\pfrak}^\ur$.

Note also that if $\mathcal{A}/\o_K$ is the N\'{e}ron model of $A/K$ then $G = A(K_\pfrak^\ur)[\ell^{M+1}] = A(\Qbar_p)[\ell^{M+1}]^{I_\pfrak}$ and moreover by smoothness and \'{e}taleness (i.e.\ Hensel) $G = \mathcal{A}(\o_{K,\pfrak}^\ur)[\ell^{M+1}]\to \overline{\mathcal{A}}(\Fbar_p)[\ell^{M+1}]$ is an isomorphism, whence it follows that $e_p(\mathcal{A}) = w_\pfrak + 2g - \dim_{\F_\ell} \ell^M\cdot G$.

So the bound is sharp with equality at least for the N\'{e}ron model, whence once $N$ becomes sufficiently large Step $2.(d).vi$ breaks the loop, which is to say that Algorithm \ref{algorithm computing the conductor} terminates on all inputs.

It remains to show that the corresponding $e_\pfrak^\uparrow(M)$ is the correct conductor exponent at $\pfrak$, which is to say that $\dim_{\F_\ell} \ell^M\cdot (A[\ell^{M+1}]^{I_\pfrak}) = t + 2b$, where $t = \dim{T}$ and $b = \dim{B}$ in the Chevalley decomposition $0\to T\times U\to \overline{\mathcal{A}}^\circ\to B\to 0$, with $\overline{\mathcal{A}}^\circ/(\o_K/\pfrak)$ the connected component of the identity of the mod-$\pfrak$ special fibre of the N\'{e}ron model $\mathcal{A}/\o_K$ of $A/K$, $B/(\o_K/\pfrak)$ an abelian variety, $T/(\o_K/\pfrak)$ a torus, and $U/(\o_K/\pfrak)$ a unipotent group. However as noted above by smoothness and \'{e}taleness we have that $A(\Qbar_p)[\ell^{M+1}]^{I_\pfrak} = A(K_\pfrak^\ur)[\ell^{M+1}]\to \overline{\mathcal{A}}(\Fbar_p)[\ell^{M+1}]$ is an isomorphism, and the latter satisfies $\dim_{\F_\ell} \ell^M\cdot \overline{\mathcal{A}}(\Fbar_p)[\ell^{M+1}] = t + 2b$ because $\ell^M\cdot \overline{\mathcal{A}}(\Fbar_p)\subseteq \overline{\mathcal{A}}^\circ(\Fbar_p)$, completing the proof.
\end{proof}

\subsection{Computability of the endomorphism ring.}

\begin{algo}\label{algorithm computing the endomorphism ring}
On input $(K, A/K)$,
\begin{enumerate}
\item Choose a prime $\ell\in \Z^+$.
\item Let $N := 1$.
\item Until this loop is broken (by the termination condition in (c) below),
\begin{enumerate}
\item Perform a brute-force search with parameter $N$ (Principle \ref{prin:brute_force}) for $K$-subvarieties of $A\times A$ of height at most $N$ which are the graphs of endomorphisms of $A$, and, for each such, compute its action on singular homology $H_1(A, \mathbb{Z})$ (using Lemma \ref{homology_action}).  Let $S_N \subseteq \End H_1(A, \mathbb{Z})$ be a basis for the algebra generated by all such endomorphisms.

\item Compute a $\Sigma_N\subseteq \End_K(A)$ such that $\Sigma_N$ is $\Z$-linearly independent and $\Span_\Z\, \Sigma_N$ is the saturation of $\Span_\Z\, S_N$ inside $\End_K(A)$.  To do this:

\begin{enumerate}
    \item Compute a basis $\Sigma_{N, \sing}$ for the saturation of $\Span_\Z\, S_N$ inside $\End H_1(A, \mathbb{Z})$.
    \item For each $f_\sing \in \Sigma_{N, \sing}$, find, by brute-force search, an endomorphism $f$ of $A$ such that $H_1(f) = f_\sing$.
\end{enumerate}

\item Using Lemma \ref{compute_torsion}, compute the $\ell^N$-torsion $A[\ell^N]$, and determine whether $\rank_\Z\Span_\Z\, \Sigma_N = \dim_{\F_\ell} \ell^{N-1}\cdot \End_{\Gal(\Qbar/K)}(A[\ell^N])$.
If so, return $\Sigma_N$. Else increment $N \mapsto N+1$ and return to the beginning of this loop.
\end{enumerate}
\end{enumerate}
\end{algo}

\begin{thm}\label{the endomorphism ring is computable}
Algorithm \ref{algorithm computing the endomorphism ring} is a finite-time algorithm that, on input $(K, A/K)$ an abelian variety over a number field $K/\Q$, outputs (a $\Z$-basis of) $\End_K(A)$.
\end{thm}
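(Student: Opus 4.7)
The loop in Algorithm \ref{algorithm computing the endomorphism ring} exits at iteration $N$ precisely when $r_N := \rank_\Z \Span_\Z \Sigma_N$ equals $d_N := \dim_{\F_\ell} \ell^{N-1}\cdot \End_{\Gal(\Qbar/K)}(A[\ell^N])$. Set $r := \rank_\Z \End_K(A)$. The plan is to show $r_N \leq r \leq d_N$ for all $N$, with both inequalities becoming equalities for $N$ sufficiently large; eventual equality then forces termination, and at any termination the identities $r_N = d_N = r$ together with saturation of $\Span_\Z \Sigma_N$ inside $\End_K(A)$ (coming, by construction, from saturation of $\Span_\Z\,S_N$ inside $\End H_1(A,\Z)$ and from $\End_K(A)$ being itself saturated in $\End H_1(A,\Z)$) force $\Span_\Z \Sigma_N = \End_K(A)$, proving correctness.

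The tautological bound is $r_N \leq r$. For $d_N \geq r$, I would invoke Faltings' isogeny theorem: the natural map $\End_K(A)\otimes_\Z \Z_\ell \to R := \End_{\Gal(\Qbar/K)}(T_\ell A)$ is an isomorphism, so $R$ is free of rank $r$ over $\Z_\ell$ and saturated in $\End(T_\ell A)$ (which is torsion-free). Reducing modulo $\ell^N$ produces a $\Gal(\Qbar/K)$-equivariant injection $(\Z/\ell^N)^r \iso R/\ell^N \hookrightarrow \End(A[\ell^N])$, and multiplying by $\ell^{N-1}$ exhibits $\F_\ell^r$ inside $\ell^{N-1}\cdot \End_{\Gal(\Qbar/K)}(A[\ell^N])$. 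Eventual $r_N = r$ follows because $\End_K(A)$ is finitely generated: for $N$ large enough, the brute-force search will have found a $\Z$-generating set of $\End_K(A)$.

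The nontrivial ingredient is eventual $d_N = r$. Writing $Q := \End(T_\ell A)/R$, which is free over $\Z_\ell$ by saturation, tensor $0 \to R \to \End T_\ell A \to Q \to 0$ with $\Z/\ell^N$ and take Galois invariants to obtain the injection
\[ \End_{\Gal(\Qbar/K)}(A[\ell^N])/(R/\ell^N) \hookrightarrow (Q/\ell^N)^{\Gal(\Qbar/K)}. \]
Faltings' semisimplicity of $V_\ell A$ gives $R\otimes \Q_\ell = \End(V_\ell A)^{\Gal(\Qbar/K)}$, hence $Q^{\Gal(\Qbar/K)} = 0$, whence the long exact sequence of Galois cohomology for $0 \to Q \xrightarrow{\ell^N} Q \to Q/\ell^N \to 0$ identifies $(Q/\ell^N)^{\Gal(\Qbar/K)} = H^1(\Gal(\Qbar/K), Q)[\ell^N]$. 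Since the action on $Q$ factors through $G_{K, S\cup\{\ell\}}$ with $S$ the (finite) set of primes of bad reduction of $A$, standard finiteness of continuous global $\ell$-adic Galois cohomology unramified outside a finite set shows $H^1(G_{K, S\cup\{\ell\}}, Q)$ is a finitely generated $\Z_\ell$-module, so its $\ell$-power torsion has bounded exponent $\ell^c$ independent of $N$. Then for every $N$, $\End_{\Gal(\Qbar/K)}(A[\ell^N])/(R/\ell^N)$ is killed by $\ell^c$, and for $N \geq c+1$ this forces $\ell^{N-1}\cdot \End_{\Gal(\Qbar/K)}(A[\ell^N]) = \ell^{N-1}(R/\ell^N) \iso \F_\ell^r$, giving $d_N = r$.

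The main obstacle is precisely the cohomological step of the previous paragraph: combining Faltings' semisimplicity with finite generation of global $\ell$-adic cohomology to force the ``extra'' Galois-equivariant endomorphisms of $A[\ell^N]$ (those not lifting to $T_\ell A$) to be uniformly bounded in $N$. Once in hand, the rest (the bounds $r_N \leq r$ and $d_N \geq r$, eventual $r_N = r$ by brute-force search, and saturation of $\End_K(A) \hookrightarrow \End H_1(A, \Z)$) is routine.
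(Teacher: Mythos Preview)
Your argument is correct, and it matches the paper's in its overall architecture: the chain $r_N\le r\le d_N$, eventual equality on both sides, and the saturation of $\End_K(A)$ in $\End H_1(A,\Z)$ to deduce correctness upon termination. The one substantive divergence is in proving that $d_N=r$ eventually. The paper does this by a pure compactness argument (K\HH{o}nig / Mittag--Leffler): after identifying $\ell^{N-1}\End_G(A[\ell^N])$ with the image of the restriction map $\End_G(A[\ell^N])\to\End_G(A[\ell])$, these images form a decreasing chain of subspaces of the finite space $\End_G(A[\ell])$, and the stable image is the image from $\varprojlim_N\End_G(A[\ell^N])=\End_G(T_\ell A)=R$, which has $\F_\ell$-dimension $r$. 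Your route instead bounds the quotient $\End_G(A[\ell^N])/(R/\ell^N)$ by embedding it into $H^1(G_{K,S\cup\{\ell\}},Q)[\ell^N]$ and invoking finite generation of continuous $H^1$ of $G_{K,S}$ with $\Z_\ell$-coefficients. This is heavier machinery than the paper needs, but it is correct and in fact buys you something the paper's argument does not: an in-principle uniform exponent $c$ (the torsion exponent of $H^1$) such that $d_N=r$ for all $N>c$, rather than merely ``for $N$ sufficiently large.''
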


\begin{proof}
For all $N\in \Z^+$, $\rank_\Z \Span_\Z\, \Sigma_N\leq \rank_\Z \End_K(A)$ and $\rank_{\Z_\ell} \End_{\Gal(\Qbar/K)}(T_\ell(A))\leq \dim_{\F_\ell} \ell^{N-1}\cdot \End_{\Gal(\Qbar/K)}(A[\ell^N])$ --- the first is evident and the second follows because the canonical map $\End_{\Gal(\Qbar/K)}(T_\ell(A))/\ell\to \ell^{N-1}\cdot \End_{\Gal(\Qbar/K)}(A[\ell^N])$ is an injection because $(\ell^{N-1}\cdot \phi)(A[\ell^N]) = \phi(A[\ell])$ and so if the left-hand side vanishes then $\phi\in \End_{\Gal(\Qbar/K)}(T_\ell(A))$ is divisible by $\ell$. Moreover for $N$ sufficiently large both inequalities are sharp --- evident in the first case and a consequence of e.g.\ K\HH{o}nig's Lemma in the second case.

But now because $\End_K(A)\otimes_\Z \Z_\ell\to \End_{\Gal(\Qbar/K)}(T_\ell(A))$ is an isomorphism it follows that, for all $N$, $\rank_\Z \Span_\Z\, \Sigma_N\leq \dim_{\F_\ell} \ell^{N-1}\cdot \End_{\Gal(\Qbar/K)}(A[\ell^N])$, with equality if and only if both equal $\rank_\Z \End_K(A)$ (whence $\Span_\Z\, \Sigma_N = \End_K(A)$), and we have seen that equality holds when $N$ is sufficiently large, so that the algorithm always terminates with correct input, as desired.
\end{proof}

\subsection{Computability of the set of polarizations of given degree.}

\begin{thm}\label{the set of polarizations of given multidegree is computable}
There is a finite-time algorithm that, on input $(K, A/K, (d_1, \ldots, d_{\dim{A}}))$ with $A/K$ an abelian variety over a number field $K/\Q$ and $d_i\in \Z^+$, outputs a set of representatives of the (finitely many) $\Aut_K(A)$-orbits of $K$-polarizations of $A/K$ of multidegree $(d_1, \ldots, d_{\dim{A}})$.
\end{thm}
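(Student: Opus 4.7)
The plan is to realize the set of polarizations as a sublattice of $\Hom_K(A, A^\vee)$, bound the enumeration via Minkowski reduction, and then quotient by $\Aut_K(A)$.

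First I would compute $\Hom_K(A, A^\vee)$ as a finitely generated $\Z$-module. This can be done by adapting Algorithm \ref{algorithm computing the endomorphism ring} in a direct way: brute-force-search over subvarieties of $A\times A^\vee$ of bounded height that are graphs of $K$-homomorphisms, and use $\dim_{\F_\ell}\ell^{N-1}\cdot \Hom_{\Gal(\Qbar/K)}(A[\ell^N], A^\vee[\ell^N])$ as a termination certificate (which is computable via Lemma \ref{compute_torsion}). The subgroup $\operatorname{NS}_K(A) = \Hom_K^{\text{sym}}(A, A^\vee)$ of symmetric morphisms is then obtained as the kernel of $\phi\mapsto \phi - \phi^\vee$, which is computable from the action of $\phi$ and $\phi^\vee$ on $H_1(A,\Z)$ (via Section \ref{sec:int_homology} and Lemma \ref{homology_action}). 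Alternatively one may fix the reference polarization $\lambda_0$ coming from the given projective embedding and identify $\operatorname{NS}_K(A)\otimes\Q$ with the Rosati-fixed sublattice of $\End_K^0(A)$ (computed via Theorem \ref{the endomorphism ring is computable}); the lattice $\operatorname{NS}_K(A)$ itself sits inside $\lambda_0^{-1}\cdot \End_K(A)$ and is cut out by the Rosati involution.

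For each candidate $\lambda\in \operatorname{NS}_K(A)$ the multidegree $(d_1,\ldots,d_g)$ is the tuple of elementary divisors of the induced $\Z$-linear map $H_1(A,\Z)\to H_1(A^\vee, \Z)\cong H_1(A,\Z)^\vee$, which is computable from the homology action. Fixing multidegree pins $\det \lambda_* = \pm d_1\cdots d_g =: \pm d$, cutting out a finite-determinant slice of the Rosati-symmetric cone. On the open cone of positive-definite (i.e.\ ample) elements of $\operatorname{NS}_K(A)\otimes \R$, the $\Aut_K(A)$-action admits a Minkowski-style fundamental domain (adapted to the action of the unit group of the order $\End_K(A)$ on positive-definite forms over it) whose intersection with the fixed-determinant slice is compact; this yields an effective bound, in terms of $d$ and the arithmetic invariants of $\End_K(A)$ (signatures, discriminant), on the norm of some representative in each $\Aut_K(A)$-orbit. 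I would then enumerate lattice points of $\operatorname{NS}_K(A)$ in that explicit ball, discard those of incorrect multidegree, and test ampleness of each survivor by numerically approximating the complex-analytic uniformization of $A$ (from Section \ref{sec:int_homology}) to enough precision to verify positive-definiteness of the Hermitian form associated to $\lambda$ by the Riemann relations.

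Finally I would quotient by $\Aut_K(A)\subseteq \End_K(A)^\times$: for each pair of surviving polarizations $\lambda,\lambda'$, search for a unit $u$ with $u^\vee\lambda u = \lambda'$ within a bounded region of $\End_K(A)$, with the bound on $\|u\|$ again coming from Minkowski reduction applied to the positive-definite form $\lambda + \lambda'$. The main obstacle is making the Minkowski-style reduction theory for the action of $\End_K(A)^\times$ on the ample cone of $\operatorname{NS}_K(A)$ fully explicit; the underlying finiteness is the classical Narasimhan--Nori finiteness for polarizations of bounded degree on a fixed abelian variety (equivalently the finite-type property of the moduli stack of polarized abelian varieties of the given multidegree), so the remaining task is just to convert these classical fundamental-domain estimates into explicit, finitely checkable search bounds in terms of $\End_K(A)$ and the target multidegree.
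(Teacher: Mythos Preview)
Your framework is the same one the paper uses: identify $K$-polarizations with the lattice $\operatorname{NS}_K(A)$ inside the Rosati-fixed part of $\End_K^0(A)$, and study the action of $\Aut_K(A)$ on it. The early steps (computing $\Hom_K(A,A^\vee)$, extracting the symmetric sublattice, reading off the multidegree via the integral homology action, testing ampleness numerically) are all fine and match Section~\ref{sec:int_homology} and Theorem~\ref{the endomorphism ring is computable}.

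The gap is in your middle paragraph. You assert that the $\Aut_K(A)$-action on the ample cone ``admits a Minkowski-style fundamental domain'' whose intersection with the determinant-$d$ slice is compact and yields an effective search bound, and you close by saying the ``remaining task is just to convert these classical fundamental-domain estimates into explicit, finitely checkable search bounds.'' That conversion \emph{is} the theorem. The finiteness you cite (Narasimhan--Nori) rests on \cite[Theorem 6.9]{BHC}, whose proof is ineffective; there is no off-the-shelf ``Minkowski reduction'' for the unit group of a general order in a semisimple $\Q$-algebra acting on its cone of Rosati-positive elements. The paper's Section~\ref{comp_pol_section} is devoted precisely to filling this hole: it reduces (Propositions~\ref{the set of endomorphisms of given degree modulo automorphisms is computable} and~\ref{the set of integral orbits in a relevant real orbit is computable}) to computing $G_\Z$-orbits inside a closed $G(\R)$-orbit for a specific reductive $G$, invokes the Grunewald--Segal effectivization of Borel--Harish-Chandra (Lemma~\ref{computation of reps for integral orbits in a real orbit} and Lemma~\ref{eff_5_3}) to handle integral orbits given a rational basepoint, and then --- because even Grunewald--Segal does not tell you which real orbits contain rational points --- runs the Albert classification case-by-case (Sections~\ref{type i subsection}--\ref{type iv subsection}) to exhibit such basepoints explicitly. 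Your proposal does not supply any of this, and your final orbit-comparison step (``search for a unit $u$ with $u^\vee\lambda u=\lambda'$ within a bounded region'') has the same defect: the bound on $\|u\|$ again requires the effective reduction theory you have not provided.
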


The proof of Theorem \ref{the set of polarizations of given multidegree is computable} is given in Section \ref{comp_pol_section}.

\subsection{Computability of the isomorphism relation.}

\begin{lem}
    \label{aut_of_pav}
    There is a finite-time algorithm that, on input $(K, (A, \mathcal{L}))$ with $K$ a number field and $(A, \mathcal{L})$ a polarized abelian variety over $K$, returns the (finite) set of all $K$-automorphisms of $A$ that preserve the Néron--Severi class of $\mathcal{L}$.
\end{lem}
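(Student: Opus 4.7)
The plan is to reduce the problem to a finite lattice search inside $\End_K(A)$, exploiting positivity of the Rosati involution.

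First, I would use Theorem \ref{the endomorphism ring is computable} to compute a $\Z$-basis $f_1, \ldots, f_r$ of $\End_K(A)$. After fixing a complex embedding $K\hookrightarrow \C$, Lemma \ref{homology_action} (together with the constructions of Section \ref{sec:int_homology}) lets one compute the induced action of each $f_i$ on the integral singular homology $\Lambda := H_1(A(\C), \Z)$, and Lemma \ref{find_polarization_in_H2} expresses the first Chern class of $\mathcal{L}$ as an alternating form $E \colon \Lambda \times \Lambda \to \Z$.

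Now, $\phi \in \End_K(A)$ preserves the N\'{e}ron--Severi class of $\mathcal{L}$ if and only if the induced map $\phi_*$ on $\Lambda$ satisfies $E(\phi_* x, \phi_* y) = E(x, y)$ for all $x, y \in \Lambda$. Writing $\phi = \sum a_i f_i$, this amounts to finitely many homogeneous quadratic equations in the $a_i \in \Z$. Any such $\phi$ is automatically an isomorphism: $\chi(\phi^* \mathcal{L}) = \deg(\phi) \cdot \chi(\mathcal{L})$ together with $\phi^* \mathcal{L} \equiv \mathcal{L}$ in $\mathrm{NS}(A)$ forces $\deg \phi = 1$. Equivalently, the condition is $\phi^\dagger \phi = 1$ in $\End_K(A)\otimes \Q$, where $\dagger$ is the Rosati involution associated to $\mathcal{L}$; this involution is explicitly computable from the $f_i$ and $E$ by linear algebra over $\Q$ on $\Lambda \otimes \Q$ (one inverts the non-degenerate form $E$ and composes with the transpose action).

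To make the search finite, I would invoke positivity of the Rosati involution: the form $(\phi_1, \phi_2) \mapsto \tr(\phi_1^\dagger \phi_2)$, with $\tr$ the trace on $H_1(A(\C),\Q)$, is positive definite on $\End_K(A)\otimes_\Z \R$, and it takes the fixed value $2g$ on the locus $\{\phi : \phi^\dagger \phi = 1\}$. Expressing this form in the basis $f_1, \ldots, f_r$ yields an explicit Euclidean ball in $\Z^r$ containing every candidate coordinate vector $(a_i)$; one then enumerates the finitely many lattice points in the ball and retains those for which the quadratic condition $\phi^\dagger \phi = 1$ holds. The main obstacle is arithmetic bookkeeping --- correctly transporting the input polarization (specified by the projective embedding of $A$) to the alternating form $E$ on $\Lambda$ and then back to an equation in $\End_K(A)$ --- but each step is a concrete linear-algebraic computation handled by the results cited above, so finiteness and correctness follow directly.
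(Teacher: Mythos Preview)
Your proof is correct and follows essentially the same strategy as the paper: compute $\End_K(A)$ and its action on $H_1(A,\Z)$, express the polarization as an alternating form there, and use a positive-definite form coming from the polarization to bound the search for automorphisms to a finite lattice box.

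The only difference is in how the positive-definite form is produced. The paper computes the complex structure $I$ on $H_1(A,\Z)\otimes\R$ in approximate coordinates and uses the Riemann form $(v,w)\mapsto E(v,Iw)$ on $H_1$; an automorphism preserving $E$ and commuting with $I$ is then orthogonal for this form, bounding its matrix entries. You instead work directly in $\End_K(A)\otimes\Q$, computing the Rosati adjoint $\phi^{\dagger}$ from $E$ by exact linear algebra and using the trace form $(\phi_1,\phi_2)\mapsto\tr(\phi_1^{\dagger}\phi_2)$, whose positivity is the standard positivity of the Rosati involution; the condition $\phi^{\dagger}\phi=1$ then pins $\phi$ to the sphere $\tr(\phi^{\dagger}\phi)=2g$. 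Your variant has the modest advantage that the bounding quadratic form is exact rational rather than numerically approximated, so no precision analysis is needed; the paper's variant is perhaps more transparent geometrically. Mathematically the two are equivalent, since Rosati positivity is proved precisely via the Riemann form.
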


\begin{proof}
    Compute (Theorem \ref{the endomorphism ring is computable}) the endomorphism ring of $A$, along with its action (Lemma \ref{homology_action}) on $H_1(A, \Z)$.

    Further compute (Lemma \ref{find_polarization_in_H2}) the Chern class of $\mathcal{L}$ as an alternating pairing $\phi(v, w)$ on $H_1(A, \Z)$, and the complex structure $v \mapsto I\cdot v$ (Lemma \ref{homology_basis}) on $H_1(A, \Z) \otimes \R$, in approximate coordinates.

    Compute approximate coordinates for the positive-definite pairing $(v, w) \mapsto \phi(v, Iw)$ on $H_1(A, \Z) \otimes \R$.  Any automorphism of $(A, \mathcal{L})$ must preserve this pairing; this gives us an explicit bound on the size of the coefficients of any such automorphism.  Test all endomorphisms in $\End_K(A)$ up to this bound to determine which fix the Chern class of $\mathcal{L}$.
\end{proof}

\begin{thm}\label{the polarized isomorphism relation is computable}
There is a finite-time algorithm that, on input $(K, (A, \mathcal{L}_1), (B, \mathcal{L}_2))$ with $(A, \mathcal{L}_1)$ and $(B, \mathcal{L}_2)$ two polarized abelian varieties over a number field $K$, returns true if and only if $(A, \mathcal{L}_1) \iso_K (B, \mathcal{L}_2)$.

If so, the algorithm also returns an explicit $K$-isomorphism.
\end{thm}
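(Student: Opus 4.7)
The plan is to reduce the question to a bounded integer search inside $\Hom_K(A, B)$, exploiting the fact that any polarization-preserving isomorphism must be an isometry of the associated positive-definite Riemann forms on integral homology.

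First I would compute a $\Z$-basis $f_1, \ldots, f_r$ of $\Hom_K(A, B)$ together with the matrices $H_1(f_i) \colon H_1(A, \Z) \to H_1(B, \Z)$. This can be done by running the algorithm of Theorem \ref{the endomorphism ring is computable} on the product $A \times B$ and extracting $\Hom_K(A, B)$ from $\End_K(A \times B)$ via the projectors onto the two factors (alternatively: by running the same brute-force-plus-$\ell$-adic-rank-certification loop as in Algorithm \ref{algorithm computing the endomorphism ring}, now searching directly for graphs of homomorphisms inside $A \times B$, with the completeness test comparing against $\dim_{\F_\ell} \ell^{N-1} \cdot \Hom_{\Gal(\Qbar/K)}(A[\ell^N], B[\ell^N])$). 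In particular, we may assume $\dim A = \dim B =: g$, since otherwise no isomorphism exists and the algorithm may return false immediately.

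Next, to any desired precision, I would compute via Lemma \ref{find_polarization_in_H2} and Lemma \ref{homology_basis} the Chern classes $\phi_1, \phi_2$ of $\mathcal{L}_1, \mathcal{L}_2$ as alternating $\Z$-valued forms on $H_1(A, \Z)$ and $H_1(B, \Z)$, together with the complex structures $I_A, I_B$ on the real homologies. Any $K$-isomorphism $f \colon (A, \mathcal{L}_1) \to (B, \mathcal{L}_2)$ of polarized abelian varieties induces a $\Z$-linear isomorphism $f_* \colon H_1(A, \Z) \to H_1(B, \Z)$ that commutes with the complex structures and satisfies $\phi_2(f_* v, f_* w) = \phi_1(v, w)$; consequently $f_*$ is an isometry from the positive-definite Riemann form $R_A(v, w) := \phi_1(v, I_A w)$ to $R_B(v, w) := \phi_2(v, I_B w)$. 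Writing $f_*$ as a matrix in integral bases, this isometry condition confines its entries to a compact set, and thus yields an explicitly computable constant $C \in \Z^+$ such that any such $f_*$ has all matrix entries of absolute value at most $C$. Inverting the $\Z$-linear injection $\Hom_K(A, B) \hookrightarrow \Hom_\Z(H_1(A, \Z), H_1(B, \Z))$ on its image (using the already-computed $H_1(f_i)$) then translates this into an explicit bound $C' \in \Z^+$ on the coefficients $n_i$ in any expansion $f = \sum_i n_i f_i$.

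Finally, I would enumerate all $f = \sum_i n_i f_i \in \Hom_K(A, B)$ with $\max_i |n_i| \leq C'$, and for each such $f$ check exactly on homology whether $H_1(f)$ has determinant $\pm 1$ (so $f$ is a $K$-isomorphism) and whether $H_1(f)^* \phi_2 = \phi_1$ (so $f$ identifies the polarizations). The first $f$ passing both tests is returned; if none does, the algorithm returns false. The main obstacle is the effective constant $C'$, but since positive-definite isometries have operator norm $1$ with respect to the source and target inner products, the bound is obtained by straightforward linear algebra applied to the matrices of $R_A, R_B$ and the $H_1(f_i)$ in fixed integral bases; the enumeration and the two exact tests are then routine. (Lemma \ref{aut_of_pav} is the special case $A = B$, $\mathcal{L}_1 = \mathcal{L}_2$, and the argument here is the natural generalization.)
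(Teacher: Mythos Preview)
Your argument is correct, and it is a genuinely different route from the paper's. The paper first passes to a finite extension $L/K$ over which a Mumford embedding of $B$ exists, uses Lemma \ref{all_mumf_coords} to list all Mumford embeddings of $(A_L, 8\mathcal{L}_1)$, and checks whether any coincides with the chosen one for $(B_L, 8\mathcal{L}_2)$; if so, it gets one $L$-isomorphism $f$, parametrizes all polarized $L$-isomorphisms as $f\circ e$ with $e$ ranging over the finite group computed by Lemma \ref{aut_of_pav}, and then tests each for $\Gal(L/K)$-invariance inside $\Hom_L(A,B)$.

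By contrast, you stay over $K$ throughout: you compute $\Hom_K(A,B)$ directly (via $\End_K(A\times B)$, which is legitimate), and then run the positive-definite-isometry bound of Lemma \ref{aut_of_pav} in the $\Hom$ rather than the $\End$ setting. This avoids Mumford coordinates and Galois descent entirely and is arguably more elementary; the paper's approach, on the other hand, cleanly separates the geometric isomorphism question from the rationality question and reuses the Mumford-coordinate machinery already needed for Lemma \ref{the subfamily of abelian varieties in a family isomorphic to a given one is computable}. Both rest on the same analytic fact (polarization-preserving maps are isometries of the Riemann form, hence bounded), just applied at different stages.
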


\begin{proof}
Find some Mumford embedding of $B$ over an extension $L$ of $K$ (Lemma \ref{algo_mumf_family}), polarized by $8 \mathcal{L}_2$.

Compute (Lemma \ref{all_mumf_coords}) all embeddings in Mumford form of $(A_L, 8 \mathcal{L}_1)$.  Check to see whether any of these embeddings agrees with the Mumford embedding of $B$ found above.  If no, return false.

If yes, let $f \colon A_L \rightarrow B_L$ be the isomorphism given by the equality between the Mumford embeddings of $A_L$ and $B_L$.  
Replace $L$ with its Galois closure over $K$.  
Compute (Theorem \ref{the endomorphism ring is computable}) the endomorphism ring $E$ of $A$ over $L$.  Then $\operatorname{Hom}_L(A, B)$ is a finitely-generated free abelian group, isomorphic to $E$ by $e \mapsto f \circ e$.

Compute the action of the finite Galois group $\Gal(L/K)$ on the finitely generated abelian group $\operatorname{Hom}_L(A, B)$.  Determine the fixed set $\operatorname{Hom}_K(A, B) = \operatorname{Hom}_L(A, B)^{\Gal(L/K)}$.

Apply Lemma \ref{aut_of_pav} to find all automorphisms of $A$ that respect the polarization $\mathcal{L}_1$.  For every such automorphism $e$, determine whether $f \circ e \in \operatorname{Hom}_L(A, B)$ is stable under the action of $\Gal(L/K)$.  If it is (for any $e$), return $f \circ e$; otherwise, return false.
\end{proof}

\begin{thm}\label{the isomorphism relation is computable}
There is a finite-time algorithm that, on input $(K, A, B)$ with $A, B / K$ two abelian varieties over a number field $K$, returns true if and only if $A\iso_K B$.

If so, the algorithm also returns an explicit $K$-isomorphism.
\end{thm}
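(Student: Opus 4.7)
The plan is to reduce to the polarized case, using the fact that the given projective embeddings of $A$ and $B$ automatically furnish them with polarizations via the Néron--Severi classes of $\mathcal{O}(1)$. Call these polarizations $\mathcal{L}_A$ and $\mathcal{L}_B$ respectively. Any $K$-isomorphism $f \colon A \xrightarrow{\sim} B$ must pull $\mathcal{L}_B$ back to \emph{some} polarization on $A$ of the same multidegree as $\mathcal{L}_B$ (equivalently, push $\mathcal{L}_A$ forward to some polarization on $B$ of the same multidegree as $\mathcal{L}_A$), but not necessarily to $\mathcal{L}_A$ itself. So the algorithm must compare $(A, \mathcal{L}_A)$ against \emph{all} polarizations on $B$ of the correct multidegree.

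Concretely, the steps I would carry out are as follows. First, use Lemma \ref{find_polarization_in_H2} to compute the Chern class of $\mathcal{L}_A$ as an alternating form on $H_1(A, \Z)$, and read off its multidegree $(d_1, \ldots, d_g)$ from the elementary divisors of this form (where $g = \dim A$). Second, invoke Theorem \ref{the set of polarizations of given multidegree is computable} on input $(K, B, (d_1, \ldots, d_g))$ to produce a finite set $\{\mathcal{M}_1, \ldots, \mathcal{M}_r\}$ of representatives of the $\Aut_K(B)$-orbits of $K$-polarizations of $B$ of multidegree $(d_1, \ldots, d_g)$. Third, for each $i$ apply Theorem \ref{the polarized isomorphism relation is computable} to the pair $(A, \mathcal{L}_A)$ and $(B, \mathcal{M}_i)$; if any $i$ yields an isomorphism $f_i \colon (A, \mathcal{L}_A) \xrightarrow{\sim} (B, \mathcal{M}_i)$, return \texttt{true} together with the underlying $K$-isomorphism $f_i \colon A \xrightarrow{\sim} B$. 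Otherwise, return \texttt{false}.

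For correctness: if $f \colon A \xrightarrow{\sim} B$ is any $K$-isomorphism, then $f_* \mathcal{L}_A$ is a $K$-polarization on $B$ of the same multidegree as $\mathcal{L}_A$, hence (by Theorem \ref{the set of polarizations of given multidegree is computable}) lies in the $\Aut_K(B)$-orbit of some $\mathcal{M}_i$; composing $f$ with the appropriate automorphism of $B$ gives a polarized isomorphism $(A, \mathcal{L}_A) \xrightarrow{\sim} (B, \mathcal{M}_i)$, which will be detected at the third step. Conversely, any polarized isomorphism detected at the third step in particular gives an unpolarized $K$-isomorphism $A \xrightarrow{\sim} B$.

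There is essentially no obstacle beyond the results already established: all three ingredients (computing the Chern class, enumerating polarizations of a given multidegree, and testing polarized isomorphism) are already available earlier in the paper. The only minor subtlety is being careful that the ``multidegree'' we extract from $\mathcal{L}_A$ matches the convention used in the statement of Theorem \ref{the set of polarizations of given multidegree is computable} — this is straightforward since both are defined via elementary divisors of the Chern class on $H_1$.
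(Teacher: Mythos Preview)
Your proposal is correct and follows essentially the same approach as the paper: the paper's proof is a one-liner invoking Theorems \ref{the set of polarizations of given multidegree is computable} and \ref{the polarized isomorphism relation is computable} (using the multidegree of the implicitly given polarization on $A$), and you have simply unpacked this into explicit steps with a correctness argument.
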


\begin{proof}
Apply Theorems \ref{the set of polarizations of given multidegree is computable} (using e.g.\ the multidegree of the implicitly given polarization of $A$) and \ref{the polarized isomorphism relation is computable}.
\end{proof}

\subsection{Computability of the isogeny class.}

\begin{thm}\label{MW}
There is a finite-time algorithm that, on input $(K, A/K)$ an abelian variety over a number field $K$, returns an $N$ such that, if $B$ is any abelian variety $K$-isogenous to $A$, then there is a $K$-isogeny $A \rightarrow B$ of degree at most $N$.
\end{thm}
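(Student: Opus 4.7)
The plan is to invoke directly the Masser--Wüstholz isogeny theorem (or one of its refinements, e.g.\ Bost's), which furnishes an explicit function $N(g,[K:\Q],h)$ such that any abelian variety $B/K$ that is $K$-isogenous to an abelian variety $A/K$ of dimension $g$ and Faltings height at most $h$ admits a $K$-isogeny $A\to B$ of degree at most $N(g,[K:\Q],h)$. Since the value of this function is explicitly computable once its arguments are known, the only algorithmic content is to extract from the input the quantities $g$, $[K:\Q]$, and an upper bound on the Faltings height of $A$.

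First I would read off $g = \dim A$ and $[K:\Q]$ directly from the input (the former computable from, e.g., Hilbert polynomial of the projective embedding, the latter from the presentation of $K$). The nontrivial step is producing an explicit upper bound for the Faltings height $h_{\mathrm{Fal}}(A)$. For this I would use the given projective embedding of $A$ together with a choice of complex embeddings $K\hookrightarrow\C$: compute, to sufficient precision, a period lattice of each complex fiber $A_\sigma(\C)$ (cf.\ Lemma \ref{homology_basis}) together with the associated polarization form, use this to compute the covolume of the period lattice with respect to the polarization norm, and translate through the standard comparison between the stable Faltings height and the archimedean contributions coming from the Hodge bundle, together with a finite contribution at the primes of bad reduction (which are computable by Theorem \ref{conductors are computable}). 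All of these quantities can be computed to arbitrary precision and hence rigorously upper-bounded.

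Once $g$, $[K:\Q]$, and an upper bound $h$ for $h_{\mathrm{Fal}}(A)$ are in hand, the algorithm simply returns $N := N(g,[K:\Q],h)$, where $N(\cdot,\cdot,\cdot)$ is the explicit function appearing in the Masser--Wüstholz theorem (see \cite{MW}, or \cite{Bost96} for the polarized refinement, which is the relevant form since our abelian varieties come with their induced polarizations). By that theorem, for any $K$-isogenous $B/K$ there is a $K$-isogeny $A\to B$ of degree at most $N$, as required.

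The main obstacle is the Faltings-height bound: Masser--Wüstholz is a black box once the height is known, so all of the actual work is in producing a rigorous archimedean estimate from approximate period data. This is straightforward in principle but notationally heavy; crucially it requires only that one can approximate the periods and the polarization to arbitrary precision, which is afforded by the projective presentation of $A$ together with Lemma \ref{homology_basis} and Lemma \ref{find_polarization_in_H2}. No genuinely new idea beyond Masser--Wüstholz is needed.
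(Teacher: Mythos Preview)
Your approach is correct and is essentially the same as the paper's own proof, which simply cites \cite{MW} and \cite{Bost96} (with a footnote quoting the explicit Gaudron--R\'emond bound in terms of $g$, $[K:\Q]$, and $h(A)$). If anything, you are more explicit than the paper about the one genuinely algorithmic ingredient---producing an effective upper bound for the Faltings height from the given projective presentation---which the paper leaves entirely implicit.
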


\begin{proof}
    This follows from \cite{MW} and \cite{Bost96}.\footnote{For example:
\begin{thm}[Theorem $1.4$ of Gaudron-R\'{e}mond's \cite{gaudron-remond}]
Let $A,A'/K$ be $K$-isogenous abelian varieties over a number field $K$. Write $g := \dim{A}$. Then: there is a $K$-isogeny $\phi: A\to A'$ of degree $$\deg{\phi}\leq \left((14g)^{64g^2}\cdot [K:\Q]\cdot \max(h(A), \log{[K:\Q]}, 1)^2\right)^{2^{10} g^3} =: \kappa(A),$$ where $h(A)$ is the Faltings height of $A$ using Faltings' original normalization.

Consequently, $$|h(A') - h(A)|\leq \frac{1}{2}\log{\kappa(A)}.$$
\end{thm}}
\end{proof}

\begin{thm}\label{isogeny classes are computable}
There is a finite-time algorithm that, on input $(d, K, A/K)$ with $d\in \Z^+$ and $A/K$ an abelian variety over a number field $K/\Q$, returns $\{(B, \mathcal{L})/K : B\sim_K A, \deg(\mathcal{L}) = d\}/\iso_K$.
\end{thm}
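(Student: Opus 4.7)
The plan is to bound the degrees of isogenies we need to consider via the Masser--W\"ustholz theorem, enumerate all $B \sim_K A$ via finite $K$-rational subgroup schemes of $A$, equip each with its polarizations of degree $d$, and de-duplicate.

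First I would apply Theorem \ref{MW} on input $(K, A/K)$ to obtain an integer $N$ such that every $B/K$ isogenous to $A$ is the target of a $K$-isogeny $\phi: A \to B$ of degree at most $N$. Such a $\phi$ has kernel $G := \ker(\phi)$, a $K$-rational finite subgroup scheme contained in $A[N!]$; since we are in characteristic zero $G$ is \'etale, hence determined by $G(\Qbar) \subseteq A[N!](\Qbar)$ together with its $\Gal(\Qbar/K)$-action. Using Lemma \ref{compute_torsion} to compute $A[N!]$ and the Galois action on its $\Qbar$-points, I enumerate the finitely many $\Gal(\Qbar/K)$-stable subgroups $G \subseteq A[N!](\Qbar)$ of order at most $N$; for each such $G$ I compute the quotient $B := A/G$ by a brute-force search (Principle \ref{prin:brute_force}) over tuples consisting of a projective $K$-variety with abelian variety structure and a $K$-homomorphism $\phi: A \to B$, accepting the first tuple for which $\ker(\phi) = G$ as schemes (an algorithmically verifiable condition since $G$ is concretely given), and this search is guaranteed to terminate because $A/G$ exists as a $K$-abelian variety.

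Next, for each $B$ produced in the previous step and for each multidegree $(d_1, \ldots, d_g)$ of positive integers with $d_1 \mid d_2 \mid \cdots \mid d_g$ and $\prod d_i$ matching the given $d$ (a finite list to iterate over), I apply Theorem \ref{the set of polarizations of given multidegree is computable} to compute representatives of the $\Aut_K(B)$-orbits of $K$-polarizations of $B$ of that multidegree. Concatenating over all pairs $(G, (d_i))$ yields a finite list which contains every isomorphism class of pairs $(B, \mathcal{L})/K$ with $B \sim_K A$ and $\deg(\mathcal{L}) = d$, but possibly with repetitions coming from distinct choices of $G$ producing $K$-isomorphic polarized pairs.

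Finally, I de-duplicate the list using Theorem \ref{the polarized isomorphism relation is computable}: for every pair of candidates I test whether $(B, \mathcal{L}) \iso_K (B', \mathcal{L}')$ and retain a single representative per equivalence class, then return the resulting list. The principal obstacle here is the brute-force construction of $A/G$ as an explicit projective variety over $K$, which is extravagantly wasteful but terminates since the quotient is known to exist; everything else reduces to finite enumerations and invocations of previously established algorithms.
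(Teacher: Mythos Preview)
Your proposal is correct and follows essentially the same approach as the paper's proof: bound isogeny degrees via Theorem \ref{MW}, enumerate Galois-stable finite subgroups of the relevant torsion, form the quotients, list polarizations of the right degree via Theorem \ref{the set of polarizations of given multidegree is computable}, and remove duplicates via Theorem \ref{the polarized isomorphism relation is computable}. The only cosmetic differences are your use of $A[N!]$ in place of the paper's $A[N']$ for $N'\leq N$, and your explicit brute-force construction of $A/G$, which the paper leaves implicit.
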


\begin{proof}
Compute $\End_K(A)$. Compute (Theorem \ref{MW}) an $N\in \Z^+$ such that $B\sim_K A$ implies there is a $K$-isogeny $\phi: A\to B$ of degree $\deg{\phi} \leq N$. Compute all subgroups $G\subseteq A[N']$ (with $N' \leq N$) which are $\Gal(\Qbar/K)$-stable, i.e.\ $\Gal(\Qbar/K)\cdot G = G$.
For each such $G$, determine (Theorem \ref{the set of polarizations of given multidegree is computable}) all polarizations $\mathcal{L}$ of degree $d$ on $A/G$.
Return the set (i.e.\ remove duplicates via Theorem \ref{the isomorphism relation is computable}) of all such pairs $(A, \mathcal{L})$.
\end{proof}

\subsection{Computability of a $k$-th root.}

\begin{prop}\label{the property of being a kth power is computable}
There is a finite-time algorithm that, on input $(K, A/K, k)$ with $A/K$ an abelian variety over a number field $K/\Q$ and $k\in \Z^+$, returns true if and only if there is an abelian variety $B/K$ such that $A\iso_K B^{\times k}$.

If so, the algorithm also returns such a $B/K$ along with an explicit $K$-isomorphism.
\end{prop}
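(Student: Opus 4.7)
The plan is to exploit that if $A\cong_K B^{\times k}$ then $B$ is determined up to $K$-isogeny by $A$ and $k$, so that it suffices to (1) identify the correct isogeny class of $B$ from the Wedderburn structure of $\End_K(A)\otimes\Q$, (2) enumerate the finitely many $K$-abelian varieties in that class, and (3) test each candidate using Theorem \ref{the isomorphism relation is computable}.

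More concretely, I would first compute $\End_K(A)$ by Theorem \ref{the endomorphism ring is computable} and then, using the algorithms of Sections \ref{sec:alg_dec_ql_alg} and \ref{alg_dec_num_fld_alg} for semisimple algebras, compute the Wedderburn decomposition $\End_K(A)\otimes_\Z\Q \iso \prod_i M_{n_i}(D_i)$. Note that $A\cong_K B^{\times k}$ forces $\End_K(A)\otimes \Q \iso M_k(\End_K(B)\otimes\Q)$, whence $k\mid n_i$ for every $i$; if this divisibility fails I would immediately return false. Otherwise, I pick the ``upper left corner'' idempotent $\widetilde e = \sum_i \widetilde e_i \in \End_K(A)\otimes\Q$ with $\widetilde e_i \in M_{n_i}(D_i)$ an $(n_i/k) \times (n_i/k)$ identity block, clear denominators to get $m\widetilde e \in \End_K(A)$ for some $m\in\Z^+$, and set $B_0 := \im(m\widetilde e) \subseteq A$, a $K$-abelian subvariety of dimension $\dim A/k$.

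Next, I would enumerate all $B/K$ with $B\sim_K B_0$ by adapting the proof of Theorem \ref{isogeny classes are computable}: invoke Theorem \ref{MW} to get a bound $N$, enumerate all $\Gal(\Qbar/K)$-stable subgroup schemes $G \subseteq B_0[N']$ with $N'\leq N$, form the quotients $B_0/G$, and remove duplicates via Theorem \ref{the isomorphism relation is computable}. For each such $B$, I would then form $B^{\times k}$ (with its product projective embedding) and test whether $B^{\times k}\cong_K A$ via Theorem \ref{the isomorphism relation is computable}; if any candidate passes, I return $B$ along with the explicit isomorphism, and otherwise, once the enumeration is exhausted, I return false.

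The main obstacle is verifying that the true $B$, if it exists, is $K$-isogenous to the computed $B_0$, so that it actually appears in the enumeration. This is a bookkeeping step with the Wedderburn decomposition: if $A\cong_K B^{\times k}$ and $\End_K(B)\otimes\Q \iso \prod_i M_{m_i}(D_i)$ with $A\sim_K \prod_i A_i^{n_i}$ and $B\sim_K \prod_i A_i^{m_i}$ the corresponding isotypic decompositions into $K$-simple factors, then $n_i = k m_i$, and the image of my chosen block idempotent $\widetilde e$ restricted to each isotypic piece $A_i^{n_i}$ is isogenous to $A_i^{m_i}$; summing gives $B_0\sim_K \prod_i A_i^{m_i} \sim_K B$, as required. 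Correctness and termination then follow from the finiteness of the enumeration at each stage.
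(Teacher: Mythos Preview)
Your proposal is correct and follows essentially the same approach as the paper: compute the Wedderburn decomposition of $\End_K^0(A)$, test $k\mid n_i$, cut out a candidate $B_0$ via a suitable idempotent, enumerate its isogeny class using the Masser--W\"ustholz bound, and test each candidate with Theorem \ref{the isomorphism relation is computable}. Your explicit verification that any true $B$ satisfies $B\sim_K B_0$ is a detail the paper leaves implicit, and your direct adaptation of the proof of Theorem \ref{isogeny classes are computable} (rather than invoking it with an unspecified polarization degree) is slightly more precise than the paper's own citation.
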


Here there are a number of clear ways to proceed, e.g.\ by reading the answer off from the $K$-endomorphism ring, via brute force search, or e.g.\ the following.

\begin{proof}
Compute (Theorem \ref{the endomorphism ring is computable}) $\End_K(A)$. Compute (Proposition \ref{num_fld_algebra}) a decomposition
\[\End_K^0(A) \cong \bigoplus_{i=1}^s M_{n_i} (E_i), \]
with each $E_i$ a division algebra. Return false if at least one of the $n_i$'s is not divisible by $k$.

Else (via this isomorphism and suitable elements of the form $\left(\kappa\cdot \delta_{i, a}\cdot \left(\delta_{(j, k), (b, c)}\right)_{j, k = 1}^{n_i}\right)_{i = 1}^s$ with $\kappa\in \Z^+$ sufficiently divisible) compute an abelian subvariety $B/K$ with $B\subseteq A$ such that $A\sim_K B^{\times k}$.

Finally compute all $C\sim_K B$ (Theorem \ref{isogeny classes are computable}) and check if any have $A\iso_K C^{\times k}$ (Theorems \ref{the set of polarizations of given multidegree is computable} and \ref{the isomorphism relation is computable}).
\end{proof}

\subsection{Computability of the N\'{e}ron model.
\label{computability of the neron model section}}

This result is not used in the rest of the paper; we include it for our own amusement.

\begin{thm}\label{neron models are computable}
There is a finite-time algorithm that, on input $(K, A)$ with $K/\Q$ a number field and $A/K$ an abelian variety, outputs its N\'{e}ron model $\mathcal{A} / \o_K$.
\end{thm}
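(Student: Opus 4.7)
The plan is to construct $\mathcal{A}/\o_K$ one prime at a time by a brute-force enumeration of candidate smooth separated models, combined with algorithmic verification of the N\'{e}ron mapping property, leaning on the existence theorems and local characterizations of Bosch--L\"{u}tkebohmert--Raynaud \cite{bosch-lutkebohmert-raynaud} (hereafter BLR). First I would invoke Theorem \ref{conductors are computable} to compute the (finite) set $S$ of primes of bad reduction. Outside $S$, the N\'{e}ron model agrees with any smooth proper model; such a model can be obtained directly from a given projective embedding of $A$ by taking the scheme-theoretic closure in $\mathbb{P}^n_{\o_K}$ and then algorithmically verifying smoothness at each $\pfrak \notin S$ using the Jacobian criterion on the defining equations.

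For each $\pfrak \in S$, I would compute the local N\'{e}ron model $\mathcal{A}_\pfrak/\o_{K,\pfrak}$ by a day-and-night search: brute-force search (Principle \ref{prin:brute_force}) with parameter $H$ over smooth separated $\o_{K,\pfrak}$-schemes of bounded height with generic fiber $A_{K_\pfrak}$ together with a candidate group-scheme structure extending the multiplication law of $A$, paired with a verification step that tests the N\'{e}ron property. By the existence theorem of BLR the N\'{e}ron model does exist with bounded height, so for $H$ sufficiently large the search will eventually locate it. I would then glue the local N\'{e}ron models $\{\mathcal{A}_\pfrak\}_{\pfrak \in S}$ with the smooth model outside $S$, using the uniqueness of the N\'{e}ron model over each punctured spectrum $\Spec\o_K \setminus \{\pfrak\}$ to ensure compatibility of the gluing data.

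The main obstacle is the verification step: given a candidate smooth separated $\o_{K,\pfrak}$-model $\mathcal{A}'_\pfrak$ of $A_{K_\pfrak}$, decide algorithmically whether it is in fact the N\'{e}ron model. To do this I would use the following effectively testable criteria (each extracted from BLR, particularly the N\'{e}ron--Ogg--Shafarevich-style characterization): (i) for any auxiliary prime $\ell$ coprime to the residue characteristic $p$, the reduction map $\mathcal{A}'_\pfrak(\o_{K,\pfrak}^{\ur})[\ell^{M+1}] \to A(K_\pfrak^{\ur})[\ell^{M+1}]$ is a bijection for all sufficiently large $M$ (checkable by Lemma \ref{compute_torsion}, and cross-referenced against the conductor exponent from Algorithm \ref{algorithm computing the conductor}, which pins down the Chevalley decomposition dimensions of the identity component of the special fiber); (ii) the special fiber has the correct number of connected components, checkable by counting $\Fbar_p$-points of each component against the order of the component group implicit in the Grothendieck--Serre/Raynaud formulas; and (iii) the $p$-part of the special fiber's torsion is captured via prolongation of the $p$-divisible group of $A$, verified in the spirit of Lemma \ref{test_finite_flat}. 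Taking these checks together with the maximality property of the N\'{e}ron model in the sense of Raynaud, BLR guarantees that any smooth separated model passing all of them is $\mathcal{A}_\pfrak$; this is the step for which we repeatedly invoke \cite{bosch-lutkebohmert-raynaud} and thus the algorithm is not self-contained.
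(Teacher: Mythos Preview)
Your overall shape---reduce to local N\'{e}ron models at the bad primes and glue---is reasonable, but the verification step has a genuine gap. Criteria (i)--(iii) do not furnish an algorithmic certificate that a candidate smooth separated model $\mathcal{A}'_\pfrak$ \emph{is} the N\'{e}ron model. Criterion (i) as written asks that a certain map be bijective ``for all sufficiently large $M$'', which is not a finitely checkable condition without an a priori bound on $M$; criterion (ii) requires knowing the ``correct'' component group, which is circular (that invariant is precisely what the N\'{e}ron model encodes); and the appeal to ``maximality in the sense of Raynaud'' is not a theorem in BLR that says a smooth model passing these particular tests is N\'{e}ron. Concretely, a proper open subgroup-scheme of the N\'{e}ron model (e.g.\ the identity component) is also smooth, separated, of finite type, and has the correct $\ell$-adic Tate module and the correct conductor exponent---your tests would not reject it.

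The paper sidesteps this by actually \emph{constructing} the local N\'{e}ron model rather than merely recognizing it: Theorem \ref{local neron models are computable} follows BLR's smoothening process (iterated blow-ups of singular loci in the closure, then the $\omega$-minimal model selection and birational group law of BLR \S\S 3--4) step by step, so that at the end one holds the local N\'{e}ron model to any desired $\lambda$-adic precision. Verification of a global candidate then becomes trivial: one checks, via Hensel, that the candidate agrees with the explicitly computed local model modulo a sufficiently high power of $\lambda$. If you want to salvage your approach, you would need either to replace your recognition criteria with this explicit local construction, or to prove a characterization theorem of the form ``a smooth separated group model whose $\ell^M$-torsion over $\o^{\ur}$ surjects onto $A[\ell^M]^{I_\pfrak}$ for one explicitly computable $M$, and whose special fibre has at least the right number of geometric components, is the N\'{e}ron model''---which is plausible but is real work you have not done.
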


Let us show the local version of this statement first.

\begin{thm}\label{local neron models are computable}
There is a finite-time algorithm that, on input $(\ell, K, \lambda, A)$ with $K_\lambda / \Q_\ell$ a finite extension and $A / K_\lambda$ an abelian variety, outputs its N\'{e}ron model $\mathcal{A} / \o_{K, \lambda}$.
\end{thm}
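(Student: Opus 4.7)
The plan is to enumerate candidate smooth separated group-scheme models $\mathcal{A}'/\o_{K,\lambda}$ of $A$ by brute-force search (Principle \ref{prin:brute_force}), and to algorithmically test whether a given candidate is the N\'eron model; the first candidate passing the test is returned. This follows the same template as Algorithm \ref{algorithm computing the conductor}.

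The key algorithmic test for the N\'eron property is as follows. By definition, $\mathcal{A}'$ is the N\'eron model of $A$ if and only if the canonical map $\mathcal{A}'(\o_{K,\lambda}^{\mathrm{ur}})\to A(K_\lambda^{\mathrm{ur}})$ is bijective. Injectivity is automatic from separatedness, and by Hensel's lemma applied to the smooth $\mathcal{A}'$, surjectivity amounts to surjectivity of the reduction map $A(K_\lambda^{\mathrm{ur}})\to \mathcal{A}'(k_\lambda^{\mathrm{alg}})$. This in turn is equivalent to the equality $|\mathcal{A}'(k_\lambda^{\mathrm{alg}})[N]| = |A[N]^{I_\lambda}|$ for all $N$ coprime to the residue characteristic of $\lambda$, by density of torsion in the reduction map together with the fact that any surjection of smooth commutative algebraic groups whose fibers agree at torsion level is an isomorphism onto its image. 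Both sides are algorithmically computable: the left by finding zeros of the $N$-torsion equations on $\mathcal{A}'$, and the right via Lemma \ref{compute_torsion}. A finite uniform bound on the $N$'s that must be tested can be extracted from a computable upper bound on the exponent of the component group, as in the proof of Theorem \ref{conductors are computable}.

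The algorithm then has the form: iterate over increasing complexity parameters $H\in \Z^+$; for each $H$, enumerate all smooth separated $\o_{K,\lambda}$-group schemes $\mathcal{A}'$ of finite type, equipped with a group-scheme structure and an identification of the generic fiber with $A$, whose total complexity (number of affine charts, degrees of equations and gluings, heights of coefficients) is at most $H$; for each such candidate, run the test above. The N\'eron model exists as a smooth separated $\o_{K,\lambda}$-group scheme of finite type, hence has some finite complexity $H_0$ and will be found eventually; by the torsion-count test, no non-N\'eron candidate passes the test, so the first passing candidate is correct.

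The main obstacle is the enumeration of candidate models together with their gluing data: the N\'eron model is not affine in general (since its component group may be nontrivial), so one must enumerate gluings of affine charts rather than merely affine schemes. This is technically fiddly but algorithmic once an a priori bound on the number of components is in hand (which itself follows from the Galois-theoretic analysis in the proof of Theorem \ref{conductors are computable}); all the remaining substeps --- smoothness testing, checking the group-scheme axioms, computing $N$-torsion on $\mathcal{A}'$, and extracting the component-group bound --- are standard.
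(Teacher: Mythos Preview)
Your approach is genuinely different from the paper's, but it has a real gap in the proposed test for N\'eron-ness.

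The paper does \emph{not} brute-force search for the local N\'eron model. Instead it \emph{constructs} it: starting from the scheme-theoretic closure of a projective embedding, it runs the explicit smoothening process of Bosch--L\"utkebohmert--Raynaud (iterated blowups along canonically defined centres) to produce a weak N\'eron model, then selects the $\omega$-minimal components, glues, and finally brute-force searches only for the group-scheme structure on the resulting scheme. The point is that BLR's construction is already effective, so no test for ``is this candidate the N\'eron model?'' is ever needed locally. (The paper \emph{does} use brute-force search in the global theorem, but there the test is literally ``compare with the local N\'eron model mod $\lambda^n$'', which presupposes the local construction.)

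Your test, by contrast, has several problems. First, you write that ``by definition, $\mathcal{A}'$ is the N\'eron model if and only if $\mathcal{A}'(\o_{K,\lambda}^{\mathrm{ur}})\to A(K_\lambda^{\mathrm{ur}})$ is bijective''. This is not the definition: it is the \emph{weak} N\'eron property. That a smooth separated group-scheme model satisfying the weak N\'eron property must already be the N\'eron model is a nontrivial statement (cf.\ BLR \S7.1--7.2), and you give no argument for it. Second, your reduction of the weak N\'eron property to torsion-counting does not parse: there is no ``reduction map $A(K_\lambda^{\mathrm{ur}})\to \mathcal{A}'(k_\lambda^{\mathrm{alg}})$'' unless points already extend, and prime-to-$p$ torsion is certainly not dense in the $p$-adic group $A(K_\lambda^{\mathrm{ur}})$. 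Even granting the intended statement $|\mathcal{A}'_s(\bar k)[N]|=|A[N]^{I_\lambda}|$ for all prime-to-$p$ $N$, this equality of cardinalities does not obviously force the canonical map $\mathcal{A}'\to\mathcal{N}$ to be an isomorphism (e.g.\ it says nothing about whether $\mathrm{Lie}(\mathcal{A}')\to\mathrm{Lie}(\mathcal{N})$ is surjective). Third, the ``computable upper bound on the exponent of the component group'' you invoke from the proof of Theorem~\ref{conductors are computable} does not exist there: that proof computes the exponent only for models already enumerated up to a given height, and lets the height grow in a sandwich argument --- it never produces an \emph{a priori} bound.
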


Note that here we leave implicit the evident finitary statement in terms of $\ell$-adic approximations.

\begin{proof}[Proof of Theorem \ref{local neron models are computable}.]
Implicitly we are given a "desired precision" $N\in \Z^+$ and, to the extent that we can, we will leave implicit the fact that we are working at finite precision (i.e.\ all varieties are over $\o_K / \lambda^{\widetilde{N}}$ and the computations are repeated incrementing $\widetilde{N}\mapsto \widetilde{N} + 1$ until the desired precision is reached).

Let $\omega\neq 0$ be a left-invariant differential on $A / K_\lambda$.

$A / K_\lambda$ is equipped with a polarization, so let $\mathcal{B} / \o_{K, \lambda}$ be the scheme-theoretic closure of $A$ in the corresponding projective space over $\o_{K, \lambda}$. Compute $\mathcal{B}\pmod*{\lambda^N}$. Compute (via Hensel) an $n\in \Z^+$ such that a point $P\in (\mathcal{B}\bmod{\lambda})(\Fbar_\ell)$ lifts to $(\mathcal{B}\bmod{\lambda^n})(\o_K^\sh / \lambda^n)$ if and only if it lifts to $\mathcal{B}(\o_K^\sh)$. Without loss of generality (by replacing $N\mapsto \max(N, n)$) $N\geq n$, and indeed (by incrementing $N\mapsto N + 1$ and restarting the algorithm if necessary, or else via e.g.\ Hensel) $N$ is so large that all the below steps have errors contained in $\lambda^N$.

We will apply the "smoothening process" detailed in Chapter $3$ (and specifically on page $72$) of Bosch-L\"{u}tkebohmert-Raynaud's \cite{bosch-lutkebohmert-raynaud} to $\mathcal{B} / \o_{K, \lambda}$ to obtain a weak N\'{e}ron model $\widetilde{\mathcal{B}} / \o_{K, \lambda}$, exactly as in the first paragraph of their page $74$.

Write $\mathcal{B}_0 := \mathcal{B}$.

For each $i\in \Z^+$, we will inductively define $\mathcal{B}_i$ as follows.

Let $E^{(i, 0)} := \emptyset$.

For each $j\in \Z^+$, we will inductively define $E^{(i, j)}$, $F^{(i, j)}$, $Y_{/ (\o_K / \lambda)}^{(i, j)}$, and $U_{/ (\o_K / \lambda)}^{(i, j)}$ as follows.

Let $F^{(i, j)}\subseteq (\mathcal{B}\bmod{\lambda^N})(\o_K^\sh / \lambda^N)$ be the subset of points not in $E^{(i, j - 1)}$ which reduce into the singular locus of $(\mathcal{B}\bmod{\lambda}) / \Fbar_\ell$ and let $Y_{/ (\o_K / \lambda)}^{(i, j)}\subseteq \mathcal{B}\bmod{\lambda}$ be the scheme-theoretic closure of the reduction of $F^{(i, j)}$. (This latter closed subscheme is computable, again by Hensel.)

Let $U_{/(\o_K / \lambda)}^{(i, j)}\subseteq Y_{/ (\o_K / \lambda)}^{(i, j)}$ be the largest open subscheme which is smooth over $\o_K / \lambda$ and over which $\Omega^1_{\mathcal{B} / \o_K}\mid_{Y_{/ (\o_K / \lambda)}^{(i, j)}}$ is locally free. (This dense open subscheme is computable.)

Let $E^{(i, j)}\subseteq F^{(i, j)}$ be the points reducing into $U_{/ (\o_K / \lambda)}^{(i, j)}$.

Because the dimensions of the $Y_{/ (\o_K / \lambda)}^{(i, j)}$ are strictly decreasing it follows that for $j$ explicitly sufficiently large $Y_{/ (\o_K / \lambda)}^{(i, j)} = \emptyset$. Let $t_i\in \Z^+$ be minimal with this property.

If $t_i = 0$ then break the loop over $i$ and let $\widetilde{\mathcal{B}} := \mathcal{B}_i$.

Otherwise, let $\mathcal{B}_i$ be the blowup of $\mathcal{B}_{i - 1}$ at $Y_{/ (\o_K / \lambda)}^{(i, t_i - 1)}$ and continue the loop.

The proof of Theorem $2$ of Section $3.4$ of Bosch-L\"{u}tkebohmert-Raynaud's \cite{bosch-lutkebohmert-raynaud} implies that this procedure terminates and moreover (and indeed this is how it is used in the proof of Corollary $4$ of Section $3.1$ of the same) that the resulting $\widetilde{\mathcal{B}}$ is a weak N\'{e}ron model of $A / K_\lambda$.

Write $\widetilde{\mathcal{B}}_i$ for the preimages in $\widetilde{\mathcal{B}}$ of the irreducible components of $(\widetilde{\mathcal{B}}\bmod{\lambda}) / (\o_K / \lambda)$.

For each $i$, let $n_i$ be the valuation of $\omega$ at the generic point of the special fibre of $\widetilde{\mathcal{B}}_i$.

Let $n_* := \min_i n_i$ and let $S := \{i : n_i = n_*\}$. Then by Lemma $1$ and Proposition $2$ of Section $4.3$ of Bosch-L\"{u}tkebohmert-Raynaud's \cite{bosch-lutkebohmert-raynaud} it follows that an $\omega$-minimal (their nomenclature) $\o_{K, \lambda}$-model of $A / K_\lambda$ is equivalent (in the sense of page $105$ of Bosch-L\"{u}tkebohmert-Raynaud's \cite{bosch-lutkebohmert-raynaud}) to one of the $\widetilde{\mathcal{B}}_i$ with $i\in S$.

Now we may simply follow the proof of Proposition $4$ of Section $4.3$ of Bosch-L\"{u}tkebohmert-Raynaud's \cite{bosch-lutkebohmert-raynaud} to construct an $\o_{K, \lambda}$-model $\widetilde{\widetilde{\mathcal{B}}} / \o_{K, \lambda}$: shrink (via a finite computation on the special fibre) the special fibres of each of the $\widetilde{\mathcal{B}}_i$ for $i\in S$ to obtain $\widetilde{\mathcal{B}}_i'$, say, so that the diagonal of $A\times_{K_\lambda} A$ is Zariski closed inside $\widetilde{\mathcal{B}}_i'\times_{\o_{K, \lambda}} \widetilde{\mathcal{B}}_j'$ when $i\neq j$ and $i, j\in S$, and then glue the $\widetilde{\mathcal{B}}_i'$ for $i\in S$ along their generic fibres. The proof of Proposition $5$ of Section $4.3$ of Bosch-L\"{u}tkebohmert-Raynaud's \cite{bosch-lutkebohmert-raynaud} then constructs an explicit $\o_{K, \lambda}$-birational group law on the resulting model $\widetilde{\widetilde{\mathcal{B}}}$.

Finally it is a matter of executing a brute-force search for $\o_{K, \lambda}$-group scheme structure (i.e.\ for the multiplication and inversion maps) on $\widetilde{\widetilde{\mathcal{B}}}$ --- Theorem $6$ of Section $4.3$ of Bosch-L\"{u}tkebohmert-Raynaud's \cite{bosch-lutkebohmert-raynaud} amounts to the statement that this search terminates in finite time.

The resulting group scheme over $\o_{K, \lambda}$ is the N\'{e}ron model of $A / K_\lambda$ by Corollary $4$ of Section $4.4$ of Bosch-L\"{u}tkebohmert-Raynaud's \cite{bosch-lutkebohmert-raynaud}, as desired.
\end{proof}

\begin{proof}[Proof of Theorem \ref{neron models are computable}.]
Write $\mathcal{A} / \o_K$ for the N\'{e}ron model of $A/K$.

First note the following finite-time test of whether or not a smooth model $\mathcal{B} / \o_K$ is the N\'{e}ron model of $A$: compute (in the evident way) a finite set $T\supseteq S$ of primes of $K$ containing all ramified primes of $K/\Q$ such that $\mathcal{B}$ has good reduction outside $T$. For each prime $\lambda\in T$, compute (via Hensel) an $n\in \Z^+$ such that if the canonical map induced by the N\'{e}ron mapping property is an $\left(\o_K / \lambda^n\right)$-isomorphism between $\mathcal{B}\pmod*{\lambda^n}$ and the mod-$\lambda^n$ reduction of the N\'{e}ron model of $A / K_\lambda$, then said map is an $\o_{K, \lambda}$-isomorphism. Then check (via Theorem \ref{local neron models are computable}) in finite time whether this is the case. If this test passes for each $\lambda\in T$, then return $\mathcal{B} / \o_K$.

This test is indeed only passed by $\mathcal{A} / \o_K$ --- specifically, the N\'{e}ron mapping property produces a map $\mathcal{B}\to \mathcal{A}$ which is an isomorphism at all primes $\lambda$: at all $\lambda\in T$ by construction, and at all $\lambda\not\in T$ because of functoriality and the fact that abelian schemes are uniquely N\'{e}ron models of their generic fibres (alternatively consider $p$-divisible groups).

Now we simply brute-force search through smooth models $\mathcal{B} / \o_K$ of $A / K$ by enumerating integral models of abelian varieties, testing smoothness (a calculation of e.g.\ a Gr\"{o}bner basis), and then testing $K$-isomorphism of generic fibres (via Theorem \ref{the isomorphism relation is computable}).
\end{proof}

\subsection{Computability of a nonisotrivial family over a given curve.}

\begin{thm}\label{computability of a nonisotrivial family}
There is a finite-time algorithm that, on input $(K, C/K)$ with $C/K$ a smooth projective hyperbolic curve over a number field $K/\Q$, outputs a nonisotrivial family $A \rightarrow C$ of abelian varieties, defined over $K$.
\end{thm}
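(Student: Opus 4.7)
The plan is to combine a classical existence result with brute-force enumeration. For existence: for any smooth projective curve $C/K$ of genus $\geq 2$, a nonisotrivial family of abelian varieties $\pi\colon A \to C$ defined over $K$ can be built by a Kodaira-Parshin-type construction. Concretely, one takes a suitable finite cover $X \to C \times_K C$ ramified along (a multiple of) the diagonal $\Delta$, chosen so that the requisite covering datum is defined over $K$, and sets $A := \Jac(X/C)$ with structure map induced by the first projection $X \to C$. Nonisotriviality follows from the fact that distinct branch points in $C$ yield non-isomorphic cyclic covers (since $C$ has genus $\geq 2$), and hence non-isomorphic Jacobians.

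The algorithm then brute-force searches (Principle \ref{prin:brute_force}) over finite data specifying a closed embedding $A \subseteq \P^N_C$, the group-scheme structure morphisms $m\colon A \times_C A \to A$, $\iota\colon A \to A$, $e\colon C \to A$, and a pair of points $s_1, s_2 \in C(\Qbar)$ (given as coordinates in some finite extension of $K$). For each candidate, one verifies algorithmically that (i) $\pi$ is smooth and proper (Jacobian criterion plus projectivity), (ii) the structure morphisms satisfy the group-scheme axioms as polynomial identities, (iii) geometric fibers are connected, and (iv) the polarized fibers $(A_{s_1}, \mathcal{L}_1)$ and $(A_{s_2}, \mathcal{L}_2)$, with $\mathcal{L}_i$ induced from the embedding, are non-isomorphic over $\Qbar$. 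For (iv), after computing the finite polarized automorphism groups of $(A_{s_i}, \mathcal{L}_i)$ via Theorem \ref{the endomorphism ring is computable}, the field of definition of any $\Qbar$-isomorphism is bounded in terms of those automorphism groups; Theorem \ref{the polarized isomorphism relation is computable}, applied over a sufficiently large computable extension $L/K$, then decides the question.

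Termination follows because, by existence, such a family exists, and for any nonisotrivial family the associated classifying map $C \to \mathcal{A}_{g, d}$ is non-constant, so generic pairs of $\Qbar$-points yield non-isomorphic polarized fibers; the day-and-night search will eventually exhibit both a valid family and a witnessing pair. The main obstacle is step (iv): whereas isotriviality is a global statement about all geometric fibers, we convert its negation into the existence of a single witnessing pair of non-isomorphic polarized fibers, which becomes a finite check once one bounds the field of definition of potential isomorphisms using the finiteness of polarized automorphism groups.
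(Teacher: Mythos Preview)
Your proposal is correct and follows the same approach as the paper: existence via the Kodaira--Parshin construction, then brute-force search (Principle \ref{prin:brute_force}). The paper's proof is a single line (``Brute-force search''), so your elaboration of how to certify nonisotriviality---by exhibiting a witnessing pair of geometrically non-isomorphic polarized fibers and bounding the field over which any putative isomorphism would be defined---fills in detail that the paper leaves entirely implicit.
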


Note that valid outputs exist for all inputs, thanks to e.g.\ the Kodaira-Parshin family (see for example \cite[\S 7]{LV}).

\begin{proof}
Brute-force search (see Section \ref{sec:brute_force}).
\end{proof}

\subsection{Computability of an integral model.}

\begin{thm}\label{computability of an integral model}
    There is a finite-time algorithm that, on input $(K, C, A \rightarrow C)$, with $K/\Q$ a number field, $C$ a smooth projective hyperbolic curve over $K$, and $A \rightarrow C$ a nonisotrivial family of abelian varieties over $C$, outputs a finite set $S$ of places of $K$, such that for every $x \in C(K)$, the fiber $A_x$ has good reduction at all places of $K$ outside $S$.
\end{thm}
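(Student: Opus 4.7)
The plan is to spread the whole family $\mathcal{A} \to C \to \Spec K$ out to an integral model over $\Spec \mathcal{O}_{K,S}$ for some algorithmically produced finite $S$, and then read off good reduction from the valuative criterion of properness. Concretely, first clear denominators in the equations defining the given projective embedding of $C$ to obtain a closed subscheme $\mathcal{C} \subseteq \P^M_{\mathcal{O}_K}$ with generic fiber $C/K$. Computing the non-smooth locus of $\mathcal{C} \to \Spec \mathcal{O}_K$ via the Jacobian criterion (an explicit Gröbner-basis calculation) and projecting that locus down to $\Spec \mathcal{O}_K$, let $S_0$ be the resulting finite set of primes; then $\mathcal{C} \to \Spec \mathcal{O}_{K,S_0}$ is smooth projective.

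Next repeat this procedure for each ingredient of the algebraic group scheme data of $\mathcal{A} \to C$, namely the equations defining the embedding of $\mathcal{A}$ as a projective $C$-scheme, the identity section $e \colon C \to \mathcal{A}$, the multiplication $\mathcal{A} \times_C \mathcal{A} \to \mathcal{A}$, and the inverse $\mathcal{A} \to \mathcal{A}$; adjoin to $S_0$ the finitely many primes at which any of these morphisms fails to spread, or at which the spread of $\mathcal{A} \to \mathcal{C}$ fails to be smooth projective. Finally, adjoin the finitely many primes at which the geometric fibers of $\mathcal{A} \to \mathcal{C}$ fail to be connected (detectable, given smoothness, by checking that $f_\ast \mathcal{O}_{\mathcal{A}} = \mathcal{O}_{\mathcal{C}}$ via Stein factorization; this is an open condition on the base whose complement is an algorithmically computable closed subset). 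Call the resulting finite set of primes $S$. Over $\Spec \mathcal{O}_{K,S}$ the morphism $\mathcal{A} \to \mathcal{C}$ is then a smooth projective group scheme with geometrically connected fibers, i.e.\ an abelian scheme over $\mathcal{C}$.

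To conclude: given $x \in C(K)$, properness of $\mathcal{C}/\mathcal{O}_{K,S}$ and the valuative criterion yield a unique extension $\widetilde{x} \colon \Spec \mathcal{O}_{K,S} \to \mathcal{C}$ of $x$; pulling $\mathcal{A}$ back along $\widetilde{x}$ produces an abelian scheme over $\mathcal{O}_{K,S}$ whose generic fiber is $\mathcal{A}_x$, so $\mathcal{A}_x / K$ has good reduction at every prime outside $S$, as desired.

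The main work here is bookkeeping rather than mathematics: every property we need (smoothness, projectivity, fiberwise geometric connectedness, and the group-scheme axioms) is either a closed condition on the total space that is already known on the generic fiber, or an open condition on the base that therefore defines a cofinite set of good primes. Each of these conditions is testable algorithmically via standard ideal-theoretic computations (Gröbner bases, primary decomposition, computing the image of a projection), so no deeper input is required, and the only real obstacle is writing down, in a uniform way, the passage from the generic-fiber data to an integral model where the required open conditions are witnessed by the inversion of an explicit finite list of primes.
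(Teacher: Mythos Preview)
Your proposal is correct and follows essentially the same approach as the paper: spread out over $\mathcal{O}_K$, compute the finite set of primes where smoothness fails, and use properness of $\mathcal{C}/\mathcal{O}_{K,S}$ to extend $K$-points to $\mathcal{O}_{K,S}$-points. If anything, you are more careful than the paper's own proof, which simply asserts that the fiber $\mathcal{A}_x$ is a ``smooth abelian scheme'' after checking only smoothness of $\mathcal{A}\to\mathcal{C}\to\Spec\mathcal{O}_K$; you rightly note that the group-scheme structure maps and fiberwise geometric connectedness must also be spread out, and that each of these is an algorithmically testable open condition on the base.
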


\begin{proof}
    Take any integral model $\mathcal{A} \rightarrow \mathcal{C}$ of $A \rightarrow C$ over $\mathcal{O}_K$.  (We can produce such a model by simply writing down defining equations for $A$ and $C$ in projective coordinates, and clearing denominators.)

    The locus in $\operatorname{Spec} \mathcal{O}_K$ over which $\mathcal{A} \rightarrow \mathcal{C}$ is a smooth morphism of smooth schemes is an effectively computable open set $U$; compute it, and take $S$ to be the set of points (i.e.\ primes) of $\mathcal{O}_K$ not contained in this $U$.

    Then every $K$-point $x \in C(K)$ extends to an $\mathcal{O}_{K, S}$-point $x \in \mathcal{C}(\mathcal{O}_{K, S})$, and the fiber $\mathcal{A}_x$ is a smooth abelian scheme over $\mathcal{O}_{K, S}$.
\end{proof}

\newpage

\section{Computability of the set of polarizations of given degree.}
\label{comp_pol_section}

In this and the following section we prove Theorem \ref{the set of polarizations of given multidegree is computable}: we show how to compute all polarizations, up to automorphism, of given degree, on a given abelian variety.  

That there are only finitely many such polarizations is a theorem of Narisimhan and Nori \cite{NN};
we need to prove that the finite list can be effectively computed.

Narasimhan and Nori reduce the statement about principal polarizations to a theorem of Borel and Harish-Chandra \cite[Theorem 6.9]{BHC} about orbits of arithmetic groups on lattices.  The reduction relies on \cite[Lemma 3.1]{NN}, showing that a certain algebraic group acting on a certain variety \emph{over $\mathbb{C}$} has only finitely many orbits.
We will explicitly enumerate these orbits.

Many of the results of \cite{BHC} are made algorithmic in a paper of Grunewald and Segal \cite{grunewald-segal}.
While \cite[Theorem 6.9]{BHC} is not treated in \cite{grunewald-segal}, 
it is quickly reduced to previous lemmas that are.

There is one piece of the argument of \cite{BHC} that we were unable to make effective in general: in order to determine all the integral orbits of $G$ on $W$, we need to determine which real orbits of $W$ contain a rational point, and identify a specific rational point in each.  In the special case relevant to abelian varieties we do so by an explicit calculation (Lemma \ref{the set of integral orbits in a relevant real orbit is computable}).

\subsection{Introduction and setup.}

Let us recall from \cite{NN} the connection between polarizations and the endomorphism algebra.

Let $A$ be an abelian variety, and fix an ample line bundle $\mathcal{L}_0$ on $A$.
This gives rise to a Rosati involution $\theta = \theta_{\mathcal{L}_0}$, defined by 
\[ \theta(f) = \phi_{\mathcal{L}_0}^{-1} \circ \hat{f} \circ \phi_{\mathcal{L}_0}. \]
For any other line bundle $\mathcal{L}$, the composition $\phi_{\mathcal{L}_0}^{-1} \circ \phi_{\mathcal{L}}$ is an endomorphism of $A$ that is stable under $\theta$;
the map 
\[\rho(\mathcal{L}) = \phi_{\mathcal{L}_0}^{-1} \circ \phi_{\mathcal{L}} \]
defines an injection
\[ \operatorname{NS}(A) \rightarrow (\operatorname{End}(A) \otimes \mathbb{Q})^{\theta}, \]
whose image is the lattice 
\[ \{\phi_{\mathcal{L}_0}^{-1} \circ f | f \in \operatorname{Hom}(A, A') \} \cap (\operatorname{End}(A) \otimes \mathbb{Q})^{\theta} \]
in the $\theta$-fixed subspace of $(\operatorname{End}(A) \otimes \mathbb{Q})$ (\cite[\S 20, Thm.\ 2 and \S 23, Thm.\ 3]{MumAV}).

Let $E := \End(A)$; write $E_{\mathbb{Q}} = \End(A) \otimes \mathbb{Q}$, and let $E_{\mathbb{Q}}^{\theta}$ be the subspace fixed by $\theta$.

We are interested in principal polarizations $\mathcal{L}$ on $A$, up to isomorphism of the pair $(A, \mathcal{L})$.  We consider two polarizations $\mathcal{L}_1$ and $\mathcal{L}_2$ equivalent if there exists an automorphism $f$ of $A$ such that $f^* \mathcal{L}_1$ agrees with $\mathcal{L}_2$ up to an element of $\operatorname{Pic}^0$.
On the level of the endomorphism algebra, we have \[ \rho(f^* \mathcal{L}) = \theta(f) \rho(\mathcal{L}) f, \]
giving an action of $E_{\mathbb{Q}}$ on $E_{\mathbb{Q}}^{\theta}$.

We want to compute a set of representatives for the (finitely many) orbits of the integral group $E$ on the integral lattice $NS(A) \subseteq E_{\mathbb{Q}}^{\theta}$.
The fact that these orbits are finite in number is \cite[Theorem 6.9]{BHC}.
Using results of \cite{grunewald-segal}, it is straightforward to compute these representatives, provided one is given a rational ``basepoint'' in each real orbit;
we do this in Section \ref{bhc_section} and Lemma \ref{computation of reps for integral orbits in a real orbit}.

On the other hand, we do not know of any procedure in the generality of \cite[Theorem 6.9]{BHC} to determine which real orbits contain rational points!  In our particular setting, we explicitly compute the real orbits (Proposition \ref{the set of integral orbits in a relevant real orbit is computable}), and we see that every real orbit contains a rational point -- which means that we can find the required basepoints by brute-force search.

\subsection{Computation of some polarization.}

First a definition.

\begin{defn}
Let $A,B/K$ be abelian varieties over $K$ and $\phi: A\to B$ a $K$-isogeny. Then: $\phi^\vee: B\to A$ is the $K$-homomorphism such that $\phi^\vee\circ \phi = \deg{\phi}$ as elements of $\End_K(A)$. (It follows then that $\phi\circ \phi^\vee = \deg{\phi}$ as elements of $\End_K(B)$.)
\end{defn}

\begin{prop}\label{some polarization is computable}
There is a finite-time algorithm that, on input $(K, A/K)$ with $A/K$ an abelian variety over a number field $K/\Q$, outputs $$(s, (e_i)_{i=1}^s, (n_i)_{i=1}^s, (E_i)_{i=1}^s, (\lambda_i)_{i=1}^s)$$ where $$\End_K^0(A)\iso \bigoplus_{i=1}^s M_{n_i}(E_i)$$ with $s\in \Z^+$, each $E_i$ a division algebra and each $e_i\in E$ the elementary idempotent projecting onto the $i$-th summand, and each $\lambda_i: A_i\to A_i^*$ a $K$-polarization, where $$A_i := \diag(\underbrace{1, 0, \ldots, 0}_{n_i})\cdot e_i\cdot A.$$
\end{prop}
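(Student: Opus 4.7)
The plan is to first compute the endomorphism algebra and its Wedderburn decomposition, then extract each $A_i$ as the image of a concrete integer endomorphism representing the corresponding rational idempotent, and finally restrict the ambient polarization (inherited from the given projective embedding of $A$) to each $A_i$.

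Concretely, I would first invoke Theorem \ref{the endomorphism ring is computable} to obtain a $\Z$-basis of $\End_K(A)$, giving the finite-dimensional semisimple $\Q$-algebra $\End_K^0(A)$ together with its multiplication table. I would then feed this algebra into Proposition \ref{num_fld_algebra} to extract the decomposition $\End_K^0(A)\iso \bigoplus_{i=1}^s M_{n_i}(E_i)$ with each $E_i$ a division algebra, the primitive central idempotents $e_i$, and explicit isomorphisms realizing each summand as $M_{n_i}(E_i)$. Using these isomorphisms I can write down each element $\diag(1,0,\ldots,0)\cdot e_i$ as an explicit rational combination of the basis of $\End_K(A)$.

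Next, for each $i$, I would choose $N_i\in \Z^+$ large enough that $f_i := N_i\cdot \diag(1,0,\ldots,0)\cdot e_i$ lies in the integral ring $\End_K(A)$, and compute the scheme-theoretic image $A_i := f_i(A)\subseteq A$ --- a $K$-abelian subvariety --- by a Gr\"obner-basis calculation in the ambient projective space. Note that $f_i^2 = N_i\cdot f_i$ in $\End_K(A)$ (because $\diag(1,0,\ldots,0)\cdot e_i$ is an idempotent in $\End_K^0(A)$), so $f_i\vert_{A_i}$ is multiplication by $N_i$ on $A_i$; this confirms that $A_i$ is (a concrete representative of) the abelian subvariety cut out by the rational idempotent $\diag(1,0,\ldots,0)\cdot e_i$.

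Finally, I would build $\lambda_i$ by restricting the polarization that $A$ already carries. The given projective embedding of $A$ supplies the ample line bundle $\mathcal{L} := \mathcal{O}_A(1)$ and thence the polarization $\phi_\mathcal{L}: A\to A^*$, both of which are computable from the embedding data. Writing $j_i: A_i\hookrightarrow A$ for the closed immersion computed in the previous step, the pullback $j_i^*\mathcal{L}$ remains ample on $A_i$ (since ampleness is preserved under closed immersions into projective space), so it induces the desired polarization
\[ \lambda_i := \widehat{j_i}\circ \phi_\mathcal{L}\circ j_i: A_i\to A_i^*, \]
where $\widehat{j_i}: A^*\to A_i^*$ is the dual morphism, computable by functoriality of the dual abelian variety. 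The main obstacle is really the algebraic step --- producing an explicit Wedderburn decomposition of $\End_K^0(A)$ together with the central idempotents and summand isomorphisms --- but that task is precisely what Proposition \ref{num_fld_algebra} delivers as a black box; the remaining geometric ingredients (taking the image of an endomorphism, restricting an ample line bundle, dualizing a closed immersion) are all routine given the explicit equations and the endomorphism data.
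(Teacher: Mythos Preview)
Your argument is correct and follows the same outline as the paper: compute $\End_K(A)$ via Theorem \ref{the endomorphism ring is computable}, decompose $\End_K^0(A)$ via Proposition \ref{num_fld_algebra}, realize each $A_i$ as the image of an integral multiple of the relevant idempotent, and then produce a polarization on each $A_i$. The one place you differ from the paper is the final step: the paper simply invokes brute-force search to find \emph{some} $K$-polarization $\lambda_i$ on $A_i$, whereas you construct $\lambda_i$ explicitly as $\widehat{j_i}\circ\phi_{\mathcal{L}}\circ j_i = \phi_{j_i^*\mathcal{L}}$ by restricting the ambient $\mathcal{O}(1)$ --- this is a cleaner and more direct construction (and incidentally certifies that the brute-force search terminates), but the overall strategy is the same.
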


\begin{proof}[Proof of Proposition \ref{some polarization is computable}.]
Compute, via Theorem \ref{the endomorphism ring is computable}, $\End_K^0(A)$. Compute, via Proposition \ref{num_fld_algebra}, $$\left(s, (e_i)_{i=1}^s, (n_i)_{i=1}^s, (E_i)_{i=1}^s\right)$$ where $$\End_K^0(A)\iso \bigoplus_{i=1}^s M_{n_i}(E_i)$$ with each $E_i$ a division algebra and each $e_i\in E$ the elementary idempotent projecting onto the $i$-th summand. (Note that then $e_i\cdot A\sim_K e_j\cdot A$ implies $i=j$.) Thus via $$\diag(\underbrace{1, 0, \ldots, 0}_{n_i})\in M_{n_i}(E_i),$$ i.e.\ via letting $A_i := \diag(\underbrace{1, 0, \ldots, 0}_{n_i})\cdot e_i\cdot A$, we obtain a $K$-isogeny decomposition $$A\sim_K \prod_{i=1}^s A_i^{\times n_i}$$ with each $A_i/K$ $K$-simple (indeed with $\End_K^0(A_i)\simeq E_i$) and $A_i\sim_K A_j$ only if $i = j$. Finally compute via brute force a $K$-polarization $\lambda_i: A_i\to A_i^*$ on each $A_i/K$.
\end{proof}

\subsection{Reduction to computing the set of symmetric endomorphisms of given degree modulo automorphisms.}

\begin{prop}\label{the set of endomorphisms of given degree modulo automorphisms is computable}
There is a finite-time algorithm that, on input $$(K, \widetilde{A}/K, \widetilde{\lambda}, n, d),$$ with $\widetilde{A}/K$ a $K$-simple abelian variety over a number field $K/\Q$, $\widetilde{\lambda}: \widetilde{A}\to \widetilde{A}^*$ a $K$-polarization of $A/K$, and $n, d\in \Z^+$, outputs a finite set $$\Phi\subseteq \End_K(\widetilde{A}^{\times n})$$ such that $\Aut_K(\widetilde{A}^{\times n})\cdot \Phi$ is the set of $K$-endomorphisms of $A/K$ of degree $d$ which are symmetric with respect to the Rosati involution of $\End_K^0(\widetilde{A}^{\times n})$ induced by $\widetilde{\lambda}^{\times n}: \widetilde{A}^{\times n}\to (\widetilde{A}^{\times n})^*$.
\end{prop}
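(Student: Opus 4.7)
The plan is to translate the problem into the framework of \cite[Theorem 6.9]{BHC} on orbits of arithmetic groups and then apply the algorithmic version developed in \cite{grunewald-segal}, with the real-orbit enumeration handled by hand in our specific setting.

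First I would compute, via Theorem \ref{the endomorphism ring is computable}, the ring $\End_K(\widetilde{A})$. Because $\widetilde{A}/K$ is $K$-simple the $\Q$-algebra $E_0 := \End_K^0(\widetilde{A})$ is a division algebra, and $\End_K^0(A) = M_n(E_0)$ carries the natural $\Z$-lattice $\End_K(A) \supseteq M_n(\End_K(\widetilde{A}))$. Using Lemma \ref{homology_action} I would compute the action of $E_0$ on $H_1(\widetilde{A},\Z)$ and the class of $\widetilde{\lambda}$ in $H^2$ (Lemma \ref{find_polarization_in_H2}); from these one recovers the Rosati involution $\theta_0$ on $E_0$ explicitly, and then $\theta$ on $M_n(E_0)$ is the ``conjugate-transpose'' $\theta((a_{ij})) = (\theta_0(a_{ji}))$. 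Call $V := \End_K^0(A)^\theta$ and $L := \End_K(A)\cap V$; the degree $f\mapsto \deg(f)$ restricts to a polynomial function $V \to \Q$ (computable from the action on $H_1$), and $\Aut_K(A) = \GL_n(\End_K(\widetilde{A}))$ acts on $V$ and $L$ by $u\cdot f := \theta(u)\circ f\circ u$, preserving degree.

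Under these identifications, the desired set is exactly the $\Aut_K(A)$-orbits of the level set $\{f\in L : \deg(f) = d\}$. Finiteness is \cite[Lemma 3.1]{NN} / \cite[Theorem 6.9]{BHC}. To enumerate orbit representatives I would mimic the Borel--Harish-Chandra argument, now in the effective form of \cite{grunewald-segal}: (i) enumerate the finitely many real orbits of $\GL_n(E_0\otimes\R)$ on $\{f\in V\otimes\R : \deg(f)=d\}$ --- these are Hermitian forms over the $\R$-algebra $(E_0\otimes\R,\theta_0)$ of fixed reduced discriminant, so they are classified by signatures on each simple factor of $E_0\otimes\R$ and can be listed directly; (ii) in each real orbit produce an integral basepoint $f_0\in L$ by brute-force search, using Principle \ref{prin:brute_force} --- existence of such a basepoint is precisely the content of Lemma \ref{the set of integral orbits in a relevant real orbit is computable} (which plays the role of \cite[Theorem 6.9]{BHC} in our setting); (iii) for each such basepoint $f_0$, apply Lemma \ref{computation of reps for integral orbits in a real orbit} to the stabilizer-quotient problem in order to compute representatives of the finitely many integral $\Aut_K(A)$-orbits inside that real orbit. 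The union of these representatives is the sought $\Phi$.

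The main obstacle, exactly as noted on page \pageref{comp_pol_section}, is step (ii): in the generality of \cite{BHC} one does not know an algorithm to decide which real orbits of $V\otimes\R$ on a level set of $\deg$ contain an integral point, nor to produce such a point. In the Hermitian setting at hand this is not an issue: by Lemma \ref{the set of integral orbits in a relevant real orbit is computable} every real orbit at the right signature contains a rational point, whence brute-force search terminates. Once (i) and (ii) are in hand, step (iii) is the standard effective arithmetic-group calculation of \cite{grunewald-segal} applied to the stabilizer of $f_0$ acting on $L$, which terminates in finite time and outputs the required finite list $\Phi$.
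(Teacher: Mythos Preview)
Your proposal is essentially the paper's own argument: reduce to an arithmetic-group orbit problem on the space of Rosati-symmetric endomorphisms, enumerate real orbits via the Albert classification (Hermitian forms classified by signature on each simple factor of $E_0\otimes\R$), find rational basepoints, and apply the effective Borel--Harish-Chandra machinery of \cite{grunewald-segal} (Lemma \ref{computation of reps for integral orbits in a real orbit}).

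There is one genuine technical slip. You write that $\GL_n(E_0\otimes\R)$ acts on the level set $\{f\in V\otimes\R:\deg f=d\}$, but it does not: under $u\cdot f=\theta(u)fu$ one has $\deg(u\cdot f)=\deg(u)^2\deg(f)$, so $G(\R)$ moves points off the level set. Consequently ``real orbits of $\GL_n(E_0\otimes\R)$ on $\{\deg=d\}$'' are not $G(\R)$-orbits, and Lemma \ref{computation of reps for integral orbits in a real orbit} (which requires a closed $G(\R)$-orbit in a linear representation) does not apply as stated. The paper fixes exactly this by passing to the auxiliary representation $W=V^{\mathrm{sym.}}\oplus\det\oplus\det^{-1}$ and working with the locus $\{(\phi,\pm 1,\pm 1)\}\subseteq W$: this locus \emph{is} a finite union of closed $G(\R)$-orbits (verified case-by-case in Section \ref{characterization of the relevant real orbits section}), and its integral points are precisely your $\{f\in L:\deg f=d\}$ since $G_\Z$-points automatically have $\det=\pm 1$. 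An alternative repair would be to replace $G$ by its reduced-norm-$\pm 1$ subgroup throughout, but then one must check that the integral points of that smaller group still give all of $\Aut_K(A)$, and that the resulting orbits are closed; the paper's $W$-trick avoids these checks.

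A minor remark on references: you invoke Proposition \ref{the set of integral orbits in a relevant real orbit is computable} to justify that each real orbit contains a rational point, but in the paper that proposition is the \emph{output} of the argument, not an input; the existence of rational basepoints is established directly by the signature computations in Section \ref{characterization of the relevant real orbits section}, which is what you describe in your step (i). Once you have those basepoints, Lemma \ref{computation of reps for integral orbits in a real orbit} finishes the job exactly as you say.
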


Note that the Rosati involution induced by a $K$-polarization $\lambda: B\to B^*$ of an abelian variety $B/K$ is $$\phi\mapsto \tau_\lambda(\phi) := \frac{1}{\deg{\lambda}}\cdot (\lambda^\vee\circ \phi^*\circ \lambda),$$ with $\phi^*$ the dual $K$-endomorphism of $B^*$ induced by $\phi$ and $\lambda^\vee: B^*\to B$ the $K$-homomorphism such that $\lambda^\vee\circ \lambda = \deg{\lambda}$, i.e.\ such that $\ker{\lambda^\vee}\simeq B[\deg{\lambda}]/(\ker{\lambda})$ under the $K$-isomorphism $B^*\simeq B/(\ker{\lambda})$ furnished by $\lambda$.

Let us now show that Proposition \ref{the set of endomorphisms of given degree modulo automorphisms is computable} implies Theorem \ref{the set of polarizations of given multidegree is computable}.

\begin{proof}[Proof of Theorem \ref{the set of polarizations of given multidegree is computable} assuming Proposition \ref{the set of endomorphisms of given degree modulo automorphisms is computable}.]
First we reduce to the situation where $A$ has the form $A = \widetilde{A}^{\times n}$, with $\widetilde{A}$ a simple abelian variety.  Use Lemma \ref{the endomorphism ring is computable} to compute the endomorphism ring $E = \End_K(A)$, and use Proposition \ref{global_decomp} to decompose $E$ as a direct sum of matrix algebras $$E \cong \bigoplus_{i=1}^s M_{n_i} (E_i).$$  Compute $B_i$, the kernel of $1 - e_i$, where $e_i \in E$ is an idempotent projecting onto the $i$-th factor.  

Let $d_0$ be the degree of the natural map $$\bigoplus_{i=1}^s B_i \rightarrow A.$$  Then every polarization on of degree $d$ pulls back to a polarization of degree $d d_0^2$ on $\bigoplus_{i=1}^s B_i$; conversely, a polarization on $\bigoplus_{i=1}^s B_i$ descends to $A$ if its Chern class in $$H^2(A) \otimes \Q \cong \bigoplus_{i=1}^s H^2(B_i) \otimes \Q$$ is integral.  (Recall that we can compute Chern classes effectively in terms of an integral basis, Lemma \ref{find_polarization_in_H2}.)

So it suffices to find all polarizations of given degree on $\bigoplus_{i=1}^s B_i$.  Any such polarization is a direct sum of polarizations on the individual factors $B_i$, so it is enough to find all polarizations of given degree on a single factor $B_i$.

We now rename $B_i$ as $A$, and assume throughout that $A := \widetilde{A}^{\times n}$ is a power of a $K$-simple abelian variety.

Write $R := \End_K(\widetilde{A})$, $A = \widetilde{A}^{\times n}$, and $\lambda := \widetilde{\lambda}^{\times n}$. We find all $K$-polarizations of $A/K$ of given degree $d$ --- up to the action of $M_n(R)^\times\simeq \Aut_K(A)$ via $$\gamma\cdot \lambda' := \gamma\circ \lambda'\circ \gamma^*$$ --- as follows. Given a $K$-endomorphism $\phi: A\to A$ of degree $d^{\dim{A} - 1}\cdot \deg{\lambda}$ with kernel containing that of $\lambda$ and which is symmetric under the Rosati involution induced by $\lambda$, we let $\lambda': A\to A^*$ be the $K$-homomorphism such that $\phi = (\lambda')^\vee\circ \lambda$ (via the fact that $\phi$ factors through $A/(\ker{\lambda})\simeq A^*$), and then it is routine to determine in finite time whether or not this $\lambda'$ is indeed a $K$-polarization of $A/K$.

Thus we need only show that each such $K$-polarization (regarded as a symmetric $K$-homomorphism $\lambda': A\to A^*$) arises in this manner. But given such a $\lambda': A\to A^*$, the $K$-endomorphism $$(\lambda')^\vee\circ \lambda: A\to A$$ is of degree $d^{\dim{A} - 1}\cdot \deg{\lambda}$, has kernel containing that of $\lambda$, and is symmetric under the Rosati involution induced by $\lambda$. 
\end{proof}

\subsection{Reduction to the effectivization of certain cases of Borel--Harish-Chandra.}

\begin{prop}\label{the set of integral orbits in a relevant real orbit is computable}
(Determination of the integral orbits in a real orbit)

There is a finite-time algorithm that, on input $(K, A/K, \lambda, n, k, (\phi_i)_{i=1}^k)$, with $A/K$ a $K$-simple abelian variety over a number field $K/\Q$, $\widetilde{\lambda}: A\to A^*$ a $K$-polarization, $n\in \Z^+$, $k\in \Z^+$ the $\Z$-rank of $R := \End_K(A)$, and $(\phi_i)_{i=1}^k$ a $\Z$-basis of $R$, outputs a finite set $\Xi$ such that, writing:
\begin{itemize}
\item $\lambda := \widetilde{\lambda}^{\times n}: A^{\times n}\to (A^{\times n})^*$,
\item $i: M_n(R)\inj M_{k\cdot n}(\Z)$ for the map induced by the $\Z$-basis $(\phi_i)_{i=1}^k$,
\item $G$ the algebraic group over $\Q$ defined by $G(S) := M_n(R\otimes_\Z S)^\times$,
\item $G_\Z := G(\Q)\cap i^{-1}\left(\GL_{n\cdot \rk_{\Z} R}(\Z)\right) = M_n(R)^\times$,
\item $V$ the trivial vector bundle over $\Spec{\Q}$ defined by $V(S) := \End_K^0(A)\otimes_\Q S\simeq M_n(R\otimes_\Z S)$ (whence $G\inj \Aut_\Q(V)$ is an algebraic representation over $\Q$),
\item $V_\Z := \End_K(R)$ (thus $V_\Z$ is a lattice inside $V$ which is $G_\Z$-stable),
\item $\tau_\lambda: V\to V$ the Rosati involution induced by $\lambda$,
\item $V^{\mathrm{sym.}} := \ker(\tau_\lambda - \id)$, with $G$-action given by $\gamma\cdot \phi := \gamma\circ \phi\circ \tau_\lambda(\gamma)$ (whence $V^{\mathrm{sym.}}$ is an algebraic $G$-representation defined over $\Q$),
\item $V^{\mathrm{sym.}}_\Z := V^{\mathrm{sym.}}\cap V_\Z$ (thus $V^{\mathrm{sym.}}_\Z$ is a lattice inside $V$ which is $G_\Z$-stable),
\item $\det: G\to \G_m$ the evident map (defined using $i$), regarded as furnishing $\Q$ with the structure of an algebraic $G$-representation defined over $\Q$,
\item $W := V^{\mathrm{sym.}}\oplus \det\oplus \det^{-1}$ as $G$-representations defined over $\Q$,
\item and $W_\Z := V^{\mathrm{sym.}}_\Z\oplus \Z\oplus \Z$ (thus $W_\Z$ is a lattice inside $W$ which is $G_\Z$-stable),
\end{itemize}\noindent
it follows that $G_\Z\cdot \Xi = \{(\star, \pm 1, \pm 1)\in W_\Z\} = G(\R)\cdot \{(\star, \pm 1, \pm 1)\in W(\R)\}\cap W_\Z$.
\end{prop}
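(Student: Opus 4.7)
The plan is to reduce to an effective form of Borel--Harish-Chandra (Theorem 6.9 of \cite{BHC}, effectivized in \cite{grunewald-segal}), which, given a rational basepoint in each real $G(\R)$-orbit on $W(\R)$ meeting $\{(\star, \pm 1, \pm 1) \in W(\R)\}$, produces a finite set of integral $G_\Z$-orbit representatives in that real orbit. What is not immediate from \cite{grunewald-segal} is how to enumerate those real orbits and exhibit rational basepoints for them; the substance of the argument will be in handling this step explicitly for our specific $(G, W)$.

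First I would pin down the real orbit structure. Because $\gamma\cdot (v, a, b) = (\gamma\cdot v,\ \det(\gamma)\cdot a,\ \det(\gamma)^{-1}\cdot b)$ and we require $a, b \in \{\pm 1\}$, a real orbit of $G(\R)$ meeting $\{(\star, \pm 1, \pm 1)\in W(\R)\}$ is determined by a choice of sign-pair together with a $G(\R)^{\det = \pm 1}$-orbit on $V^{\mathrm{sym.}}(\R)$. Using Proposition \ref{num_fld_algebra} (applied after base change to $\R$) I would decompose $R \otimes_{\Z} \R$ as a product of real division algebras, so that $V^{\mathrm{sym.}}(\R)$ becomes a product of spaces of Hermitian $n\times n$ matrices over $\R$, $\C$, or $\H$; the precise type of each factor is determined by the action of the Rosati involution $\tau_\lambda$ on $R \otimes_\Z \R$, which can be read off from $\widetilde{\lambda}$ using Lemmas \ref{find_polarization_in_H2} and \ref{homology_basis}. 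The $G(\R)^{\det=\pm 1}$-orbits on such a product of Hermitian spaces are classified by rank together with signature data on each factor, giving a finite, algorithmically enumerable list of real-orbit invariants.

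Finally I would produce, for each such real orbit that meets $W_\Z$ at a $(\star, \pm 1, \pm 1)$-point, an integral representative, and feed it into the algorithm of \cite{grunewald-segal} to recover the finitely many $G_\Z$-orbit representatives inside it. The main obstacle is the existence half of this step: not every real orbit meets $W_\Z$, and deciding which do (and exhibiting representatives) is precisely the piece of Borel--Harish-Chandra that the authors flag as not effective in general. Here, however, the integer points of $W_\Z$ of the required shape are Hermitian matrices over an order in a semisimple $\Q$-algebra, and Hasse--Minkowski, together with its generalizations to Hermitian forms over CM fields and quaternion algebras, determines exactly which signature data is realized over $\Q$ and lets us exhibit explicit integer witnesses by a bounded brute-force search. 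Taking $\Xi$ to be the union, over these witnesses, of the Grunewald--Segal output then yields a finite set with $G_\Z \cdot \Xi = \{(\star, \pm 1, \pm 1) \in W_\Z\}$, as required.
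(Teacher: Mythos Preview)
Your approach matches the paper's in outline: classify the real $G(\R)$-orbits on $W(\R)$ meeting $\{(\star,\pm 1,\pm 1)\}$ by signature data, exhibit a rational point in each, and then apply the effectivized Borel--Harish-Chandra machinery (the paper's Lemma~\ref{computation of reps for integral orbits in a real orbit}) to each such orbit. The difference is in how you handle the rational-basepoint step.

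The paper works through the Albert classification (Types I--IV) directly, rather than via Proposition~\ref{num_fld_algebra} base-changed to~$\R$; in each type the Rosati involution has a standard form, $V^{\mathrm{sym.}}(\R)$ becomes a product of real, complex, or quaternionic Hermitian spaces, and Sylvester-type inertia arguments enumerate the real orbits. The crucial simplification you are missing is that the paper observes \emph{every} relevant real orbit already contains a point of $W(\Q)$: the locus of nondegenerate matrices with a prescribed signature at each archimedean place is open and nonempty, hence meets the dense set of rational points. There is therefore no local-global question to resolve, and your appeal to Hasse--Minkowski and its Hermitian analogues is unnecessary --- a brute-force search for a rational representative, guaranteed to terminate, suffices. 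Your route would still work in principle, but the paper's is more elementary and sidesteps having to verify the relevant local-global principles in each Albert type. One small point: once a rational (not necessarily integral) representative $v$ is found, the paper passes to the dilated lattice $\frac{1}{q}W_\Z$ containing $v$ and invokes Lemma~\ref{computation of reps for integral orbits in a real orbit} there, rather than searching directly for an integral representative as you suggest.
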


Let us now show that Proposition \ref{the set of integral orbits in a relevant real orbit is computable} implies Proposition \ref{the set of endomorphisms of given degree modulo automorphisms is computable} and thus Theorem \ref{the set of polarizations of given multidegree is computable}.

\begin{proof}[Proof of Proposition \ref{the set of integral orbits in a relevant real orbit is computable} assuming Proposition \ref{the set of endomorphisms of given degree modulo automorphisms is computable}.]
Compute such a $\Xi$ via Proposition \ref{the set of integral orbits in a relevant real orbit is computable}. For each $(\phi, \pm 1, \pm 1)\in \Xi$ we may check if there is a $\gamma\in G_\Z$ such that $$\ker{(\gamma\circ \phi\circ \tau_\lambda(\gamma))}\supseteq \ker{\lambda}$$ by computing a finite set of generators of $G_\Z$ \cite[Theorem B]{grunewald-segal}, then a finite set of representatives in $G_\Z$ of $G_\Z\pmod*{\!\!\deg{\phi}}$, and finally checking whether there is a $\gamma\in G_\Z$ among said representatives such that $$\ker{(\gamma\circ \phi\circ \tau_\lambda(\gamma))}\supseteq \ker{\lambda},$$ since both are subsets of $A[\deg{\phi}]$. If there is no such $\gamma\in G_\Z$ we may remove said representative, and if there is such a $\gamma\in G_\Z$ (thus already computed in terms of the generators provided by \cite[Theorem B]{grunewald-segal}) then without loss of generality $\gamma = \id$, and, writing the relevant representative as $\phi\in \End_K(A)$ and, letting $\lambda' : A\to A^*$ be such that $\phi = (\lambda')^\vee\circ \lambda$ (which is without loss of generality a $K$-polarization since this is easily checked in finite time), since $$\gamma\circ \phi\circ \tau_\lambda(\gamma) = (\gamma\circ \lambda'\circ \gamma^*)\circ \lambda$$ it follows that there is exactly one orbit of $K$-polarizations under $\Aut_K(A)$ corresponding to the $G_\Z$-orbit of $\phi$.
\end{proof}

So, in the notation of Proposition \ref{the set of integral orbits in a relevant real orbit is computable}, we have reduced to computing a set of representatives for the finitely many $G_\Z$-orbits inside the set $$\{(\phi, \pm 1, \pm 1)\in W_\Z\} = G(\R)\cdot \{(\phi, \pm 1, \pm 1)\in W(\R)\}\cap W_\Z.$$

\subsection{Characterization of the relevant real orbits.\label{characterization of the relevant real orbits section}}

Now let us invoke the Albert classification: by Albert (see for example \cite[\S 21]{MumAV}) the tuple $(R\otimes_\Z \Q, \tau_{\widetilde{\lambda}})$ falls into one of the following cases.

\subsubsection{Type I.\label{type i subsection}}

In the first case $R\otimes_\Z \Q$ is a totally real number field and $\tau_{\widetilde{\lambda}}$ is trivial. Hence $$\tau_\lambda: M_n(R\otimes_\Z \Q)\to M_n(\R\otimes_\Z \Q)$$ is the standard transpose and $$R\otimes_\Z \R\simeq \bigoplus_{R\hookrightarrow \R} \R,$$ whence $$M_n(R)\otimes_\Z \R\simeq M_n(R\otimes_\Z \R)\simeq \bigoplus_{R\hookrightarrow \R} M_n(\R),$$ and so $$V(\R)\simeq \bigoplus_{R\hookrightarrow \R} M_n(\R)$$ and similarly $$V^{\mathrm{sym.}}(\R)\simeq \bigoplus_{R\hookrightarrow \R} \Sym^2(\R^{\oplus n})$$ under this isomorphism.

Thus $G(\R)\simeq \prod_{R\hookrightarrow \R} \GL_n(\R)$, acting on each factor of $V^{\mathrm{sym.}}(\R)$ independently via the action of $\GL_n(\R)$ on $\Sym^n(\R^{\oplus n})\subseteq M_n(\R)$ via $(g,x)\mapsto g\cdot x\cdot g^t$, or in other words the action of $\GL_n(\R)$ on the quadratic form $v\mapsto v^t\cdot x\cdot v$.

Hence in particular from Sylvester's law of inertia we conclude that each $G(\R)$-orbit on $W(\R)$ intersects the following set at least once:

$$\left\{((\diag(x_i^{(v)})_{i=1}^n)_{v: R\hookrightarrow \R}, a, b) : a, b\in \R^\times, \forall i, v, x_i^{(v)}\in \{-1, 0, 1\}, \forall v, i < j, x_i^{(v)}\leq x_j^{(v)}\right\}.$$

In particular if $\phi\in V^{\mathrm{sym.}}(\R)$ has $\deg{\phi}\neq 0$ and $\delta, \eps\in \{\pm 1\}$, then there is a tuple $(k_v)_{v: R\hookrightarrow \R}$ with each $0\leq k_v\leq n$ such that $$G(\R)\cdot (\phi, \delta, \eps) = G(\R)\cdot \left(\diag\left(\underbrace{-1, \ldots, -1}_{k_v}, \underbrace{1, \ldots, 1}_{n-k_v}\right), \frac{\delta}{\sqrt{\deg{\phi}}}, \eps\cdot \sqrt{\deg{\phi}}\right).$$ Let us also note that the latter orbit is closed since $((X_v)_{v: R\hookrightarrow \R}, a, b)$ lies in said orbit if and only if $ab = \delta \eps$, $b^2\cdot \prod_{v: R\hookrightarrow \R} \det(X_v) = \deg{\phi}$, and each $X_v$ has signature $(n-k_v, k_v)$ (a closed condition since $\det(X_v)\neq 0$ via the previous equality) --- note that these conditions also allow us to check if a given $(\phi, a, b)\in W(\Q)$ with $ab\cdot \deg{\phi}\neq 0$ lies in a given $G(\R)$-orbit in finite time. Let us also note that $W(\Q)$ does intersect each such orbit since e.g.\ the locus of matrices in $\SL_n(\R)$ with given signature is both closed and open, and also that if $\phi\in V^{\mathrm{sym.}}(\C)$ has $\deg{\phi}\neq 0$ and $\delta, \eps\in \{\pm 1\}$, then $$G(\C)\cdot (\phi, \delta, \eps) = G(\C)\cdot \left(\id, \frac{\delta}{\sqrt{\deg{\phi}}}, \eps\cdot \sqrt{\deg{\phi}}\right),$$ and now the latter orbit is Zariski closed because $((X_v)_{v: R\hookrightarrow \R}, a, b)$ lies in said orbit if and only if $ab = \delta \eps$ and $b^2\cdot \prod_{v: R\hookrightarrow \R} \det(X_v) = \deg{\phi}$.

\subsubsection{Type II.\label{type ii subsection}}

In the second case $R\otimes_\Z \Q$ is a quaternion algebra over a totally real field $K/\Q$ which splits at all real places and $\tau_{\widetilde{\lambda}}$ is trivial on $K$ and such that there is an isomorphism of $\R$-algebras (computable by e.g.\ brute-force search for an element of $R\otimes_\Z \Q$ with totally real characteristic polynomial, i.e.\ for a totally real splitting field) $$R\otimes_\Z \R\simeq \bigoplus_{K\hookrightarrow \R} M_2(\R)$$ taking $\tau_{\widetilde{\lambda}}$ to the coordinatewise standard transpose on the right-hand side. Hence via the same isomorphism $$M_n(R\otimes_\Z \R)\simeq \bigoplus_{K\hookrightarrow \R} M_n(M_2(\R))\simeq \bigoplus_{K\hookrightarrow \R} M_{2n}(\R)$$ with $\tau_\lambda$ also taken to the coordinatewise transpose on the right-hand side. Thus exactly as in Section \ref{type i subsection} $$V^{\mathrm{sym.}}(\R)\simeq \bigoplus_{K\hookrightarrow \R} \Sym^2(\R^{\oplus 2n})$$ and $$G(\R)\simeq \prod_{K\hookrightarrow \R} \GL_{2n}(\R)$$ with the coordinatewise action again given by $(g,x)\mapsto g\cdot x\cdot g^t$, and we conclude in exactly the same way as in Section \ref{type i subsection} with explicit representatives for the $G(\R)$-orbits on $W(\R)$ as well as a finite-time algorithm to test membership of a given $(\phi, a, b)\in W(\Q)$ in each relevant orbit (all of which are seen to be closed --- with the analogous statements for $G(\C)$-orbits meant in the Zariski topology --- and to intersect $W(\Q)$).

\subsubsection{Type III.\label{type iii subsection}}

In the third case $R\otimes_\Z \Q$ is a quaternion algebra over a totally real field $K/\Q$ which is ramified at all real places and $\tau_{\widetilde{\lambda}}$ is trivial on $K$ and such that there is a computable isomorphism of $\R$-algebras $$R\otimes_\Z \R\simeq \bigoplus_{K\hookrightarrow \R} \H$$ taking $\tau_{\widetilde{\lambda}}$ to coordinatewise canonical involution, where by the canonical involution on the Hamilton quaternions $\H$ we mean $(1, i, j, k)\mapsto (1, -i, -j, -k)$. Hence via the same isomorphism $$M_n(R\otimes_\Z \R)\simeq \bigoplus_{K\hookrightarrow \R} M_n(\H)$$ with $\tau_\lambda$ taken to coordinatewise conjugate transpose $X\mapsto X^\dag$.

Thus now $$V^{\mathrm{sym.}}(\R)\simeq \bigoplus_{K\hookrightarrow \R} \mathrm{Herm}_{n\times n}(\H),$$ with $$\mathrm{Herm}_{n\times n}(\H) := \{X\in M_n(\H) : X^\dag = X\},$$ and $$G(\R)\simeq \prod_{K\hookrightarrow \R} M_n(\H)^\times$$ with the coordinatewise action given by $(g,x)\mapsto g\cdot x\cdot g^\dag$. Now instead of the Sylvester law of inertia we need a classification of Hermitian forms on $\H^{\oplus n}$, but this is routine --- the same Gram-Schmidt argument one uses to classify quadratic forms over $\R$ works to classify Hermitian forms on $\H^{\oplus n}$ by the same data: $(d_0, d_+, d_-)\in \N^{\times 3}$ with $d_0 + d_+ + d_- = n$, with each form in the orbit equivalent to $$(\alpha_i)_{i=1}^n\mapsto \sum_{i = d_0 + 1}^{d_0 + d_+} |\alpha_i|^2 - \sum_{i = d_0 + d_+ + 1}^n |\alpha_i|^2.$$

So again exactly as in Sections \ref{type i subsection} and \ref{type ii subsection} we find explicit representatives for the $G(\R)$-orbits on $W(\R)$ and a finite-time algorithm to test membership of a given element of $W(\Q)$ in each relevant orbit, all of which are seen to be closed (in the real topology, while again for $G(\C)$-orbits we mean in the Zariski topology) and to intersect $W(\Q)$.

\subsubsection{Type IV.\label{type iv subsection}}

In the fourth case $R\otimes_\Z \Q$ is a central simple algebra over an imaginary CM field $K/\Q$ (with maximal totally real subfield $K^+/\Q$, say) and $\tau_{\widetilde{\lambda}}$ restricts to complex conjugation on $K$ and is such that there is a (computable, in the same way as in e.g.\ Section \ref{type i subsection}) isomorphism of $\R$-algebras $$R\otimes_\Z \R\simeq \bigoplus_{K^+\hookrightarrow \R} M_k(\C)$$ (with $k$ the index of $R\otimes_\Z \Q$ over its centre $K$) taking $\tau_{\widetilde{\lambda}}$ to coordinatewise conjugate transpose $X\mapsto X^\dag$. Hence via the same isomorphism $$M_n(R\otimes_\Z \R)\simeq \bigoplus_{K^+\hookrightarrow \R} M_{kn}(\C)$$ with $\tau_\lambda$ taken to coordinatewise conjugate transpose as well.

Thus now $$V^{\mathrm{sym.}}(\R)\simeq \bigoplus_{K^+\hookrightarrow \R} \mathrm{Herm}_{kn\times kn}(\C),$$ with $$\mathrm{Herm}_{kn\times kn}(\C) := \{X\in M_{kn}(\C) : X^\dag = X\},$$ and $$G(\R)\simeq \prod_{K^+\hookrightarrow \R} \GL_{kn}(\C),$$ with the coordinatewise action given by $(g,x)\mapsto g\cdot x\cdot g^\dag$. So in this case we need a classification of Hermitian forms on $\C^{\oplus n}$, and again the usual Gram-Schmidt argument works to classify Hermitian forms on $\C^{\oplus n}$ by the same data: $(d_0, d_+, d_-)\in \N^{\times 3}$ with $d_0 + d_+ + d_- = n$, with each form in the orbit equivalent to $$(\alpha_i)_{i=1}^n\mapsto \sum_{i=d_0 + 1}^{d_0 + d_+} |\alpha_i|^2 - \sum_{i=d_0 + d_+ + 1}^n |\alpha_i|^2.$$

So again exactly as in Sections \ref{type i subsection}, \ref{type ii subsection}, and \ref{type iii subsection} we find explicit representatives for the $G(\R)$-orbits on $W(\R)$ and a finite-time algorithm to test membership of a given element of $W(\Q)$ in each relevant orbit, all of which are seen to be closed (in the real topology, while again for $G(\C)$-orbits we mean in the Zariski topology) and to intersect $W(\Q)$.

\subsection{Proof of Proposition \ref{the set of integral orbits in a relevant real orbit is computable}.\label{what to do once you have a rational point in a real orbit}}

Now we may prove Proposition \ref{the set of integral orbits in a relevant real orbit is computable} and thus Theorem \ref{the set of polarizations of given multidegree is computable}.

\begin{proof}[Proof of Proposition \ref{the set of integral orbits in a relevant real orbit is computable}.]
Thanks to Section \ref{characterization of the relevant real orbits section} we have an explicit description of the $G(\R)$-orbits in $$G(\R)\cdot \{(\phi, \pm 1, \pm 1)\in W(\R)\},$$ and we moreover may and will assume (via e.g.\ brute-force) that inside each such $G(\R)$-orbit we have chosen a representative lying in $W(\Q)$.

So let $v\in W(\Q)$ be said representative. It suffices then to explain how to compute representatives for the $G_\Z$-orbits in $$(G(\R)\cdot v)\cap W_\Z.$$ Let $q\in \Z^+$ be such that $q\cdot v\in W_\Z$. Let $\Gamma := \frac{1}{q}\cdot W_\Z$. Then certainly $\Gamma\subseteq W(\R)$ is a lattice invariant under $G_\Z$. Let $X := G(\R)\cdot v$ --- note that in Section \ref{characterization of the relevant real orbits section} we saw that $X\subseteq V(\R)$ is Zariski closed. But now we are done by Lemma \ref{computation of reps for integral orbits in a real orbit}, because a $G_\Z$-orbit intersects $W_\Z$ if and only if it lies entirely inside $W_\Z$.
\end{proof}

\subsection{Making algorithmic a result of Borel and Harish-Chandra.}
\label{bhc_section}

In this section we prove Lemma \ref{computation of reps for integral orbits in a real orbit}, which shows how to find all integral orbits in a given real orbit of an algebraic group given a single rational point in the real orbit.  
This is a matter of effectivizing \cite[Theorem 6.9]{BHC}, using work of Grunewald and Segal \cite{grunewald-segal}.

We begin by recalling some notation.

\begin{defn}
\label{siegel_set}
Fix rational numbers $t > \frac{2}{\sqrt{3}}$ and $u > 1/2$.
The \emph{standard Siegel set} is the set
$\mathcal{S} \subseteq \operatorname{GL}_n(\mathbb{R})$ defined by
\[ \mathcal{S} = \mathcal{S}_{t, u} = K A_t N_u, \]
where $K = O_n(\mathbb{R})$, 
$A_t$ is the set of diagonal matrices with diagonal entries $a_i$ satisfying $a_i \leq t a_{i+1}$, 
and $N_u$ is the set of unipotent upper triangular matrices, with off-diagonal entries $x_{ij}$ satisfying $\left | x_{ij} \right| \leq u$.

(The definition of $\mathcal{S}$ depends on $t$ and $u$, but we will suppress this dependence in our discussion.
The reader is free to choose once and for all some particular values $t$ and $u$.)
\end{defn}

For convenience we record a simple lemma involving bounding products of matrices.
\begin{lem}
\label{combine_m}
For every $M>0$, let $$\Omega_M := \{ g = (g_{ij}) \in \GL_n(\R) \mid \left | g_{ij} \right | < M \text{ and } \left | \det g \right | > 1/M  \}.$$
Then: $\Omega_{M_1} \Omega_{M_2} \subseteq \Omega_{n M_1 M_2}$ and $\Omega_{M_1}^{-1} \subseteq \Omega_{n!\cdot M^n}$.
\end{lem}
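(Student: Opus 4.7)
The plan is to verify each of the two inclusions by direct entrywise calculation, using the Leibniz formula for determinants and Cramer's rule for the inverse. Both assertions are straightforward; I do not expect any serious obstacle, but I will need to be a bit careful about the strict inequalities in the definition of $\Omega_M$.

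For the product inclusion $\Omega_{M_1}\Omega_{M_2}\subseteq \Omega_{nM_1M_2}$, I would take $g\in \Omega_{M_1}$ and $h\in \Omega_{M_2}$ and expand $(gh)_{ij} = \sum_{k=1}^n g_{ik}h_{kj}$; the triangle inequality then bounds $|(gh)_{ij}|$ by the sum of $n$ terms each of absolute value less than $M_1M_2$, giving $|(gh)_{ij}|<nM_1M_2$. The determinant bound follows from multiplicativity: $|\det(gh)| = |\det g|\cdot|\det h| > 1/(M_1M_2) > 1/(nM_1M_2)$.

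For the inverse inclusion $\Omega_{M_1}^{-1}\subseteq \Omega_{n!\cdot M_1^n}$, I would use Cramer's rule $g^{-1} = (\det g)^{-1}\cdot \operatorname{adj}(g)$. Each entry of $\operatorname{adj}(g)$ is (up to sign) an $(n-1)\times(n-1)$ minor of $g$, which by the Leibniz formula is a sum of $(n-1)!$ products of $n-1$ entries, each entry bounded by $M_1$; thus every entry of $\operatorname{adj}(g)$ has absolute value at most $(n-1)!\cdot M_1^{n-1}$. Combined with $|\det g|^{-1}<M_1$, this gives $|(g^{-1})_{ij}| < (n-1)!\cdot M_1^n < n!\cdot M_1^n$ as required. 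For the determinant of $g^{-1}$, I apply the Leibniz formula to $g$ itself to get the upper bound $|\det g|<n!\cdot M_1^n$, whence $|\det g^{-1}| = 1/|\det g| > 1/(n!\cdot M_1^n)$, which is the remaining inequality needed to conclude $g^{-1}\in \Omega_{n!\cdot M_1^n}$.

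The only mild subtlety is keeping the inequalities strict, but this is automatic since the defining inequalities $|g_{ij}|<M$ and $|\det g|>1/M$ in $\Omega_M$ are already strict and the Leibniz/triangle bounds inherit the strictness from any single term. No deeper ingredient is needed.
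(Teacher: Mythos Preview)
Your proof is correct and is exactly the direct verification the paper has in mind; the paper's own proof is the single word ``Evident.''
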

\begin{proof}
Evident.
\end{proof}

\begin{lem}
\label{eff_5_3}
Let $G = \operatorname{GL}_n$, and
let $\pi \colon G \rightarrow \operatorname{GL}(V)$ be a representation of the algebraic group $G$, defined over $\mathbb{Q}$.

Let $\theta \colon g \mapsto (g^{\intercal})^{-1}$ be the standard Cartan involution (transpose inverse) on $G$.
Let $v \in V(\mathbb{R})$ be a point whose orbit under $G$ is closed
and whose isotropy group $G_v$ is stable under $\theta$.
Let $\mathcal{S}$ be a standard Siegel set (Definition \ref{siegel_set}) in $G$.
Let $\Gamma \subseteq V(\mathbb{Q})$ be a lattice.

Then one can compute algorithmically the finite set $v \cdot \pi(\mathcal{S}) \cap \Gamma$.
\end{lem}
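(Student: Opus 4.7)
The plan is to effectivize the compactness argument underlying Borel--Harish-Chandra. First I parametrize $\mathcal{S} = K A_t N_u$: the factors $K = O_n(\R)$ and $N_u$ are bounded semi-algebraic sets, so the only unbounded parameter in $g = k a n \in \mathcal{S}$ is the diagonal $a = \diag(a_1,\ldots,a_n)\in A_t$, subject to the Siegel inequalities $a_i \leq t a_{i+1}$. I then decompose $V = \bigoplus_{\chi} V_\chi$ into weight spaces for the diagonal torus $T \subseteq G$; this decomposition is explicitly computable from the given rational representation $\pi$, and $a$ acts on $V_\chi$ by multiplication by $\chi(a) = \prod_i a_i^{\chi_i}$.

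The key ingredient is an effective form of the Kempf--Ness / Birkes theorem. Because $G(\R) v$ is closed and $G_v$ is $\theta$-stable, $v$ admits a ``minimal vector'' $v^* = \pi(k_0) v$ in its $K$-orbit, characterized by $\|v^*\| \leq \|\pi(g) v^*\|$ for all $g \in G(\R)$, which can be computed to arbitrary precision by numerically minimizing the (convex, along the relevant directions) function $g \mapsto \|\pi(g)v\|^2$. Minimality translates into a ``balance'' condition on the weight support of $v^*$, yielding a computable constant $c > 0$ such that
\[ \|\pi(g) v\|^2 \;\geq\; c \cdot \bigl(\max_{\chi : (v^*)_\chi \neq 0} \chi(a)\bigr)^2 \]
for all $g = k a n \in \mathcal{S}$. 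Combined with the Siegel inequalities, this produces a computable function $F$ such that $\|\pi(g) v\| \leq R$ forces $\max_i |a_i| \leq F(R)$, i.e. $g$ lies in a computable compact subset $\mathcal{S}_R \subseteq \mathcal{S}$. For any fixed $R$, the intersection $v \cdot \pi(\mathcal{S}_R) \cap \Gamma$ is then finite and algorithmically computable: enumerate the finitely many lattice points of norm bounded by the (computable) diameter of $\pi(\mathcal{S}_R)v$, and test membership in the semi-algebraic set $v\cdot\pi(\mathcal{S}_R)$ by Tarski--Seidenberg.

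The main obstacle is producing an effective upper bound $R^\ast$ capturing all of $v \cdot \pi(\mathcal{S}) \cap \Gamma$. While \cite[Theorem 6.9]{BHC} guarantees that this set is finite, its proof is not a priori effective in bounding the norms of the finitely many representatives. I would extract the required $R^\ast$ from the effective reduction-theoretic estimates developed in \cite{grunewald-segal} for the arithmetic group $G_\Z$ acting on the closed orbit $\mathcal{O} := v \cdot \pi(G(\R))$, using the fact that $\mathcal{O}$ is cut out by a finite list of $G$-invariant polynomials obtainable from generators of the invariant ring (computable by standard Gröbner-basis invariant theory). Granted this bound, the remaining work -- enumeration of $\Gamma$-points of norm at most $R^\ast$ together with Tarski--Seidenberg membership testing in $v \cdot \pi(\mathcal{S}_{R^\ast})$ -- is routine.
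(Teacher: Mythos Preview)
Your proposal has a genuine gap at precisely the point you flag as ``the main obstacle'': you never actually produce the bound $R^\ast$. Your Kempf--Ness inequality, even if correct, runs the wrong direction for what is needed --- it says that a bound on $\|\pi(g)v\|$ forces a bound on $a$, but the whole problem is to \emph{obtain} a bound on $\|\pi(g)v\|$ for $g\in\mathcal{S}$ with $\pi(g)v\in\Gamma$. Your final paragraph defers this to ``effective reduction-theoretic estimates developed in \cite{grunewald-segal}'', but no such ready-made estimate is available there; indeed the paper's purpose in this section is precisely to fill that gap. Moreover, your argument never uses the hypothesis that $G_v$ is $\theta$-stable, which should be a warning sign.

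The paper's proof (following \cite[Lemma 5.3]{BHC}) extracts the bound from two ingredients you have not used. First, because $w=\pi(x)v$ lies in the \emph{lattice} $\Gamma$, each nonzero weight-projection $E_i(w)$ has $|E_i(w)|\geq c$ for a computable $c>0$. Writing $x=kan$ and setting $y=xa^{-1}$, $z=xa^{-2}$, one sees that $y$ lies in a fixed compact set (independent of $a$), so $|E_i(\pi(y)v)|\leq c'$; combining this with the lattice lower bound yields $|E_i(\pi(z)v)|\leq (c')^2/c$, hence $\pi(z)v$ lies in a computable ball. Closedness of the orbit plus Tarski then gives $z\in G_v\Omega_M$ for an explicit compact $\Omega_M$, and after peeling off the bounded factor $a^2na^{-2}$ one gets $ka^{-1}\in G_v\Omega_{M_2}$. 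The second ingredient is the $\theta$-trick: applying the Cartan involution (which fixes $k$, sends $a^{-1}\mapsto a$, and preserves $G_v$ by hypothesis) gives $ka\in G_v\Omega_{M_3}$, whence $x=kan\in G_v\Omega_{M_5}$ and finally $|w|=|\pi(x)v|$ is explicitly bounded. This is the step your argument is missing.
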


\begin{rmk}
This is just a matter of going carefully through the proof of \cite[Lemma 5.4]{BHC} in the particular case of interest to us, i.e.\ $G = \operatorname{GL}_n$, and making every step effective.

(In fact, the particular case $G = \operatorname{GL}_n$ is the only situation in which Borel and Harish-Chandra use the result.)
\end{rmk}

\begin{proof}
Let $A$ be the torus of diagonal matrices in $G = \operatorname{GL}_n$.
The vector space $V$ splits (over $\mathbb{Q}$)
as a direct sum of eigenspaces $V_i$ for the action of $A$,
and we can explicitly compute the eigenspaces $V_i$ by linear algebra.
Choose a Euclidean norm $v \mapsto \left | v \right |$ on $V$ for which the $V_i$'s are mutually orthogonal.

For each eigenspace $V_i$, let $E_i \colon V \rightarrow V_i$ denote the projection of $V$ onto $V_i$.
Since $V$, $V_i$ and $E_i$ are all defined over $\mathbb{Q}$, the projection $E_i(\Gamma)$ is again a lattice in $V_i$, which can be computed effectively.
Hence, we can compute a constant $c$ such that if $w \in \Gamma$ and $E_i(w) \neq 0$, then $\left | E_i(w) \right | \geq c$.

Now consider variable $k, a, n$, with
$k \in K = \operatorname{O}(n)$,
$a \in A_t$,
$n \in N_u$,
so that
\[ x = kan \in \mathcal{S} \]
is an arbitrary element of $\mathcal{S}$.
(Here $A_t$ and $N_u$ are as in Definition \ref{siegel_set}.)

Let $w = v \cdot x$, and define
\begin{eqnarray*}
y & := & x a^{-1} = kana^{-1} \\
z & := & x a^{-2} = kana^{-2}.
\end{eqnarray*}

Note that $a n a^{-1}$ is a unipotent upper-triangular matrix whose off-diagonal entries are bounded in absolute value by $t^{n-1} u$.
Since $K$ is orthogonal, its entries are bounded as well (by $1$), 
so we can compute a bound on the absolute value of the matrix entries of $y$.
(In fact, by matrix multiplication, the entries of $y$ are clearly bounded by $n \operatorname{max}(1, t^{n-1} u)$.)
Since the action of $G$ on $V$ is explicitly presented,
we can compute a bound on $\left | v \cdot y \right | = \left | w \cdot a^{-1} \right |$, independent of $x$.  Write the bound as $\left | v \cdot y \right | \leq c'$.

It follows (see \cite{BHC}) that $\left | E_i(v \cdot z) \right | \leq (c')^2 / c$,
for all choices of $k, a, n$, and for all $i$.  Therefore, we can effectively compute a bound $\left | v \cdot z \right | \leq c''$.

Let $Q$ be the closed ball of radius $c''$ about the origin in $V$.
We want to produce a compact subset $\Omega \subseteq G$ such that 
\[ ( v \cdot \pi(G) ) \cap Q \subseteq v \cdot \pi(\Omega). \]
The existence of $\Omega$ is given by \cite[Lemma 5.2]{BHC}, which is not effective.

To produce $\Omega$, we will apply a theorem of Tarski (see \cite[\S 1.2]{grunewald-segal}).
We will consider only $\Omega \subseteq G$ of the form
\[ \Omega_M = \{ g = (g_{ij}) \in G \mid \left | g_{ij} \right | < M \text{ and } \left | \det g \right | > 1/M  \}. \]
Since every compact $\Omega \in G$ is contained in some $\Omega_M$, we know that $\Omega_M$ has the desired property for all sufficiently large $M$.
But now we have reduced to a standard quantifier elimination problem!  We simply need to find some $M$ for which the following statement holds:
\begin{quote}
For every $v' \in V$ and $g \in G$ such that $\left | v' \right | \leq c''$ and $v \cdot g = v'$, there exists $g' \in \Omega_M$ such that $v \cdot g' = v'$.
\end{quote}
Such an $M$ must exist by \cite[Proposition 5.2]{BHC}, so Tarski's algorithm (in the form of \cite[Algorithm 1.2.1]{grunewald-segal}) will find one.
(Literally, \cite[Algorithm 1.2.1]{grunewald-segal} only finds $M$ approximately: that is, it will give some interval $(M_{\mathrm{approx}}-\epsilon, M_{\mathrm{approx}}+\epsilon)$ which is guaranteed to contain an $M$ that works.
But if $M$ has the desired property, any $M' > M$ does as well, so in particular $M' = M_{\mathrm{approx}}+\epsilon$ will do the job.)

Returning to our effectivization of \cite[Lemma 5.3]{BHC}, we see that for any $k, a, n$ as above, the vector $v \cdot z$ satisfies
$v \cdot z \in v \cdot \Omega_M$,
so
$z \in G_v \Omega_M$.
In other words,
\[ k a^{-1} a^{2} n a^{-2} \in G_v \Omega_M. \]

We can compute an explicit bound on the coeffcients of $a^{2} n a^{-2}$ (and of course this matrix has determinant $1$), whence we obtain $M_1$ such that
\[ a^2 n a^{-2} \in \Omega_{M_1}. \]

Thus, we can compute (by Lemma \ref{combine_m}) $M_2$ such that
\[ k a^{-1} = k a^{-1} (a^{2} n a^{-2}) (a^{2} n a^{-2})^{-1} \in G_v \Omega_M \Omega_{M_1} \subseteq G_v \Omega_{M_2}. \]
Applying $\theta$, we note that $\theta(k) = k$, $\theta(a^{-1}) = a$, $G_v$ is invariant under $\theta$, and one can compute $M_3$ such that $\theta(\Omega_{M_2}) \subseteq \Omega_{M_3}$.  Hence, we obtain
\[ k a \in G_v \Omega_{M_3}. \]
Finally, the bounds on the coefficients of $n$ give an $M_4$ such that $n \in \Omega_{M_4}$, so we can find $M_5$ such that
\[ k a n \in G_v \Omega_{M_5}. \]

Thus $w = v \cdot x = v \cdot k a n \in v \cdot \Omega_{M_5}$,
so one can compute effective bounds on $\left | w \right |$.
This gives a computable finite list of possible vectors $w$, which is guaranteed to contain the finite set $v \cdot \pi(\mathcal{S}) \cap \Gamma$.

Finally, we test each vector $w$ on our list, 
using Tarski's algorithm \cite[Algorithm 1.2.1]{grunewald-segal}
to determine whether it lies in $\pi(\mathcal{S})$.
\end{proof}

\begin{lem}\label{computation of reps for integral orbits in a real orbit}
Let $G$ be a connected reductive group defined over $\mathbb{Q}$, 
$\pi \colon G \rightarrow \operatorname{GL}(V)$ a representation defined over $\mathbb{Q}$,
$\Gamma$ a $\mathbb{Z}$-lattice in $\mathbb{Q}$-vector space $V$ invariant under $G_{\mathbb{Z}}$, 
and $X$ a closed orbit of $G_{\mathbb{R}}$ on $V_{\mathbb{R}}$.
Suppose $v_0 \in \Gamma \cap X$ is a given point.

There is an algorithm that takes as input $G, \pi, \Gamma, v_0$, 
and returns a list of points 
\[ v_0, v_1, \ldots, v_n \in \Gamma \cap X \]
such that 
\[ \Gamma \cap X = \bigcup_{i=0}^n G_{\mathbb{Z}}\cdot v_i. \]
\end{lem}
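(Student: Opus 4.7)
I would follow the proof of \cite[Theorem 6.9]{BHC}, making each step algorithmic using Lemma \ref{eff_5_3} as the core computational ingredient after a reduction to the case $G = \GL_n$.

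First I would apply Mostow's theorem. Because $X$ is closed, the stabilizer $G_{v_0}$ is reductive, so Mostow provides a $g_0 \in G_\R$ such that $g_0^{-1} G_{v_0} g_0$ is stable under a chosen Cartan involution $\theta$ of $G_\R$. Such a $g_0$ is computed to arbitrary precision by a Tarski-style semialgebraic search (\cite[\S 1.2]{grunewald-segal}) for an element of $G_\R$ that conjugates a maximal compact subgroup of $G_{v_0}$ inside the maximal compact $K = G_\R^\theta$. Replace $v_0$ with $v_0 \cdot g_0$; this affects neither the orbit $X$ nor $\Gamma \cap X$, so we may assume $G_{v_0}$ is itself $\theta$-stable.

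Next I would reduce to the setting of Lemma \ref{eff_5_3}. Fix a faithful $\Q$-rational embedding $G \hookrightarrow \GL_n$ under which $\theta$ extends to the standard (transpose-inverse) Cartan involution of $\GL_n$, and extend $\pi$ to a $\Q$-rational $\GL_n$-representation $\tilde\pi$ on some $\tilde V \supseteq V$ equipped with a $\GL_n(\Z)$-stable lattice $\tilde\Gamma \supseteq \Gamma$. Such data can be produced effectively from the algebraic equations defining $G$ and $\pi$. By the reduction theory of Borel (see also \cite[\S 3]{grunewald-segal}), one can effectively compute a finite set $F \subseteq \GL_n(\Q)$ such that $G_\R \subseteq G_\Z \cdot F \cdot \mathcal{S}$, where $\mathcal{S}$ is a standard Siegel set in $\GL_n(\R)$ (Definition \ref{siegel_set}). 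Consequently every $w \in \Gamma \cap X$ is $G_\Z$-equivalent to an element of $\bigcup_{f \in F}\, (v_0 \cdot f \cdot \tilde\pi(\mathcal{S})) \cap \tilde\Gamma$.

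Finally, for each $f \in F$ I would invoke Lemma \ref{eff_5_3} on the basepoint $v_0 \cdot f \in \tilde V$ to compute the finite set $(v_0 \cdot f \cdot \tilde\pi(\mathcal{S})) \cap \tilde\Gamma$, then intersect with $V \subseteq \tilde V$ and $\Gamma \subseteq \tilde\Gamma$ (both algorithmic tests) to extract a finite subset $L_f \subseteq \Gamma \cap X$. The collection $\{v_0\} \cup \bigcup_{f \in F} L_f$ is then a list of representatives: any $w \in \Gamma \cap X$ can be written $w = v_0 \cdot \gamma \cdot f \cdot s$ with $\gamma \in G_\Z$, $f \in F$, $s \in \mathcal{S}$, whence $\gamma^{-1} \cdot w$ lies in some $L_f$. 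The main anticipated obstacle is verifying the technical hypotheses of Lemma \ref{eff_5_3} --- closedness of the full $\GL_n(\R)$-orbit of $v_0 \cdot f$ in $\tilde V$ and $\theta$-stability of its $\GL_n$-stabilizer --- uniformly across all $f \in F$. This can be arranged by a judicious choice of the extended representation $\tilde\pi$ (for instance, augmenting $\tilde V$ with an auxiliary summand on which $v_0$ has trivial $\GL_n$-stabilizer, so that the closedness of the $\GL_n(\R)$-orbit becomes transparent), possibly followed by a further per-$f$ Mostow adjustment carried out exactly as in the first paragraph.
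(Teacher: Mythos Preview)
Your overall architecture is right --- reduce to $\GL_n$, cover $G_\R$ by finitely many translates of a Siegel set, and feed those into Lemma~\ref{eff_5_3} --- but the last paragraph hides a real gap. Lemma~\ref{eff_5_3} is stated only for $G=\GL_n$ and requires two things about the basepoint: the $\GL_n$-orbit must be closed, and the $\GL_n$-stabilizer must be stable under the transpose-inverse involution. Your Mostow step arranges only that the $G$-stabilizer $G_{v_0}$ is stable under a Cartan involution of $G$; this says nothing about the stabilizer of $v_0$ in the ambient $\GL_n$, which is what the lemma needs. Likewise, simply extending $\pi$ to a $\GL_n$-representation $\tilde V\supseteq V$ (even granting that this can be done effectively, which you have not argued) gives no control over the $\GL_n$-orbit of $v_0\cdot f$ in $\tilde V$. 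Your proposed fix --- augmenting $\tilde V$ so that ``$v_0$ has trivial $\GL_n$-stabilizer'' --- is incoherent as stated: if $v_0$ keeps its old component in $V$ and acquires a zero component in the auxiliary summand, its $\GL_n$-stabilizer is unchanged; if instead you move to a new basepoint $\tilde v_0$ with nonzero auxiliary component, you lose the relation between $\tilde v_0\cdot\tilde\pi(\mathcal S)\cap\tilde\Gamma$ and $\Gamma\cap X$.

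The paper avoids this by \emph{not} extending $V$ at all. Instead it invokes \cite[Algorithm~5.1.1]{grunewald-segal} to build a fresh $\Q$-rational $\GL_n$-representation $\rho\colon\GL_n\to\GL(W)$ and a vector $w\in W$ such that the $\GL_n$-stabilizer of $w$ is exactly $H=G_{v_0}$ and the $\GL_n$-orbit $w\cdot\rho(\GL_n)$ is closed --- both hypotheses of Lemma~\ref{eff_5_3} are then built in from the start. Since $w$ and $v_0$ have the same $G$-stabilizer, there is a $G$-equivariant $\Q$-isomorphism $\phi\colon X\to X':=w\cdot\rho(G)$, and $\phi,\phi^{-1}$ can be found as explicit polynomial maps by brute-force search. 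One then finds (again by search, using \cite[Lemma~1.9]{BHC}) an $a\in\SL_n(\R\cap\Qbar)$ making both $aGa^{-1}$ and $aHa^{-1}$ self-adjoint, applies \cite[Algorithm~5.3.1]{grunewald-segal} to get the finite set of $b_i$, runs Lemma~\ref{eff_5_3} on the basepoint $w\cdot a^{-1}$ in $W$, translates by the $b_i$, and finally pulls the resulting finite set back through $\phi^{-1}$ and intersects with $\Gamma$. The passage to the auxiliary representation $W$ is the missing idea in your proposal.
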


\begin{rmk}
This is essentially just a matter of making effective the proof of \cite[6.9]{borel-harishchandra}.
That proof has one essential ineffectivity:
it does not say how to determine whether $\Gamma \cap X$ is empty.
We do not know how to solve this problem algorithmically, in general;
we solve it explicitly in the cases we need (Section \ref{characterization of the relevant real orbits section}).

In fact, \cite[6.5 (i)]{borel-harishchandra} is made algorithmic by \cite[5.3.1]{grunewald-segal},
so we need only explain how to reduce \cite[6.9]{borel-harishchandra} to \cite[6.5 (i)]{borel-harishchandra}.
\end{rmk}

\begin{proof}
$G$ is given as a subgroup, defined over $\mathbb{Q}$, of some $G' = GL_n$.
Let $H \subseteq G$ be the stabilizer of $v_0$.
By \cite[Algo.\ 5.1.1]{grunewald-segal}, 
we can find a rational representation
\[ \rho \colon G' \rightarrow GL(W), \]
and a vector $w \in W$, such that
\begin{enumerate}
    \item $H$ is the stabilizer of $w$ in $W$, and
    \item $w \cdot \rho(G')$ is closed in $W$.
\end{enumerate}

Let $X' = w \cdot \rho(G)$ be the orbit of $w$ under the subgroup $G \subseteq \operatorname{GL}_n$.
There is a unique $G$-equivariant isomorphism of $\mathbb{Q}$-schemes $\phi \colon X \rightarrow X'$ such that $\phi(v_0) = w$ (see the proof of \cite[6.9]{borel-harishchandra}).
In fact, since $X$ and $X'$ are both affine schemes, the coordinates of $\phi$ can be written explicitly as polynomials.
Choosing coordinates $x_i$ on $V$ and $y_i$ on $W$,
the map $\phi$ is given by polynomials $(P_1, \ldots, P_s)$ with rational coefficients such that $y_i(\phi(x_1, \ldots, x_r)) = P_i(x_1, \ldots, x_r)$.
The inverse morphism $\phi^{-1}$ is given by polynomials over $\mathbb{Q}$ as well.
The pair $(\phi, \phi^{-1})$ can be computed by brute-force search.

Let $\Gamma' \subseteq W$ be a $\mathbb{Z}$-lattice, containing $\phi(\Gamma)$ and invariant under the action of the group $G'_{\mathbb{Z}} = \operatorname{SL}_n(\mathbb{Z})$. 

(The next step effectivizes \cite[Lemma 6.8]{borel-harishchandra}.)
First, find (by brute-force search) an $a \in \operatorname{SL}_n(\mathbb{R} \cap \Qbar)$ 
such that $a G a^{-1}$ and $a H a^{-1}$ are self-adjoint
as subgroups of $\operatorname{SL}_n$.
Such an $a$ is guaranteed to exist: \cite[Lemma 1.9]{borel-harishchandra} gives the existence of an $a \in \operatorname{SL}_n(\mathbb{R})$ such that $a G a^{-1}$ and $a H a^{-1}$ are self-adjoint.
But the condition that $a G a^{-1}$ and $a H a^{-1}$ be self-adjoint
is an algebraic condition on $a$, 
so in fact we can take $a$ to have coefficients algebraic numbers.

By \cite[Algorithm 5.3.1]{grunewald-segal}, 
we can find finitely many elements $b_i \in \operatorname{GL}_n(\mathbb{Z})$ such that
\[ G(\mathbb{R}) = \bigcup_i \left ( G(\mathbb{R}) \cap a^{-1} \mathcal{S} b_i \right ) \dot G(\mathbb{Z}), \]
where $\mathcal{S}$ is the standard Siegel set
(Definition \ref{siegel_set}).

Now apply Lemma \ref{eff_5_3} (to the group $\operatorname{GL}_n$, the representation $W$, the lattice $\Gamma'$, and the vector $w \cdot a^{-1}$
to determine the finite set $w \cdot a^{-1} \mathcal{S} \cap \Gamma$.
(By the choice of $a$, the stabilizer $a H a^{-1}$ of $w \cdot a^{-1}$ is self-adjoint
under the standard Cartan involution on $\operatorname{GL}_n$; 
the other hypotheses of Lemma \ref{eff_5_3} are immediate.)

For each $i$, apply $b_i$ to $w \cdot a^{-1} \mathcal{S} \cap \Gamma$ to obtain the finite set $w \cdot a^{-1} \mathcal{S} b_i \cap \Gamma$.
Compute the union $F_0$ of these finite sets, over all the $b_i$ computed earlier.
Since $G(\mathbb{R}) = \bigcup_i \left ( G(\mathbb{R}) \cap a^{-1} \mathcal{S} b_i \right ) \dot G(\mathbb{Z})$, 
the set $F_0$ contains a representative for each orbit of $G(\mathbb{Z})$ on $X'$.
Test each point of $F_0$ to determine whether it lies in $X'$; let $F_1 = F_0 \cap X'$.

Finally, apply the isomorphism $\phi^{-1} \colon X' \rightarrow X$ to each point in $F_1$,
and test the resulting points to determine which lie in the lattice $\Gamma$.
Let $F$ be the set of point of $\phi^{-1}(F_1)$ lying in $\Gamma$; this $F$ is the desired set.
\end{proof}

\newpage

\section{The homology of an abelian variety.}
\label{sec:int_homology}

In this section we show how to explicitly compute the singular homology of an abelian variety over $\mathbb{C}$, which enables us to perform computations on endomorphisms and polarizations in exact integer coordinates.

Lemma \ref{homology_action} gives the action of an endomorphism on singular homology; we use it in the computation of the endomorphism ring.
Lemma \ref{find_polarization_in_H2} enables us to write the Chern class of a polarization in exact integer coordinates.

\begin{lem}
\label{basis_differentials}
There is an algorithm that takes as input an abelian variety $A$ over a number field $K$, and returns a $K$-basis for $\Gamma(A, \Omega^1_A)$.
\end{lem}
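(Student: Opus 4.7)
The plan is to compute $\Gamma(A, \Omega^1_A)$ directly from the given projective embedding $A \hookrightarrow \P^N_K$ by reducing to standard Gröbner-basis operations on finitely generated graded modules over the homogeneous coordinate ring $S_A := K[x_0, \ldots, x_N]/I$, where $I$ is the (computable) defining ideal of $A$. The output dimension $g = \dim A$ is known in advance (e.g.\ from the Hilbert polynomial of $I$), so at the end it is only a matter of extracting a $K$-basis of a finite-dimensional vector space of known dimension.

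The key algebraic input is the conormal short exact sequence, valid because $A$ is smooth:
$$0 \to I/I^2 \to \Omega^1_{\P^N_K}|_A \to \Omega^1_A \to 0.$$
Both of the outer-left sheaves admit completely explicit presentations as finitely generated graded $S_A$-modules. The conormal bundle $I/I^2$ is presented directly from a finite set of generators of $I$, with the map to $\Omega^1_{\P^N_K}|_A$ sending each generator $f \in I$ to the Kähler differential $df$. The restricted cotangent sheaf $\Omega^1_{\P^N_K}|_A$ is presented, in turn, by restricting the Euler sequence
$$0 \to \Omega^1_{\P^N_K} \to \mathcal{O}_{\P^N_K}(-1)^{\oplus (N+1)} \to \mathcal{O}_{\P^N_K} \to 0$$
(whose middle map is multiplication by $(x_0, \ldots, x_N)$) to $A$, i.e.\ tensoring over $K[x_0, \ldots, x_N]$ with $S_A$. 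Taking the cokernel gives a finitely presented graded $S_A$-module $M$ representing $\Omega^1_A$; this is a single Gröbner-basis computation.

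Then I would invoke the standard effective algorithm for computing global sections of a coherent sheaf on a projective variety: compute the saturation $M^{\mathrm{sat}}$ of $M$ with respect to the irrelevant ideal $\mathfrak{m} = (x_0, \ldots, x_N)$ (another terminating Gröbner-basis computation, since $(M : \mathfrak{m}^n)$ stabilizes for $n$ large), and then output a $K$-basis of the degree-zero piece $(M^{\mathrm{sat}})_0$. By general theory $\Gamma(A, \Omega^1_A) \cong (M^{\mathrm{sat}})_0$, and this $K$-vector space has dimension $g$, matching the expected dimension of $H^0$ of the cotangent sheaf of an abelian variety.

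There is no real obstacle: every step is a standard manipulation of finitely presented graded modules over a finitely presented $K$-algebra, and all such manipulations (kernels, cokernels, ideal quotients, saturations, and extracting a $K$-basis of a finite-dimensional graded piece) are implemented by Gröbner bases. The only mild point of care is that the presentations above are of graded $S_A$-modules rather than sheaves, and that identifying $(M^{\mathrm{sat}})_0$ with $\Gamma(A, \Omega^1_A)$ relies on the standard fact that saturation reproduces global sections of the associated sheaf --- but this is entirely routine.
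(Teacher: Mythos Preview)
Your approach is sound and takes a genuinely different route from the paper. The paper exploits the group structure: it chooses a basis $u_1, \ldots, u_g$ of the cotangent space at the origin $e$ and spreads each $u_i$ to a global translation-invariant form by pulling back along the multiplication map $m \colon A \times A \to A$, projecting onto the first factor of $\Omega^1_{A\times A}$ along the antidiagonal, and restricting via $x \mapsto (x, -x)$. This uses only the structure maps already presented with $A$ and produces translation-invariant forms by construction. Your method instead treats $A$ as an arbitrary smooth projective variety and invokes general Gr\"obner-basis machinery for coherent cohomology via the conormal and Euler sequences; this is more general (it would compute $H^0(X, \Omega^1_X)$ for any smooth projective $X$) but heavier, and makes no use of the abelian-variety structure.

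One technical point to fix: the identification $\Gamma(A, \Omega^1_A) \cong (M^{\mathrm{sat}})_0$ is not correct as stated. Saturation (killing $H^0_{\mathfrak{m}}(M)$) only gives an injection $M^{\mathrm{sat}} \hookrightarrow \Gamma_*(\widetilde{M}) = \bigoplus_d H^0(A, \widetilde{M}(d))$; the cokernel is $H^1_{\mathfrak{m}}(M)$, which need not vanish in degree zero for the particular presentation $M$ you construct. The remedy is standard --- compute $H^0$ honestly from a graded free resolution of $M$ over $K[x_0, \ldots, x_N]$ together with the explicit cohomology of the line bundles $\mathcal{O}_{\P^N}(e)$, or via local cohomology --- and is algorithmic (indeed implemented in standard computer-algebra systems), so your argument goes through after this correction.
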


\begin{proof}
Let $e$ be the origin on $A$.
Let $u_1, \ldots, u_g$ for a basis for $(T_e A)^{\vee}$, the cotangent space to $A$ at $e$.

For each $1 \leq i \leq g$, compute $\omega_i$ by ``translating $u_i$ around $A$''.  
Concretely, let
\[\Delta = \{(x, -x) \mid x \in A\} \subseteq A \times A\] 
be the antidiagonal,
and let $m \colon A \times A \rightarrow A$
be the multiplication map, so that $\Delta = m^{-1}(e)$,
and $m^* u_i \in T(A\times A)^{\vee} | _{\Delta}$
is a differential vanishing along $\Delta$.

With the natural identification 
\[ T(A \times A)^{\vee} = \pi_1^* T_A \oplus \pi_2^* T_A, \] 
let $\iota_1$ denote the projection onto the first factor:
\[ \iota_1( \pi_1^* \omega_1 + \pi_2^* \omega_2 ) = \pi_1^* \omega_1. \]
Finally, let $\delta \colon A \rightarrow \Delta$ be the inclusion 
\[ \delta(x) = (x, -x). \]

Then (for $1 \leq i \leq g$)
\[ v_i = \delta^* \iota_1( m^* u_i ) \]
is a translation-invariant section of $\Omega_1(A)$
restricting to $u_1$ at $e$;
the sections $v_1, \ldots, v_g$ form the desired basis of $\Gamma(A, \Omega_1(A))$.
\end{proof}

\begin{notation}
\label{unif_exp}
Let $A$ be an abelian variety of dimension $g$ over a number field $K$, and suppose given some complex embedding of $K$.  Suppose $\omega_1, \ldots, \omega_g$ form a basis for $\Gamma(A, \Omega^1_A)$.

Then there is a unique complex-analytic map $\mathrm{exp} \colon \mathbb{C}^g \rightarrow A$
such that $\mathrm{exp}^* \omega_i = d z_i$ for each $i$.

(In other words: $A$ is uniformized by $\mathbb{C}^g$; 
the uniformizing map $\mathrm{exp} \colon \mathbb{C}^g \rightarrow A$ is well-defined up to linear automorphisms of $\mathbb{C}^g$, 
and we use the basis $\omega_i$ of $\Gamma(A, \Omega^1_A)$
to specify the map $\mathrm{exp}$.)

We call this $\mathrm{exp}$ the \emph{uniformization of $A$ corresponding to the basis $\omega_i$}.
\end{notation}

\begin{lem}
\label{tao_ift0}
(Effective inverse function theorem.)

Suppose $U \subseteq \mathbb{C}^n$ is a neighborhood of a point $e$, 
and $f \colon U \rightarrow \mathbb{C}^n$ is a holomorphic function.

Suppose the differential $df_e$ of $f$ at $e$ is invertible, 
and suppose $\epsilon$ and $c < 1$ are chosen such that,
for all $x \in B_\eps(e)$, 
\[ \| df_e^{-1} (df_x - df_e) \| \leq c. \]

Then:
\begin{itemize}
\item $f$ is injective on $B_\eps(e)$.
\item 
$f(e) + df_e ( B_{(1-c) \eps}(0))) \subseteq f( B_\eps(e) ) \subseteq
f(e) + df_e ( B_{(1+c) \eps}(0)))$.
\item 
The inverse function of $f$ can be computed effectively: for any $y \in f(e) + df_e (B_{(1-c) \eps}(0)))$, 
one can compute to arbitrary precision 
its unique inverse image $f^{-1}(y) \in B_\eps(e)$.
\end{itemize}
\end{lem}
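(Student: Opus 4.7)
The plan is to reduce to the classical contraction-mapping proof of the inverse function theorem and to extract explicit quantitative estimates at each step. After replacing $f$ by the map $x \mapsto (df_e)^{-1}(f(x + e) - f(e))$, I may assume without loss of generality that $e = 0$, $f(0) = 0$, and $df_0 = I$; the hypothesis then becomes $\|df_x - I\| \leq c$ on $B_\eps(0)$, and each of the three conclusions of the lemma transforms equivariantly under this affine change of coordinates, so it suffices to prove them under these normalizations.

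Set $g(x) := x - f(x)$ on $B_\eps(0)$, so that $dg_x = I - df_x$ has operator norm at most $c$. Integrating $dg$ along the line segment from $x_2$ to $x_1$ (which remains in the convex ball $B_\eps(0)$) yields $|g(x_1) - g(x_2)| \leq c\cdot |x_1 - x_2|$, so $g$ is a $c$-contraction on $B_\eps(0)$. Injectivity of $f$ is then immediate from the identity $x_1 - x_2 = (f(x_1) - f(x_2)) + (g(x_1) - g(x_2))$ together with $c < 1$, and the upper image containment $f(B_\eps(0)) \subseteq B_{(1+c)\eps}(0)$ follows from $|f(x)| \leq |x| + |g(x) - g(0)| \leq (1+c)|x|$ for $x \in B_\eps(0)$.

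For the lower image containment, fix $y$ with $|y| \leq (1-c)\eps$ and consider $T_y(x) := y + g(x)$. The contraction bound on $g$, combined with $g(0) = 0$, shows that $T_y$ sends $\overline{B_\eps(0)}$ into itself and is a $c$-contraction on it, so Banach's fixed-point theorem produces a unique $x \in \overline{B_\eps(0)}$ with $f(x) = y$. The Picard iteration $x_0 := 0$, $x_{k+1} := T_y(x_k)$ then converges geometrically at rate $c$, with the standard bound $|x_k - x| \leq c^k \eps / (1-c)$, so $O(\log(1/\delta))$ iterations suffice to pin down $x$ to within error $\delta$. None of these steps is hard; the only point worth pausing on is the effective-computability clause, which tacitly requires that the ambient presentation of the holomorphic $f$ permits evaluation to arbitrary precision at any given point — but this is part of what it means to present a holomorphic function algorithmically, and once granted the Picard iteration above furnishes the required approximations with explicit complexity bounds.
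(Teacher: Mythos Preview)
Your proof is correct and takes exactly the same approach as the paper: the paper's proof simply cites the contraction mapping principle (referencing Tao's book) and notes that iterating $x \mapsto x + df_e^{-1}(y - f(x))$ computes $f^{-1}(y)$, which after your normalization $df_e = I$ is precisely your $T_y$. Your write-up is in fact more detailed than the paper's; the only quibble is that you apply Banach's theorem on $\overline{B_\eps(0)}$ while $f$ is a priori defined only on the open ball, but this is harmless since the fixed point satisfies $|x| \le |y|/(1-c) < \eps$ and one can run the iteration on any closed ball of radius strictly between $|y|/(1-c)$ and $\eps$.
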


\begin{proof}
This is a standard application of the contraction mapping principle 
(see for example \cite[Lemma 6.6.6]{Tao}).
To compute $f^{-1}(y)$, simply iterate the contraction mapping
\[ x \mapsto x + df_e^{-1} (y - f(x)). \]
\end{proof}

\begin{lem}
\label{tao_ift}
Let $X \subseteq \mathbb{A}^N$ be an algebraic variety 
of dimension $g$ over a number field $K$ (with fixed complex embedding)
and let $e$ be a smooth point of $X$.
Equip $\mathbb{A}^N(\mathbb{C}) = \mathbb{C}^N$ with the standard Euclidean metric. 

There is an algorithm to find some $g$ of the $N$ coordinates on $\mathbb{A}^N$ -- 
say $x_{i_1}, x_{i_2}, \ldots, x_{i_g}$ -- 
so that the resulting map $\pi \colon X \rightarrow \mathbb{A}^g$
is invertible in a neighborhood of $e$ (for the classical topology on $X^{an}$).

Furthermore, there is an algorithm to find an $\epsilon$
such that, if $B_\eps(e)$ is the ball of radius $\epsilon$ about $e$, then
\begin{itemize}
    \item the restriction of $\pi$ to $B_\eps(e) \cap X$ is invertible, and
    \item $B_\eps(e) \cap X$ is contained in a contractible subset of $X$.
\end{itemize}

Finally, there is an algorithm to find an $\epsilon_1$ such that $B_{\epsilon_1}(\pi(e)) \subseteq \pi(B_\eps(e))$.  The inverse function to $\pi$ can be computed algorithmically on $B_{\epsilon_1}(\pi(e))$ (to arbitrary precision).
\end{lem}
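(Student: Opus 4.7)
The plan is to realize $\pi$ as the restriction to $X$ of an explicit polynomial diffeomorphism $F \colon \C^N \to \C^N$ and then invoke the effective inverse function theorem (Lemma \ref{tao_ift0}) applied to $F$.

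First I would compute the Jacobian at $e$ of some set of defining equations $g_1, \ldots, g_m$ of $X$ (which are part of the input), using exact arithmetic over $K$. Smoothness of $X$ at $e$ together with $\dim X = g$ forces this Jacobian to have rank $N - g$, so by row reduction I can select $N - g$ equations $f_1, \ldots, f_{N-g}$ from among the $g_i$ whose gradients at $e$ are linearly independent, and then pick $g$ coordinates $x_{i_1}, \ldots, x_{i_g}$ whose differentials complete these to a basis of $T_e^* \A^N$. Setting
\[ F(x) := (x_{i_1}, \ldots, x_{i_g}, f_1(x), \ldots, f_{N-g}(x)), \]
the matrix $dF_e$ is invertible (and I can invert it in exact arithmetic over $K$), $F(e) = (\pi(e), 0)$, and --- since $\{f_1 = \cdots = f_{N-g} = 0\}$ is smooth of dimension $g$ at $e$ and contains $X$ --- $X$ agrees with $F^{-1}(\C^g \times \{0\})$ in a Zariski neighborhood of $e$, which I can make explicit by computing the finitely many excess components of the common vanishing locus of the $f_i$ (none of which contain $e$) and then requiring $\epsilon$ to be smaller than the effectively bounded Euclidean distance from $e$ to each.

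Next I would apply Lemma \ref{tao_ift0} to $F$ at $e$. Each entry of $dF_x - dF_e$ is a polynomial in $x - e$ with no constant term, so a crude bound (the sum of the absolute values of its coefficients, times $\|x - e\|$ on any bounded region) yields an explicit $C > 0$ with $\|dF_e^{-1}(dF_x - dF_e)\| \leq C\|x - e\|$ for $\|x - e\| \leq 1$. Fixing any $c < 1$ and then choosing $\epsilon \leq \min(1, c/C)$ also small enough to avoid the excess components above, Lemma \ref{tao_ift0} gives that $F$ is injective on $B_\epsilon(e)$ and that $F(B_\epsilon(e))$ contains $F(e) + dF_e(B_{(1-c)\epsilon}(0))$, with $F^{-1}$ computable there to arbitrary precision via the contraction mapping of Lemma \ref{tao_ift0}.

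Since locally $X = F^{-1}(\C^g \times \{0\})$, the restriction $\pi|_{X \cap B_\epsilon(e)}$ is injective with inverse $y \mapsto F^{-1}(y, 0)$. Setting $\epsilon_1 := (1-c)\epsilon / \|dF_e^{-1}\|$ guarantees $(y - \pi(e), 0) \in dF_e(B_{(1-c)\epsilon}(0))$ for all $y \in B_{\epsilon_1}(\pi(e))$, so the inverse is defined and computable there. Finally, the preimage $V := F^{-1}(F(e) + dF_e(B_{(1-c)\epsilon}(0))) \cap B_\epsilon(e)$ is mapped homeomorphically by $F$ onto an open ball, and $V \cap X$ is mapped homeomorphically onto $(\C^g \times \{0\}) \cap dF_e(B_{(1-c)\epsilon}(0))$, an open ball in $\C^g$; hence $V \cap X$ is a contractible open neighborhood of $e$ in $X$, and shrinking $\epsilon$ once more (via the same Lipschitz estimate on $F - dF_e$) so that $B_\epsilon(e) \cap X \subseteq V \cap X$ completes the construction. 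The only substantive step is producing the explicit bound $C$ and the lower bound on the distance from $e$ to the excess components, both of which are purely mechanical since $F$ is polynomial with coefficients in $K$ under the fixed complex embedding, so no real obstacle arises.
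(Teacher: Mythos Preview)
Your proof is correct and follows essentially the same approach as the paper: extend $\pi$ to a polynomial map $F\colon \C^N\to\C^N$ using $N-g$ of the defining equations, then apply the effective inverse function theorem (Lemma \ref{tao_ift0}) to $F$. The one noteworthy difference is how you arrange that $X$ locally coincides with $F^{-1}(\C^g\times\{0\})$: you compute the excess components of $Z(f_1,\ldots,f_{N-g})$ and effectively lower-bound the distance from $e$ to them, whereas the paper sidesteps this by observing that $F^{-1}$ of an ellipsoid slice is connected and hence lies in a single irreducible component of $Z$ (necessarily the one through $e$, which equals the component of $X$ through $e$). Both arguments are valid; the paper's avoids the primary decomposition and the distance estimate, while yours is perhaps more concrete.
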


\begin{proof}
The algebraic variety $X$ comes presented as the vanishing locus of a collection of polynomials in $N$ variables.
That $X$ is smooth at $e$ implies that there are $N-g$ of these polynomials, say $f_1, \ldots, f_{N-g}$, 
whose differentials are linearly independent in $(T_e \mathbb{A}^N)^{\vee}$;
these polynomials can be found (among the finitely many defining polynomials for $X$)
by standard methods of linear algebra.

One can then choose indices $i_1, \ldots, i_g$ so that $df_1, \ldots, df_{N-g}, dx_{i_1}, \ldots, dx_{i_g}$ form a basis for $(T_e \mathbb{A}^N)^{\vee}$.

For dimension reasons, $X$ locally coincides which the vanishing locus $Z = Z(f_1, \ldots, f_{N-g})$; 
in other words, there is some open set $U \ni e$ such that 
\[ X \cap U = Z \cap U. \]

We will apply an effective form of the inverse function theorem 
to the map $F \colon \mathbb{C}^n \rightarrow \mathbb{C}^n$
given by
\[ F(x_1, \ldots, x_n) = (f_1(x_1, \ldots, x_n), \ldots, f_{N-g}(x_1, \ldots, x_n), x_{i_1}, \ldots, x_{i_g}). \]
We have arranged that $dF_e \colon \mathbb{C}^N \rightarrow \mathbb{C}^N$ is invertible.

Let $Y \subseteq \mathbb{C}^N$ be the set of points whose first $N-g$ coordinates vanish, so we have the relations
\[ X \subseteq Z = F^{-1}(Y). \]

Equip $\mathbb{C}^N$ with the standard Euclidean metric.
For variable $x \in \mathbb{C}^n$, consider the linear transformation
\[ G(x) = dF_e^{-1} \circ (dF_x - dF_e). \]
We have $G(e) = 0$.
Since $G$ is continuous, we can find $\epsilon$ such that,
whenever $\left | x-e  \right | < \epsilon$, all matrix entries of $G(x)$ are less than $1/2g^2$.  (In fact the matrix entries of $G$ are polynomials in $x$, and so one can find such $\epsilon$ algorithmically.)

Now by Lemma \ref{tao_ift0}
$F$ is invertible on $B_\eps(e)$.
Hence, $F$ maps $B_\eps(e)$ invertibly to some neighborhood of $F(e)$.
By Lemma \ref{tao_ift0} again,
\[ B_{\eps/4}(e) \subseteq F^{-1}B' \subseteq B_\eps(e), \]
where $B'$ is the image of $B_{\eps/2}(e)$ under the affine-linear map
\[ x \mapsto F(e) + dF_e(x-e). \]

Now $B'$ is an ellipsoid, so its intersection with any coordinate plane is also an ellipsoid.  In particular, $Y \cap B'$ is simply connected.  
Since $F$ is a homeomorphism on $F^{-1}(B')$, 
the set $$F^{-1}(Y \cap B') = Z \cap F^{-1}(B')$$ is simply connected as well.
In particular, $Z \cap F^{-1}(B')$ is contained in a single irreducible component of $Z$, so $Z \cap F^{-1}(B') \subseteq X$.

It now follows that $B_{\epsilon/4}(e)$ satisfies the conditions of the lemma: the restriction of $\pi$ to $B_{\epsilon/4}(e) \cap X$ is invertible, and $B_{\epsilon/4}(e) \cap X$ is contained in a contractible subset of $X$.

Let $\frac{1}{4} B'$ be the image of $B_{\eps/8}(e)$ under $x \mapsto F(e) + dF_e(x-e)$.
By Lemma \ref{tao_ift0} yet again, 
\[ \frac{1}{4} B' \subseteq \pi(B_{\epsilon/4}(e)). \]
Thus if we take $\epsilon_1$ small enough that $B_{\epsilon_1}(\pi(e)) \subseteq \frac{1}{4} B'$, we have $B_{\epsilon_1}(\pi(e)) \subseteq \pi(B_\eps(e))$.

Computability of the inverse function follows from Lemma \ref{tao_ift0}.
\end{proof}

\begin{lem}
\label{unif_ball}
Let $A$ be an abelian variety over a number field with chosen complex embedding.  Suppose a basis for $\Gamma(A, \Omega^1_A)$ has been chosen as in Lemma \ref{basis_differentials}, and let $\mathrm{exp} \colon \mathbb{C}^g \rightarrow A$ be the uniformizing map defined in Notation \ref{unif_exp}.

One can explicitly compute an open ball $B = B_{\eps}(0)$ about the origin in $\mathbb{C}^g$
and an open neighborhood $B'$ of $e$ in $A$
such that $\mathrm{exp}(B) \subseteq B'$, and $B'$ is contained in a simply connected subset of $A$.
Furthermore, the map $\mathrm{exp}$ can be computed algorithmically to arbitrary precision on $B$.
\end{lem}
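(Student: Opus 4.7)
The plan is to construct $\mathrm{exp}$ near the origin by inverting the integration-of-forms map, using the effective inverse function theorem already in hand.

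First I would apply Lemma \ref{tao_ift} to $A$ (in an affine chart of its projective embedding containing the identity $e$) at the point $e$. This produces local coordinates $\pi = (x_{i_1}, \ldots, x_{i_g})$ together with an explicit $\eps_0 > 0$ such that $\pi$ restricts to a biholomorphism from $B_{\eps_0}(e) \cap A$ onto its image, and this set is contained in a contractible---hence simply connected---subset of $A$. Set $B' := B_{\eps_0}(e) \cap A$; this is the neighborhood of $e$ that will appear in the conclusion.

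Next, using the explicit basis supplied by Lemma \ref{basis_differentials} and the projective equations defining $A$, pull each $\omega_j$ back by $\pi^{-1}$ to obtain explicit rational 1-forms $(\pi^{-1})^* \omega_j = \sum_k h_{jk}(y_1, \ldots, y_g)\, dy_k$, regular on a neighborhood of $\pi(e)$. Define
\[
F \colon B' \to \C^g, \qquad F(p) := \Bigl(\int_e^p \omega_1, \ldots, \int_e^p \omega_g\Bigr),
\]
where each integral is taken along any path in the simply connected set $B'$. Since the $\omega_j$ restrict to a basis of the cotangent space at $e$, $dF_e$ is invertible, and the defining property $\mathrm{exp}^* \omega_j = dz_j$ says exactly that $F = \mathrm{exp}^{-1}$ on a neighborhood of $e$. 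Expanding each $h_{jk}$ as a convergent power series about $\pi(e)$---its denominator is a fixed polynomial nonvanishing at $\pi(e)$, so the radius of convergence is effectively bounded from below---and integrating term-by-term makes $F \circ \pi^{-1}$ computable to arbitrary precision.

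Finally I would apply Lemma \ref{tao_ift0} to $F$. Using the explicit power series above, one bounds $\lVert dF_e^{-1}(dF_p - dF_e)\rVert$ on $B_r(e) \cap A$ for shrinking $r$ and chooses $r \leq \eps_0$ so that this norm is at most some fixed $c < 1$. Lemma \ref{tao_ift0} then furnishes an explicit $\eps > 0$ such that $F$ restricts to a bijection from $B_r(e) \cap A$ onto a set containing $B_\eps(0) \subseteq \C^g$, and supplies the contraction-mapping iteration $x \mapsto x + dF_e^{-1}(z - F(x))$ which computes $F^{-1}(z) = \mathrm{exp}(z)$ to arbitrary precision for each $z \in B := B_\eps(0)$. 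By construction $\mathrm{exp}(B) = F^{-1}(B) \subseteq B_r(e) \cap A \subseteq B'$, as required. The only nontrivial bookkeeping is in the second step---producing exact expressions for the $h_{jk}$ and effective error terms for truncations of their power series---and everything else is a direct application of Lemma \ref{tao_ift0}, so no nonconstructive analytic feature of $\mathrm{exp}$ is ever invoked.
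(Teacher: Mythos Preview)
Your proposal is correct and follows essentially the same route as the paper: use Lemma~\ref{tao_ift} to obtain the chart $\pi$ and the simply connected neighborhood $B'$, express the $\omega_j$ in these coordinates via rational functions (your $h_{jk}$ are the paper's $M_{ij}$), identify $\exp^{-1}$ with the integration map, and then apply Lemma~\ref{tao_ift0} to invert it. The only cosmetic difference is that the paper applies Lemma~\ref{tao_ift0} to $\exp^{-1}\circ\pi^{-1}$ (a map $\mathbb{C}^g\to\mathbb{C}^g$) rather than to $F$ itself, and is terser about how the forward map is evaluated; your explicit use of power-series integration for $F\circ\pi^{-1}$ makes that step clearer.
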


\begin{proof}

Apply Lemma \ref{tao_ift} to the smooth point $e$ of the smooth variety $A$;
Lemma \ref{tao_ift} returns
a projection
\[\pi \colon U \rightarrow \mathbb{A}^g\]
from some open $U \subseteq A$
onto some $g$ of the $N$ standard coordinates on an affine coordinate patch $\mathbb{A}^N \subseteq \mathbb{P}^N$,
as well as an algorithmically-computable $\epsilon_0$
such that $B' = B_{\eps_0}(e) \cap A$ is contained in some simply-connected subset $\Omega$ of $A$.

Since $\mathrm{exp} \colon \mathbb{C}^g \rightarrow A$ is the universal cover of $A$,
we can invert $\mathrm{exp}$ canonically
on $\Omega$ by requiring that $\mathrm{exp}^{-1}(e) = 0$;
this gives a well-defined inverse $\mathrm{exp}^{-1}$ on $B'$,
which we will use without further comment.

\[
\xymatrix{
\mathrm{exp}^{-1}(B') \ar[r]^{\mathrm{exp}} \ar@{^{(}->}[d] & 
B' \ar[r]^{\gamma} \ar@{^{(}->}[d] & 
\mathbb{C}^g \\ \mathbb{C}^g \ar[r] 
& A & 
}
\]

Now we want to apply Lemma \ref{tao_ift0} to the map
\[ \mathrm{exp}^{-1} \circ \gamma^{-1} \colon \gamma(B') \rightarrow \mathbb{C}^g. \]
To estimate $d(\mathrm{exp}^{-1} \circ \gamma^{-1})$, we will work with differentials on $A$.
At any $a \in B \subseteq A$, 
we can express each $\omega_i$ 
as a linear combination of the differentials $\gamma^* dt_i$.
Specifically, one can compute (exactly) rational functions $M_{ij}$ on $A$ such that
\[ \omega_i = \sum_i M_{ij} \gamma^* t_j. \]

But now these $M_{ij}$ exactly give the differential of $\mathrm{exp}^{-1} \circ \gamma^{-1}$!
Specifically, if $M(x)$ denotes the matrix whose $i,j$ entry is $M_{ij}(x)$, we have
\[ d(\mathrm{exp}^{-1} \circ \gamma^{-1})_a = M(\gamma^{-1}(a)). \]

Now apply Lemma \ref{tao_ift0}.
We can find $\epsilon$ such that,
if $x \in \gamma^{-1} C$, then
\[ \| M(e)^{-1} (M(x) - M(e)) \| < 1/2. \]
And we are done!
Specifically: Lemma \ref{tao_ift0} tells us that $M(e)^{-1} B_{\epsilon/2}(0) \subseteq \mathrm{exp}^{-1} \circ \gamma^{-1} (\mathrm{exp}^{-1} \circ \gamma^{-1})$, so any ball $B$ contained in $M(e)^{-1} B_{\epsilon/2}(0)$ does the job.

Finally, note that both $\gamma \circ \mathrm{exp}$ and $\gamma^{-1}$ can be computed numerically by Lemma \ref{tao_ift}, so their composition $\mathrm{exp}$ can as well.
\end{proof}

\begin{lem}
\label{inv_exp}
There is an algorithm to compute the inverse of the uniformizing map $\mathrm{exp}$
on the open set $B' \subseteq A$ computed in Lemma \ref{unif_ball}. 
Specifically, the algorithm takes as input $x \in B' \subseteq A$, and returns an estimate to arbitrary precision
of the unique inverse image $\mathrm{exp}^{-1}(x) \in B \subseteq \mathbb{C}^g$.
\end{lem}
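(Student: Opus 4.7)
The plan is to apply the effective inverse function theorem (Lemma \ref{tao_ift0}) to the holomorphic map $\phi := \gamma \circ \mathrm{exp} : B \to \mathbb{C}^g$, where $\gamma : B' \to \gamma(B') \subseteq \mathbb{A}^g(\mathbb{C})$ is the projection onto the chosen $g$ coordinates used in the proof of Lemma \ref{unif_ball}. Indeed, the proof of that lemma set up $B$ and the matrix-valued function $M$ (giving the differential of $\mathrm{exp}^{-1} \circ \gamma^{-1}$, hence the inverse of $d\phi$) precisely so that $\|d\phi_0^{-1}(d\phi_y - d\phi_0)\| < 1/2$ throughout $B$, which is exactly the hypothesis of Lemma \ref{tao_ift0}. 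Note that $d\phi_0$ is a single constant matrix, computable once and for all.

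Given $x \in B'$, I would first compute $\gamma(x) \in \mathbb{C}^g$ (simply by reading off the relevant coordinates of $x$), and then run the contraction iteration furnished by Lemma \ref{tao_ift0}:
\[ y_{n+1} := y_n + d\phi_0^{-1}\bigl(\gamma(x) - \gamma(\mathrm{exp}(y_n))\bigr), \qquad y_0 := 0. \]
The inner evaluation $\mathrm{exp}(y_n)$ is computable to arbitrary precision by Lemma \ref{unif_ball}, and the projection $\gamma$ is trivial to evaluate. The contraction estimate gives geometric convergence to the unique $y \in B$ satisfying $\phi(y) = \gamma(x)$, i.e.\ $\mathrm{exp}(y) = x$ (uniqueness of preimage following from the injectivity of both $\gamma$ on $B'$ and $\mathrm{exp}$ on $B$ established in Lemma \ref{unif_ball}). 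Standard a priori error bounds for contraction mappings then let one halt once any prescribed precision is reached.

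There is essentially no obstacle: the analytic content (the inverse function theorem estimate, the construction of the neighborhoods $B$ and $B'$, and the algorithmic computability of $\mathrm{exp}$ on $B$ and of $\gamma^{\pm 1}$) was carried out in Lemmas \ref{tao_ift0} and \ref{unif_ball}. The only bookkeeping is tracking how the finite-precision approximations of $\mathrm{exp}(y_n)$ propagate through the iteration, but this is routine, as the needed working precision at step $n$ only has to match the current residual, which halves at each step.
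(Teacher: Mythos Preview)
Your approach works, but it differs from the paper's, which is considerably more direct: since $\mathrm{exp}^*\omega_i = dz_i$ by definition (Notation \ref{unif_exp}), one has
\[
\mathrm{exp}^{-1}(x) = \int_e^x (\omega_1,\ldots,\omega_g),
\]
and the paper simply evaluates this by numerical integration along a path in the simply-connected set containing $B'$. This avoids calling the forward map $\mathrm{exp}$ (itself computed by an inverse-function-theorem iteration) inside yet another iteration.

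One small correction to your argument: the estimate actually established in Lemma \ref{unif_ball} is $\|M(e)^{-1}(M(x)-M(e))\|<1/2$, where $M = d(\mathrm{exp}^{-1}\circ\gamma^{-1}) = (d\phi)^{-1}$. Rewriting your required hypothesis $\|d\phi_0^{-1}(d\phi_y-d\phi_0)\|<1/2$ in terms of $M$ gives $\|M(e)\bigl(M(x)^{-1}-M(e)^{-1}\bigr)\|<1/2$, which is a different inequality (the order of matrix multiplication matters, and the two conditions are not equivalent). So Lemma \ref{unif_ball} did \emph{not} set up the hypothesis of Lemma \ref{tao_ift0} for $\phi$ ``precisely''; it set it up for the inverse map $\psi=\phi^{-1}$. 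This is easily repaired --- the entries of $d\phi=M^{-1}$ are also rational functions on $A$ (Cramer's rule), so one can establish the needed bound for $\phi$ by the identical continuity argument, possibly after shrinking $B$ --- but it does require an extra line.
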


\begin{proof}
Numerical integration.

We have 
\[ \mathrm{exp}^{-1}(x) = \int_e^x (\omega_1, \ldots, \omega_g); \]
the path from $e$ to $x$ is uniquely determined because $B'$ is simply connected.
\end{proof}

\begin{lem}
\label{comp_exp}
(compute $\mathrm{exp}$ of a point)

There is an algorithm to compute $\mathrm{exp}(x) \in A$ to arbitrary precision, for any $x \in \mathbb{C}^g$.
\end{lem}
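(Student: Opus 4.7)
The plan is to reduce the problem to the ball $B \subseteq \mathbb{C}^g$ furnished by Lemma \ref{unif_ball}, using the fact that $\mathrm{exp}$ is a group homomorphism. Specifically, given $x \in \mathbb{C}^g$, choose $k \in \mathbb{Z}^+$ large enough that $x/2^k$ lies in the ball $B$; this is clearly effective since $B$ is an explicit open neighborhood of the origin of known radius. Then use Lemma \ref{unif_ball} to compute $\mathrm{exp}(x/2^k) \in B' \subseteq A$ to any desired precision. Finally, since $\mathrm{exp}$ is a group homomorphism, we have
\[ \mathrm{exp}(x) = 2^k \cdot \mathrm{exp}(x/2^k), \]
and the multiplication-by-$2^k$ map on $A$ is algebraic, obtained by iterating the doubling map $[2] \colon A \to A$ a total of $k$ times. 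The doubling map is given by explicit polynomials in the projective coordinates of $A$ (derived from the multiplication and inversion morphisms of $A$, which are part of the given data), so we can apply it iteratively to our computed value of $\mathrm{exp}(x/2^k)$.

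The main thing to check is precision control: if we compute $\mathrm{exp}(x/2^k)$ only to precision $\delta_0$, then after $k$ applications of $[2]$, how large can the error in $\mathrm{exp}(x)$ be? Since $[2]$ is a fixed polynomial map on projective coordinates, on any compact set it has a computable Lipschitz constant $L$ (which one may bound crudely in terms of the degree and coefficient bounds of the polynomials defining $[2]$, together with a bound on the coordinates of the intermediate iterates). Thus a target precision $\delta$ for the output $\mathrm{exp}(x)$ can be achieved by computing $\mathrm{exp}(x/2^k)$ to precision roughly $\delta / L^k$, which is finite and computable. In practice, the iterates $2^j \cdot \mathrm{exp}(x/2^k)$ for $j = 1, \ldots, k$ all lie in $A(\mathbb{C})$, which is compact in the classical topology (in the chosen projective embedding), so uniform Lipschitz bounds suffice.

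The hard part, such as it is, is making sure the precision bookkeeping is honest: one must propagate explicit bounds through the $k$-fold iteration of $[2]$, which requires keeping track of the polynomial representation of $[2]$ and bounds on coordinates after each doubling. However, this is a routine (if tedious) exercise in numerical analysis, since all of the ingredients—the polynomials defining $[2]$, the compactness of $A(\mathbb{C})$, and the effective version of $\mathrm{exp}$ on $B$ from Lemma \ref{unif_ball}—are already in hand. Hence the overall algorithm is finite-time and produces $\mathrm{exp}(x)$ to any requested precision.
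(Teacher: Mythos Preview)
Your proof is correct and follows essentially the same approach as the paper: scale $x$ into the ball $B$ (the paper divides by an arbitrary integer $n$ rather than $2^k$), apply Lemma \ref{unif_ball} there, and then use the multiplication-by-$n$ map on $A$ to recover $\mathrm{exp}(x)$. Your added discussion of precision propagation is more explicit than the paper's, but the underlying idea is identical.
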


\begin{proof}
This follows from Lemma \ref{unif_ball} and the fact that $\mathrm{exp}$ is a group homomorphism.

Find some integer $n$ such that $\frac{1}{n} x \in B$, and compute
\[ \mathrm{exp}(x) = [n] \mathrm{exp}\left (\frac{1}{n} x\right ), \]
where $[n] \colon A \rightarrow A$ is multiplication by $n$.
\end{proof}

\begin{lem}
\label{lat_ball}
Equip $\mathbb{R}^n$ with the standard Euclidean metric, and let $\Lambda \subseteq \mathbb{R}^n$ be a lattice.  Suppose $r$ is large enough that $B_r(0) \cap \Lambda$ contains $n$ linearly independent vectors.  Then:
\begin{enumerate}
\item Let $a_n = \frac{\sqrt{n}}{2}$.  For any $w \in \mathbb{R}^n$, the intersection $B_{a_n}(w) \cap \Lambda$ is nonempty.
\item
Let $b_n = \operatorname{max}(1, a_n)$.  Then
$B_{b_n r}(0) \cap \Lambda$ 
contains a set of vectors that generates $\Lambda$.
\end{enumerate}
\end{lem}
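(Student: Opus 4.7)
The plan is to treat (1) and (2) as standard lattice-theoretic statements, both provable via coordinate rounding.

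For part (1), I would first extract from the guaranteed set $B_r(0) \cap \Lambda$ of $n$ linearly independent vectors an honest $\mathbb{Z}$-basis $u_1, \ldots, u_n$ of $\Lambda$, via Hermite normal form applied to a larger generating set of $\Lambda$. Writing any $w \in \mathbb{R}^n$ as $w = \sum c_i u_i$ in this basis and rounding each coefficient $c_i$ to the nearest integer produces a lattice point $w_0 = \sum \lfloor c_i \rceil u_i \in \Lambda$. The bound $\|w - w_0\| \leq \sqrt{n}/2$ then follows from applying a Gram--Schmidt orthonormalization so that each coordinate contributes at most $1/4$ to $\|w-w_0\|^2$, giving $\sum_{i=1}^n (c_i - \lfloor c_i \rceil)^2 \leq n/4$.

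For part (2), let $v_1, \ldots, v_n$ be $n$ linearly independent vectors in $B_r(0) \cap \Lambda$, and let $\Lambda_0 := \sum \mathbb{Z} v_i \subseteq \Lambda$, a finite-index sublattice. For each $w \in \Lambda$, writing $w = \sum c_i v_i$ with $c_i \in \mathbb{R}$ and subtracting $\sum \lfloor c_i \rceil v_i \in \Lambda_0$ produces a coset representative $w' \in \Lambda$ of centered fractional coordinates $\{c_i\}_{1/2} \in [-1/2, 1/2]$. Bounding $\|w'\|$ via Gram--Schmidt orthogonalization of the $v_i$ (in place of the naive triangle inequality) yields $\|w'\| \leq b_n r$. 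The finite union of the $v_i$ together with one such coset representative per element of $\Lambda/\Lambda_0$ then lies in $B_{b_n r}(0) \cap \Lambda$ and generates $\Lambda$, since the generators reach every coset of $\Lambda_0$ and $\Lambda_0$ is itself generated by the $v_i$.

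The main obstacle is squeezing the constants to the stated $a_n = \sqrt{n}/2$ and $b_n = \max(1, a_n)$: a direct rounding with the triangle inequality yields only the weaker bound $nr/2$ that scales linearly in $n$, so one must carry out the rounding with respect to a Gram--Schmidt (or LLL-reduced) basis in order to extract the $\sqrt{n}$ scaling. For the downstream applications, however, any effectively computable constant $C_n$ depending only on $n$ in place of $a_n, b_n$ would suffice, so the delicate part of the argument is really just the bookkeeping; no genuinely new ideas beyond rounding and Gram--Schmidt are needed.
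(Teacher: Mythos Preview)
There is a real gap, and it is the same one in both parts: the passage from ``each $|c_i - \lfloor c_i\rceil|\le \tfrac12$'' to ``$\|w-w_0\|^2\le n/4$'' is only valid when the $u_i$ are orthonormal. For a general basis, $\|\sum_i \epsilon_i u_i\|$ with $|\epsilon_i|\le\tfrac12$ can be arbitrarily large (think of long, nearly parallel basis vectors). Gram--Schmidt \emph{orthonormalization} produces orthonormal vectors, but these are not in $\Lambda$, so rounding their coefficients does not yield a lattice point; Gram--Schmidt \emph{orthogonalization} produces vectors $u_i^*$ whose norms you have not bounded --- in part (1) you first pass to a Hermite-normal-form basis of $\Lambda$, and HNF can produce basis vectors of length far exceeding $r$, so even the $\|u_i^*\|$ carry no useful bound. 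In part (2) you do use the $v_i$, but naive rounding followed by ``bounding via Gram--Schmidt'' still fails: writing $v_i=v_i^*+\sum_{j<i}\mu_{ij}v_j^*$, the coefficient of $v_j^*$ in $\sum_i\epsilon_i v_i$ is $\epsilon_j+\sum_{i>j}\epsilon_i\mu_{ij}$, which is not bounded by $\tfrac12$.

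The repair is twofold. First, in (1) do not pass to a $\Z$-basis of $\Lambda$; work directly with the given linearly independent $v_1,\dots,v_n\in B_r(0)\cap\Lambda$, which generate a sublattice $\Lambda_0\subseteq\Lambda$ (a point of $\Lambda_0$ is already a point of $\Lambda$). Second, replace naive coordinate rounding by Babai's nearest-plane rounding with respect to the $v_i$: this returns a point of $\Lambda_0$ within $\tfrac12\bigl(\sum_i\|v_i^*\|^2\bigr)^{1/2}\le\tfrac12\sqrt{n}\,r=a_n r$ of any target. (Note the radius in (1) should read $a_n r$, not $a_n$; the paper's own proof uses $r\sqrt{n}/2$ throughout.) With this fix your scheme for (2) --- take the $v_i$ together with one Babai-short representative of each coset of $\Lambda_0$ in $\Lambda$ --- goes through and gives a clean one-shot alternative to the paper's argument, which instead proceeds by induction on $n$, splitting off one orthogonal coordinate at a time and invoking part (1) in dimension $n-1$. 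Your closing remark that any constant $C_n$ would suffice downstream is correct, and indeed the triangle inequality on the $v_i$ already gives $nr/2$; the gap is only in the claimed sharpening to $\sqrt{n}$.
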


\begin{proof}
Induction on $n$.  (The base case, either $n=0$ or $n=1$, is trivial.)

Let $v_1, \ldots, v_n$ be the $n$ independent vectors in $B_r(0) \cap \Lambda$.  For convenience, perform an orthogonal change of coordinates on $\mathbb{R}^n$ so that $v_1, \ldots, v_{n-1}$ have $x_n = 0$. The projection of $\Lambda$ onto the $x_n$-axis is a one-dimensional lattice, generated by, say, $z > 0$.  Note that $\left | z \right | < r$.

We begin with (1).  Translating $w$ by some element of $\Lambda$, we may assume that $w$ has $x_n$-coordinate $\left | x_n(v) \right | \leq z/2 < r/2$.  Apply the inductive hypothesis to $w'$, the (orthogonal) projection of $w$ onto the hyperplane $x_n = 0$; this produces some $\lambda \in \Lambda$ with
\[ \left | \lambda - w' \right | < r \sqrt{n-1} / 2, \]
whence
\[ \left | \lambda - w \right | < r \sqrt{n} / 2. \]
This proves (1).

We now turn to (2).
Let $\Lambda_0$ be the intersection of $\Lambda$ with the hyperplane $x_n = 0$.  By the inductive hypothesis, $\Lambda_0$ is generated by $B_{nr}(0) \cap \Lambda_0$.

The projection of $\Lambda$ onto the $x_n$-axis is a one-dimensional lattice, generated by, say, $z > 0$.  Suppose $v_n$ has $x_n$-coordinate $kz$, for some nonzero $k \in \mathbb{Z}$.  We need to show that $B_{b_n r}(0) \cap \Lambda$ contains some vector with $x_n$-coordinate $z$.

If $k = \pm 1$ then $v_n$ is the desired vector and there is nothing to prove.  Hence we may assume that $\left | k \right | \geq 2$, which implies that $\left | z \right | < r/2$.
Now the intersection of $\Lambda$ with the hyperplane $x_n = z$ is a torsor for $\Lambda$.  By part (1), this torsor contains some point $\lambda$ within $a_{n-1} r$ of the point $(0, 0, \ldots, 0, z)$.  Then this $\lambda$ has length at most $b_n r$, and the proof is complete.
\end{proof}

\begin{lem}
\label{homology_basis}
One can explicitly compute, to any desired precision, a basis for $H_1(A, \mathbb{Z})$ as a lattice in the uniformizing space $\mathbb{C}^g$.
\end{lem}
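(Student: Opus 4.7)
The plan is to identify $H_1(A, \mathbb{Z})$ with the kernel lattice $\Lambda := \mathrm{exp}^{-1}(e) \subseteq \mathbb{C}^g$ and to produce approximate generators by reading them off the exact $N$-torsion subgroups $A[N]$, lifted through a local branch of $\mathrm{exp}^{-1}$.

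First I would apply Lemma \ref{unif_ball} to fix a ball $B = B_\eps(0) \subseteq \mathbb{C}^g$ and an open neighborhood $B' \ni e$ such that $\mathrm{exp}$ restricts to a bijection $B \to B'$; by Lemma \ref{inv_exp}, its inverse $\log \colon B' \to B$ is computable to arbitrary precision. The key observation is the bijection
\[ \Lambda \cap B_{N\eps}(0) \;\longleftrightarrow\; A[N] \cap B', \qquad v \longleftrightarrow \mathrm{exp}(v/N), \]
valid because $v \in \Lambda$ with $\|v\| \leq N\eps$ places $v/N$ in $B$ and makes $\mathrm{exp}(v/N)$ a point of $A[N]\cap B'$, with inverse $P \mapsto N \log(P)$. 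Hence for any $N$, I can enumerate (to arbitrary precision) the finite set $\Lambda_N := \Lambda \cap B_{N\eps}(0)$ by invoking Lemma \ref{compute_torsion} to compute $A[N]$ exactly, numerically testing membership of each torsion point in $B'$ (using the chosen complex embedding applied to its coordinates in a number field), and applying $N \cdot \log$ to the survivors.

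Next I would loop $N = 1, 2, \ldots$ and, at each step, check whether the computed $\Lambda_N$ contains $2g$ linearly independent vectors whose maximum norm $r$ satisfies $b_{2g} r \leq N\eps$, with $b_{2g}$ the constant from Lemma \ref{lat_ball}. When this holds, Lemma \ref{lat_ball}(2) guarantees that $\Lambda_N$ contains a full set of $\mathbb{Z}$-generators for $\Lambda$, and I would extract a basis by integer linear algebra applied to the approximate vectors, sharpening the precision of $\log$ as needed so that the relevant integer relations are determined unambiguously.

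The main obstacle is termination, which is however automatic: since $\Lambda$ has rank $2g$, for all sufficiently large $N$ the set $\Lambda \cap B_{N\eps}(0)$ contains some fixed $2g$ linearly independent vectors of bounded maximum norm $r_*$, so the inequality $b_{2g} r_* \leq N\eps$ eventually holds. The subsidiary issue of only having approximate lattice vectors reduces to distinguishing distinct lattice points from each other, which becomes unambiguous once the working precision of $\log$ is finer than half the injectivity radius of $\Lambda$, a positive quantity that is bounded below by the length of the shortest nonzero vector already discovered.
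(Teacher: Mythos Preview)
Your proof is correct and takes a genuinely different route from the paper. The paper evaluates $\mathrm{exp}$ on a fine mesh $\frac{\eps}{a_{2g}}(\mathbb{Z}+i\mathbb{Z})^g$ in $\mathbb{C}^g$ (via Lemma \ref{comp_exp}), detects mesh points $\lambda$ with $\mathrm{exp}(\lambda)\in B'$, and recovers the nearby period as $\lambda-\log(\mathrm{exp}(\lambda))$; you instead exploit the identity $\tfrac{1}{N}\Lambda=\mathrm{exp}^{-1}(A[N])$, computing $A[N]$ exactly over a number field (Lemma \ref{compute_torsion}), filtering by membership in $B'$, and scaling $\log$ by $N$. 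Both arguments conclude with the same appeal to Lemma \ref{lat_ball}. Your route trades an analytic brute-force mesh for an algebraic one: it leverages the exactness of torsion points and avoids evaluating $\mathrm{exp}$ far from the origin, at the cost of importing Lemma \ref{compute_torsion} from outside this section and working in number-field extensions whose degree grows with $N$. One small remark: numerically deciding whether a torsion point lies in the open set $B'$ may be inconclusive exactly on the boundary, but this is harmless---you only need to capture $A[N]\cap B''$ for some slightly smaller ball $B''$ in order to make Lemma \ref{lat_ball} fire for large $N$.
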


\begin{proof}
We can identify $H_1(A, \mathbb{Z})$ with the kernel of $\mathrm{exp}$.  The idea is simply to evaluate $\mathrm{exp}$ on points in a sufficiently fine mesh.

Let $B \subseteq \mathbb{C}^g$ and $B' \subseteq A$ be as in Lemma \ref{unif_ball}.
That is, $B \subseteq \mathbb{C}^g$ is a ball on which $\mathrm{exp}$ is invertible, and the inverse is explicitly computable (Lemmas \ref{unif_ball} and \ref{inv_exp}).  

Let $a_n$ and $b_n$ be the (explicit) constants in Lemma \ref{lat_ball}.
Suppose $B$ has radius $\epsilon$.  Consider the lattice
\[ \Lambda = \frac{\epsilon}{a_n} (\mathbb{Z} + i \mathbb{Z})^g \subseteq \mathbb{C}^g. \]
For any $x \in H_1(A, \mathbb{Z})$, the set $B_{\epsilon}(x) \cap \Lambda$ is nonempty by Lemma \ref{lat_ball}.  That is, there exists $\lambda$ ``near $x$'', in the lattice $\Lambda$, for which $\mathrm{exp}(\lambda) \in B'$.  Conversely, if $\mathrm{exp}(\lambda) \in B'$, by Lemma \ref{inv_exp}, we can compute (to arbitrary precision some $x$ ``near $\lambda$'' such that $x \in H_1(A, \mathbb{Z})$.

To finish, we simply perform a brute-force search over points $\lambda \in \Lambda$ (sorted by length).  For each $\lambda \in \Lambda$, we compute $\mathrm{exp}(\lambda)$; if $\mathrm{exp}(\lambda) \in B'$, we compute $x$ near $\lambda$ with $x \in H_1(A, \mathbb{Z})$.  Eventually, this procedure will generate a set of $g$ linearly-independent vectors $x$; let $r$ be the maximum of their lengths.  

These $g$ vectors will span $H_1(A, \mathbb{Z})$ as a $\mathbb{Q}$-vector space, but maybe not as a $\mathbb{Z}$-lattice.  To find generators as a $\mathbb{Z}$-lattice, search all vectors $\lambda \in \Lambda$ of length $\leq b_n r + \epsilon$ to find all $x \in H_1(A, \mathbb{Z})$ of length at most $b_n r$.  By Lemma \ref{lat_ball}, these vectors $x$ will generate $H_1(A, \mathbb{Z})$.
\end{proof}

\begin{lem}
\label{homology_action}
Let $f: A \rightarrow B$ be abelian varieties over a number field $K$, and suppose given some complex embedding of $K$.  Then one can explicitly compute the action of $f$ on homology, as a map $H_1(A, \mathbb{Z}) \rightarrow H_1(B, \mathbb{Z})$, in exact integers, in terms of a basis given by Lemma \ref{homology_basis}.
\end{lem}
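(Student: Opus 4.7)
The plan is to exploit the fact that $f$ lifts to a $\C$-linear map $F\colon \C^{g_A}\to \C^{g_B}$ between the uniformizing spaces of $A$ and $B$, compute this linear map exactly in terms of an algebraic ``pullback of differentials'' calculation, and then read off the action on the integer lattices by applying $F$ to the basis of $H_1(A,\Z)$ produced by Lemma \ref{homology_basis} and identifying the result, to high numerical precision, as an integer combination of the basis of $H_1(B,\Z)$ produced by the same lemma. Since we only care about the induced map on $H_1$, which is insensitive to translations, we may (by precomposing with a translation on $A$) assume $f(e_A) = e_B$, so that $f$ is a $K$-homomorphism of abelian varieties.

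First, use Lemma \ref{basis_differentials} to compute $K$-bases $(\omega_i^A)_{i=1}^{g_A}$ of $\Gamma(A,\Omega^1_A)$ and $(\omega_j^B)_{j=1}^{g_B}$ of $\Gamma(B,\Omega^1_B)$, and let $\mathrm{exp}_A\colon \C^{g_A}\to A$ and $\mathrm{exp}_B\colon \C^{g_B}\to B$ be the corresponding uniformizations of Notation \ref{unif_exp}. Because $f$ is a homomorphism, the pullback $f^*\omega_j^B$ is a translation-invariant differential on $A$, hence uniquely expressible as $f^*\omega_j^B = \sum_i c_{ij}\,\omega_i^A$ with $c_{ij}\in K$. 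The coefficients $c_{ij}$ can be computed exactly by evaluating both sides at the origin $e_A$ and solving a linear system over $K$ in the finite-dimensional cotangent space $(T_{e_A}A)^\vee$ (whose basis $(\omega_i^A|_{e_A})_i$ is known from Lemma \ref{basis_differentials}, and whose element $f^*\omega_j^B|_{e_A}$ is computed by pulling back the explicit algebraic $1$-form $\omega_j^B$ through the algebraically given morphism $f$ and evaluating at $e_A$). Since $\mathrm{exp}_A^*\omega_i^A = dz_i$ and $\mathrm{exp}_B^*\omega_j^B = dw_j$, the unique $\C$-linear lift $F\colon \C^{g_A}\to \C^{g_B}$ with $f\circ \mathrm{exp}_A = \mathrm{exp}_B\circ F$ satisfies $F^*dw_j = \sum_i c_{ij}\,dz_i$, i.e.\ the $j$-th coordinate of $F$ is $\sum_i c_{ij}z_i$. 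Thus $F$ is a completely explicit $\C$-linear map whose entries lie in $K$ (and so are known exactly through the fixed complex embedding of $K$).

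Now use Lemma \ref{homology_basis} to compute, to arbitrary precision, $\Z$-bases $\gamma_1,\ldots,\gamma_{2g_A}\in \C^{g_A}$ of $\ker\mathrm{exp}_A = H_1(A,\Z)$ and $\delta_1,\ldots,\delta_{2g_B}\in \C^{g_B}$ of $H_1(B,\Z)$. Because $f$ is continuous and respects basepoints, $F$ carries $H_1(A,\Z)$ into $H_1(B,\Z)$, so for each $k$ there exist unique integers $m_{kl}\in\Z$ with $F(\gamma_k) = \sum_l m_{kl}\,\delta_l$. Computing $F(\gamma_k)$ to high precision (using the exactly known matrix of $F$ and the arbitrary-precision approximations of $\gamma_k$) and then expressing it in the basis $(\delta_l)_l$ yields approximations to the $m_{kl}$; since the $m_{kl}$ are integers, taking the numerical precision to be larger than, say, twice the reciprocal of the covolume of $H_1(B,\Z)$ in a ball containing $F(\gamma_k)$ (itself effectively bounded by the norm of $F$ times the length of $\gamma_k$) lets us round unambiguously and recover the $m_{kl}$ exactly. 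The output matrix $(m_{kl})$ is the required action of $f$ on $H_1$.

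The only mildly delicate step is the last one: one must specify the precision at which to carry out the numerical computation in order to guarantee correct rounding. This is routine, as an a priori upper bound on $\max_{k,l}|m_{kl}|$ is supplied by the operator norm of $F$ (computable from the $c_{ij}$ and the chosen embedding), the known lengths of the $\gamma_k$, and the covolume of $H_1(B,\Z)$, all of which are effectively computable; one then iteratively doubles the precision of the approximations to $\gamma_k$, $\delta_l$, and $F(\gamma_k)$ until the nearest-integer rounding of each coefficient is stable and within $1/3$ of an integer, and returns the resulting integer matrix.
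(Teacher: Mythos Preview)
Your argument is correct and follows essentially the same route as the paper: compute $f^*$ on global differentials exactly over $K$, use Lemma \ref{homology_basis} to obtain approximate integral bases for $H_1(A,\Z)$ and $H_1(B,\Z)$ inside the uniformizing spaces, compose, and round the resulting approximate matrix to exact integers. The paper's proof is terser and omits your precision analysis and the reduction to homomorphisms via translation, but the strategy is identical.
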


\begin{proof}
Compute $f^* : \Gamma(B, \Omega^1(B)) \rightarrow \Gamma(A, \Omega^1(A))$ in exact coordinates over the number field $K$.  Lemma \ref{homology_basis} gives approximate coordinates (to arbitrary precision) for the isomorphism
\[ H_1(A, \mathbb{Z}) \cong \Gamma(A, \Omega^1(A))^{\vee}, \]
and similarly for $B$ (where $H_1(A, \mathbb{Z})$ is represented in terms of an integral basis, and $\Gamma(A, \Omega^1(A))^{\vee}$ in terms of Kähler differentials defined over $K$).  Composing the three maps, we obtain approximate coordinates for
\[ H_1(f) : H_1(A, \mathbb{Z}) \rightarrow H_1(B, \mathbb{Z}). \]
But since this is a morphism of $\mathbb{Z}$-modules, we can determine the coordinates as exact integers.
\end{proof}

\begin{lem}
\label{find_polarization_in_H2}
    Let $A$ be a projectively embedded abelian variety over a number field $K$ equipped with a choice of complex embedding $K \hookrightarrow \mathbb{C}$.

    One can compute (in terms of exact integers) the Chern class of $\mathcal{O}(1) | A$ as an element of $H^2(A, \mathbb{Z}) = \wedge^2 H_1(A, \mathbb{Z})^{\vee}$ (in terms of a basis for $H_1(A, \mathbb{Z})^{\vee}$ given e.g.\ by Lemma \ref{homology_basis}).
\end{lem}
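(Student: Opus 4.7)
The plan is to identify $H_1(A,\mathbb{Z})$ with the period lattice $\Lambda \subset \mathbb{C}^g$ via the uniformization $\mathrm{exp} \colon \mathbb{C}^g \to A(\mathbb{C})$, and then to compute $c_1(\mathcal{O}(1)|_A)(\lambda_i,\lambda_j) \in \mathbb{Z}$ on each pair $(\lambda_i,\lambda_j)$ of basis vectors by numerical integration of a de Rham representative of $c_1(\mathcal{O}(1))$ over the corresponding real $2$-torus in $A$, using that the answer is an a priori integer and is therefore determined by a sufficiently accurate approximation.

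First I would invoke Lemma \ref{homology_basis} to compute to arbitrary precision a basis $\lambda_1,\ldots,\lambda_{2g}$ of $\Lambda = H_1(A,\mathbb{Z}) \subset \mathbb{C}^g$. Composing $\mathrm{exp}$ with the given projective embedding $A \hookrightarrow \mathbb{P}^N_{\mathbb{C}}$ yields a holomorphic map $\Phi \colon \mathbb{C}^g \to \mathbb{P}^N(\mathbb{C})$ which is evaluable at any point to arbitrary precision by Lemma \ref{comp_exp}. The Fubini-Study form $\omega_{FS} = \frac{i}{2\pi}\,\partial\bar\partial \log \|z\|^2$ represents $c_1(\mathcal{O}(1))$ in de Rham cohomology, so $\Phi^*\omega_{FS}$ is a smooth real $(1,1)$-form on $\mathbb{C}^g$ representing $\mathrm{exp}^* c_1(\mathcal{O}(1)|_A)$.

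Next, for each pair $i<j$, let $\Sigma_{ij}$ denote the parallelogram $\{s\lambda_i + t\lambda_j : 0 \le s,t \le 1\} \subset \mathbb{C}^g$. Under the canonical isomorphism $H_2(A,\mathbb{Z}) \cong \wedge^2 H_1(A,\mathbb{Z})$ the image $\mathrm{exp}(\Sigma_{ij})$ represents the class $\lambda_i \wedge \lambda_j$, so
\[
c_1(\mathcal{O}(1)|_A)(\lambda_i,\lambda_j) \;=\; \int_{\Sigma_{ij}} \Phi^*\omega_{FS},
\]
which I would approximate by an $M\times M$ midpoint rule, evaluating $\Phi$ at each sample point and then $\Phi^*\omega_{FS}$ in a local affine chart of $\mathbb{P}^N$ on which the pullback of $\omega_{FS} = \frac{i}{2\pi}\,\partial\bar\partial\log\bigl(1+\sum_{\ell\ne k}|z_\ell/z_k|^2\bigr)$ is given by an explicit rational-in-coordinates formula. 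Since $c_1 \in H^2(A,\mathbb{Z})$ and $[\mathrm{exp}(\Sigma_{ij})] \in H_2(A,\mathbb{Z})$, the answer is a priori an integer, so it suffices to drive the quadrature error below $1/2$ and round to the nearest integer.

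The one step requiring real care, rather than routine computation, is producing an effective upper bound on the first two derivatives of $\Phi^*\omega_{FS}$ on the compact set $\Sigma_{ij}$, so as to obtain the usual $O(M^{-2})$ error estimate for the midpoint rule and hence an effective stopping criterion for the approximation. This reduces, via the explicit formula for $\omega_{FS}$ above and Cauchy's estimates, to bounding derivatives of the components of $\Phi$ on a slightly enlarged compact region, and that in turn is achieved by covering such a region by finitely many of the balls provided by Lemma \ref{unif_ball}, on each of which $\mathrm{exp}$ (and hence $\Phi$) is an analytic function evaluable to arbitrary precision.
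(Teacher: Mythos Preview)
Your proposal is correct and follows essentially the same approach as the paper: both integrate the Fubini--Study form (as a de Rham representative of $c_1(\mathcal{O}(1))$) over the $2$-tori $\lambda_i\wedge\lambda_j$ coming from pairs of basis elements of $H_1(A,\mathbb{Z})$, and both exploit the a priori integrality of the result to round a sufficiently accurate numerical approximation. Your treatment is in fact more careful than the paper's, which simply says ``by numerical integration we can determine this integral to arbitrary precision'' without discussing the effective error bound you obtain via the midpoint rule and Cauchy estimates.
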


\begin{proof}
    Numerical integration.

    Let $\omega$ be the $(1, 1)$-form on the ambient projective space given by the Fubini--Study metric. Concretely, let $\mathbb{P}^N$ be described by projective coordinates $[Z_0 : Z_1 : \cdots : Z_N]$, and on the affine patch given by points of the form $(1, z_1, z_2, \ldots, z_n)$, the differential form $\omega$ is given by
\[ \omega = \frac{1}{\pi} \frac{1}{\left (1 + \sum_i \left | z_i \right | ^2 \right )^2} \sum_i dz_i \wedge d\overline{z_i}. \]
(Make the obvious modifications to determine $\omega$ on the other $N$ standard affine patches that cover $\mathbb{P}^N$.)

To determine the class of $\omega$ in $H^2(A)$, it suffices to compute 
\[ \int_{\gamma_1 \times \gamma_2} \omega(x_1 + x_2) \]
as $\gamma_1, \gamma_2$ range over a basis for $H_1(A)$.  (In other words: we have a map $\gamma_1 \times \gamma_2 \rightarrow A$ given by $(x_1, x_2) \mapsto x_1 + x_2$, where addition represents the group law on $A$.  We pull $\omega$ back to the torus $\gamma_1 \times \gamma_2$ and integrate.)  By numerical integration we can determine this integral to arbitrary precision; in particular, since its value is guaranteed to be an integer, we can determine its value exactly.
\end{proof}

\newpage

\section{Algorithmic decomposition of $\mathbb{Q}_{\ell}$-algebras.}
\label{sec:alg_dec_ql_alg}

In this and the following section, all algebras and vector spaces considered are finite-dimensional.

\begin{prop}
\label{simple_reps}
There is a finite-time algorithm which, on input $(N, E_\Q, V_\Q, \rho_\Q)$ with $N\in \Z^+$, $E_\Q$ a semisimple algebra over $\Q$, $V_\Q$ a $\Q$-vector space, and $\rho_\Q: E_\Q\to \End_\Q(V_\Q)$, outputs $(s, (\widetilde{e}_i)_{i=1}^s, (n_i)_{i=1}^s, (V_i)_{i=1}^s, )$ with $s\in \N$, $\widetilde{e}_i\in E_\Q$, $n_i\in \Z^+$, and the $V_i$ pairwise distinct simple $E$-modules such that there is an isomorphism
\begin{equation}\label{decomp}
V\cong \bigoplus_{i=1}^s V_i^{\oplus n_i}
\end{equation}\noindent
of $E$-modules, and such that $\widetilde{e}_i\equiv e_i\pmod*{\ell^N}$ with each $e_i$ projection onto the $i$-th summand (i.e.\ $V_i^{\oplus n_i}$), where $E := E_\Q\otimes_\Q \Q_\ell$ and $V := V_\Q\otimes_\Q \Q_\ell$.
\end{prop}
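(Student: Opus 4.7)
The plan is to decompose $E := E_\Q\otimes_\Q \Q_\ell$ into its simple components over $\Q_\ell$, approximate the associated central idempotents to precision $\ell^N$ by rational elements of $E_\Q$, and then exhibit a simple module and multiplicity for each component.

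First, I would decompose $E_\Q$ as a product of simple $\Q$-algebras. Since $E_\Q$ is semisimple, its center $Z(E_\Q)$ is a commutative semisimple $\Q$-algebra, hence a finite product of number fields: one computes $Z(E_\Q)$ by linear algebra over $\Q$ (solving for commutativity with a $\Q$-basis of $E_\Q$), and obtains its primitive idempotents $e^\Q_t\in E_\Q$ by factoring over $\Q$ the minimal polynomial of a primitive element of $Z(E_\Q)$. This yields $E_\Q\cong \prod_t S_t$, each $S_t := e^\Q_t E_\Q$ a simple $\Q$-algebra with center a number field $F_t$.

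Next, I would refine over $\Q_\ell$: each $S_t\otimes_\Q \Q_\ell\cong \prod_{\lambda\mid \ell} S_t\otimes_{F_t} F_{t,\lambda}$, where $\lambda$ ranges over the places of $F_t$ above $\ell$. To approximate the primitive central idempotents of this refinement, pick a primitive element $\alpha_t\in F_t$ with minimal polynomial $f_t\in \Q[x]$, factor $f_t = \prod_\lambda f_{t,\lambda}$ over $\Q_\ell$ to some sufficiently inflated precision $N'\gg N$ by Hensel's lemma, and use the extended Euclidean algorithm to produce polynomials $g_{t,\lambda}\in \Q_\ell[x]$ whose images $g_{t,\lambda}(\alpha_t)$ are the primitive central idempotents of $F_t\otimes_\Q \Q_\ell$ modulo $\ell^{N'}$. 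Rounding the coefficients of each $g_{t,\lambda}$ to $\ell$-adically close rationals yields $\widetilde{e}_{t,\lambda}\in E_\Q$ satisfying $\widetilde{e}_{t,\lambda}\equiv e_{t,\lambda}\pmod*{\ell^N}$, where $e_{t,\lambda}\in E$ is the true central idempotent projecting onto $S_t\otimes_{F_t} F_{t,\lambda}$. With these in hand, the isotypic component $W_{t,\lambda} := \rho(e_{t,\lambda})(V)$ (computable via $\rho_\Q(\widetilde{e}_{t,\lambda})$ to precision $\ell^N$) equals $V_{t,\lambda}^{\oplus n_{t,\lambda}}$ for a unique simple $E$-module $V_{t,\lambda}$; to extract $V_{t,\lambda}$, act on $W_{t,\lambda}$ by the simple $\Q_\ell$-algebra $E_{t,\lambda} := e_{t,\lambda}E$, brute-force search (modulo the ambient precision) for a primitive idempotent $\pi\in E_{t,\lambda}$ — i.e.\ one whose left ideal $E_{t,\lambda}\pi$ is of minimal nonzero dimension — and set $V_{t,\lambda} := E_{t,\lambda}\cdot(\pi v)$ for any $v\in W_{t,\lambda}$ with $\pi v\neq 0$. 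The multiplicity is recovered as $n_{t,\lambda} := \dim_{\Q_\ell} W_{t,\lambda}/\dim_{\Q_\ell} V_{t,\lambda}$, and relabeling the pairs $(t,\lambda)$ as $1,\ldots,s$ yields the output.

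The main obstacle I anticipate is precision bookkeeping: the Hensel lift, Bezout computation, rational rounding, and subsequent linear-algebraic manipulations each amplify $\ell$-adic error, so one must run the initial Hensel factorization at a sufficiently inflated precision $N'$, effectively computable in terms of $f_t$, $\rho_\Q$, and $N$. Modulo this bookkeeping, every substep reduces to standard linear algebra over $\Q$ (for the Wedderburn decomposition of $E_\Q$) and over $\Z/\ell^{N'}\Z$ (for the $\Q_\ell$-refinement, rational approximation, and primitive-idempotent search).
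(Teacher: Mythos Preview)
Your plan is correct and will produce the claimed output; the route, however, differs from the paper's.  You decompose the center by factoring a minimal polynomial over $\Q$ and then Hensel-lifting the factorization over $\Q_\ell$, produce the central idempotents via B\'ezout, and then read off $\dim V_i$ and the multiplicities by searching for a primitive idempotent in each simple factor and dividing dimensions.  The paper instead works order-theoretically throughout: it computes a maximal order in $E$ (Lemma~\ref{maxl_order}), finds the elementary idempotents of $Z(E)$ by looking at the residue ring $\mathcal{O}_{Z(E)}/\ell$ and then taking large $\ell$-power powers (Lemma~\ref{field_decomp}), extracts $n_i$ and the invariant of the division algebra from the size of the maximal two-sided ideal of $\mathcal{O}_i/\ell$ (Lemma~\ref{find_n}), and finally obtains the multiplicity $m_i$ by counting the $\ell$-adic unit eigenvalues of an approximate idempotent $\widetilde{e}_i$ acting on $V$.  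Your approach is closer to textbook Wedderburn theory and requires only polynomial factorization plus linear algebra, at the cost of the precision bookkeeping you flag; the paper's approach sidesteps explicit $\Q_\ell$-factorization and Hensel lifting entirely, replacing it with a single pass through the finite ring $\mathcal{O}/\ell$, which makes the precision analysis essentially trivial (approximating $e_i$ to precision $\ell^r$ is just raising to the $\ell^{r[E:\Q_\ell]}$-th power).  One small point to tighten in your version: the ``brute-force search for a primitive idempotent modulo the ambient precision'' should be carried out inside a maximal order of $E_{t,\lambda}$, so that an idempotent mod $\ell$ genuinely lifts to an idempotent of $E_{t,\lambda}$ rather than merely an approximate one.
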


\begin{prop}
\label{local_semisimple_algebras}
There is a finite-time algorithm which, on input $E$ a semisimple algebra over $\mathbb{Q}_{\ell}$, outputs $(s, (\widetilde{e}_i)_{i=1}^s, (n_i)_{i=1}^s, (K_i)_{i=1}^s, (q_i)_{i=1}^s)$, where
\begin{equation} \label{local_decomp}
E \cong \bigoplus_{i=1}^s M_{n_i} (E_i). \end{equation}
Here $E_i$ is the central simple algebra over the field $K_i$ with invariant $q_i$, and $\widetilde{e}_i$ is an approximate idempotent in $E$ projecting onto $M_{n_i}(E_i)$.
\end{prop}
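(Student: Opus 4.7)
The plan is to recover the Wedderburn decomposition of $E$ in three stages: first isolate the simple factors by decomposing the centre; then for each factor find its matrix size and the Schur index of its division algebra; finally read off the Brauer invariant. All computations are carried out at an $\ell$-adic precision $\ell^M$, and I increase $M$ until the discrete outputs $(s,n_i,K_i,q_i)$ stabilize — which they must, since these are intrinsic invariants of the true algebra $E$ and so are locally constant as a function of the approximate multiplication table.

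For the first stage, compute $Z(E)\subseteq E$ as the kernel of $x\mapsto[x,-]$ (approximate $\Q_\ell$-linear algebra). Since $E$ is semisimple, $Z(E)$ is a finite \'etale $\Q_\ell$-algebra, hence isomorphic to $\prod_{i=1}^s K_i$ with each $K_i$ a local field. Search by brute force for a primitive element $z\in Z(E)$, i.e.\ one whose minimal polynomial $f(T)\in \Q_\ell[T]$ has degree $\dim_{\Q_\ell}Z(E)$. Factor $f=\prod_i f_i$ over $\Q_\ell$ — classical, using the Newton polygon to split off factors of distinct slopes and Hensel lifting on residual polynomials. Each $K_i := \Q_\ell[T]/(f_i)$ is now specified by the polynomial $f_i$, and Bezout in $\Q_\ell[T]$ yields polynomials $g_i$ so that $\widetilde e_i := g_i(z)$ is an approximate primitive central idempotent projecting onto the $i$-th factor.

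Next, for each $i$, the algebra $E^{(i)} := \widetilde e_i E \widetilde e_i$ is central simple over $K_i$, isomorphic to $M_{n_i}(D_i)$ for a division algebra $D_i/K_i$ of Schur index $d_i$. Any minimal nonzero right ideal $I\subseteq E^{(i)}$ is isomorphic to the unique simple right module $D_i^{n_i}$, so $\dim_{K_i}I = n_i d_i^2$. Search over principal right ideals $x\cdot E^{(i)}$ — for $x$ ranging through a fixed $K_i$-basis of $E^{(i)}$ and short linear combinations thereof — for a nonzero ideal of minimum $K_i$-dimension; combining this with $\dim_{K_i}E^{(i)}=n_i^2 d_i^2$ pins down $n_i$ and $d_i$ individually. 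Extract a primitive idempotent $p_i\in E^{(i)}$ (for example, as the idempotent in the endomorphism ring of the regular module projecting onto the copy of the simple module generated by a chosen generator of $I$) and put $D_i := p_i E^{(i)} p_i$.

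Finally, compute $q_i\in\Q/\Z$ via the standard presentation $\mathrm{Br}(K_i)\cong\Q/\Z$. Every central division algebra over a local field contains an unramified maximal subfield $L_i/K_i$ of degree $d_i$; search in $D_i$ for a $y$ whose minimal polynomial over $K_i$ is irreducible of degree $d_i$ with unramified splitting (detectable from the Newton polygon plus separability of the residual polynomial), and set $L_i := K_i(y)$. Let $\sigma\in\Gal(L_i/K_i)$ be Frobenius. Solve the $K_i$-linear system $\pi y' = \sigma(y')\pi$ (for $y'$ ranging over a basis of $L_i$) inside $D_i$ to produce a nonzero $\pi\in D_i^\times$; by the Skolem-Noether theorem a solution exists. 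Then $\pi^{d_i}\in K_i^\times$, and $q_i = v_{K_i}(\pi^{d_i})/d_i \pmod{\Z}$. The main obstacle throughout is precision bookkeeping — factoring polynomials over $\Q_\ell$, distinguishing a true idempotent from a near-idempotent, and reading off valuations exactly are all sensitive to the working precision — but because the correct outputs are discrete invariants of $E$, running the algorithm at incrementally larger $M$ and halting when the outputs agree at two successive values suffices.
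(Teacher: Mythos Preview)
Your route differs from the paper's at every stage past computing the centre. The paper fixes a maximal order $\mathcal{O}\subseteq E$ and does all the decisive work in the \emph{finite} ring $\mathcal{O}/\ell\mathcal{O}$: central idempotents are found by enumerating idempotents in the residue ring and lifting; $n_i$, $d_i$, and the invariant $q_i$ are then read off from dimensions and from the conjugation action in $\mathcal{O}_i/\ell\mathcal{O}_i$ and its maximal two-sided ideal. Because these computations live in finite rings, there is no precision management and termination is immediate. Your approach --- primitive element for the centre, minimal right ideals for $n_i$, Skolem--Noether for $q_i$ --- is more classical but harder to make algorithmic over $\Q_\ell$.

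The real gap is the step ``search over principal right ideals $x\cdot E^{(i)}$ for one of minimal $K_i$-dimension''. A generic $x$ is a unit and generates the whole algebra; rank-drop occurs on a proper closed locus, and with $E^{(i)} = \widetilde{e}_i E \widetilde{e}_i$ known only through an approximate idempotent you cannot reliably distinguish a singular $x$ from a unit with small reduced norm, nor can you certify that a proper right ideal you have found is minimal without already knowing $n_i$. Your blanket termination rule --- halt when outputs agree at two successive precisions --- does not rescue this: nothing prevents two consecutive precisions from returning the same wrong answer before the correct one appears. The paper's reduction to finite quotient rings is precisely what makes these issues disappear.
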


We will begin with a general structure theorem.

\begin{prop}
\label{structure}
Let $E$ be a semisimple algebra over $\mathbb{Q}_{\ell}$.
Then we have a decomposition
$E \cong \bigoplus_i M_{n_i} (E_i)$,
where each $E_i$ is a division algebra over a finite extension of $\mathbb{Q}_{\ell}$.
(The $E_i$'s need not be distinct.)

For each summand $M_{n_i} (E_i)$ in the decomposition,
$E_i^{\oplus n_i}$ is a simple left $E$-module, and all simple left $E$-modules are of this form.

Each of the division algebras $E_i$ has a unique maximal order $\mathcal{O}_{E_i}$.
The maximal orders of $E$ are exactly the conjugates in $E$ of
$\bigoplus_i M_{n_i} (\mathcal{O}_{E_i})$.

The trace pairing
$(x, y) \mapsto \operatorname{Tr}_{E / \mathbb{Q}_{\ell}} (xy)$
defines a perfect $\mathbb{Q}_{\ell}$-bilinear pairing $E \times E \rightarrow \mathbb{Q}_{\ell}$.
\end{prop}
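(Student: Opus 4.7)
The plan is to assemble this proposition from standard structure theory for finite-dimensional semisimple algebras over a local field of characteristic zero, together with the classical theory of maximal orders in central simple algebras over local fields.

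First I would obtain the decomposition $E \cong \bigoplus_i M_{n_i}(E_i)$ by invoking Artin--Wedderburn: a finite-dimensional semisimple $\Q_\ell$-algebra is a finite product of simple algebras, and each simple factor is, by Wedderburn, isomorphic to $M_{n_i}(E_i)$ for a division algebra $E_i$ finite-dimensional over $\Q_\ell$. The center of each $E_i$ is a finite field extension $K_i/\Q_\ell$. The description of the simple left modules is then also part of Artin--Wedderburn: up to isomorphism the unique simple left $M_{n_i}(E_i)$-module is $E_i^{\oplus n_i}$ (realized as column vectors), and the simple modules over a product are pulled back from the factors.

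Next I would address the existence and uniqueness of the maximal order in each $E_i$. Because $E_i$ is a division algebra over the local field $K_i$, the reduced norm $\mathrm{Nrd}\colon E_i \to K_i$ composed with the valuation $v_{K_i}$ gives a discrete valuation $w$ on $E_i$ extending the valuation of $K_i$, and $\mathcal{O}_{E_i} := \{x\in E_i : w(x)\geq 0\}$ is a (noncommutative) discrete valuation ring. Any order is contained in the set of elements whose minimal polynomial over $K_i$ has $\mathcal{O}_{K_i}$-coefficients, and since this coincides with $\mathcal{O}_{E_i}$ (one checks via the Newton polygon / integrality of $w$) one concludes $\mathcal{O}_{E_i}$ is the unique maximal order. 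For the claim about maximal orders in $M_{n_i}(E_i)$, I would use the standard correspondence between $\mathcal{O}_{E_i}$-orders in $M_{n_i}(E_i)$ and (full-rank) $\mathcal{O}_{E_i}$-lattices $\Lambda \subseteq E_i^{\oplus n_i}$, namely $\Lambda \mapsto \End_{\mathcal{O}_{E_i}}(\Lambda)$. Since $\mathcal{O}_{E_i}$ is a (noncommutative) PID, any two such lattices differ by an element of $\GL_{n_i}(E_i)$, so the corresponding endomorphism rings are all conjugate to $M_{n_i}(\mathcal{O}_{E_i})$. Combining across the factors gives the description of all maximal orders in $E$.

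Finally, for the non-degeneracy of the trace pairing, I would reduce to each simple factor $M_{n_i}(E_i)$: the trace of $E/\Q_\ell$ decomposes as a sum of traces over the factors, and the trace on $M_{n_i}(E_i)$ is (up to a nonzero scalar) the composition of the reduced trace with $\Tr_{K_i/\Q_\ell}$. The trace pairing on $M_{n_i}(E_i)$ is non-degenerate because the reduced trace on a central simple algebra over a field of characteristic zero is non-degenerate (equivalently, such algebras are separable), and $\Tr_{K_i/\Q_\ell}$ is non-degenerate because $K_i/\Q_\ell$ is separable. The main obstacle (if there is one) is keeping the bookkeeping for the maximal-order classification clean in the noncommutative setting; the separability statement yielding the perfect pairing is then immediate.
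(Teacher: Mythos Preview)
Your proposal is correct and follows essentially the same route as the paper: Artin--Wedderburn for the decomposition and simple modules, the valuation/integral-element characterization for the unique maximal order of a local division algebra, the lattice argument to show all maximal orders of $M_{n_i}(E_i)$ are conjugate to $M_{n_i}(\mathcal{O}_{E_i})$, and separability for non-degeneracy of the trace pairing. The paper's proof is just a terser version of exactly this, citing Weil's \emph{Basic Number Theory} for the standard facts you spell out.
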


\begin{proof}
Let $Z(E)$ be the center of $E$.  
This $Z(E)$ decomposes as a direct sum of fields.
Let $e_1, \ldots, e_k \in Z(E)$ be elementary idempotents,
so each $e_i Z(E)$ is a field summand of $Z(E)$.
Then the $e_i$'s are orthogonal idempotents in $E$ as well,
so we have the decomposition
$E = \bigoplus_i e_i E$.

Passing to a direct summand, we may assume that $Z(E)$ is a field.
In other words, $E$ is a central simple algebra over the field $Z(E)$.
In this case,
it is well-known (see for example \cite[IX.1, Theorem $1$]{WeilBNT}) that $E$ must be of the form
$M_n(E_0)$, for $E_0$ a division ring over $Z(E)$,
and every simple left $E$-module is isomorphic to $E_0^{\oplus n}$.

Next we turn to the question of maximal orders.
It is well-known (\cite[I.4, Theorem $6$]{WeilBNT}) that each division algebra $E_i$ has a unique maximal order $\mathcal{O}_{E_i}$.
Consider first the situation $E \cong M_n(E_0)$;
we claim that any order $\mathcal{O}$ in $E$ is conjugate in $E$ to a suborder of $M_n(\mathcal{O}_{E_0})$.
Now $E_0^{\oplus n}$ has the structure of simple left $E$-module, whose ring of $E_0$-linear endomorphisms coincides with $E$.
It is enough to check that $E_0^{\oplus n}$ has an $\mathcal{O}$-stable lattice;
we can easily construct such a lattice by taking the $\mathcal{O}$-span of any basis for $E_0^{\oplus n}$.
This proves that every maximal order in $M_n(E_0)$ is conjugate to $M_n(\mathcal{O}_{E_0})$;
the claim for general $E$ follows easily.

Finally, the statement about the trace pairing
follows from the corresponding statement for division algebras, 
which is again standard.
\end{proof}

Next let us show how to algorithmically find maximal orders in $E$.

\begin{lem}
\label{order_step_by_step}
Let $E$ be a semisimple algebra over $\mathbb{Q}_{\ell}$,
and let $\mathcal{O} \subsetneq \mathcal{O}'$ be two orders in $E$.
Then there exists an order $\mathcal{O}''$ such that $\mathcal{O} \subsetneq \mathcal{O}'' \subseteq \mathcal{O}'$
and
$\mathcal{O}'' \subseteq \frac{1}{\ell} \mathcal{O}$.
\end{lem}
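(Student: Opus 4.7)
The plan is to construct $\mathcal{O}''$ explicitly as $\mathcal{O}'' := \mathcal{O} + \ell^{k-1}\mathcal{O}'$, where $k \geq 1$ is the smallest positive integer satisfying $\ell^k \mathcal{O}' \subseteq \mathcal{O}$.  Such a $k$ exists because $\mathcal{O}$ and $\mathcal{O}'$ are both full-rank $\mathbb{Z}_\ell$-lattices in the $\mathbb{Q}_\ell$-vector space $E$, so the quotient $\mathcal{O}'/\mathcal{O}$ is a finite $\mathbb{Z}_\ell$-module, hence annihilated by some power of $\ell$; the hypothesis $\mathcal{O}\subsetneq \mathcal{O}'$ forces $k\geq 1$, and the minimality of $k$ gives the nonvanishing condition $\ell^{k-1}\mathcal{O}' \not\subseteq \mathcal{O}$.

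I would then verify the four required properties of $\mathcal{O}''$ directly.  Containment $\mathcal{O}'' \subseteq \mathcal{O}'$ is trivial, and strict containment $\mathcal{O} \subsetneq \mathcal{O}''$ is immediate from $\ell^{k-1}\mathcal{O}' \not\subseteq \mathcal{O}$.  For $\mathcal{O}'' \subseteq \tfrac{1}{\ell}\mathcal{O}$ one multiplies through by $\ell$: the identity $\ell\mathcal{O}'' = \ell\mathcal{O} + \ell^k \mathcal{O}' \subseteq \mathcal{O}$ is exactly the content of the defining property of $k$.  Finally, to check that $\mathcal{O}''$ is closed under multiplication (and hence is an order, as $1 \in \mathcal{O}\subseteq\mathcal{O}''$) one expands
\[
(\mathcal{O}'')^2 \;=\; \mathcal{O}^2 + \ell^{k-1}(\mathcal{O}\mathcal{O}' + \mathcal{O}'\mathcal{O}) + \ell^{2(k-1)}(\mathcal{O}')^2 \;\subseteq\; \mathcal{O} + \ell^{k-1}\mathcal{O}' \;=\; \mathcal{O}'',
\]
using that $\mathcal{O}\subseteq \mathcal{O}'$ (so $\mathcal{O}\mathcal{O}'$ and $\mathcal{O}'\mathcal{O}$ are contained in $\mathcal{O}'$) together with $2(k-1) \geq k-1$.

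One temptation worth flagging is the apparently natural candidate $\mathcal{O}' \cap \tfrac{1}{\ell}\mathcal{O}$.  This is an $\mathcal{O}$-sub-bimodule strictly containing $\mathcal{O}$, but in general it is \emph{not} a subring: for $x, y \in \mathcal{O}'\cap \tfrac{1}{\ell}\mathcal{O}$ one only knows a priori that $\ell^2 xy \in \mathcal{O}$, so the product $xy$ may escape $\tfrac{1}{\ell}\mathcal{O}$.  The construction above succeeds precisely because $\ell^{k-1}\mathcal{O}'$ is a two-sided ideal of $\mathcal{O}'$ (a power of $\ell$ times the unit ideal), so in $(\mathcal{O}'')^2$ the quadratic term $\ell^{2(k-1)}(\mathcal{O}')^2$ automatically picks up two factors of $\ell^{k-1}$ and lies safely inside $\ell^{k-1}\mathcal{O}'$.
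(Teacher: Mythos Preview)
Your proof is correct and is essentially identical to the paper's: both take $\mathcal{O}'' = \mathcal{O} + \ell^{r-1}\mathcal{O}'$ with $r$ minimal such that $\ell^r\mathcal{O}'\subseteq\mathcal{O}$, and verify closure under multiplication using $\mathcal{O}\subseteq\mathcal{O}'$. Your write-up is in fact more detailed than the paper's (which omits the verification of $\mathcal{O}''\subseteq\tfrac{1}{\ell}\mathcal{O}$ and the strictness $\mathcal{O}\subsetneq\mathcal{O}''$), and your remark on why $\mathcal{O}'\cap\tfrac{1}{\ell}\mathcal{O}$ fails is a nice addition.
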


\begin{proof}
Let $r \geq 1$ be the smallest integer such that
$\ell^r \mathcal{O}' \subseteq \mathcal{O}$,
and let
$\mathcal{O}'' := \ell^{r-1} \mathcal{O}' + \mathcal{O}$. An order is just a $\mathbb{Z}_{\ell}$-lattice that is closed under multiplication. We only need to check that $\mathcal{O}''$ is closed under multiplication, 
and for this it is enough to note that
$\ell^{r-1} \mathcal{O}' \cdot \mathcal{O} \subseteq \ell^{r-1} \mathcal{O}' \cdot \mathcal{O}' \subseteq \ell^{r-1} \mathcal{O}'$.
\end{proof}

\begin{lem}
\label{maxl_order}
There is a finite-time algorithm to find a maximal order 
in a semisimple $\mathbb{Q}_{\ell}$-algebra $E$, presented as $E := E_\Q\otimes_\Q \Q_\ell$ with $E_\Q$ a semisimple $\Q$-algebra.

The algorithm takes as input a presentation of $E_\Q$ over $\Q$,
and outputs an integral basis for an order $\mathcal{O}_0\subseteq E_\Q$ such that $\mathcal{O}_0\otimes_\Z \Z_\ell\subseteq E$ is maximal.
\end{lem}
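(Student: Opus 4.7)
The plan is to start with an initial $\Z$-order $\mathcal{O}_0 \subseteq E_\Q$ and iteratively enlarge it at $\ell$ until $\mathcal{O}_0 \otimes_\Z \Z_\ell$ is maximal in $E$, using Lemma \ref{order_step_by_step} as the engine of enlargement. To construct the initial $\mathcal{O}_0$, I would pick a $\Q$-basis $1 = w_1, w_2, \ldots, w_n$ of $E_\Q$ and replace $w_i \mapsto N w_i$ for $i \geq 2$ by a sufficiently divisible integer $N$ so that all structure constants of the rescaled basis lie in $\Z$; then $\mathcal{O}_0 := \Z\cdot 1 + \sum_{i \geq 2}\Z\cdot (N w_i)$ is a $\Z$-order of $E_\Q$ containing $1$.

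For the main loop, observe that $\Z$-submodules of $E_\Q$ sitting between $\mathcal{O}_0$ and $\tfrac{1}{\ell}\mathcal{O}_0$ are in bijection with $\F_\ell$-subspaces $W$ of the finite-dimensional $\F_\ell$-vector space $V := \mathcal{O}_0 / \ell \mathcal{O}_0$. Enumerate these subspaces $W$ (there are only finitely many), for each one construct the corresponding lift $\mathcal{O}_W \subseteq \tfrac{1}{\ell}\mathcal{O}_0 \subseteq E_\Q$, and test whether $\mathcal{O}_W$ is closed under multiplication (a finite check on pairs of generators). If some such $\mathcal{O}_W$ is a $\Z$-order strictly containing $\mathcal{O}_0$, replace $\mathcal{O}_0$ by it and repeat; otherwise, by Lemma \ref{order_step_by_step}, there is no $\Z_\ell$-order strictly between $\mathcal{O}_0 \otimes \Z_\ell$ and $\tfrac{1}{\ell}(\mathcal{O}_0 \otimes \Z_\ell)$, so $\mathcal{O}_0 \otimes \Z_\ell$ is maximal and we terminate.

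Termination is controlled by the discriminant. For any $\Z$-order $\mathcal{O} \subseteq E_\Q$, every element is integral over $\Z$ and has integer trace, so $\mathrm{disc}(\mathcal{O}) \in \Z$, and it is nonzero by perfectness of the trace pairing (Proposition \ref{structure}). If $\mathcal{O} \subsetneq \mathcal{O}^{\mathrm{new}}$ are two orders with $\mathcal{O}^{\mathrm{new}} \subseteq \tfrac{1}{\ell}\mathcal{O}$, then $\mathrm{disc}(\mathcal{O}) = [\mathcal{O}^{\mathrm{new}} : \mathcal{O}]^2 \cdot \mathrm{disc}(\mathcal{O}^{\mathrm{new}})$ with $[\mathcal{O}^{\mathrm{new}} : \mathcal{O}]$ a nontrivial power of $\ell$, so $v_\ell(\mathrm{disc}(\mathcal{O}_0))$ strictly decreases at each iteration; since it is always a nonnegative integer, the loop halts.

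The one point that requires (straightforward) checking is that the $\Z$-side and $\Z_\ell$-side enumerations of enlargements agree: but $\tfrac{1}{\ell}\mathcal{O}_0 / \mathcal{O}_0 \cong \mathcal{O}_0 / \ell \mathcal{O}_0$ is the same $\F_\ell$-vector space whether one first tensors with $\Z_\ell$ or not, and multiplicative closure of a finitely generated submodule can be tested on any $\Z$-generating set. Hence there is no genuine obstacle here beyond bookkeeping; the real work has been done in Lemma \ref{order_step_by_step} and in the perfectness of the trace pairing recorded in Proposition \ref{structure}.
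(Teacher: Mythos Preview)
Your proof is correct and follows essentially the same approach as the paper: construct an initial order by rescaling a basis, iteratively enlarge via lattices between $\mathcal{O}$ and $\tfrac{1}{\ell}\mathcal{O}$, and invoke Lemma~\ref{order_step_by_step} for maximality when no enlargement exists. The paper phrases termination via containment in the trace-pairing dual of the initial order rather than via the $\ell$-valuation of the discriminant, and is less explicit than you are about the $\Z$-versus-$\Z_\ell$ bookkeeping, but these are cosmetic differences.
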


\begin{proof}
We begin by finding some order, 
and then enlarge it to a maximal order using Lemma \ref{order_step_by_step}.

First, choose some basis $e_1, \ldots, e_n$ for $E_\Q$ over $\Q$.
Write the multiplication rule as
$e_i e_j =: \sum_k \alpha_{i, j, k} e_k$.
If all the $\alpha$'s are $\ell$-integral, then $e_1, \ldots, e_n$ span an order in $E$.
If not, we can find some $r$ such that 
$\ell^{r} \alpha_{i, j, k}$ is $\ell$-integral for all $i$, $j$, $k$.
Then the span of $\ell^r e_1, \ldots, \ell^r e_n$ is an order in $E$.

Once we have found one order $\mathcal{O}$, 
we iteratively enlarge it.
Specifically, there are finitely many $\mathbb{Z}_{\ell}$-lattices $\mathcal{O}'$ with
$\mathcal{O} \subsetneq \mathcal{O}' \subseteq \frac{1}{\ell} \mathcal{O}$.
Indeed, these lattices are in bijection with 
the finitely many $\mathbb{F}_{\ell}$-vector subspaces of $\frac{1}{\ell} \mathcal{O} / \mathcal{O}$,
which can be enumerated algorithmically.
For each lattice, it is a finite computation to determine whether it is closed under multiplication:
simply choose a basis $e_i$, and compute the constants $\alpha_{i, j, k}$ as above.

At any time, if a larger order $\mathcal{O}'$ is found,
we replace $\mathcal{O}$ with $\mathcal{O}'$ and repeat.

We claim that this process must terminate.
If not, there is an infinite ascending chain of orders
$\mathcal{O}_1 \subsetneq \mathcal{O}_2 \subsetneq \mathcal{O}_3 \subsetneq \cdots$.
But the trace pairing must take integral values on any order,
so each $\mathcal{O}_n$ is contained in the dual under the trace pairing of $\mathcal{O}_1$,
a contradiction.

Hence, the algorithm terminates, giving some order $\mathcal{O}$ 
such that no order $\mathcal{O}'$ satisfies
$\mathcal{O} \subsetneq \mathcal{O}' \subseteq \frac{1}{\ell} \mathcal{O}$.
By Lemma \ref{order_step_by_step}, $\mathcal{O}$ is maximal.
\end{proof}

\begin{lem}
\label{field_decomp}
There is a finite-time algorithm that determines the decomposition 
of a commutative \'etale $\mathbb{Q}_{\ell}$-algebra as a direct sum of fields.

Specifically, the algorithm takes as input a commutative \'etale $\mathbb{Q}_{\ell}$-algebra $E$,
presented as $E := E_\Q\otimes_\Q \Q_\ell$ with $E_\Q$ a commutative \'etale $\Q$-algebra.
It outputs $\ell$-adic approximations to elementary idempotents $e_1, \ldots, e_n$,
such that
$E = \bigoplus_i e_i E$
is the decomposition of $E$ as a direct sum of fields;
these $e_i$'s can be computed to any desired $\ell$-adic precision.
\end{lem}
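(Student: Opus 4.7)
The plan is to pass to a maximal $\Z_\ell$-order in $E$, where the product structure of $E$ becomes transparent, and then find the elementary idempotents modulo $\ell$ and lift them to arbitrary $\ell$-adic precision by Hensel's lemma.

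First I would apply Lemma \ref{maxl_order} to compute an order $\mathcal{O}_0 \subseteq E_\Q$ (given with an explicit $\Z$-basis) such that $\mathcal{O} := \mathcal{O}_0 \otimes_\Z \Z_\ell$ is the maximal order in $E$. Writing the (unknown!) decomposition of $E_\Q$ into number fields $E_\Q \cong \prod_i K_i$, Proposition \ref{structure} provides a canonical identification
\[
\mathcal{O} \;\cong\; \prod_i \prod_{\lambda \mid \ell} \mathcal{O}_{K_{i,\lambda}},
\]
and the primitive idempotents of this product decomposition are exactly the elementary idempotents of $E$ that we wish to output.

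Next I would reduce modulo $\ell$: the finite commutative ring $R := \mathcal{O}_0/\ell = \mathcal{O}/\ell$ decomposes correspondingly as a product of local Artinian $\F_\ell$-algebras. I would enumerate the (finitely many) elements $r \in R$ and check the equation $r^2 = r$; this yields the complete list of idempotents of $R$, from which the primitive idempotents $\bar e_{i,\lambda}$ are identified as the minimal nonzero ones under the partial order $\bar e \leq \bar f \iff \bar e \bar f = \bar e$.

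Finally I would lift each $\bar e_{i,\lambda} \in R$ to an honest idempotent $e_{i,\lambda} \in \mathcal{O}$ by iterating the Hensel-type formula $x \mapsto 3x^2 - 2x^3$, which (a short direct computation gives $y^2 - y = (x^2 - x)^2(4(x^2-x) - 3)$ for $y := 3x^2 - 2x^3$) sends an approximate idempotent with $x^2 - x \in \ell^N \mathcal{O}$ to one with $x^2 - x \in \ell^{2N} \mathcal{O}$ for every prime $\ell$; after $O(\log N)$ iterations one obtains $e_{i,\lambda}$ modulo $\ell^N$ for any prescribed $N$, expressed in the fixed $\Z$-basis of $\mathcal{O}_0$. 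The only substantive input is the maximal-order computation, which is already handled by Lemma \ref{maxl_order}; the remaining steps—enumerating a finite ring and iterating a polynomial map—are entirely mechanical, so there is no real obstacle to surmount.
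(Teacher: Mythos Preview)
Your proof is correct and follows essentially the same approach as the paper: compute the maximal order via Lemma~\ref{maxl_order}, find the primitive idempotents in the finite ring $\mathcal{O}/\ell$, then lift $\ell$-adically. The only difference is in the lifting step---the paper uses $\widetilde{e}_i \mapsto \widetilde{e}_i^{\,\ell^{r[E:\Q_\ell]}}$ (exploiting that high $\ell$-power exponents kill nonunits and send $1+\mathfrak{m}$ to $1$), whereas you use the Newton-style iteration $x \mapsto 3x^2 - 2x^3$; both work, and yours has the mild advantage of quadratic convergence and of not needing to pass to the nilradical quotient first.
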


\begin{proof}
The algorithm is in two steps.
First, we determine elements $\widetilde{e}_i \in \mathcal{O}_{E}$ such that
$\widetilde{e}_i$ is a unit in $\mathcal{O}_{E_i}$,
but has strictly positive valuation in $\mathcal{O}_{E_j}$ for all $j \neq i$.
Then, we show how to produce successively better approximations to the true idempotents $e_i$.

First, use Lemma \ref{maxl_order} to compute a maximal order $\mathcal{O}_E$ in $E$.
Since $E$ is commutative, $\mathcal{O}_E$ is unique;
it is the direct sum of the maximal orders $\mathcal{O}_{E_j}$ in the number field summands of $E$.

The quotient $\mathcal{O}_E / \ell \mathcal{O}_E$ is a finite $\mathbb{F}_{\ell}$-algebra.
Let $\mathfrak{N}$ be its nilradical; 
this can be computed in finite time 
(for example, by computing characteristic polynomials).
The quotient of $\mathcal{O}_E / \ell \mathcal{O}_E$ by $\mathfrak{N}$
is the direct sum of the residue fields of the rings $\mathcal{O}_{E_i}$.
By a finite search, we can find all idempotent elements of this quotient ring.
There will be $2^n$ of them, where $n$ is the number of fields in the decomposition of $E$.
They form a lattice in which the join of two elements is given by their product;
the elementary idempotents are minimal nonzero elements in this lattice.
Clearly, these elementary idempotents can be computed in finite time.
Choosing lifts to $\mathcal{O}_E$ gives us elements $\widetilde{e}_1, \ldots, \widetilde{e}_n$,
such that each $\widetilde{e}_i$ is a unit in $\mathcal{O}_{E_i}$, congruent to $1$ modulo its maximal ideal,
and is contained in the maximal ideal of each $\mathcal{O}_{E_j}$ for $j \neq i$.

The absolute ramification degree of each $E_i$ is bounded above by $[E : \mathbb{Q}_{\ell}]$.
It follows (by a calculation in each field $E_i$) that $\widetilde{e}_i^{\ell^{r [E : \mathbb{Q}_{\ell}]}}$
is congruent to the $i$-th idempotent $e_i$ modulo $\ell^r$.
\end{proof}

\begin{defn}
Let
$E = \bigoplus_i E_i$
be a commutative \'etale $\mathbb{Q}_{\ell}$-algebra.
An \emph{approximate elementary idempotent} is an element $\widetilde{e}_i \in \mathcal{O}_E$
that is a nonunit in $E_j$ for all $j \neq i$, and that is congruent to $1$ modulo the maximal ideal of $\mathcal{O}_{E_i}$.
\end{defn}

The proof of Lemma \ref{field_decomp} shows that 
an approximate elementary idempotent determines the field $E_i$,
and given an approximate elementary idempotent $\widetilde{e}_i$,
we can compute arbitrarily accurate $\ell$-adic approximations to the idempotent that projects onto $E_i$.

\begin{lem}
\label{center}
There is a finite-time algorithm which, on input $E$ a semisimple algebra over $\mathbb{Q}_{\ell}$, 
presented as $E := E_\Q\otimes_\Q \Q_\ell$ with $E_\Q$ a semisimple algebra over $\Q$, outputs the center $Z(E) = Z(E_\Q)\otimes_\Q \Q_\ell$,
with output a basis for $Z(E_\Q)$.
\end{lem}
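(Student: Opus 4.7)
The plan is to reduce the problem entirely to linear algebra over $\Q$, using flatness of $\Q_\ell/\Q$ to identify $Z(E) = Z(E_\Q)\otimes_\Q \Q_\ell$ so that a $\Q$-basis of $Z(E_\Q)$ is automatically a $\Q_\ell$-basis of $Z(E)$.

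First I would extract from the presentation of $E_\Q$ a $\Q$-basis $e_1,\ldots,e_n$ together with the structure constants $\alpha_{i,j,k}\in \Q$ defined by $e_i e_j = \sum_k \alpha_{i,j,k} e_k$; these constants are part of (or can be computed from) any reasonable presentation of the finite-dimensional $\Q$-algebra $E_\Q$. Then, for a candidate central element $z = \sum_i z_i e_i$ with unknowns $z_i \in \Q$, the condition $z e_j = e_j z$ for each $j \in \{1,\ldots,n\}$ expands (by comparing coefficients of each $e_k$) into a finite homogeneous $\Q$-linear system in the $z_i$'s, with coefficients given by explicit $\Q$-linear combinations of the $\alpha_{i,j,k}$.

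Next I would solve this linear system over $\Q$ by Gaussian elimination, obtaining a $\Q$-basis $f_1,\ldots,f_r$ of the solution space, which is exactly $Z(E_\Q)\subseteq E_\Q$. Output this basis (each $f_s$ expressed as a $\Q$-linear combination of the $e_i$).

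Finally, to justify that this is the desired answer, I would invoke flatness of $\Q_\ell$ over $\Q$: the formation of centralizers commutes with flat base change, so
\[
Z(E) \;=\; Z(E_\Q\otimes_\Q \Q_\ell) \;=\; Z(E_\Q)\otimes_\Q \Q_\ell,
\]
and hence $f_1,\ldots,f_r$ form a $\Q_\ell$-basis of $Z(E)$ as well. There is no real obstacle here; the only subtlety is the remark that the output format is a $\Q$-basis of $Z(E_\Q)$, which is in fact a \emph{stronger} output than a $\Q_\ell$-basis of $Z(E)$, and is available precisely because commuting with every basis element is a $\Q$-rational linear condition.
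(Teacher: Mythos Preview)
Your proposal is correct and is essentially the same as the paper's proof: choose a $\Q$-basis of $E_\Q$, write the commutation conditions $z a_i = a_i z$ as a linear system over $\Q$, and solve by linear algebra. Your extra remarks about structure constants and flatness of $\Q_\ell/\Q$ are just a more detailed spelling-out of what the paper leaves implicit.
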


\begin{proof}
Let $a_1, \ldots, a_n$ be a basis for $E_\Q$ over $\Q$.
Then $Z(E_\Q)$ is the set of solutions $z$ to the system of simultaneous linear equations
$z a_i = a_i z$;
finding a basis for this solution set is a routine problem in linear algebra.
\end{proof}

\begin{lem}
Let $E$ be a semisimple algebra over $\mathbb{Q}_{\ell}$.
Let $Z(E)$ be the center of $E$, and let $e_i$ be an elementary idempotent in $Z(E)$.
Then $Z_i = e_i Z(E)$ is a field, and $e_i E$ is a central algebra over $Z_i$.
In particular, we have an isomorphism
$e_i E \cong M_{n_i} (D_i)$,
where $D_i$ is a central simple algebra over $Z_i$;
the integer $n_i$ and the isomorphism class of $D_i$ are uniquely determined.
\end{lem}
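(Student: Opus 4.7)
The plan is to reduce this to the classical Wedderburn--Artin theorem once we have checked two structural points: that $Z_i := e_iZ(E)$ is a field, and that $e_iE$ is simple with center $Z_i$.

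First, I would argue that $Z(E)$ is a commutative semisimple (hence étale) $\mathbb{Q}_\ell$-algebra. Since $E$ is semisimple, Proposition \ref{structure} gives $E \cong \bigoplus_j M_{m_j}(D_j)$ for division algebras $D_j$ over their (field) centers $K_j$, so $Z(E) \cong \bigoplus_j K_j$ is a product of fields. The elementary idempotents in $Z(E)$ are then exactly the projectors onto the individual factors $K_j$, so for each elementary idempotent $e_i$, the summand $Z_i = e_i Z(E)$ is a field, proving the first assertion.

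Next I would show that $e_iE$ is simple as a (two-sided) ideal and has center exactly $Z_i$. Since $e_i$ is central, $e_iE$ is a two-sided ideal, and it is a unital ring with unit $e_i$. Any two-sided ideal $J \subseteq e_iE$ is automatically a two-sided ideal of $E$ (because $E = e_iE \oplus (1-e_i)E$ and $(1-e_i)J = 0$). Under the decomposition $E \cong \bigoplus_j M_{m_j}(D_j)$, the central idempotent $e_i$ must (by matching centers) project onto a single factor $M_{m_{j_0}}(D_{j_0})$, whose only two-sided ideals are $0$ and itself. Hence $e_iE$ is simple, and its center is the center of $M_{m_{j_0}}(D_{j_0})$, which is $K_{j_0} = Z_i$. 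Thus $e_iE$ is a central simple algebra over the field $Z_i$.

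Finally I would invoke Wedderburn--Artin: every central simple algebra over a field $Z_i$ is isomorphic to $M_{n_i}(D_i)$ for a (unique up to isomorphism) division algebra $D_i$ with center $Z_i$, and the integer $n_i$ is also uniquely determined (e.g.\ by $n_i^2 [D_i : Z_i] = [e_iE : Z_i]$ together with Skolem--Noether for the uniqueness of $D_i$). None of the steps is really the ``hard part'': the only mild subtlety is the argument that a two-sided ideal of $e_iE$ is a two-sided ideal of $E$, which is immediate once one notes that multiplication by $e_i$ is idempotent and central.
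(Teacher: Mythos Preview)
Your argument is correct and is essentially the paper's approach: the paper's entire proof is ``Apply Proposition \ref{structure},'' and you have simply unpacked what that application consists of (identifying elementary idempotents of $Z(E)$ with projectors onto the simple factors and reading off the Wedderburn--Artin data).
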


\begin{proof}
Apply Proposition \ref{structure}.
\end{proof}

\begin{lem}
\label{find_n}
There is a finite-time algorithm which, on input ($E$, $\widetilde{e}$), with $E$ a semisimple algebra over $\mathbb{Q}_{\ell}$, 
presented as $E := E_\Q\otimes_\Q \Q_\ell$ with $E_\Q$ a semisimple algebra over $\Q$, and $\widetilde{e}\in E$ an approximate elementary idempotent, outputs the integer $n_i$ such that 
$e_i E \cong M_{n_i} (D_i)$
for some division algebra $D_i$, and the invariant of the algebra $D_i$.
\end{lem}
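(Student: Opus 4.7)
The plan is to read off $n_i$ and the invariant from the structure of a maximal order in the central summand $e_i E$. First, I would apply the Hensel-style refinement used in the proof of Lemma \ref{field_decomp} to the approximate idempotent $\widetilde{e}$, viewed inside the center $Z(E)$ (computable by Lemma \ref{center}), in order to produce the true idempotent $e_i$ to any desired $\ell$-adic precision. This yields the local field $Z_i := e_i Z(E)$ with its uniformizer $\pi_{Z_i}$ and residue field $k_i$ of cardinality $q_i$. Applying Lemma \ref{maxl_order} to the semisimple $\mathbb{Q}_\ell$-algebra $B := e_i E$ produces a maximal order $\mathcal{O}\subseteq B$; by Proposition \ref{structure}, such an $\mathcal{O}$ is conjugate in $B$ to $M_{n_i}(\mathcal{O}_{D_i})$.

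The key algebraic input is that the reduction $\mathcal{O}/\pi_{Z_i}\mathcal{O} \cong M_{n_i}(\mathcal{O}_{D_i}/\pi_{Z_i}\mathcal{O}_{D_i})$ has Jacobson radical $J = M_{n_i}(\mathfrak{M}_{D_i}/\pi_{Z_i}\mathcal{O}_{D_i})$ with semisimple quotient isomorphic to $M_{n_i}(\mathbb{F}_{q_i^{d_i}})$. I would compute $J$ via the standard characterization of the radical as the set of elements whose left-multiplication operator on $\mathcal{O}/\pi_{Z_i}\mathcal{O}$ is nilpotent (a finite linear-algebra problem over $k_i$). Then the center of $(\mathcal{O}/\pi_{Z_i}\mathcal{O})/J$ is a finite field $\mathbb{F}_{q_i^{d_i}}$, from which I read off $d_i$; and $n_i$ is determined by the dimension formula $n_i^2 d_i = \dim_{k_i} (\mathcal{O}/\pi_{Z_i}\mathcal{O})/J$.

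To compute the invariant of $D_i$, I would Hensel-lift a primitive idempotent $\bar{f} \in M_{n_i}(\mathbb{F}_{q_i^{d_i}})$ to an idempotent $f \in \mathcal{O}$; this lifting succeeds to arbitrary precision because $\mathcal{O}$ is $\pi_{Z_i}$-adically complete and the idempotent-refining iteration $f \mapsto 3f^2 - 2f^3$ is quadratically convergent. The compression $fBf$ is then a central simple $Z_i$-algebra of dimension $d_i^2$ with no zero divisors, hence isomorphic to $D_i$. Inside $fBf$ the canonical unramified subfield $L/Z_i$ of degree $d_i$ is produced by Teichm\"uller-lifting a generator of the residue field of $f\mathcal{O}f$, and a Frobenius-implementing element $\pi_D \in (fBf)^\times$ --- one satisfying $\pi_D\cdot x = \Frob(x)\cdot \pi_D$ for all $x \in L$ --- exists by the cyclic algebra description of $D_i$ and can be found by solving a finite $Z_i$-linear system at each precision. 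Then $\pi_D^{d_i}$ lies in $Z_i^\times$, and the invariant of $D_i$ is $v_{Z_i}(\pi_D^{d_i})/d_i \pmod{1} \in \mathbb{Q}/\mathbb{Z}$.

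The main obstacle will be tracking effective precision: determining in advance how many $\ell$-adic digits of $\widetilde{e}$ suffice so that the computed $\mathcal{O}/\pi_{Z_i}\mathcal{O}$, its radical $J$, the lifted idempotent $f$, and finally $\pi_D$ are all unambiguous. This is routine given explicit Hensel constants that depend only on $\dim_{\mathbb{Q}_\ell} E$ and the ramification of $Z_i$, but is the piece of bookkeeping that must be written carefully to ensure the algorithm terminates with certified output.
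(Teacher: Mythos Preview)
Your approach is correct and shares the paper's core strategy: compute a maximal order, pass to a finite quotient, locate the Jacobson radical, and read off $n_i$ and $d_i$ from the resulting matrix algebra over a finite field. Two points of comparison are worth noting.

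First, a small technical slip: you invoke Lemma~\ref{maxl_order} on $B = e_i E$, but that lemma requires the input algebra to be presented as $E_\Q \otimes_\Q \Q_\ell$ for a semisimple $\Q$-algebra $E_\Q$, and the idempotent $e_i$ is generally not rational. The paper sidesteps this by computing a maximal order $\mathcal{O}$ in the full algebra $E$ (which \emph{is} presented over $\Q$) and then working with $\widetilde{e}\,\mathcal{O}/\ell\,\widetilde{e}\,\mathcal{O}$, which only needs the idempotent to finite precision. The fix in your argument is the same: compute $\mathcal{O}\subseteq E$ once, then project.

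Second, your invariant computation is genuinely different from the paper's. You Hensel-lift a primitive idempotent $f$ to isolate $D_i \cong fBf$, Teichm\"uller-lift a generator of the residue field to build the unramified subfield $L$, solve a linear system for a Frobenius-implementing element $\pi_D$, and output $v_{Z_i}(\pi_D^{d_i})/d_i$. The paper instead stays entirely at finite level: it chooses any $M$ in the maximal two-sided ideal $\mathfrak{m}$ of $\mathcal{O}_i$ but not in $\mathfrak{m}^2$, chooses a generator $x$ of the center of the semisimple reduction, and reads the invariant directly from the congruence $Mx \equiv x^{p^b} M \pmod{\mathfrak{m}^2}$. The paper's route avoids all lifting to characteristic zero and the attendant precision bookkeeping you flag in your last paragraph; your route is more structural (it literally exhibits the cyclic-algebra presentation of $D_i$) but pays for that with the Hensel and Teichm\"uller machinery and the need to certify how many $\ell$-adic digits of $\widetilde{e}$ suffice.
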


\begin{proof}
Let $\mathcal{O}$ be a maximal order in $E$, let $e_i$ be the elementary idempotent corresponding to $\widetilde{e}$, and let $\mathcal{O}_i := e_i \mathcal{O}$.
We'll show that it is enough to compute the number of elements in the (unique)
maximal two-sided ideal of $\mathcal{O}_i / \ell \mathcal{O}_i\simeq (\widetilde{e} \mathcal{O}) / \ell (\widetilde{e} \mathcal{O})$.

For $D_i$, a central simple algebra over a $p$-adic field,
standard structure results imply that there is a valuation $v$ on $D_i$,
unique up to scaling,
and all ideals of $\mathcal{O}_{D_i}$ are powers of the maximal ideal $\mathfrak{m}$.
It follows by a routine calculation that all two-sided ideals of $M_{n_i}(\mathcal{O}_{D_i})$ 
are of the form $M_{n_i}(\mathfrak{m}^k)$.
In particular, $M_{n_i}(\mathcal{O}_{D_i})$ has a unique maximal ideal $M_{n_i}(\mathfrak{m})$,
which contains $\ell$.

Suppose $D_i$ has dimension $d_i^2$ over its center $Z(D_i) = e_i Z(E)$, 
so
$[e_i E : \mathbb{Q}_{\ell}] = d_i^2 n_i^2 [e_i Z(E) : \mathbb{Q}_{\ell}]$.
The structure theory of central simple algebras over $p$-adic fields gives that
$D_i$ has ramification degree $d_i$ over its center.
It follows that 
$\dim_{\mathbb{F}_{\ell}} M_{n_i}(\mathcal{O}_{D_i}) / M_{n_i}(\mathfrak{m}) = d_i n_i^2 [Z(D_i)^{\ur} : \mathbb{Q}_{\ell}],$
where $Z(D_i)^{\ur}$ is the maximal unramified subfield of $Z(D_i)$.

Similarly, taking $\mathfrak{m}_{Z(D_i)}$ to be the unique maximal ideal in $\mathcal{O}_{Z(D_i)}$, we have
$$\dim_{\mathbb{F}_{\ell}} (\mathcal{O}_{Z(D_i)} / \mathfrak{m}_{Z(D_i)}) = [Z(D_i)^{\ur} : \mathbb{Q}_{\ell}].$$

Finally we turn to the invariant of $D_i$.
Recall the theory of division algebras over a local field: 
The quotient $k_i = D_i / \mathfrak{m}_{Z(D_i)}$ is a finite field -- in fact, the finite field of order $p^{d_i}$.
There exist a uniformizer $\pi$ (i.e.\ a generator of $\mathfrak{m}_{Z(D_i)}$)
and a unique integer $r$ with $1 \leq r \leq d_i$ such that conjugation by $\pi^r$ induces the Frobenius endomorphism on $k_i$ -- in other words,
\[ \pi^r x \equiv x^p \pi^r (\text{mod } \pi^{r+1}) \]
for all $x \in \mathcal{O}_{D_i}$.
The invariant of $D_i$ is, by definition, $r / d_i$.

Now the quotient
\[ (\mathcal{O}_{D_i}) / M_{n_i}(\mathfrak{m}) \]
is isomorphic to $M_{n_i}(k_i)$,
so its center
\[ Z((\mathcal{O}_{D_i}) / M_{n_i}(\mathfrak{m})) \]
is isomorphic to $k_i$ (it is the set of diagonal matrices).
Choose some $x \in Z((\mathcal{O}_{D_i}) / M_{n_i}(\mathfrak{m}))$ that generates the field $k_i$.
Then a calculation shows that, for any $a \in \mathbb{Z}$ and any
\[ M \in \mathfrak{m}_{Z(D_i)}^a - \mathfrak{m}_{Z(D_i)}^{a+1} \]
we have
\[ M x \equiv x^{p^b} M (\text{mod }\mathfrak{m}_{Z(D_i)}^{a+1}),  \]
where $ra \equiv b$ (mod $d_i$)
and $r$ is the integer defined above (i.e.\ $r / d_i$ is the invariant of the algebra $D_i$).

Now we explain the algorithm.
Use Lemma \ref{maxl_order} to find a maximal order $\mathcal{O}$ in $E$.
The quotient ring $\mathcal{O}_i / \ell \mathcal{O}_i$ is finite, so we can compute a presentation for it.
It is a finite calculation to determine a maximal ideal $\mathfrak{m}$ in $\mathcal{O}_i / \ell \mathcal{O}_i$,
and hence compute $\dim_{\mathbb{F}_{\ell}} M_{n_i}(\mathcal{O}_{D_i}) / M_{n_i}(\mathfrak{m})$.

Similarly, let $Z$ be the center of $\mathcal{O}$; by Lemmas \ref{center} and \ref{maxl_order},
we can compute the maximal order $\mathcal{O}_Z$ in $Z$.
Again by a finite computation, we can compute the maximal ideal in 
$\widetilde{e} (\mathcal{O}_Z / \ell \mathcal{O}_Z)$; call it $\mathfrak{m}_{Z_i}$.
From this we determine $\dim_{\mathbb{F}_{\ell}} (\mathcal{O}_{Z(D_i)} / \mathfrak{m}_{Z(D_i)}) [Z(D_i)^{\ur} : \mathbb{Q}_{\ell}]$.

Having determined
$[Z(D_i)^{\ur} : \mathbb{Q}_{\ell}]$, $d_i n_i^2 [Z(D_i)^{\ur} : \mathbb{Q}_{\ell}]$, and $d_i^2 n_i^2 [e_i Z(E) : \mathbb{Q}_{\ell}]$,
it is straightforward to determine $n_i$.

Finally, to determine the invariant $r / d_i$ of $D_i$, we simply choose 
\[ M \in \mathfrak{m}_{Z(D_i)} - \mathfrak{m}_{Z(D_i)}^{2}, \]
take $x$ to generate the field $Z((\mathcal{O}_{D_i}) / M_{n_i}(\mathfrak{m}))$,
find the unique $b \in \mathbb{Z} / d_i \mathbb{Z}$ such that
\[ M x \equiv x M^b (\text{mod } \mathfrak{m}_{Z(D_i)}^{2}), \]
and take $r$ the multiplicative inverse of $b$ modulo $d_i$.
\end{proof}

We may now prove Propositions \ref{simple_reps} and \ref{local_semisimple_algebras}.

\begin{proof}[Proof of Proposition \ref{simple_reps}.]

We are given the semisimple algebra $E$ and its action on the vector space $V$.
By Lemmas \ref{center} and \ref{field_decomp}, we can find a system of approximate elementary idempotents for the center $Z(E)$;
by Lemma \ref{find_n} we can find the integers $n_i$ in the decomposition
$E \cong \bigoplus M_{n_i} (E_i)$.

Next let us show how to approximate the trace of any element $\alpha \in E$ on any simple $E$-module $E_i^{\oplus n_i}$.
This trace is simply $\operatorname{Tr}(e_i \alpha)$, 
where $e_i$ is the elementary idempotent of $E$ projecting onto $M_{n_i} (E_i)$.
By the remark following Lemma \ref{field_decomp}, we can approximate $e_i$ to any desired $\ell$-adic precision.
We need to control the precision on $\operatorname{Tr}(e_i \alpha)$;
scaling $\alpha$ by a power of $\ell$, we may assume that $\alpha$ has all eigenvalues integral.
In this setting, if
$\widetilde{e}_i \equiv e_i \pmod*{\ell^n Z(E)}$,
then
$\operatorname{Tr}(\widetilde{e}_i \alpha) \equiv \operatorname{Tr}(e_i \alpha) \pmod*{\ell^n}$.

All that remains is to determine the decomposition of a given $E$-module $V$ in terms of the simple $E$-modules;
for this it is enough to determine the multiplicity of a single $E_i^{\oplus n_i}$ as a factor of $V$.
For this, we consider the action of an approximate elementary idempotent $\widetilde{e}_i$ on $V$.
We know that $\widetilde{e}_i$ acts on $E_i^{\oplus n_i}$ with eigenvalues that are $\ell$-adic units,
while the eigenvalues of its actions on any other simple $E$-module are zero modulo $\ell$.
We can compute the number of nonzero eigenvalues of $\widetilde{e}_i$ on $V$;
this number will be exactly
$m_i \dim_{\mathbb{Q}_{\ell}} E_i^{\oplus n_i}$,
where $m_i$ is the multiplicity we want to find.
\end{proof}

\begin{proof}[Proof of Proposition \ref{local_semisimple_algebras}.]
Apply Lemmas \ref{center}, \ref{field_decomp}, and \ref{find_n}.
\end{proof}

\newpage

\section{Algorithmic decomposition of semisimple algebras over number fields.}
\label{alg_dec_num_fld_alg}

\begin{prop}
\label{num_fld_algebra}
There is a finite-time algorithm which, on input $(K, E)$ with $E$ a semisimple algebra over a number field $K/\Q$, outputs $(s, (e_i)_{i=1}^s, (n_i)_{i=1}^s, (E_i)_{i=1}^s)$, where
\begin{equation} \label{global_decomp}
E \cong \bigoplus_{i=1}^s M_{n_i} (E_i), \end{equation}
with each $E_i$ a division algebra,
and each $e_i \in E$ the elementary idempotent projecting onto the $i$-th summand.
\end{prop}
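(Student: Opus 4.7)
The plan is to combine two steps: a decomposition of the center, followed by a direct decomposition of each central simple factor via brute-force search for a primitive idempotent.

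First I would compute the center $Z(E)$ by straightforward linear algebra (as in Lemma \ref{center}, applied over $K$ rather than $\Q_\ell$). This $Z(E)$ is a commutative semisimple $K$-algebra, hence a finite product $Z(E) \cong \bigoplus_{i=1}^s F_i$ of finite extensions $F_i/K$. To find this decomposition concretely: pick a $K$-basis of $Z(E)$, brute-force search for an element $z \in Z(E)$ whose $K$-minimal polynomial equals its $K$-characteristic polynomial (such a $z$ exists by the primitive element theorem), factor that minimal polynomial over $K[T]$ (standard; e.g.\ van Hoeij's algorithm), and use CRT to read off the central primitive idempotents $e_1, \ldots, e_s \in Z(E)$, with $F_i := e_i Z(E)$.

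For each $i$, the summand $A_i := e_i E$ is a central simple algebra over $F_i$, so by Wedderburn $A_i \cong M_{n_i}(E_i)$ for a unique pair $(n_i, E_i)$ with $E_i$ a central division algebra over $F_i$. To find this decomposition explicitly I would brute-force search for idempotents $f \in A_i \setminus \{0\}$ (i.e.\ elements $f = \sum_j a_j v_j$ with $v_j$ a fixed $F_i$-basis of $A_i$ and $a_j \in F_i$ of bounded height, tested by computing $f^2 - f$), and for each such $f$ compute $\dim_{F_i}(fA_i f)$ by linear algebra. A primitive idempotent exists in $A_i$ (e.g.\ a matrix unit $E_{11}$ in any concrete realization $A_i \cong M_{n_i}(E_i)$, which is defined over $F_i$), so the search terminates. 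To recognize primitivity: the quantity $\dim_{F_i}(f A_i f)$ ranges over $\{k^2 \dim_{F_i} E_i : 1 \le k \le n_i\}$ as $f$ varies over nonzero idempotents, so $f$ is primitive iff $\dim_{F_i}(f A_i f)$ equals the minimum observed value; moreover this minimum can be checked to be achieved, since if $d := \dim_{F_i}(f A_i f)$ is the current minimum and satisfies $d \mid \dim_{F_i}(A_i)$ with $\dim_{F_i}(A_i)/d$ a perfect square, then $f$ is primitive and $n_i = \sqrt{\dim_{F_i}(A_i)/d}$. To certify termination of this recognition step I would interleave it with computing the local invariants of $A_i$ at each finite place of $F_i$ (via Proposition \ref{local_semisimple_algebras} applied to $A_i \otimes_{F_i} (F_i)_\lambda$), whose common denominator gives the Schur index, hence $\dim_{F_i}(E_i)$, hence $n_i$; the brute-force search then only needs to continue until an idempotent $f$ is found with $\dim_{F_i}(f A_i f) = \dim_{F_i}(E_i)$.

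Once a primitive idempotent $f_i$ is in hand, I would set $E_i := f_i A_i f_i$ (with the multiplication inherited from $A_i$), which by construction is a division algebra of the correct $F_i$-dimension, and record the pair $(n_i, E_i)$ together with the central idempotent $e_i$. The explicit isomorphism $A_i \cong M_{n_i}(E_i)$ is obtained by choosing an $E_i$-basis of the simple left $A_i$-module $A_i f_i$ (routine linear algebra). The main obstacle is the verification of primitivity of a candidate idempotent: without some external input it is not a priori clear when to stop searching. Using the local decomposition in Proposition \ref{local_semisimple_algebras} to compute $\dim_{F_i}(E_i)$ in advance circumvents this, and is really the one nontrivial input beyond straightforward commutative algebra and brute-force search.
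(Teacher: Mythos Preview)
Your approach is essentially the same as the paper's: decompose the center into fields to obtain the central primitive idempotents $e_i$, then for each simple factor $A_i = e_i E$ use local invariants to determine the Schur index $d_i$ (hence $n_i$ via $\dim_{F_i} A_i = d_i^2 n_i^2$). The paper stops there, describing $E_i$ by its collection of local invariants; you go one step further and realize $E_i$ concretely as $f_i A_i f_i$ for a primitive idempotent $f_i$ found by brute-force search, with the Schur-index computation used to certify termination. This extra step is not needed for how the proposition is actually used later in the paper, but it is correct.

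One small gap: you compute local invariants only at finite places, but the Schur index is the least common denominator of the invariants at \emph{all} places, including the real ones. Over a totally real field with at least two real embeddings there exist quaternion division algebras ramified only at archimedean places (invariants $\tfrac{1}{2}$ at two real places, $0$ elsewhere), so your termination criterion would never fire in that case. The fix is immediate: include the real invariants, computed e.g.\ from the signature of the trace form as in Lemma~\ref{real_inv}, exactly as the paper does in Lemma~\ref{all_inv}.
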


\begin{lem}
\label{decomp_global_center}
There is a finite-time algorithm which, on input $(K, E)$ with $E$ a semisimple central algebra over a number field $K/\Q$, outputs the elementary idempotents $e_i$ projecting onto each summand in the decomposition \eqref{global_decomp}.
\end{lem}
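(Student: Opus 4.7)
The plan is to reduce the production of the required idempotents to the computation and decomposition of the center $Z(E)$. Since $E$ is semisimple, each simple summand $M_{n_i}(E_i)$ in \eqref{global_decomp} is central simple over the field $Z(E_i)$; hence $Z(E) \cong \bigoplus_{i=1}^{s} Z(E_i)$ canonically, and the elementary idempotents of the commutative semisimple (i.e.\ étale) $K$-algebra $Z(E)$ are precisely the sought-after elementary central idempotents $e_i$ of $E$. Thus it suffices to compute $Z(E)$ and then algorithmically decompose it as a product of number fields.

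First, I would compute $Z(E)$ as the $K$-subspace of $E$ consisting of elements that commute with every element of a fixed $K$-basis of $E$; this is pure linear algebra given the structure constants of $E$ (cf.\ the proof of Lemma \ref{center}). Next, to decompose $Z(E)$ as a product of number fields, I would invoke the primitive element theorem for étale $K$-algebras: there exists $\alpha \in Z(E)$ with $Z(E) = K[\alpha]$, and one finds such an $\alpha$ by searching through elements of $Z(E)$ with small coordinates until the powers $1, \alpha, \alpha^2, \ldots$ span $Z(E)$ (equivalently, until the minimal polynomial of $\alpha$ has degree $\dim_K Z(E)$). Compute the minimal polynomial $m(x) \in K[x]$ by linear algebra, then factor $m(x) = \prod_{i=1}^{s} m_i(x)$ into distinct irreducibles over $K$. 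Finally, recover the $e_i$ via the Chinese Remainder Theorem applied to $K[x]/m(x) \cong \bigoplus_i K[x]/m_i(x) \cong Z(E)$: take $f_i(x) \in K[x]$ with $f_i \equiv 1 \pmod*{m_i}$ and $f_i \equiv 0 \pmod*{m_j}$ for $j \neq i$ (computable by the extended Euclidean algorithm in $K[x]$), and set $e_i := f_i(\alpha)$.

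The only non-elementary ingredient is the factorization of $m(x)$ over the number field $K$, which is the main obstacle to an elementary proof. However, this is a classical problem, solvable e.g.\ by Trager's algorithm reducing it to polynomial factorization over $\Q$, which in turn is handled in polynomial time by LLL-based methods. Once this black box is granted, every other step of the argument is routine linear algebra or a finite search, so the algorithm terminates with the desired output on every valid input.
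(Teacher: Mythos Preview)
Your proof is correct and follows essentially the same approach as the paper: compute $Z(E)$ by linear algebra and reduce to decomposing this commutative \'etale $K$-algebra into fields. The paper dismisses this last step as ``standard'' without further comment, whereas you spell it out via a primitive element, polynomial factorization over $K$, and CRT --- all of which is fine and indeed is one of the standard ways to do it.
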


\begin{proof}
By linear algebra we can compute a presentation for the center $Z(E)$.
This center decomposes as a sum of fields
$Z(E) = \bigoplus_i Z(E_i)$;
the idempotents we are looking for are exactly the projectors onto the summands.
Hence we are reduced to the problem of decomposing 
a commutative \'etale $K$-algebra as a sum of fields, which is standard.
\end{proof}

\begin{lem}
\label{real_inv}
There is a finite-time algorithm which, on input $E$ a semisimple central algebra over $\mathbb{R}$ (thus $E$ is either a matrix algebra over $\mathbb{R}$ or over the Hamilton quaternions $\mathbb{H}$), 
presented as $E := E_\Q\otimes_\Q \R$ with $E_\Q$ a semisimple central algebra over $\Q$, outputs true if and only if $E$ is a matrix algebra over $\mathbb{R}$.
\end{lem}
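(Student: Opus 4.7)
The plan is to distinguish $E \cong M_n(\R)$ from $E \cong M_{n/2}(\H)$ (where $n := \sqrt{\dim_\Q E_\Q}$, an integer since $E_\Q$ is central simple of degree $n$ over $\Q$) via the signature over $\R$ of the reduced trace bilinear form $b(x, y) := \operatorname{Trd}_E(xy)$. This is a $\Q$-rational invariant of $E_\Q$ whose $\R$-signature can be extracted by ordinary rational linear algebra, without ever approximating $\R$.

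First I would compute the reduced trace $\operatorname{Trd} : E_\Q \to \Q$ from the formula $\operatorname{Trd}(x) = \tr_\Q(L_x)/n$, where $L_x \in \End_\Q(E_\Q)$ is left multiplication by $x$ on the $\Q$-vector space $E_\Q$; this holds because each eigenvalue of $x$ appears with multiplicity $n$ in the left-regular representation of a central simple algebra of degree $n$. Then I would write down the Gram matrix $M \in M_{n^2}(\Q)$ of the pairing $b$ in a chosen $\Q$-basis of $E_\Q$, and compute its signature $(p, q)$ over $\R$ by iterative rational Gram--Schmidt orthogonalization with respect to $b$; this produces a diagonal form with rational entries whose signs give $(p, q)$ directly.

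The algorithm returns TRUE iff $p > q$. For correctness, a direct computation in the matrix-unit basis shows: on $M_n(\R)$, each diagonal $e_{ii}$ contributes $+1$, while each pair $\{e_{ij}, e_{ji}\}$ with $i \neq j$ contributes a hyperbolic plane of signature $(1, 1)$, yielding total signature $(n(n+1)/2,\, n(n-1)/2)$; on $M_{n/2}(\H)$, each $\H$-diagonal block has trace-form signature $(1, 3)$ (since $\operatorname{Trd}(i^2) = \operatorname{Trd}(j^2) = \operatorname{Trd}(k^2) = -2$), while each off-diagonal pair contributes an $8$-dimensional hyperbolic subspace of signature $(4, 4)$, yielding total signature $(n(n-1)/2,\, n(n+1)/2)$. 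Hence $p - q = +n$ in the first case and $p - q = -n$ in the second, so the sign of $p - q$ is the desired distinguishing invariant. The only real obstacle is the fiddly but routine bookkeeping needed to verify these two signatures in the matrix-unit basis.
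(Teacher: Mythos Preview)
Your proof is correct and takes essentially the same approach as the paper: both distinguish the two cases by computing the signature of the trace pairing via Gram--Schmidt, and your explicit signatures $(n(n+1)/2,\,n(n-1)/2)$ versus $(n(n-1)/2,\,n(n+1)/2)$ match the paper's stated values (the paper writes the dimension as $(2n)^2$ and quotes $(2n^2+n,\,2n^2-n)$ and $(2n^2-n,\,2n^2+n)$, which agree after the substitution $n\mapsto 2n$). The only cosmetic difference is that you use the reduced trace while the paper uses the full algebra trace, but these differ by a positive scalar and so yield the same signature.
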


\begin{proof}
We may assume $\dim_{\mathbb{R}} E = (2n)^2$.
The trace pairing defines a quadratic form on $E$, 
whose signature is $(2n^2 + n, 2n^2 -n)$ if $E \cong M_{2n} (\mathbb{R})$,
and $(2n^2 - n, 2n^2 + n)$ if $E \cong M_n (\mathbb{H})$.
Given $E$, one can determine the signature of the trace pairing by the Gram-Schmidt process.
\end{proof}

\begin{lem}
\label{zero_inv}
There is a finite-time algorithm which, on input $(K, E)$ with $E$ a semisimple central algebra over a number field $K/\Q$, outputs a finite list of places of $K$ such that the local invariant of $E$ at any place not on the list is $0$.
\end{lem}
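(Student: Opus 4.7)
The plan is to reduce the problem to an effective factorization of a single ideal in $\o_K$, coming from the discriminant of any $\o_K$-order in $E$. First I would produce such an order: choose a $K$-basis $a_1,\ldots,a_n$ of $E$, write down the (finitely many rational) structure constants $a_i a_j = \sum_k c_{ijk} a_k$, and clear denominators by multiplying each $a_i$ by a suitable nonzero element of $\o_K$. The $\o_K$-span $\mathcal{O}$ of the resulting basis is then closed under multiplication, i.e.\ an $\o_K$-order in $E$.

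Next I would compute the reduced trace pairing $\o_K$-bilinear form $(x,y)\mapsto \operatorname{Tr}_{E/K}(xy)$ on $\mathcal{O}$ in the chosen basis and output the discriminant ideal $\disc(\mathcal{O}) := \bigl(\det(\operatorname{Tr}_{E/K}(a_i a_j))\bigr)\subseteq \o_K$. This is a nonzero ideal (since $E/K$ is semisimple and $E$ is central, the trace pairing is non-degenerate, cf.\ Proposition \ref{structure}), so by the standard effective ideal arithmetic in number rings it has a computable finite list of prime divisors, say $\pfrak_1,\ldots,\pfrak_r$. I would then return the list consisting of $\pfrak_1,\ldots,\pfrak_r$ together with all archimedean places of $K$ (of which there are finitely many and which are algorithmically enumerable from a defining polynomial of $K$).

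To verify this list contains all ramified places: at any finite prime $\pfrak$ not dividing $\disc(\mathcal{O})$, the localization $\mathcal{O}_\pfrak := \mathcal{O}\otimes_{\o_K}\o_{K,\pfrak}$ has unit discriminant over $\o_{K,\pfrak}$; by the characterization of maximal orders in central simple algebras over local fields (Proposition \ref{structure} together with the formula for the discriminant of a maximal order in a central simple algebra, which is a proper ideal precisely when the algebra is non-split), $\mathcal{O}_\pfrak$ is then a maximal order in a split central simple algebra, so $E\otimes_K K_\pfrak$ has local invariant $0$. Thus only the $\pfrak_i$ and the archimedean places can carry nontrivial invariant, as desired.

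There is no serious obstacle here: the one point requiring care is simply that we are content with a superset of the ramified places (rather than an exact determination), so that we need not decide which archimedean places ramify, nor distinguish the ``false positives'' among the $\pfrak_i$ where $\mathcal{O}$ merely fails to be locally maximal but $E$ still splits. If a sharper list were wanted, one could enlarge $\mathcal{O}$ to an honest maximal $\o_K$-order by the analogue of Lemma \ref{maxl_order} applied prime-by-prime at each $\pfrak_i$, and test each real place via Lemma \ref{real_inv}; but for the stated lemma this is unnecessary.
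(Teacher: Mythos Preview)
Your proof is correct and follows essentially the same approach as the paper: construct any order, compute the discriminant of the trace pairing, and return the primes dividing it together with the archimedean places. Your write-up is more detailed than the paper's (which is three sentences), in particular supplying the justification that a locally unit discriminant forces the order to be maximal in a split algebra; the paper simply asserts the result.
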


\begin{proof}
Choose any order $\mathcal{O}$ in $E$ (not necessarily maximal),
and compute its discriminant $N$ with respect to the trace pairing.
Return the set of finite places dividing $N$, plus all the archimedean places.
\end{proof}

\begin{lem}
\label{all_inv}
There is a finite-time algorithm which, on input $(K, E)$ with $E$ a semisimple central algebra over a number field $K/\Q$, outputs all the nonzero local invariants of $E$.
\end{lem}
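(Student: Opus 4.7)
The plan is to localize at each place of $K$ and reduce to the local algorithms already in hand. First I would apply Lemma \ref{zero_inv} to extract a finite set $S$ of places of $K$ containing every place of potentially nonzero invariant; it then suffices to compute $\mathrm{inv}_v(E)$ for each $v \in S$.

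Discarding complex places (for which the invariant is always zero), for each real archimedean $v \in S$ I would form the central simple $\R$-algebra $E \otimes_K K_v$ by a routine linear-algebraic construction from the given presentation of $E$, then invoke Lemma \ref{real_inv} to distinguish between the only two possibilities $\{0, 1/2\}$.

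For the finite places in $S$, I would group them by the rational prime $\ell$ lying beneath, and for each such $\ell$ apply Proposition \ref{local_semisimple_algebras} to the semisimple $\Q_\ell$-algebra $E \otimes_\Q \Q_\ell$ (viewed as $E_\Q \otimes_\Q \Q_\ell$ with $E_\Q := E$ regarded as a semisimple $\Q$-algebra). This returns a decomposition
\[ E \otimes_\Q \Q_\ell \cong \bigoplus_i M_{n_i}(E_i), \]
with each $E_i$ central simple over a local field $K_i / \Q_\ell$ of explicitly computed invariant $q_i$. The key observation is that the center of $E \otimes_\Q \Q_\ell$ is $K \otimes_\Q \Q_\ell \cong \bigoplus_{v \mid \ell} K_v$, so the central simple summands above are in canonical bijection with the places $v \mid \ell$ of $K$.

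The only nontrivial step beyond invoking the previous algorithms is thus matching the summands to the places. The hard part is really only bookkeeping: one approach is to compute (to sufficient $\ell$-adic precision, already supplied by the output of Proposition \ref{local_semisimple_algebras}) the image of a $\Q$-generator of $K$ in each factor $K_i$ of the decomposed center and compare with its image in each completion $K_v$, which can itself be computed to arbitrary precision via Hensel lifting from the factorization of the minimal polynomial over $\Q_\ell$; in the generic case one can shortcut this by matching ramification index and residue degree. Once the matching is done, the algorithm outputs the collected list of nonzero pairs $(v, \mathrm{inv}_v(E))$.
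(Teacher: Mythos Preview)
Your proposal is correct and follows essentially the same approach as the paper: reduce to a finite set of places via Lemma \ref{zero_inv}, handle archimedean places via Lemma \ref{real_inv}, and handle nonarchimedean places via Proposition \ref{local_semisimple_algebras}. You add a useful detail the paper leaves implicit, namely how to match the summands of $E\otimes_\Q \Q_\ell$ to the places $v\mid\ell$ of $K$ via the approximate idempotents on the center; this is a genuine bookkeeping step and your suggested method (tracking a $\Q$-generator of $K$ to sufficient $\ell$-adic precision) works.
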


\begin{proof}
The invariant of $E$ at any complex place is zero; 
at a real place it is $0$ or $1/2$, depending whether the associated division algebra is $\mathbb{R}$ or $\mathbb{Q}$.
By Lemma \ref{real_inv} we can determine which case holds at each real place.
This settles the archimedean places.

By Lemma \ref{zero_inv}, we only need to find the invariants of $E$ at finitely many nonarchimedean places;
to find the invariant of a nonarchimedean localization of $E$, we apply Proposition \ref{local_semisimple_algebras}.
\end{proof}

Now we may prove Proposition \ref{num_fld_algebra}.

\begin{proof}[Proof of Proposition \ref{num_fld_algebra}.]
The algorithm works as follows.
First we decompose $Z(E)$ into fields (Lemma \ref{decomp_global_center}); 
this determine the decomposition of $E$ into simple algebras.
Passing to one simple factor, we may assume that $E$ is simple, with center $K'$.
Then $E\cong M_{n_1}(E_1)$ is a matrix algebra over a division ring $E_1$.
If $\dim_{K'} E_1 = d_1^2$, then $\dim_K E = d_1^2 n_1^2$; 
since the latter dimension can be read off from our presentation,
we just need to compute the number $d_1$.

Global class field theory describes the structure of the central simple algebra $E_1$.
Its localization at each place (finite or infinite) of $K'$ 
is determined by an invariant in $\mathbb{Q} / \mathbb{Z}$.
All but finitely many of these invariants vanish, 
and their least common denominator is $d_1$.
We can compute the nonzero invariants by Lemma \ref{all_inv},
and we are done.
\end{proof}

\newpage

\section{Mumford coordinates.}
\label{sec:mum_coord}

Here we collect some results that enable us to work algorithmically with the moduli space of abelian varieties.  We use Mumford's explicit description of the moduli of abelian varieties with $\delta$-markings \cite{Mumford_coord1}.

A $\delta$-marking is a certain type of level structure, introduced by Mumford.  Any abelian variety with $\delta$-marking can be canonically embedded in projective space; we say that the resulting projectively embedded abelian variety is in \emph{Mumford form} (Definition \ref{defn:mumford_form}).  For fixed level $\delta$, any abelian variety admits only finitely many embeddings in Mumford form.  We will make algorithmic several basic tasks involving moduli spaces:

Lemma \ref{char_delta_lemma} allows us to test whether an abelian scheme is in Mumford form (which enables us to find Mumford forms for arbitrary abelian schemes, by brute-force search).

Lemma \ref{all_mumf_coords}
Given an abelian variety $A$ over a characteristic-zero field $K$, can compute (for given $\delta$) all Mumford embeddings of $A$.

Lemma \ref{the subfamily of abelian varieties in a family isomorphic to a given one is computable}
Given a family of abelian varieties and a fixed abelian variety $A$, one can compute all fibers of the family that are isomorphic to $A$.

\subsection{Introduction.}

We will start with Mumford's explicit description of abelian varieties,
and their moduli, in terms of (algebraic) theta functions and theta-zero values.
The material is classical -- it was known, in some form, to Riemann --
but we will use Mumford's presentation.
The material we need is in \cite[\S\S 1-3]{Mumford_coord1} and \cite[\S 6]{Mumford_coord2}.

Given a choice of ``level'' $\delta$, a polarization on $A$, and some additional discrete data (a ``$\delta$-marking''), Mumford defines a \emph{canonical} embedding of $A$ into a certain projective space depending only on $\delta$.
This embedding has the pleasant property that the line bundle $\mathcal{L} = \mathcal{O}(1)$ on $A$ is invariant under pullback both by the inversion $x \mapsto -x$ on $A$ and by a certain torsion subgroup (the ``$\delta$-torsion'') of $A$.
Inversion and these torsion translations on $A$ extend to linear maps of the ambient projective space, and in fact these linear maps can be described by universal equations independent of the abelian variety $A$.

Since the projective embedding is canonical, the coordinates $Q_x$ of the origin of $A$ define invariants of $A$ (with its $\delta$-marking).
One can write down equations for $A$ as a subscheme of projective space, whose coefficients depend only on the $Q_x$.
In other words, the coordinates $Q_x$ can be used as coordinates on the moduli space of abelian varieties (with suitable polarization and $\delta$-marking).  

However, the addition law on $A$ does not have a simple form in Mumford's coordinates.

\subsection{$\delta$-markings.}
\label{Delta markings}

(In \cite{Mumford_coord2} the constructions are carried out over $\mathbb{Z}[1 / \prod d_i]$,
but we will only need the results in characteristic $0$.)

We begin by recalling some notation from \cite[\S\S 1-3]{Mumford_coord1} and \cite[\S 6]{Mumford_coord2}.
Let $\delta = (d_1, \ldots, d_g)$ be a collection of elementary divisors; 
that is, they are positive integers such that $d_{i+1} | d_i$.
We also assume that all the $d_i$ are divisible by $8$.
(For the purposes of this paper, it is fine to assume $\delta = (8, 8, \ldots, 8).$)
We will write $2 \delta = (2d_1, \ldots, 2d_g)$
and 
\[ \left | \delta \right | := \prod_{i=1}^g d_i. \]

Let 
\[ K(\delta) := \bigoplus_{i=1}^k \mathbb{Z} / (d_i \mathbb{Z}).\]
Note that there is a natural inclusion $K(\delta) \subseteq K(2 \delta)$,
given by multiplication by two on the individual factors 
\[ \mathbb{Z} / (d_i \mathbb{Z})\rightarrow \mathbb{Z} / (2 d_i \mathbb{Z}) . \]
Also, let $Z_2 \subseteq K(\delta)$ be the subgroup of points that are divisible by 2.

Next we define the group scheme $\mathcal{G}(\delta)$.
As a set, the $S$-points of $\mathcal{G}(\delta)$ (for connected schemes $S$) are tuples $(t, a, \ell)$ such that:
\begin{enumerate}
\item $t$ is a section of $\mathbb{G}_{m, S}$,
\item $a \in K(\delta)$,
\item $\ell \in K(\delta)^{\vee} = \operatorname{Hom}(K(\delta), \mathbb{G}_m)$ -- or more concretely, $\ell = (\ell_1, \dots, \ell_g)$, where each $\ell_i$ is a $d_i$-th root of unity in $\mathbb{G}_{m, S}$.  (We will write the group law in $K(\delta)^{\vee}$ additively.)
\end{enumerate}
\begin{rmk}
    There is a natural pairing
    \[ K(\delta) \times K(\delta)^{\vee} \rightarrow \mathbb{G}_m, \]
    which we will write as
    \[ ( a, \ell ) \mapsto \langle a, \ell \rangle \]
    
    We will write the group law on $K(\delta)^{\vee}$ additively.
\end{rmk}

The group structure on $\mathcal{G}(\delta)$ is given by
\[ (t_1, a_1, \ell_1) \cdot (t_2, a_2, \ell_2) 
= (t_1 t_2 \langle a_2, \ell_1 \rangle , a_1 + a_2, \ell_1 + \ell_2).\]

We note that the injection $\mathbb{G}_m \mapsto \mathcal{G}(\delta)$ given by $t \mapsto (t, 0, 0)$
gives rise to an exact sequence
\[ 0 \rightarrow \mathbb{G}_m \rightarrow \mathcal{G}(\delta) \rightarrow K(\delta) \oplus K(\delta)^{\vee} \rightarrow 0. \]
Furthermore, the commutator on $G$ descends to the pairing $e(- , -)$ on $K(\delta) \oplus K(\delta)^{\vee}$ given by
\begin{equation} 
\label{pairing}
e((a_1, \ell_1), (a_2, \ell_2)) = 
\frac{\langle a_1, \ell_2 \rangle} {\langle a_2, \ell_1 \rangle}.
\end{equation}

Given an abelian scheme $A$ over a base $S$ of characteristic zero and a line bundle $\mathcal{L}$ on $A$, 
we define a group scheme $\mathcal{G}(\mathcal{L})$ over $S$ as follows.
For any $S$-scheme $T$, $\mathcal{G}(\mathcal{L})(T)$ is the group of pairs $(s, \alpha)$, where $s$ is a section of $A \times_S T \rightarrow T$, and 
\[ \alpha \colon T_{s}^* L \rightarrow L \]
is an isomorphism.  (Here $T_{s} \colon A \rightarrow A$ is translation by $s$.)

We note again that there is an injection $\mathbb{G}_m \mapsto \mathcal{G}(\mathcal{L})$, given by $\alpha \mapsto (0, \alpha)$;
that is, by the usual action of $\mathbb{G}_m$ by multiplication on the line bundle $\mathcal{L}$;
this injection gives rise to an exact sequence
\[ 0 \rightarrow \mathbb{G}_m \rightarrow \mathcal{G}(\mathcal{L}) \rightarrow A[\mathcal{L}] \rightarrow 0, \]
where $A[\mathcal{L}]$ is the image of the map $\mathcal{G}(\mathcal{L}) \rightarrow A$ given by $(s, \alpha) \mapsto s$.

\begin{defn}
Let $A$ be an abelian scheme of dimension $g$ over a $\mathbb{Q}$-scheme $S$.
A \emph{$\delta$-marking} on $A$ is the following data:
\begin{enumerate}
\item A symmetric, very ample line bundle $\mathcal{L}$ on $A$.  (``Symmetric'' means that there is an isomorphism $\mathcal{L} \cong \mathcal{L}^{-1}$.)
\item An isomorphism $\beta \colon \mathcal{G}(\mathcal{L}) \rightarrow \mathcal{G}(\delta)$ which is the identity on the subgroup $\mathbb{G}_{m, S}$.
\end{enumerate}

The \emph{N\'eron--Severi class} (or \emph{polarization}) of $\delta$ is simply the N\'eron--Severi class of $\mathcal{L}$ in $H^2(A)$.
\end{defn}

Given $A$ and $\delta$, there are finitely many $\delta$-markings on $A$ of given N\'eron--Severi class; see Lemmas \ref{constr_autos} and \ref{automorphisms}.

We note that any $\delta$-marking gives rise to an identification of the torsion subgroup $A[\delta]$ with $K(\delta) \oplus K(\delta)^{\vee}$.

\subsection{Let us count the ways (to put Mumford coordinates on an abelian variety).}

We begin with a short discussion of the structure of the group $\mathcal{G}(\delta)$.
\begin{defn}
    Let 
    \[ \operatorname{Aut}_{\mathbb{G}_m} \mathcal{G}(\delta) = \{ f \in \operatorname{Aut} \mathcal{G}(\delta) \mid f(x) = x \text{ for all } x \in \mathbb{G}_m \} \]
be the group of automorphisms of $\mathcal{G}(\delta)$ acting trivially on $\mathbb{G}_m$.
\end{defn}
Our goal is to compute all automorphisms in $\operatorname{Aut}_{\mathbb{G}_m} \mathcal{G}(\delta)$.

To start with, let $a_{(i)}$ be the standard generator of $\mathbb{Z} / (d_i \mathbb{Z}) \subseteq K(\delta)$ (that is, the image of $1$ under the inclusion of the $i$-th summand
\[ \mathbb{Z} / (d_i \mathbb{Z}) \hookrightarrow \bigoplus_{j=1}^k \mathbb{Z} / (d_j \mathbb{Z}) = K(\delta)). \]
Choose, for each $i$, a primitive $d_i$-th root of unity $\zeta_i$, and let $\ell_{(i)} \in (\mathbb{Z} / (d_i \mathbb{Z}))^{\vee} \subseteq K(\delta)^{\vee}$ be such that
\[ \langle a_{(i)}, \ell_{(i)} \rangle = \zeta_i. \]

Note that $\mathcal{G}(\delta)$ is generated by $\mathbb{G}_m$ and the elements $(1, a_{(i)}, 0)$ and $(1, 0, \ell_{(i)})$, subject to the relations that
\[ (1, a_{(i)}, 0)^{d_i} = (1, 0, \ell_{(i)})^{d_i} = (1, 0, 0), \]
\[ (1, 0, \ell_{(i)})(1, a_{(i)}, 0) = \zeta_i (1, a_{(i)}, 0)(1, 0, \ell_{(i)}), \]
and all other pairs of generators commute.

It follows that we can specify an automorphism in $\operatorname{Aut}_{\mathbb{G}_m} \mathcal{G}(\delta)$
by giving the image of $(1, a_{(i)}, 0)$ and $(1, 0, \ell_{(i)})$.

Let $\operatorname{Sp}(K(\delta) \oplus K(\delta)^{\vee})$ denote the set of automorphisms $\sigma$ of $K(\delta) \oplus K(\delta)^{\vee}$ that respect the pairing $e$, that is, that satisfy
\[ e(\sigma(x), \sigma(x')) = e(x, x') \]
for all $x, x' \in K(\delta) \oplus K(\delta)^{\vee}$.

\begin{lem}
\label{constr_autos}
There are finitely many automorphisms $f$ of $\mathcal{G}(\delta)$ that act as the identity on $\mathbb{G}_m$; each such $f$ can be written explicitly as follows.

Suppose given any
\[ \sigma \in \operatorname{Sp}(K(\delta) \oplus K(\delta)^{\vee}). \]
Let 
\[ \sigma(a_{(i)}) = (b_i, m_i), \]
and let $\psi_i$ be such that
\[ \psi_i^{d_i} = \langle b_i, m_i \rangle^{\frac{d_i(d_i-1)}{2}}. \]
Similarly, let
\[ \sigma(\ell_{(i)}) = (b_i', m_i'), \]
and let $\omega_i$ be such that
\[ \omega_i^{d_i} = \langle b_i', m_i' \rangle^{\frac{d_i(d_i-1)}{2}}. \]

Note that there are $d_i$ choices for each of $\psi_i$ and $\omega_i$, and that all choices can be computed explicitly in terms of roots of unity.

Then there is a unique automorphism $f$ of $\mathcal{G}(\delta)$, acting as the identity on $\mathbb{G}_m$ and as $\sigma$ on $\operatorname{Sp}(K(\delta) \oplus K(\delta)^{\vee})$, such that
\[ f((1, a_{(i)}, 0)) = (\psi_i, b_i, m_i) \]
and
\[ f((1, 0, \ell_{(i)})) = (\omega_i, b_i', m_i'). \]
\end{lem}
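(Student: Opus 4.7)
The plan is to analyze the forgetful map
\[
\Phi \colon \operatorname{Aut}_{\mathbb{G}_m} \mathcal{G}(\delta) \longrightarrow \operatorname{Aut}(K(\delta) \oplus K(\delta)^\vee)
\]
obtained from the quotient $\mathcal{G}(\delta)/\mathbb{G}_m = K(\delta) \oplus K(\delta)^\vee$, and to show $\Phi$ lands inside $\operatorname{Sp}(K(\delta) \oplus K(\delta)^\vee)$ with fibers of size exactly $|\delta|^2$, parametrized as in the statement. That $\Phi(f)$ preserves the pairing $e$ of \eqref{pairing} is immediate from functoriality: $e$ is the descended commutator on $\mathcal{G}(\delta)$, and every group automorphism preserves commutators. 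So it suffices to produce, for each $\sigma \in \operatorname{Sp}(K(\delta) \oplus K(\delta)^\vee)$, all homomorphisms $f$ lying above $\sigma$; the counting will then also give the finiteness.

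For the construction, I would use the presentation of $\mathcal{G}(\delta)$ by the central $\mathbb{G}_m$ together with generators $A_i := (1, a_{(i)}, 0)$ and $B_i := (1, 0, \ell_{(i)})$, subject to the relations $A_i^{d_i} = B_i^{d_i} = 1$, the commutator relations governed by $e$, and centrality of $\mathbb{G}_m$. Writing $\sigma(a_{(i)}) = (b_i, m_i)$ and $\sigma(\ell_{(i)}) = (b_i', m_i')$ as in the statement, the only freedom in lifting $\sigma$ to a homomorphism $f$ is the choice of scalars $\psi_i, \omega_i \in \mathbb{G}_m$ with $f(A_i) = (\psi_i, b_i, m_i)$ and $f(B_i) = (\omega_i, b_i', m_i')$. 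A direct induction on $n$ from the multiplication rule gives
\[
(\psi, b, m)^n \;=\; \bigl(\psi^n \langle b, m\rangle^{n(n-1)/2},\; nb,\; nm\bigr),
\]
so the $d_i$-th power relations $A_i^{d_i} = B_i^{d_i} = 1$ translate exactly into the conditions on $\psi_i^{d_i}$ and $\omega_i^{d_i}$ stated in the lemma. The commutator relations hold automatically because $\sigma$ preserves $e$, and no further relations need be checked; hence $f$ is a well-defined homomorphism.

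To see that $f$ is in fact an automorphism (and not just an endomorphism), I would apply the five-lemma to the central extension $1 \to \mathbb{G}_m \to \mathcal{G}(\delta) \to K(\delta) \oplus K(\delta)^\vee \to 1$, noting that $f$ is the identity on $\mathbb{G}_m$ and an automorphism on the quotient. Since each of the $2g$ equations $\psi_i^{d_i} = \langle b_i, m_i\rangle^{d_i(d_i-1)/2}$ and its $\omega_i$-analogue has exactly $d_i$ solutions in $\mathbb{G}_m$, the fiber $\Phi^{-1}(\sigma)$ has cardinality $\prod_i d_i^2 = |\delta|^2$; combined with the (evident) finiteness of $\operatorname{Sp}(K(\delta) \oplus K(\delta)^\vee)$, this proves that $\operatorname{Aut}_{\mathbb{G}_m} \mathcal{G}(\delta)$ is finite.

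I do not expect any real obstacle: once the power formula $(\psi, b, m)^n = (\psi^n \langle b, m\rangle^{n(n-1)/2}, nb, nm)$ is in hand, the entire argument reduces to bookkeeping against the presentation of $\mathcal{G}(\delta)$. The only subtlety is a sign convention in the exponent of $\langle b_i, m_i\rangle$ (the induction naturally produces a factor that must be inverted to satisfy $A_i^{d_i} = 1$), but since $\langle b_i, m_i\rangle$ has order dividing $d_i$ and $8 \mid d_i$ by assumption, this sign does not affect the counting of lifts.
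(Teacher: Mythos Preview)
Your proof is correct and follows essentially the same approach as the paper's: both use the presentation of $\mathcal{G}(\delta)$ by $\mathbb{G}_m$ and the generators $A_i,B_i$, check that the power relations force the stated conditions on $\psi_i,\omega_i$, and observe that the commutator relations are automatic because $\sigma$ preserves $e$. Your write-up is simply more explicit --- you record the power formula $(\psi,b,m)^n=(\psi^n\langle b,m\rangle^{n(n-1)/2},nb,nm)$, invoke the five-lemma for bijectivity, and count fibers --- where the paper says only ``a calculation shows\ldots''. One small sharpening: your remark that the sign discrepancy ``does not affect the counting of lifts'' undersells the point; since $\langle b_i,m_i\rangle$ has order dividing $d_i$ and $d_i$ is even, the quantity $\langle b_i,m_i\rangle^{d_i(d_i-1)/2}=\langle b_i,m_i\rangle^{d_i/2}$ is $\pm1$ and hence equals its own inverse, so the two conditions are literally identical, not merely equinumerous.
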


\begin{proof}
A calculation shows that $f((1, a_{(i)}, 0))$ and $f((1, 0, \ell_{(i)}))$ satisfy the requisite relations, and hence that $f$ extends to an automorphism of $\mathcal{G}(\delta)$.

To see that all $f$ arise in this way, note that any $f \in \operatorname{Aut}_{\mathbb{G}_m} \mathcal{G}(\delta)$ descends to an automorphism $\sigma$ of the quotient $\mathcal{G}(\delta) / \mathbb{G}_m = K(\delta) \oplus K(\delta)^{\vee}$.
The commutator of any two elements $(t, a, \ell), (t', a', \ell') \in \mathcal{G}(\delta)$ is given by 
\[ e( (a, \ell), (a', \ell') ) \in \mathbb{G}_m \subseteq \mathcal{G}(\delta), \]
so $\sigma$ must preserve the pairing $e$.

Finally, if we write
\[ f((1, a_{(i)}, 0)) = (\psi_i, b_i, m_i) \]
and
\[ f((1, 0, \ell_{(i)})) = (\omega_i, b_i', m_i'), \]
the relations
\[ (1, a_{(i)}, 0)^{d_i} = (1, 0, \ell_{(i)})^{d_i} = (1, 0, 0) \]
imply that 
\[ \psi_i^{d_i} = \langle b_i, m_i \rangle^{\frac{d_i(d_i-1)}{2}} \]
and
\[ \omega_i^{d_i} = \langle b_i', m_i' \rangle^{\frac{d_i(d_i-1)}{2}}. \]
\end{proof}

For the reader's convenience, we give a more abstract reformulation of Lemma \ref{constr_autos} below, though it is not used in the sequel.

\begin{lem}
\label{automorphisms}
Consider the group
\[ \operatorname{Aut}_{\mathbb{G}_m} \mathcal{G}(\delta) = \{ f \in \operatorname{Aut} \mathcal{G}(\delta) \mid f(x) = x \text{ for all } x \in \mathbb{G}_m \} \]
of automorphisms of $\mathcal{G}(\delta)$ acting trivially on $\mathbb{G}_m$.

This group sits in an exact sequence
\[ 0 \rightarrow (K(\delta) \oplus K(\delta)^{\vee})^{\vee} \rightarrow \operatorname{Aut} \mathcal{G}(\delta) \rightarrow \operatorname{Sp}(K(\delta) \oplus K(\delta)^{\vee}) \rightarrow 0.
\]
\end{lem}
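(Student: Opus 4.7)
The plan is to identify the three pieces of the exact sequence concretely and then check exactness at each spot. Throughout, I will write elements of $\mathcal{G}(\delta)$ as triples $(t,a,\ell)$ as in the paper.

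First I would define the right-hand map. Any $f\in \operatorname{Aut}_{\mathbb{G}_m}\mathcal{G}(\delta)$ fixes the central subgroup $\mathbb{G}_m$ pointwise, hence descends to an automorphism $\bar f$ of the quotient $\mathcal{G}(\delta)/\mathbb{G}_m = K(\delta)\oplus K(\delta)^\vee$. Because the commutator $[x,y]$ in $\mathcal{G}(\delta)$ lies in $\mathbb{G}_m$ and is computed by the pairing $e$ in \eqref{pairing}, and because $f$ is the identity on $\mathbb{G}_m$, we have $e(\bar f x,\bar f y)=e(x,y)$, so $\bar f\in \operatorname{Sp}(K(\delta)\oplus K(\delta)^\vee)$. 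This gives the homomorphism $\operatorname{Aut}_{\mathbb{G}_m}\mathcal{G}(\delta)\to \operatorname{Sp}(K(\delta)\oplus K(\delta)^\vee)$.

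Next I would identify the kernel. If $\bar f = \operatorname{id}$, then for each $(a,\ell)\in K(\delta)\oplus K(\delta)^\vee$ we have $f(1,a,\ell) = (\chi(a,\ell),a,\ell)$ for some $\chi(a,\ell)\in\mathbb{G}_m$. Because $f$ is a homomorphism and the commutators $e(\cdot,\cdot)$ already match on both sides (since $\bar f = \operatorname{id}$), a direct calculation shows $\chi\colon K(\delta)\oplus K(\delta)^\vee\to \mathbb{G}_m$ is a group homomorphism; conversely, every such $\chi$ defines an automorphism $f_\chi(t,a,\ell):=(t\cdot\chi(a,\ell),a,\ell)$ fixing $\mathbb{G}_m$ and descending to the identity. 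This identifies the kernel with $(K(\delta)\oplus K(\delta)^\vee)^\vee$.

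Finally I would establish surjectivity onto $\operatorname{Sp}(K(\delta)\oplus K(\delta)^\vee)$. This is precisely what Lemma \ref{constr_autos} furnishes: given any $\sigma$ preserving the pairing $e$, the explicit choices of $\psi_i,\omega_i$ define an $f\in \operatorname{Aut}_{\mathbb{G}_m}\mathcal{G}(\delta)$ with $\bar f = \sigma$. Thus exactness at the right is immediate from the previously proved lemma. The main (mild) obstacle is the cocycle-style bookkeeping identifying the kernel with $(K(\delta)\oplus K(\delta)^\vee)^\vee$, i.e.\ verifying that $\chi$ is forced to be a homomorphism and that every homomorphism arises; this is a short computation using the explicit group law on $\mathcal{G}(\delta)$.
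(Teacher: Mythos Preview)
Your argument is correct and is exactly the natural unpacking of what the paper intends: the lemma is stated there only as ``a more abstract reformulation of Lemma~\ref{constr_autos}'' with no separate proof, and your three steps (descent to the quotient via the commutator-pairing identity, identification of the kernel with characters via the explicit group law, and surjectivity by invoking Lemma~\ref{constr_autos}) are precisely how one reads that reformulation. There is nothing to add.
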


\subsection{Representation theory of the group $\mathcal{G}(\delta)$.}

Mumford's projective embeddings of abelian varieties and their moduli are based on the following result, which describes the action of $\mathcal{G}(\mathcal{L})$ on the global sections $\Gamma(\mathcal{L})$.

\begin{lem}
\label{Mumford_rep}
\cite[\S 1, Prop.\ 3 and Thm.\ 2]{Mumford_coord1}
The group $\mathcal{G}(\delta)$ has (up to isomorphism) a unique irreducible representation $V(\delta)$ on which $\mathbb{G}_m \subseteq \mathcal{G}(\delta)$ acts via the identity (i.e.\ on which any $\lambda \in \mathbb{G}_m$ acts by multiplication by $\lambda$).

The representation $V(\delta)$ can be described (over any field $k$ of characteristic zero) as follows:
As a vector space, $V(\delta)$ is the space of $k$-valued functions on $K(\delta)$;
the action of $\mathcal{G}(\delta)$ is given by
\[ ((t, a, \ell) \cdot f)(x) = t \langle x, \ell \rangle f(x+a). \]

The group $\mathcal{G}(\mathcal{L})$ acts on the space of global sections $\Gamma(\mathcal{L})$; if there is an isomorphism $\mathcal{G}(\mathcal{L}) \cong \mathcal{G}(\delta)$, then $\Gamma(\mathcal{L})$ and $V(\delta)$ are isomorphic as representations of $\mathcal{G}(\delta)$.
\end{lem}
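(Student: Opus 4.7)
The plan is to treat this as a finite-dimensional Stone--von Neumann theorem for the Heisenberg-type group $\mathcal{G}(\delta)$, followed by a dimension-count argument for $\Gamma(\mathcal{L})$.

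First I would verify that the formula $((t,a,\ell)\cdot f)(x) = t\langle x,\ell\rangle f(x+a)$ defines an action of $\mathcal{G}(\delta)$ on the space of $k$-valued functions on $K(\delta)$. This is a direct check against the group law $(t_1,a_1,\ell_1)(t_2,a_2,\ell_2) = (t_1t_2\langle a_2,\ell_1\rangle,a_1+a_2,\ell_1+\ell_2)$: one computes that both sides of $((t_1,a_1,\ell_1)(t_2,a_2,\ell_2))\cdot f$ and $(t_1,a_1,\ell_1)\cdot((t_2,a_2,\ell_2)\cdot f)$ send $x$ to $t_1t_2\langle x+a_2,\ell_1\rangle\langle x,\ell_2\rangle f(x+a_1+a_2)$. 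Clearly $\mathbb{G}_m$ acts by the identity character. For irreducibility, I would note that the $\delta$-functions $\{\delta_x\}_{x\in K(\delta)}$ are simultaneous eigenvectors for the abelian subgroup $\{(1,0,\ell)\}\cong K(\delta)^\vee$, with distinct weights $\ell\mapsto\langle x,\ell\rangle$; any invariant subspace is thus a span of $\delta$-functions, and the operators $(1,a,0)$ permute these eigenvectors transitively.

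For uniqueness, I would use Mackey/character theory. Let $W$ be any irreducible representation on which $\mathbb{G}_m$ acts as the identity. Decompose $W$ under the abelian subgroup $H := \mathbb{G}_m\times K(\delta)^\vee\subseteq \mathcal{G}(\delta)$; each weight space transforms by a character of $H$ extending the identity on $\mathbb{G}_m$, hence has the form $\langle\cdot,\ell\rangle$-eigenspace for some $\ell$ indexed by $K(\delta)$, and the operators coming from $K(\delta)$ transitively permute these via the pairing \eqref{pairing}. This shows $\dim W = |\delta|$ and produces an explicit intertwining isomorphism $W\cong V(\delta)$ by sending a nonzero vector in the weight $0$ eigenspace to $\delta_0$. (Equivalently: $V(\delta)$ is $\operatorname{Ind}_H^{\mathcal{G}(\delta)}$ of the character $(t,0,\ell)\mapsto t$, and Mackey's criterion together with the nondegeneracy of $e(-,-)$ gives irreducibility and uniqueness simultaneously.)

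Finally, for the statement about $\Gamma(\mathcal{L})$: the defining action of $\mathcal{G}(\mathcal{L})$ on $\Gamma(\mathcal{L})$ sends a section $s$ to $\alpha\circ T_{\sigma}^*s$ under $(\sigma,\alpha)\in\mathcal{G}(\mathcal{L})$, and the chosen embedding $\mathbb{G}_m\hookrightarrow\mathcal{G}(\mathcal{L})$ acts on $\Gamma(\mathcal{L})$ by scalar multiplication, i.e.\ by the identity character. By Riemann--Roch $\dim\Gamma(\mathcal{L}) = |\delta|$, matching $\dim V(\delta)$, so once we know $\Gamma(\mathcal{L})$ is $\mathcal{G}(\mathcal{L})$-irreducible the uniqueness statement gives $\Gamma(\mathcal{L})\cong V(\delta)$ under any isomorphism $\mathcal{G}(\mathcal{L})\cong\mathcal{G}(\delta)$. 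Irreducibility of $\Gamma(\mathcal{L})$ follows by the same weight-space decomposition argument applied to $\Gamma(\mathcal{L})$ itself: the action of $K(\mathcal{L})^\vee\subseteq\mathcal{G}(\mathcal{L})$ (the subgroup lying over $0\oplus K(\delta)^\vee$ under $\beta$) decomposes $\Gamma(\mathcal{L})$ into at most $|\delta|$ weight spaces, all of dimension at least $1$ by the dimension count, hence each of dimension exactly $1$, and transitivity of the $K(\delta)$-action on characters of $K(\delta)^\vee$ forces irreducibility.

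The main obstacle is the uniqueness of $V(\delta)$, which amounts to the finite Stone--von Neumann theorem; once this is in hand the rest is formal. The only subtlety worth flagging is that the Mackey/character-theoretic argument implicitly uses the existence of $|\delta|$-th roots of unity in $k$, which is fine in characteristic zero as in our setting.
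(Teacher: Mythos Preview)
The paper supplies no proof of this lemma; it is stated with a bare citation to Mumford \cite[\S 1, Prop.\ 3 and Thm.\ 2]{Mumford_coord1}. Your sketch is correct and is essentially the standard Stone--von Neumann argument for finite Heisenberg groups that Mumford himself gives: verify the action, observe that the $\delta$-functions give a full set of distinct weight vectors for the maximal abelian subgroup $\{(t,0,\ell)\}$ which are permuted transitively by $K(\delta)$, and then run the same weight-space analysis on an arbitrary irreducible $W$ to force $\dim W=|\delta|$ and build an explicit intertwiner. The dimension count $\dim\Gamma(\mathcal{L})=|\delta|$ via Riemann--Roch (using $|H(\mathcal{L})|=|\delta|^2$ and vanishing of higher cohomology for ample $\mathcal{L}$) together with the uniqueness statement then identifies $\Gamma(\mathcal{L})$ with $V(\delta)$.

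Two minor remarks. First, you do not really need a separate irreducibility argument for $\Gamma(\mathcal{L})$: once you know every irreducible with $\mathbb{G}_m$ acting by the identity has dimension $|\delta|$ and that representations of $\mathcal{G}(\delta)$ with fixed central character are semisimple (reduce to a finite subgroup containing $\mu_n$ and apply Maschke in characteristic zero), the equality $\dim\Gamma(\mathcal{L})=|\delta|$ already forces $\Gamma(\mathcal{L})\cong V(\delta)$. Second, your caveat about roots of unity is the right one to flag: the biduality $(K(\delta)^\vee)^\vee\cong K(\delta)$ and the distinctness of the characters $\ell\mapsto\langle x,\ell\rangle$ are literally true only after adjoining $d_1$-th roots of unity, but since the statement is over characteristic-zero fields this is handled by passing to a suitable extension (or, as in the paper, by working scheme-theoretically with $K(\delta)^\vee$ as a twist of $\mu_{d_1}\times\cdots\times\mu_{d_g}$).
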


As in Lemma \ref{Mumford_rep}, let $V(\delta)$ be the vector space of $\mathbb{Q}$-valued functions on $K(\delta)$.
Let 
\[ \mathbb{P}(V(\delta)) = \operatorname{Proj} \Sym V(\delta); \]
this is a projective $\mathbb{Q}$-scheme whose points are in bijection with linear functionals on $V(\delta)$.

For all $a \in K(\delta)$, let $X_a \in V(\delta)$ be the function that is $1$ at $a$ and $0$ elsewhere;
this is naturally a section of $\mathcal{O}(1)$ on $\mathbb{P}(V(\delta))$,
and the sections $X_a$ (for $a \in K(\delta)$) give a set of projective coordinates on $\mathbb{P}(V(\delta))$.

Suppose $(\mathcal{L}, \beta)$ is a $\delta$-marking on an abelian scheme $A$ over some base $\mathbb{Q}$-scheme $S$.
By Lemma \ref{Mumford_rep}, we can identify $\Gamma(\mathcal{L})$ with $V(\delta)$ in a $\mathcal{G}({\delta})$-equivariant way.
This identification is unique up to scaling (by Schur's lemma),
so it gives rise to an embedding 
\[ A \hookrightarrow \mathbb{P}(V(\delta)). \]

This is quite a strong statement!
The embedding of $A$ is defined, not (as might be imagined) up to automorphisms of projective space, 
but as a specific map into the space $\mathbb{P}(V(\delta))$.
The projective coordinates $X_a(e)$ of the identity point of $A$ are an invariant of the abelian variety $A$, depending only on the discrete choices of the line bundle $\mathcal{L}$ and the $\delta$-marking $\beta$.

\begin{defn}
\label{defn:mumford_form}
An \emph{abelian scheme in Mumford form} is an abelian subscheme $A$
of $\mathbb{P}(V(\delta))_S$ arising from the construction above.

Its \emph{Mumford coordinates} $Q_a = Q_a(A) = Q_a(A, \mathcal{L}, \beta)$ are the projective coordinates $X_a(e)$ of the origin $e \in A$ in $\mathbb{P}(V(\delta))$, given as functions on $S$.
\end{defn}

\subsection{Characterizing $\delta$-markings.}
\label{char_delta}

Suppose $A$ is an abelian variety in Mumford form, coming from a $\delta$-marking $(A, \mathcal{L}, \beta)$.
Let $(a, \ell) \in K(\delta) \oplus K(\delta)^{\vee}$.
The $\delta$-marking gives an isomorphism $$K(\delta) \oplus K(\delta)^{\vee} \cong A[\mathcal{L}];$$ let $s$ be the image of $(a, \ell)$ under this isomorphism.

Translation by $s$ is an automorphism $T_s$ of the scheme $A$ (though not an automorphism of the abelian variety: $T_s$ does not fix the identity or respect the addition law).
Since $T_s^*(\mathcal{O}(1)) \cong \mathcal{O}(1)$, the automorphism $T_s$ of $A$ extends to a linear automorphism $f_s$ of the ambient projective space $\mathbb{P}(V(\delta))$.
This linear automorphism is determined by the action of $\mathcal{G}(\delta)$ on $\Gamma(\mathcal{L})$; its action on projective coordinates is given (up to scaling) by
\begin{equation} \label{translation_linear} f_s^*(X_b) = \langle b, \ell \rangle X_{a+b}. \end{equation}

\begin{lem}
\label{char_delta_lemma}
Let $A$ be an abelian scheme of dimension $g$ over a $\mathbb{Q}$-scheme $S$, presented as a subscheme $A \subseteq \mathbb{P}(V(\delta)) \times S$ of the projective space on $V(\delta)$.

This $A \subseteq \mathbb{P}(V(\delta)) \times S$ is an abelian scheme in Mumford form if and only if $A$ satisfies the following conditions.
\begin{enumerate}
\item 
$A$ is not contained in any hyperplane in $\mathbb{P}(V(\delta))$.

\item
The degree of the line bundle $\mathcal{L} = \mathcal{O}(1)|_A$ is $\left | \delta \right |$.

\item
The automorphisms $f_s$ of $\mathbb{P}(V(\delta))$ defined above (Equation \eqref{translation_linear}), 
for $s \in K(\delta) \oplus K(\delta)^{\vee}$, map $A$ into itself.

\item 
If $e$ is the identity section of $A$, then the map $f_s \colon A \rightarrow A$ coincides with translation by the section $f_s(e)$.

\item
The inverse map $A \rightarrow A$ extends to a linear automorphism of $\mathbb{P}(V(\delta))$.
\end{enumerate}
\end{lem}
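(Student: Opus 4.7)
The plan is to prove both directions separately, with the ``only if'' direction being essentially a matter of unwinding the construction and the ``if'' direction requiring us to actually reconstruct the $\delta$-marking from the conditions. For the ``only if'' direction, I would observe: condition (1) follows because $V(\delta)$ is irreducible under $\mathcal{G}(\delta)$, so $\Gamma(\mathcal{L})\to V(\delta)$ cannot vanish on any hyperplane's worth of functionals; condition (2) is standard Riemann--Roch for a polarization of type $\delta$, since the degree of $\mathcal{L}$ with respect to itself is $|\delta|^2/(g!)$ (equivalently $h^0(\mathcal{L}) = |\delta|$); conditions (3) and (4) are immediate from the fact that the $\mathcal{G}(\delta)$-action on $V(\delta)$ described in Lemma \ref{Mumford_rep} precisely implements translation by the image of $(a,\ell)$ under $\beta^{-1}$ followed by rescaling by the element of $\mathbb{G}_m$; and condition (5) follows because symmetry of $\mathcal{L}$ forces inversion on $A$ to act on $\Gamma(\mathcal{L})$ linearly, giving a linear extension to all of $\mathbb{P}(V(\delta))$.

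For the ``if'' direction, I would construct the $\delta$-marking from the data. First, condition (1) ensures that the restriction map $V(\delta)^{\vee}\to \Gamma(A,\mathcal{L})$ is injective; combined with condition (2) and Riemann--Roch for the very ample line bundle $\mathcal{L}=\mathcal{O}(1)|_A$, we get $h^0(A,\mathcal{L})=|\delta|=\dim V(\delta)$, so the map is an isomorphism. Next, I would use conditions (3) and (4) to define a map $K(\delta)\oplus K(\delta)^{\vee}\to A[\mathcal{L}]$ by $s\mapsto f_s(e)$, and check (using that the $f_s$ are distinct linear automorphisms of $\mathbb{P}(V(\delta))$ acting by the explicit formula of Equation \eqref{translation_linear}) that this is a group homomorphism preserving the commutator pairing, hence an isomorphism onto $A[\mathcal{L}]$ since both sides have order $|\delta|^2$. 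Lifting the $f_s$ from $\operatorname{PGL}(V(\delta))$ to $\operatorname{GL}(V(\delta))$ in the canonical way dictated by the formula then produces a subgroup of $\operatorname{GL}(V(\delta))$ that is visibly isomorphic to $\mathcal{G}(\delta)$ (by construction), and whose projection to translations of $A$ identifies it with $\mathcal{G}(\mathcal{L})$ as extensions of $A[\mathcal{L}]$ by $\mathbb{G}_m$. This gives the required $\beta\colon \mathcal{G}(\mathcal{L})\xrightarrow{\sim}\mathcal{G}(\delta)$ which is the identity on $\mathbb{G}_m$. Finally, condition (5) is needed to conclude that $\mathcal{L}$ is symmetric: the linear extension of inversion pulls $\mathcal{O}(1)$ back to itself, hence $[-1]^*\mathcal{L}\cong\mathcal{L}$.

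The main obstacle, I expect, will be the careful bookkeeping in the ``if'' direction to verify that the subgroup of $\operatorname{GL}(V(\delta))$ generated by the lifted $f_s$'s really is isomorphic to $\mathcal{G}(\delta)$ on the nose (with the correct commutator relations) rather than merely to some central $\mathbb{G}_m$-extension of $K(\delta)\oplus K(\delta)^{\vee}$. The key input is that the explicit lifts prescribed by Equation \eqref{translation_linear} have commutators matching the pairing \eqref{pairing}, so the extension class is forced. A subtlety is that condition (4) must be used to rule out pathological cases where $f_s$ acts on $A$ by some automorphism other than translation; once this is ruled out, Lemma \ref{Mumford_rep} and Schur's lemma give the $\mathcal{G}(\delta)$-equivariant identification $\Gamma(\mathcal{L})\cong V(\delta)$ uniquely up to scalar, and the induced embedding $A\hookrightarrow \mathbb{P}(V(\delta))$ agrees with the given one (because both realize $A$ as the common zero locus of the same ideal, namely $\ker(\operatorname{Sym}^{\bullet} V(\delta)^{\vee}\to \bigoplus_n \Gamma(\mathcal{L}^{\otimes n}))$).
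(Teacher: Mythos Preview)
Your proposal is correct and follows essentially the same approach as the paper. One small simplification: the ``main obstacle'' you anticipate---checking that the lifted $f_s$'s in $\operatorname{GL}(V(\delta))$ satisfy the right commutator relations---evaporates if you note (as the paper does) that $\mathcal{G}(\delta)$ already acts linearly on $V(\delta)$ via Lemma~\ref{Mumford_rep}, hence on $\mathcal{O}(1)$, hence on $\mathcal{L}$, which directly furnishes a homomorphism $\mathcal{G}(\delta)\to\mathcal{G}(\mathcal{L})$ restricting to the identity on $\mathbb{G}_m$; injectivity plus the index count $|\delta|^2$ on both sides then forces it to be an isomorphism, with no explicit commutator verification needed.
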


\begin{proof}
For any line bundle $\mathcal{L}$ on $A$, the cokernel $H_{\mathcal{L}}$ of $\mathbb{G}_m \rightarrow \mathcal{G}(\mathcal{L})$ coincides with the kernel of the map $\phi_{\mathcal{L}} \colon A \rightarrow A^{\vee}$ defined by
\[ \phi_{\mathcal{L}} = T_x^* \mathcal{L} \otimes \mathcal{L}^{-1} \in \operatorname{Pic}^0(A) = A^{\vee}. \]
This group $H_{\mathcal{L}}$ has order
\[ \left | H_{\mathcal{L}} \right | = d(\mathcal{L})^2, \]
where $d(\mathcal{L})$ is the degree of $\mathcal{L}$. 
Furthermore, we have the equality
\[ \operatorname{dim} \Gamma(\mathcal{L}) = d(\mathcal{L}). \]

Returning our setting, 
suppose $A$ satisfies the list of conditions.
since the embedding of $A$ in $\mathbb{P}(V(\delta))$ has degree $\left | \delta \right |$, 
the line bundle $\mathcal{L}$ on $A$ has 
\[ \operatorname{dim} \Gamma(\mathcal{L}) = \left | \delta \right |. \]
Since $A$ is embedded in a $(\left | \delta \right | -1)$-dimensional projective space and does not lie in any hyperplane, 
the pullback map
\[ \Gamma(\mathbb{P}(V(\delta)), \mathcal{O}(1)) \rightarrow \Gamma(A, \mathcal{L}) \]
is an isomorphism.
In other words, the embedding of $A$ in $\mathbb{P}(V(\delta))$ is precisely the embedding given by the very ample line bundle $\mathcal{L}$.

The hypothesis on the inverse map implies that $\mathcal{L}$ is symmetric.

To finish, we just have to check that there is an isomorphism 
\[ \mathcal{G}(\delta) \cong \mathcal{G}(\mathcal{L}) \]
with respect to which
\[ \Gamma(\mathbb{P}(V(\delta)), \mathcal{O}(1)) \rightarrow \Gamma(A, \mathcal{L}) \]
is equivariant.
But this is clear: 
The action of $\mathcal{G}(\delta)$ on $\mathbb{P}(V(\delta))$ descends to $A$ by assumption.
Moreover, the action of $\mathcal{G}(\delta)$ on $\mathcal{O}(1)$ by linear automorphisms descends to an action on $\mathcal{L}$, equivariant over the action on $A$.
That is, our setup naturally gives rise to a homomorphism $\mathcal{G}(\delta) \cong \mathcal{G}(\mathcal{L})$.
This homomorphism is clearly injective and acts as the identity on the common subgroup $\mathbb{G}_m$;
since $\mathbb{G}_m$ has the same index $\left | \delta \right |^2$ in both $\mathcal{G}(\delta)$ and $\mathcal{G}(\mathcal{L}),$ 
the homomorphism $\mathcal{G}(\delta) \cong \mathcal{G}(\mathcal{L})$ is an isomorphism, 
so $A$ is indeed embedded in Mumford form.

The converse is easy:  Suppose $A$ is in Mumford form.  Then (1) follows because the projective embedding of $A$ is given by sections of a line bundle; (2) from the degree calculation above; (3) and (4) from the discussion preceding Equation \eqref{translation_linear}, and (5) from the symmetry of the line bundle $\mathcal{L}$.
\end{proof}

Next we recall a useful lemma from \cite{Mumford_coord1}: the Mumford coordinates of the origin determine the abelian variety.

\begin{lem}
\label{Mumford coords determine abelian variety}

Let $Z_2 \subseteq K(\delta)$ be the subgroup of points that are divisible by 2.

Suppose $A$ is an abelian variety in its Mumford embedding,
and let $Q_a$ be the coordinates of the origin on $A$
(as $a$ ranges over $K(\delta)$).
Then the variety $A$ in the projective space $\mathbb{P}(V(\delta))$ is cut out by the following quadratic equations (in variables $X_a$):
\begin{eqnarray*}
\left (  \sum_{\eta \in Z_2} l(\eta) Q_{c+d+\eta} Q_{c-d+\eta}  \right ) 
\left ( \sum_{\eta \in Z_2} l(\eta) X_{a+b+\eta} X_{a-b+\eta}   \right )    &  \\
- \left (  \sum_{\eta \in Z_2} l(\eta) Q_{c+b+\eta} Q_{c-b+\eta}  \right ) 
\left ( \sum_{\eta \in Z_2} l(\eta) X_{a+d+\eta} X_{a-d+\eta}   \right )   & 
\in \Gamma(\mathcal{O}(2)_\mathbb{P}(V(\delta)) \otimes  \mathcal{M}^2),
\end{eqnarray*}
for each tuple $(a, b, c, d, l)$, where $a, b, c, d \in K(2 \delta)$ are all congruent modulo $K(\delta)$,
and $l \colon Z_2 \rightarrow \{ \pm 1 \}$ is a group homomorphism.

In particular, knowledge of the projective coordinates $Q_a$ alone determines the abelian variety $A$.
\end{lem}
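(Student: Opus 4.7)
The plan is to derive the displayed quartic identity from Mumford's theta relations, which in turn follow from the isogeny theorem applied to $\xi \colon A \times A \to A \times A$, $(x, y) \mapsto (x+y, x-y)$. The key point is that, although the equations look quartic, both ``pieces'' on each side are sections of $\mathcal{O}(2)$ pulled back to either the origin or to a variable point, and they admit a common factorization that makes the identity transparent.

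First I would set up the isogeny decomposition. Under $\xi^*$, the line bundle $\mathcal{L} \boxtimes \mathcal{L}$ pulls back to $\mathcal{L}^{\otimes 2} \boxtimes \mathcal{L}^{\otimes 2}$, and the induced map on global sections admits an explicit formula: with sections of $\mathcal{L}$ indexed by $K(\delta)$ via Lemma \ref{Mumford_rep} and sections of $\mathcal{L}^{\otimes 2}$ indexed by $K(2\delta)$, one has
$$\xi^*(X_a \boxtimes X_b)(x, y) \;=\; \sum_{\eta \in Z_2} X_{\tilde a + \tilde b + \eta}(x)\, X_{\tilde a - \tilde b + \eta}(y),$$
where $\tilde a, \tilde b$ are compatible lifts of $a, b$ to $K(2\delta)$ (the hypothesis $8 \mid d_i$ ensures that all such lifts and their doublings are well-behaved). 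Projecting onto $l$-isotypic components for the natural $Z_2$-action on the right, setting $y = x$, and relabeling produces, for each character $l \colon Z_2 \to \{\pm 1\}$ and each pair $(a, b)$ of elements of $K(2\delta)$ congruent mod $K(\delta)$, a factorization
$$\sum_{\eta \in Z_2} l(\eta)\, X_{a+b+\eta}(x)\, X_{a-b+\eta}(x) \;=\; F_l^{(a)}(x)\cdot G_l^{(b)},$$
with $F_l^{(a)}$ a section of an $\mathcal{L}^{\otimes 2}$-twist on $A$ depending only on $a$ and $l$, and $G_l^{(b)}$ a scalar depending only on $b$ and $l$. Evaluating the corresponding factorization at $x = e$ with $(c, d)$ or $(c, b)$ in place of $(a, b)$, and using $Q_a = X_a(e)$, rewrites each of the four bracketed sums in the lemma as a product of one $F$-factor and one $G$-factor. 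Cross-multiplying, both sides of the displayed equation equal $F_l^{(a)}(x) \cdot F_l^{(c)}(e) \cdot G_l^{(b)} \cdot G_l^{(d)}$, proving the identity.

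For the ``in particular'' clause, I would argue that the displayed quartic equations, as $(a,b,c,d,l)$ ranges over admissible tuples, cut out $A$ scheme-theoretically in $\mathbb{P}(V(\delta))$, so the coordinates $Q_a$ alone determine $A$. Since $8 \mid d_i$, the line bundle $\mathcal{L}$ embeds $A$ projectively normally with ideal generated in degree two (a theorem of Mumford--Koizumi, cf.\ \cite{Mumford_coord1, Mumford_coord2}); combining with $\mathcal{G}(\delta) \times \mathcal{G}(\delta)$-equivariance of everything in sight and a Stone--von Neumann count of multiplicities in the decomposition of $\Gamma(\mathcal{L})^{\otimes 2}$, the displayed family of quadrics in the $X_a$ (with coefficients in the $Q_c$) spans $\ker(\Gamma(\mathcal{L})^{\otimes 2} \to \Gamma(\mathcal{L}^{\otimes 2}))$, hence defines $A$.

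The main obstacle will be this last dimension count: verifying that the listed relations \emph{span} the degree-two part of the ideal of $A$, rather than merely vanishing on $A$. The cleanest route is equivariance, reducing to a count of irreducible summands under the Heisenberg action via Lemma \ref{Mumford_rep}; alternatively one may invoke the explicit derivation carried out in \S 6 of \cite{Mumford_coord2}, which performs exactly this bookkeeping in coordinates.
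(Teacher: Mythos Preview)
The paper does not prove this lemma at all: it is stated as a result ``recall[ed]'' from \cite{Mumford_coord1}, with no argument given. Your proposal therefore goes beyond what the paper does, sketching the proof that Mumford himself gives. In that sense your approach and the paper's ``approach'' coincide: both defer to Mumford's theta-relation machinery.

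Your outline is the right one --- the isogeny $\xi\colon (x,y)\mapsto (x+y,x-y)$, the resulting Riemann-type quartic identities, and then projective normality plus degree-$2$ generation of the ideal for the ``in particular'' clause. Two points deserve tightening. First, the specialization ``set $y=x$'' on $\xi^*(X_a\boxtimes X_b)$ yields $X_a(2x)\,Q_b$ on the left, which lives in the wrong line bundle (pullback of $\mathcal{L}$ under doubling) and mixes the two index sets $K(\delta)$ and $K(2\delta)$; the standard route instead specializes one variable to the origin (so the left side becomes $X_a(x)X_b(x)$) and then compares two such specializations to eliminate the auxiliary $\mathcal{L}^{\otimes 2}$-sections. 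Your factorization $\sum_\eta l(\eta)X_{a+b+\eta}X_{a-b+\eta} = F_l^{(a)}(x)\cdot G_l^{(b)}$ is correct and is exactly what falls out of this, but the way you derive it should be rephrased. Second, you correctly flag the dimension count (that the displayed quadrics span the degree-$2$ ideal) as the main obstacle and correctly point to Heisenberg equivariance and Stone--von Neumann for the fix; this is indeed how Mumford handles it in \cite[\S 3]{Mumford_coord1} and \cite[\S 6]{Mumford_coord2}, so citing those is appropriate rather than redoing the bookkeeping.
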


\subsection{Different Mumford coordinates on the same abelian variety.}

We want to use Mumford embeddings to detect whether two polarized abelian varieties are isomorphic.  To this end, we want to be able to find all Mumford embeddings of a given $A$ with given polarization.

\begin{lem}
\label{intertwining}
Let 
\[ \rho \colon \mathcal{G}(\delta) \rightarrow \operatorname{GL}(V(\delta)) \]
be the standard representation of $\mathcal{G}(\delta)$ on $V(\delta)$ (Lemma \ref{Mumford_rep}). 

For any automorphism $\phi$ of $\mathcal{G}(\delta)$, the two representations $\rho$ and $\phi \circ \rho$ of $\mathcal{G}(\delta)$ are isomorphic.
Furthermore, the isomorphism can be computed explicitly:
one can compute (working with exact arithmetic over a cyclotomic field) a $\mathcal{G}(\delta)$-equivariant map
\[ f \colon (V(\delta), \rho) \rightarrow (V(\delta), \phi \circ \rho) \]
from the representation $\rho$ to the representation $\phi \circ \rho$.
\end{lem}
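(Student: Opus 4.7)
The plan is to combine Lemma \ref{Mumford_rep} (the Stone--von Neumann style uniqueness of the standard representation of $\mathcal{G}(\delta)$) with Schur's lemma to get existence and essential uniqueness of the intertwiner, and then to exhibit it explicitly by solving a finite linear system.

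First I would observe that, since $\mathcal{G}(\delta)$ is a central extension of $K(\delta) \oplus K(\delta)^\vee$ by $\mathbb{G}_m$ and the commutator pairing $e$ of \eqref{pairing} is non-degenerate, the center of $\mathcal{G}(\delta)$ is precisely $\mathbb{G}_m$. Hence any automorphism $\phi$ of $\mathcal{G}(\delta)$ preserves $\mathbb{G}_m$; the automorphisms relevant to the rest of the paper (cf.\ Lemma \ref{constr_autos}) act as the identity on $\mathbb{G}_m$, and this is the case I would handle. Under this assumption $\phi \circ \rho$ is irreducible (since $\phi$ is an isomorphism and $\rho$ is irreducible) and has $\mathbb{G}_m$ acting via the identity character, so Lemma \ref{Mumford_rep} gives an abstract isomorphism $\phi \circ \rho \cong \rho$; Schur's lemma then says the intertwiner is unique up to a nonzero scalar.

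Next I would make this isomorphism explicit. By Lemma \ref{constr_autos}, $\mathcal{G}(\delta)$ is generated over $\mathbb{G}_m$ by the finitely many elements $(1, a_{(i)}, 0)$ and $(1, 0, \ell_{(i)})$, with $\phi((1, a_{(i)}, 0)) = (\psi_i, b_i, m_i)$ and $\phi((1, 0, \ell_{(i)})) = (\omega_i, b_i', m_i')$ given by cyclotomic data. Viewing a putative intertwiner $f$ as a matrix in the basis $\{X_a : a \in K(\delta)\}$ of $V(\delta)$ from Lemma \ref{Mumford_rep}, the condition $f \circ \rho(g) = (\phi \circ \rho)(g) \circ f$ at each of these finitely many generators becomes a finite system of linear equations in the entries of $f$, with coefficients in the cyclotomic field $\mathbb{Q}(\zeta_N, \psi_1, \omega_1, \ldots, \psi_g, \omega_g)$, where $N = \mathrm{lcm}(d_1, \ldots, d_g)$. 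The abstract argument above guarantees that this system has a one-dimensional solution space, which I would compute by Gaussian elimination over this cyclotomic field --- and the resulting $f$ is then the sought intertwiner, exactly, in a cyclotomic field.

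There is no real obstacle: the existence half cites an already-established structure theorem, and the computational half is bog-standard linear algebra over a cyclotomic field. The only subtle bookkeeping is making sure the auxiliary roots of unity $\psi_i, \omega_i$ from Lemma \ref{constr_autos} are chosen consistently across generators so that the defining relations of $\mathcal{G}(\delta)$ hold on the nose --- but this is already ensured inside the proof of Lemma \ref{constr_autos} and so is imported for free.
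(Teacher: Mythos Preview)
Your proposal is correct and follows essentially the same approach as the paper: existence via the Stone--von Neumann uniqueness (Lemma \ref{Mumford_rep}), then explicit computation by solving a linear system. The paper's proof is a two-sentence sketch (``uniqueness gives existence of $f$; once $f$ exists, compute it by linear algebra''), and you have simply fleshed out both halves --- in particular, your observation that one must restrict to $\phi$ acting trivially on $\mathbb{G}_m$ is implicit in the paper's argument as well, and your use of the generators from Lemma \ref{constr_autos} to set up the linear system over a cyclotomic field is exactly what ``compute it by linear algebra'' means here.
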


\begin{proof}
The group $\mathcal{G}(\delta)$ has only the one irreducible representation (up to isomorphism) on which $\mathbb{G}_m$ acts as the identity (Lemma \ref{Mumford_rep}),
so $\phi \circ \rho$ must be isomorphic to this representation $\rho$.  This gives existence of $f$.

Once $f$ is known to exist, it can be computed by linear algebra.
\end{proof}

\begin{lem}\label{all mumford coordinates are computable from one}
Given $(A, \mathcal{L}, \beta)$ an abelian variety in Mumford embedding over a number field $K/\Q$,
one can compute all Mumford embeddings $(A, \mathcal{L}', \beta')$ over $K$ of the same abelian variety $A/K$ such that $\mathcal{L}'$ belongs to the same Neron--Severi class as $\mathcal{L}$.
\end{lem}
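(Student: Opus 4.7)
The plan is to use the explicit parametrization of $\delta$-markings on $A$ within the fixed N\'eron--Severi class of $\mathcal{L}$ by two computable finite sets, and to transform the given embedding $i_0$ by the corresponding operations to produce every other Mumford embedding.

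First I would enumerate the candidate pairs $(\mathcal{L}', \beta')$. For fixed $\mathcal{L}'$, the $\delta$-marking $\beta'$ differs from $\beta$ by composition with an element of the finite group $\Aut_{\G_m}\mathcal{G}(\delta)$, whose elements are described explicitly by Lemma \ref{constr_autos}. For fixed $\beta$, every symmetric $\mathcal{L}'$ in the given N\'eron--Severi class is of the form $T_x^*\mathcal{L}$ for some $x \in A(\Qbar)$ satisfying $2x \in K(\mathcal{L})$ (since $T_x^*\mathcal{L}\otimes \mathcal{L}^{-1} = \phi_\mathcal{L}(x)$ and $\mathcal{L}'\otimes \mathcal{L}^{-1}$ lies in $A^\vee[2]$); I would compute the $K$-rational such $x$ via Lemma \ref{compute_torsion} applied to $A[2\cdot \#K(\mathcal{L})](K)$, working modulo the finite subgroup $K(\mathcal{L})\cap A(K)$. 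Together these give a computable finite list of candidate pairs $(\phi, x)$.

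Next I would compute the Mumford coordinates of the embedding attached to each candidate. When $\beta$ is replaced by $\phi\circ \beta$: Lemma \ref{intertwining} constructs a $\mathcal{G}(\delta)$-equivariant linear map $V(\delta)\to V(\delta)$ intertwining $\rho$ with $\phi\circ \rho$, whose projectivization is a linear automorphism $f_\phi$ of $\mathbb{P}(V(\delta))$; the new Mumford embedding is $f_\phi\circ i_0$, with origin coordinates obtained by applying $f_\phi$ to the given tuple $(Q_a)_a$. When $\mathcal{L}$ is replaced by $T_x^*\mathcal{L}$: a section of $T_x^*\mathcal{L}$ is a section of $\mathcal{L}$ precomposed with translation by $x$, so the new Mumford embedding is $i_0\circ T_x$, with origin coordinates $Q'_a = X_a(i_0(x))$. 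Composing yields $f_\phi\circ i_0\circ T_x$ as the candidate for each $(\phi, x)$, with coordinates $f_\phi\cdot (X_a(i_0(x)))_a$, all of which can be evaluated explicitly.

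Finally I would verify each candidate using Lemma \ref{char_delta_lemma} and discard those whose Mumford coordinates fail to lie in $K$. The hard part will be this rationality check: the automorphisms $\phi$ of Lemma \ref{constr_autos} involve auxiliary roots of unity and need not individually produce intertwiners defined over $K$, and not every $K$-rational class in $A^\vee[2]$ is represented by a $K$-rational translation point $x$. Nevertheless, since the complete finite list of $\Qbar$-rational Mumford embeddings in $[\mathcal{L}]$ is exhausted by the candidates above, simply keeping those whose computed coordinates lie in $K$ yields exactly the desired $K$-rational Mumford embeddings, and duplicate candidates (which can arise because the parametrization by $(\phi,x)$ has finite stabilizers) are removed by a final comparison of coordinate tuples.
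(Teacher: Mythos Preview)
Your approach is essentially the same as the paper's: enumerate the symmetric $\mathcal{L}'$ in the class via translation by points $x$ with $2x\in H_{\mathcal{L}}$ (using Lemma \ref{compute_torsion}), then for each $\mathcal{L}'$ range over $\phi\in\Aut_{\G_m}\mathcal{G}(\delta)$ (Lemma \ref{constr_autos}) and realize the new embedding as $f_\phi$ applied to the translated one via Lemma \ref{intertwining}. Your explicit final rationality filter and duplicate removal are a reasonable addition that the paper leaves implicit.
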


\begin{proof}
Given $(A, \mathcal{L}, \beta)$, the choice of another Mumford embedding belonging to the same Neron--Severi class
amounts to the choice of:
\begin{itemize}
    \item a symmetric line bundle $\mathcal{L}'$ belonging to the same Neron--Severi class as $\mathcal{L}$, and
    \item an isomorphism $\mathcal{G}(\delta) \cong \mathcal{G}(\mathcal{L})'$.
\end{itemize}

We will proceed in two steps: first we will describe how to fine one Mumford embedding for each line bundle $\mathcal{L}'$; 
then we will describe how, given a single Mumford embedding for $\mathcal{L}'$, to find all of them.

To say that $\mathcal{L}'$ is a symmetric line bundle in the Neron--Severi class of $\mathcal{L}$
means that $\mathcal{L}' \otimes \mathcal{L}^{-1}$
is a two-torsion point on the dual abelian variety $A^{\vee} = \operatorname{Pic}^0(A)$.
There are $2^{2g}$ such $\mathcal{L}'$;
they are all given as 
\[ T_x^* \mathcal{L} \]
for $x \in A(\mathbb{Q})$ such that $2x \in H_{\mathcal{L}}$, and we can compute them by Lemma \ref{compute_torsion}.

Given any such $x$, the tuple $(A, \mathcal{L}', \beta \circ T_x)$ is a Mumford embedding of $A$.

Now given $(A, \mathcal{L}', \beta_0)$, all other Mumford embeddings for this fixed $\mathcal{L}'$ are given by
\[ \beta' = \phi \circ \beta_0 \]
for some automorphism $\phi$ of $\mathcal{G}(\delta)$
acting as the identity on $\mathbb{G}_m$.
All such automorphisms $\phi$ are described explicitly in
Lemmas \ref{constr_autos} and \ref{automorphisms}.
For each such $\phi$, 
let $f_{\phi} \colon V(\delta) \rightarrow V(\delta)$ be 
an equivariant morphism from $V(\delta)$ (with $\mathcal{G}(\delta)$-action given by $\rho$) to itself (with $\mathcal{G}(\delta)$-action given by $\rho \circ \phi$).
(Note that $f_{\phi}$ can be computed exactly by Lemma \ref{intertwining}.)

Since $f_{\phi}$ is a linear transformation of $V(\delta)$, 
it acts on the projective space $\mathbb{P}(V(\delta))$,
the Mumford embedding 
\[ (A, \mathcal{L}', \beta') \]
is given by postcomposing $A \rightarrow \mathbb{P}(V(\delta))$ with $f_{\phi}$.
\end{proof}

\begin{rmk}
The choice of $x$ such that $T_x^* \mathcal{L} \cong \mathcal{L}'$ is not unique: it is only unique up to $H_{\mathcal{L}}$.
Translation by $H_{\mathcal{L}}$ on the Mumford-embedded $A$ is given by linear transformations $f_s$ of the ambient projective space (see Equation \eqref{translation_linear} in Section \ref{char_delta}).
These are precisely the $f_{\phi}$ arising from $\phi$ an inner automorphism of $\mathcal{G(\delta)}$.
\end{rmk}

\subsection{Detecting an abelian variety in a family.}

\begin{lem}
\label{algo_mumf_family}
Let $\mathcal{A} \rightarrow S$ be an abelian scheme, where $S$ is quasiprojective over a number field $K$, and the map $\mathcal{A} \rightarrow S$ and the structure maps of $\mathcal{A}$ are defined over $K$.

There is an algorithm that produces an \'etale cover $S'$ of $S$, an abelian scheme $\mathcal{A}' / S'$ in Mumford embedding, and an isomorphism $\mathcal{A}' \cong \mathcal{A} \times_S S'$ of $S'$-schemes.
\end{lem}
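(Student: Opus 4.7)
The plan is to produce the desired $(S', \mathcal{A}')$ by first establishing existence via Mumford's theory, and then invoking brute-force search (Principle \ref{prin:brute_force}) to actually find it. Existence proceeds as follows. First I would manufacture a symmetric very ample relative line bundle on $\mathcal{A}/S$ of appropriate type: take $\mathcal{L}_0 := \mathcal{O}_{\mathcal{A}}(1)$ from the given projective embedding and replace it by $\mathcal{L} := (\mathcal{L}_0 \otimes [-1]^* \mathcal{L}_0)^{\otimes n}$ for some sufficiently large $n$ divisible by $8$. This $\mathcal{L}$ is symmetric and very ample, and its type on each geometric fiber is some $\delta = (d_1, \ldots, d_g)$ with each $d_i$ divisible by $8$, as required for the setup of Section \ref{Delta markings}. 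Working component by component on $S$, the type is constant in the family and can be computed from a single fiber.

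Next I would form the theta group scheme $\mathcal{G}(\mathcal{L}) \to S$, a finite flat extension of $\mathcal{A}[\mathcal{L}]$ by $\G_{m,S}$. In characteristic zero $\mathcal{G}(\mathcal{L})$ is étale-locally isomorphic to $\mathcal{G}(\delta)_S$ via an isomorphism that is the identity on the embedded $\G_m$, and the $S$-scheme
\[ S_0' := \underline{\operatorname{Isom}}_{\G_m, S}\bigl(\mathcal{G}(\delta)_S,\, \mathcal{G}(\mathcal{L})\bigr) \]
is a torsor under the finite group $\operatorname{Aut}_{\G_m} \mathcal{G}(\delta)$ described explicitly by Lemma \ref{constr_autos}, and so is a finite étale cover of $S$. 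Over $S_0'$ we have a tautological $\delta$-marking $\beta$ of the pulled-back pair $(\mathcal{A}_{S_0'}, \mathcal{L}_{S_0'})$. The pushforward $\pi_* \mathcal{L}_{S_0'}$ is a $\mathcal{G}(\delta)$-equivariant locally free $\mathcal{O}_{S_0'}$-module on which $\G_m$ acts by scaling, so by uniqueness of $V(\delta)$ (Lemma \ref{Mumford_rep}) and descent, it is of the form $V(\delta) \otimes_K \mathcal{N}$ for some line bundle $\mathcal{N}$ on $S_0'$. Passing to a Zariski open cover $S' \to S_0'$ on which $\mathcal{N}$ is trivial (available because $S_0'$ is quasi-projective), we obtain, via the Mumford-theoretic recipe, a canonical closed embedding of $\mathcal{A}_{S'}$ into $\mathbb{P}(V(\delta)) \times S'$ in Mumford form, as verified by Lemma \ref{char_delta_lemma}.

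With existence in hand, I would run a brute-force search over tuples $(S' \to S,\, \mathcal{A}' \subseteq \mathbb{P}(V(\delta)) \times S',\, f \colon \mathcal{A}' \to \mathcal{A} \times_S S')$, testing that $S' \to S$ is étale (standard Jacobian criterion), that $\mathcal{A}'$ is in Mumford form (Lemma \ref{char_delta_lemma}), and that $f$ is an $S'$-isomorphism of group schemes respecting the structure morphisms (a finite Gröbner-type calculation). The construction above guarantees that a valid tuple exists, so by Principle \ref{prin:brute_force} the search terminates.

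The main obstacle is the twisting line bundle $\mathcal{N}$ appearing above: descent of the Mumford representation splits naturally into a Heisenberg part (handled by the finite étale Isom torsor) and a line bundle part (handled by Zariski-localizing on $S_0'$). This forces the output $S'$ to be a composite of a finite étale cover with a Zariski cover of $S_0'$, rather than a single étale morphism directly over $S$; since a Zariski open immersion is itself étale, the composite is still an étale cover of $S$ as required by the statement, and the brute-force verification is insensitive to how the cover was assembled.
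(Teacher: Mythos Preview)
Your proposal is correct and takes essentially the same approach as the paper: brute-force search (Principle \ref{prin:brute_force}) with verification via Lemma \ref{char_delta_lemma}. The paper's proof is a single sentence that takes existence for granted, whereas you have supplied the detailed existence argument (the Isom-torsor plus the Zariski trivialization of the twisting line bundle $\mathcal{N}$) that the paper leaves implicit.
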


\begin{proof}
Brute-force search (see Section \ref{sec:brute_force}).  By Lemma \ref{char_delta_lemma}, we can detect whether a given $A \subseteq \mathbb{P}(V(\delta)) \times S$ is an abelian variety in Mumford form.
\end{proof}

\begin{lem}\label{all mumford coordinates are computable}
\label{all_mumf_coords}
Let $(A, \nu)$ be an abelian variety with given polarization over a number field $K$, and fix a $\delta$ as in \S \ref{Delta markings}.  (Assume $\delta$ is the invariants of the polarization.) 

There is an algorithm that takes $A$ and $\delta$ as input, and returns the Mumford coordinates of all $\delta$-markings on $A$ of polarization $\nu$.
\end{lem}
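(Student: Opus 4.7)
The plan is to reduce to Lemma~\ref{all mumford coordinates are computable from one}, so that it suffices to produce a single Mumford embedding of $(A,\nu)$ over some finite Galois extension $L/K$ and then descend to $K$. At least one Mumford embedding with a $\delta$-marking and polarization $\nu$ exists over $\Qbar$: since the elementary divisors of $\nu$ are $\delta$, there is a symmetric $\mathcal{L}$ on $A_{\Qbar}$ with $[\mathcal{L}]=\nu$, and the central extensions $\mathcal{G}(\mathcal{L})$ and $\mathcal{G}(\delta)$ are abstractly isomorphic as $\G_m$-extensions of $K(\delta)\oplus K(\delta)^{\vee}$ with the commutator pairing \eqref{pairing}. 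Hence at least one such Mumford embedding of $(A,\nu)$ is defined over some finite extension of $K$.

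First I would perform a brute-force search (Principle~\ref{prin:brute_force}) over pairs $(L,Y)$, where $L/K$ is a finite extension and $Y\subseteq \P(V(\delta))\times_K L$ is a closed subscheme, iterating over extensions of increasing degree and subschemes of increasing height. For each candidate, I would invoke Lemma~\ref{char_delta_lemma} to test whether $Y$ is an abelian scheme in Mumford form, Lemma~\ref{find_polarization_in_H2} to compute the Chern class $[\mathcal{O}(1)|_Y]$ in integer coordinates, and Theorem~\ref{the polarized isomorphism relation is computable} to test whether $(Y,[\mathcal{O}(1)|_Y])$ is $L$-isomorphic to $(A_L,\nu)$. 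By the previous paragraph this search terminates, producing a Mumford embedding $(A_L,\mathcal{L}_0,\beta_0)$ of $(A_L,\nu)$ over some finite $L$; enlarge $L$ if necessary so that $L/K$ is Galois.

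Next, apply Lemma~\ref{all mumford coordinates are computable from one} over $L$ to enumerate the finite set $\mathcal{M}_L$ of all Mumford embeddings of $(A_L,\nu)$; the Galois group $\Gal(L/K)$ acts on $\mathcal{M}_L$ coordinate-wise on the Mumford coordinates $Q_a\in L$ of each embedding. Compute this action on $\mathcal{M}_L$, and return the Mumford coordinates of those embeddings fixed by $\Gal(L/K)$; by Galois descent these fixed points correspond bijectively to $\delta$-markings of $(A,\nu)$ defined over $K$. The main obstacle is merely ensuring the existence of the $\Qbar$-defined Mumford embedding from which the brute-force search inherits its termination guarantee, but this is classical; everything else assembles previously established algorithms.
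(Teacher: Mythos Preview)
Your overall strategy---find one Mumford embedding by brute force, then invoke Lemma~\ref{all mumford coordinates are computable from one}---matches the paper's. The paper simply cites Lemma~\ref{algo_mumf_family} (applied with $S=\Spec K$) for the first step and Lemma~\ref{all mumford coordinates are computable from one} for the second.

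There is, however, a genuine circularity in your argument. In your brute-force search you test whether the candidate $(Y,[\mathcal{O}(1)|_Y])$ is isomorphic to $(A_L,\nu)$ by invoking Theorem~\ref{the polarized isomorphism relation is computable}. But the proof of that theorem itself calls Lemma~\ref{all_mumf_coords}, which is exactly what you are trying to prove. The fix is easy and is precisely what the paper (via Lemma~\ref{algo_mumf_family}) does: include an explicit isomorphism $Y\simeq A_L$ as part of the data being brute-force searched, rather than testing for isomorphism after the fact. With the isomorphism in hand there is nothing left to verify beyond Lemma~\ref{char_delta_lemma}, and no circular appeal is needed.

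A secondary point: your Galois-descent step restricting to $\delta$-markings defined over $K$ is not in the paper's proof and appears to be unnecessary for the downstream applications (Theorem~\ref{the polarized isomorphism relation is computable} and Lemma~\ref{the subfamily of abelian varieties in a family isomorphic to a given one is computable}), both of which want \emph{all} Mumford embeddings over a suitable extension, not just the $K$-rational ones. Once $L$ is enlarged to contain the relevant torsion and roots of unity, step~2 already produces the complete list, and step~3 would discard markings that are actually needed.
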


\begin{proof}
Find a single Mumford form by Lemma \ref{algo_mumf_family}, and then find all the others by Lemma \ref{all mumford coordinates are computable from one}.
\end{proof}

\begin{lem}\label{the subfamily of abelian varieties in a family isomorphic to a given one is computable}
There is an algorithm that takes as input a polarized abelian scheme $\mathcal{A} \rightarrow S$, where $S$ is quasiprojective over a number field $K$, and a polarized abelian variety $A$ over $K$, and determines all $s \in S(\overline{\mathbb{Q}})$ such that $\mathcal{A}_s$ and $A$ are geometrically isomorphic (by an isomorphism that respects the polarization).
\end{lem}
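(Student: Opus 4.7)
The plan is to reduce the polarized isomorphism problem to equality of Mumford coordinates of the origin, exploiting Lemma \ref{Mumford coords determine abelian variety}.

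First, by replacing the given polarizations on both $\mathcal{A}$ and $A$ by a suitable common multiple (which preserves the set of polarized isomorphism classes), we may arrange that the elementary divisors $\delta = (d_1, \ldots, d_g)$ of the polarization are all divisible by $8$, so that Mumford's construction applies. I would then apply Lemma \ref{algo_mumf_family} to produce an \'etale cover $\pi \colon S' \to S$ together with an abelian scheme $\mathcal{A}' \to S'$ in Mumford form and an isomorphism $\mathcal{A}' \cong \mathcal{A} \times_S S'$. The origin section of $\mathcal{A}'$, read off in the coordinates $X_a$, then defines an explicit morphism $Q \colon S' \to \mathbb{P}(V(\delta))$, sending $s' \in S'$ to the Mumford coordinates $(Q_a(\mathcal{A}'_{s'}))_{a \in K(\delta)}$ of the fiber at $s'$.

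Next I would apply Lemma \ref{all_mumf_coords} to the polarized abelian variety $A_{\bar K}$ to compute the finite set $P = \{p_1, \ldots, p_n\} \subseteq \mathbb{P}(V(\delta))(\bar K)$ of Mumford coordinates arising from all $\delta$-markings of $A$ compatible with the given polarization. By Lemma \ref{Mumford coords determine abelian variety}, two abelian varieties in Mumford form coincide as subschemes of $\mathbb{P}(V(\delta))$ precisely when their origin coordinates agree; since every $\bar K$-isomorphism of polarized abelian varieties $(\mathcal{A}'_{s'}, \mathcal{O}(1)) \cong A$ arises from \emph{some} $\delta$-marking on $A$ producing the same Mumford embedding as $\mathcal{A}'_{s'}$, we have $\mathcal{A}_{\pi(s')} \cong_{\bar K} A$ as polarized abelian varieties if and only if $Q(s') \in P$.

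Finally, I would compute the preimage $Q^{-1}(P) \subseteq S'_{\bar K}$, which is the vanishing locus of an explicit system of polynomial equations obtained by setting the coordinates $Q_a$ of the origin section equal (up to projective scaling) to each $p_i$. This cuts out a closed subscheme defined over some finite explicit extension of $K$; its $\bar K$-points form a finite set that can be enumerated by standard elimination theory, and pushing forward along $\pi$ yields the desired $\{s \in S(\bar K) \mid \mathcal{A}_s \cong_{\bar K} A\}$.

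The heavy lifting is all done by the earlier Mumford-coordinate machinery; once that is in hand the step is essentially formal. The only real bookkeeping obstacle is making sure that \emph{all} $\delta$-markings of $A$ are enumerated in $P$ (rather than just one), since $\mathcal{A}' \to S'$ and $A$ need only agree after some choice of $\delta$-marking, not necessarily the ones initially produced by Lemma \ref{algo_mumf_family}.
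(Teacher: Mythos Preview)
Your proposal is correct and follows essentially the same approach as the paper's own proof: multiply the polarization so that $8 \mid d_i$, put the family in Mumford form via Lemma \ref{algo_mumf_family}, enumerate all Mumford coordinates of $A$ via Lemma \ref{all_mumf_coords}, and take preimages. One small caveat: the preimage $Q^{-1}(P)$ need not be finite in general (e.g.\ if the family is isotrivial over some component of $S'$), so ``determines'' should be read as ``computes as a closed subscheme''; in the intended application the family is nonisotrivial, so your finiteness claim holds there.
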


\begin{proof}

By multiplying the polarization on $\mathcal{A}$ by $8$, we may assume that its invariant $\delta = (d_1, \ldots, d_g)$ is such that $8 \mid d_i$ for each $i$; similarly for the polarization on $A$.  We may assume that both polarizations have the same invariant; otherwise $\mathcal{A}_s$ and $A$ cannot ever be isomorphic as polarized abelian varieties.

Apply Lemma \ref{algo_mumf_family} to $\mathcal{A} \rightarrow S$ to get a family $\mathcal{A}' \rightarrow S'$ in Mumford embedding, 
and let
\[ Q_{\mathcal{A}'} \colon S' \rightarrow \mathbb{P}(V(\delta)) \]
be the corresponding Mumford coordinates.

Note that $\mathcal{A}_s$ and $A$ are geometrically isomorphic
if and only if there is some choice of $\delta$-marking on $A$ for which its Mumford coordinates agree with $Q_{\mathcal{A}'}(s)$ (Lemma \ref{Mumford coords determine abelian variety}).

By Lemmas \ref{algo_mumf_family} (applied with $S = \operatorname{Spec} K$) and \ref{all_mumf_coords}, we can find all Mumford coordinates $Q_{(A, \mathcal{L}, \beta)}$ on $A$ (for the given polarization).
Then we simply compute, for each $(\mathcal{L}, \beta)$, the inverse image of $Q_{(A, \mathcal{L}, \beta)} \in \mathbb{P}(V(\delta))$ under $Q_{\mathcal{A}'}$.
\end{proof}

\renewcommand\refname{References.}
\bibliography{references}

\end{document}